\documentclass[
a4paper 
,11pt 
,leqno 
,english 
]{scrartcl} 
\usepackage{my-preamble-article} 
\usepackage{my-macros} 
\usepackage{my-specific-macros} 
\usepackage{my-counters-related-preprints} 
\usepackage{enumitem} 
\title{Global behaviour of bistable solutions for gradient systems in one unbounded spatial dimension}
\author{Emmanuel \textsc{Risler}}
\begin{document}
\maketitle
\begin{abstract}
This paper is concerned with parabolic gradient systems of the form 
\[
u_t=-\nabla V (u) + u_{xx}\,,
\]
where the spatial domain is the whole real line, the state variable $u$ is multidimensional, and the potential $V$ is coercive 
at infinity. For such systems, under generic assumptions on the potential, the asymptotic behaviour of every 
\emph{bistable solution} --- that is, every solution close at both ends of space to stable homogeneous
equilibria --- is described. Every such solution approaches, far to the left in space a stacked family of bistable 
fronts travelling to the left, far to the right in space a stacked family of bistable fronts travelling to the 
right, and in between a pattern of profiles of stationary solutions homoclinic or heteroclinic to stable homogeneous equilibria, going slowly away from one another. 
This result pushes one step further the program initiated in the late seventies by Fife and McLeod about the global
asymptotic behaviour of bistable solutions, by extending their results to the case of systems. In the absence of 
maximum principle, the arguments are purely variational, and call upon previous results obtained in companion papers.
\end{abstract}
\nnfootnote{%
\emph{2020 Mathematics Subject Classification:} 35B38, 35B40, 35K57, 37J46.\\%
\emph{Key words and phrases:} parabolic gradient system, bistable solution, standing terrace of bistable stationary solutions, propagating terrace of bistable travelling fronts, global behaviour.
}
\thispagestyle{empty} 
\pagestyle{empty}
\hypersetup{pageanchor=false} 
\newpage
\tableofcontents
\newpage
\hypersetup{pageanchor=true} 
\pagestyle{plain}
\setcounter{page}{1}
\section{Introduction}
\label{sec:intro}
This paper deals with the global dynamics of nonlinear parabolic systems of the form
\begin{equation}
\label{init_syst}
u_t=-\nabla V (u) + u_{xx} \,,
\end{equation}
where the time variable $t$ and the space variable $x$ are real, the spatial domain is the whole real line, the function $(x,t)\mapsto u(x,t)$ takes its values in $\rr^d$ with $d$ a positive integer, and the nonlinearity is the gradient of a \emph{potential} function $V:\rr^d\to\rr$, which is assumed to be regular (of class $\ccc^2$) and coercive at infinity (see hypothesis \cref{hyp_coerc} in \vref{subsec:coerc_glob_exist}). 

The main feature of system~\cref{init_syst} is that it can be recast, at least formally, as the gradient flow of an energy 
functional. If $(w,w')$ is a pair of vectors of $\rr^d$, let $w\cdot w'$ and $\abs{w} =\sqrt{w\cdot w}$ denote the usual 
Euclidean scalar product and the usual Euclidean norm, respectively, and let us write simply $w^2$ for $\abs{w}^2$. 
For every function $v:x\mapsto v(x)$ defined on $\rr$ with values in $\rr^d$, its \emph{energy} (or \emph{Lagrangian} or \emph{action}) with respect to system \cref{init_syst} is defined (at least formally) by
\begin{equation}
\label{form_en}
\eee[v] = \int_\rr \Bigl(\frac{1}{2}v_x(x)^2+V\bigl(v(x)\bigr)\Bigr)\, dx
\,.
\end{equation}
Formally, the differential of this functional reads (skipping border terms in the integration by parts)
\[
\begin{aligned}
d\eee[v]\cdot \delta v &= \int_\rr \bigl( v_x \cdot (\delta v)_x + \nabla V(v)\cdot \delta v \bigr) \, dx \\
&= \int_\rr \bigl( - v_{xx} + \nabla V(v) \bigr) \cdot \delta v \, dx
\,.
\end{aligned}
\]
In other words, the (formal) gradient of this functional with respect to the $L^2(\rr,\rr^d)$-scalar product reads
\[
\nabla\eee[v] = \nabla V(v) - v_{xx} 
\,,
\]
and system \cref{init_syst} can formally be rewritten under the form
\[
u_t = - \nabla \eee[u(\cdot,t)]
\,.
\]
Accordingly, if $(x,t)\mapsto u(x,t)$ is a solution of this system, then (formally)
\[
\begin{aligned}
\frac{d}{d t}\eee[u(\cdot,t)] &= d\eee[u(\cdot,t)]\cdot u_t(\cdot,t) \\
&= \bigl\langle \nabla\eee[u(\cdot,t)], u_t(\cdot,t) \bigr\rangle_{L^2(\rr,\rr^d)} \\
&= - \int_\rr u_t(x,t)^2 \, dx \le 0
\,.
\end{aligned}
\]
If system \cref{init_syst} is considered on a \emph{bounded} spatial domain with boundary conditions that preserve this gradient structure, then the integrals above (on this spatial domain) converge, thus the system really --- and not only formally --- is of gradient type. In this case the dynamics is (at least from a  qualitative point of view) fairly well understood, up to a fine description of the global attractor that is compact and made of the unstable manifolds of stationary solutions \cite{Hale_asymptBehavDissipSyst_1988,Temam_infiniteDimDynSyst_1997}. According to LaSalle's principle, every solution approaches the set of stationary solutions, and even a single stationary solution under rather general additional hypotheses \cite{Simon_asymptoticsEvolEqu_1983}. 

If space is the whole real line and the solutions under consideration are only assumed to be bounded, then the gradient structure above is only formal and allows a much richer phenomenology (the full attractor is by the way far from being fully understood in this case, see the introduction of \cite{GallaySlijepcevic_energyFlowFormallyGradient_2001} and references therein). A salient feature is the occurrence of travelling fronts, that is travelling waves connecting homogeneous equilibria at both ends of space. These solutions are known to play a major role in the asymptotic behaviour for ``many'' initial conditions. 

This crucial role of travelling fronts can be viewed, abstractly, as a consequence of another fundamental feature of system~\cref{init_syst}: the fact that a formal gradient structure exists not only in the laboratory frame, but also in every frame travelling at a constant speed. Indeed, for every real quantity $c$, if a function $(x,t)\mapsto u(x,t)$ is a solution of system \cref{init_syst}, then the function $(\xi,t)\mapsto v(\xi,t)$ defined as 
\[
v(\xi,t)=u(x,t)
\quad\text{for}\quad
x = ct + \xi
\]
is a solution of
\begin{equation}
\label{syst_mf}
v_t-c v_\xi=-\nabla V(v)+v_{\xi\xi}
\,. 
\end{equation}
Now, for every function $w:\xi\mapsto w(\xi)$ defined on $\rr$ with values in $\rr^d$, let us define (at least formally) the energy of $w$ with respect to system \cref{syst_mf} by
\begin{equation}
\label{form_en_mf}
\eee_c[w]=\int_{\rr} e^{c\xi} \Bigl( \frac{1}{2}w_\xi(\xi)^2 + V\bigl(w(\xi)\bigr) \Bigr)\, d\xi
\,.
\end{equation}
Formally, the differential of $\eee_c[\dot]$ reads (skipping border terms in the integration by parts)
\[
\begin{aligned}
d\eee_c[w]\cdot \delta w &= \int_\rr e^{c\xi}\bigl( w_\xi \cdot (\delta w)_\xi + \nabla V(w)\cdot \delta w \bigr) \, d\xi \\
&= \int_\rr e^{c\xi} \bigl( - w_{\xi\xi} - c w_\xi + \nabla V(w) \bigr) \cdot \delta w \, d\xi
\,.
\end{aligned}
\]
In other words, the (formal) gradient of this functional with respect to the $L^2$-scalar product with weight $e^{c\xi}$ on functions: $\rr\to\rr^d$ reads 
\[
\nabla_c\eee_c[w] = - w_{\xi} - c w_\xi + \nabla V(w)
\,,
\]
and system \cref{syst_mf} can formally be rewritten as
\begin{equation}
\label{form_grad_mf}
v_t = - \nabla_c \eee_c[v(\cdot,t)]
\,.
\end{equation}
Accordingly, if $(\xi,t)\mapsto v(\xi,t)$ is a solution of system \cref{syst_mf}, then (formally)
\begin{equation}
\label{dt_form_en_mf}
\frac{d}{d t}\eee_c[v(\cdot,t)] = - \int_\rr e^{c\xi} v_t(\xi,t)^2 \, d\xi 
\,.
\end{equation}

This gradient structure has been known for a long time \cite{FifeMcLeod_approachTravFront_1977}, but it is only more recently that it received a more detailed attention from several authors (among which S. Heinze, C. B. Muratov, Th. Gallay, R. Joly, and the author \cite{Heinze_variationalApproachTW_2001,Muratov_globVarStructPropagation_2004,GallayRisler_globStabBistableTW_2007,Risler_globCVTravFronts_2008,GallayJoly_globStabDampedWaveBistable_2009}), and that is was shown that this structure is sufficient (in itself, that is without the use of the maximum principle) to prove results of global convergence towards travelling fronts. These ideas have been applied since in different contexts, to prove either global convergence or just existence results, see for instance \cite{Chapuisat_existenceCurvedFront_2007,ChapuisatJoly_asymptProfilesTravFrontBiolEqu_2010,MuratovNovaga_frontPropIVariational_2008,MuratovNovaga_frontPropIISharpReaction_2008,MuratovNovaga_globExpConvTW_2012,AlikakosKatzourakis_heteroclinicTW_2011,AlikakosFusco_ellipticSystemsPhaseTransType_2018,Luo_globStabDampedWaveEqu_2013,BouhoursNadin_variationalApproachRDForcedSpeedDim1_2015,BouhoursGiletti_extinctSpreadClimateAllee_2016,BouhoursGiletti_spreadVanishMonStabRDEqu_2018,OliverBonafoux_heteroclinicTW1dParabSystDegenerate_2021,OliverBonafoux_TWparabAllenCahn_2021,ChenChienHuang_varApproach3PhaseTWgradSyst_2021,ChenCotiZelati_TWSolAllenCahnEqu_2022,OliverBonafouxRisler_globCVPushedTravFronts_2023}. 

A reasonably wide class of solutions of system \cref{init_syst}, large enough to capture the convergence to travelling fronts while limiting the complexity of the dynamics encountered is made of solutions that are close to homogeneous equilibria at both ends of space, at least for large positive times. Among such solutions the simplest case is that of \emph{bistable} solutions, when these equilibria at both ends of space are stable. In the late seventies, substantial breakthroughs have been achieved by P. C. Fife and J. B. McLeod about the global behaviour of such \emph{bistable} solutions in the \emph{scalar} case ($d$ equals $1$). Their results comprise global convergence towards a bistable travelling front \cite{FifeMcLeod_approachTravFront_1977}, global convergence towards a ``stacked family of bistable travelling fronts'' \cite{FifeMcLeod_phasePlaneDisc_1981}, and finally, in the case of a bistable potential, a rather complete description of the global asymptotic behaviour of all solutions that are close enough, at infinity in space,to the local (non global) minimum point \cite{Fife_longTimeBistable_1979}. Many extensions and generalizations of these results have been achieved since, but mostly for monotone systems or in the scalar case $d$ equals $1$ (using maximum principles and order-preserving properties of the solutions), see \cite{RoquejoffreTermanVolpert_globStabTFStackedFamMon_1996,DucrotGiletti_existenceConvergencePropagatingTerrace_2014,Polacik_propagatingTerracesDynFrontLikeSolRDEquationsR_2020} and references therein. 

The aim of this paper, completing the companion papers \cite{Risler_globCVTravFronts_2008,Risler_globalRelaxation_2016}, is to make a step further in this program, by extending these results to the case of \emph{systems} of the form \cref{init_syst}, and by providing for such systems a complete description of the asymptotic behaviour of every bistable solution (\cref{thm:main} below). 
\section{Assumptions, notation, and statement of the results}
\label{sec:assumpt}
This \namecref{sec:assumpt} presents strong similarities with \cite[\GlobalRelaxationSecAssumptionsNotationStatement]{Risler_globalRelaxation_2016}, where more details and comments can be found. 
\subsection{Semi-flow in uniformly local Sobolev space and coercivity hypothesis}
\label{subsec:coerc_glob_exist}
Let us denote by $X$ the uniformly local Sobolev space $\HoneulofR$. System~\cref{init_syst} defines a local semi-flow in $X$ (see for instance D. B. Henry's book \cite{Henry_geomSemilinParab_1981}). 

As in \cite{Risler_globalRelaxation_2016}, let us assume that the potential function $V:\rr^d\to\rr$ is of class $\ccc^2$ and strictly coercive at infinity in the following sense: 
\begin{gather}
\tag{$\text{H}_\text{coerc}$}
\lim_{R\to+\infty}\quad  \inf_{\abs{u}\ge R}\ \frac{u\cdot \nabla V(u)}{\abs{u}^2} >0
\label{hyp_coerc}
\end{gather}
(or in other words there exists a positive quantity $\varepsilon$ such that the quantity $u\cdot \nabla V(u)$ is greater than or equal to $\varepsilon\abs{u}^2$ as soon as $\abs{u}$ is large enough). 

According to this hypothesis \cref{hyp_coerc}, the semi-flow of system \cref{init_syst} is actually global (see \vref{prop:attr_ball}). Let us denote by $(S_t)_{t\ge0}$ this semi-flow. 

In the following, a \emph{solution of system \cref{init_syst}} will refer to a function 
\[
\rr\times[0,+\infty)\to\rr^d\,, \quad (x,t)\mapsto u(x,t)
\,,
\]
such that the function $u_0:x\mapsto u(x,t=0)$ (initial condition) is in $X$ and $u(\cdot,t)$ equals $(S_t u_0)(\cdot)$ for every nonnegative time $t$. 
\subsection{Minimum points and bistable solutions}
\subsubsection{Minimum points}
Everywhere in this paper, the expression ``minimum point'' denotes a point where a function --- namely the potential $V$ --- reaches a local \emph{or} global minimum value. 
\begin{notation}
Let $\mmm$ denote the set of nondegenerate (local or global) minimum points of $V$:
\[
\mmm=\{u\in\rr^d: \nabla V(u)=0 
\quad\text{and}\quad 
D^2V(u)\text{ is positive definite}\}
\,.
\]
\end{notation}
\subsubsection{Bistable solutions}
Let us recall the following definition, already stated in \cite{Risler_globalRelaxation_2016}.
\begin{definition}[bistable solution]
\label{def_bist}
A solution $(x,t)\mapsto u(x,t)$ of system~\cref{init_syst} is called a \emph{bistable solution} if there are two (possibly equal) points $m_-$ and $m_+$ in $\mmm$ such that both quantities:
\[
\limsup_{x\to-\infty} \abs{u(x,t)-m_-}
\quad\text{and}\quad
\limsup_{x\to+\infty} \abs{u(x,t)-m_+}
\]
go to $0$ as time goes to $+\infty$. More precisely, such a solution is called a \emph{bistable solution connecting $m_-$ to $m_+$} (see \cref{fig:bist_sol}). 
\begin{figure}[!htbp]
\centering
\includegraphics[width=0.6\textwidth]{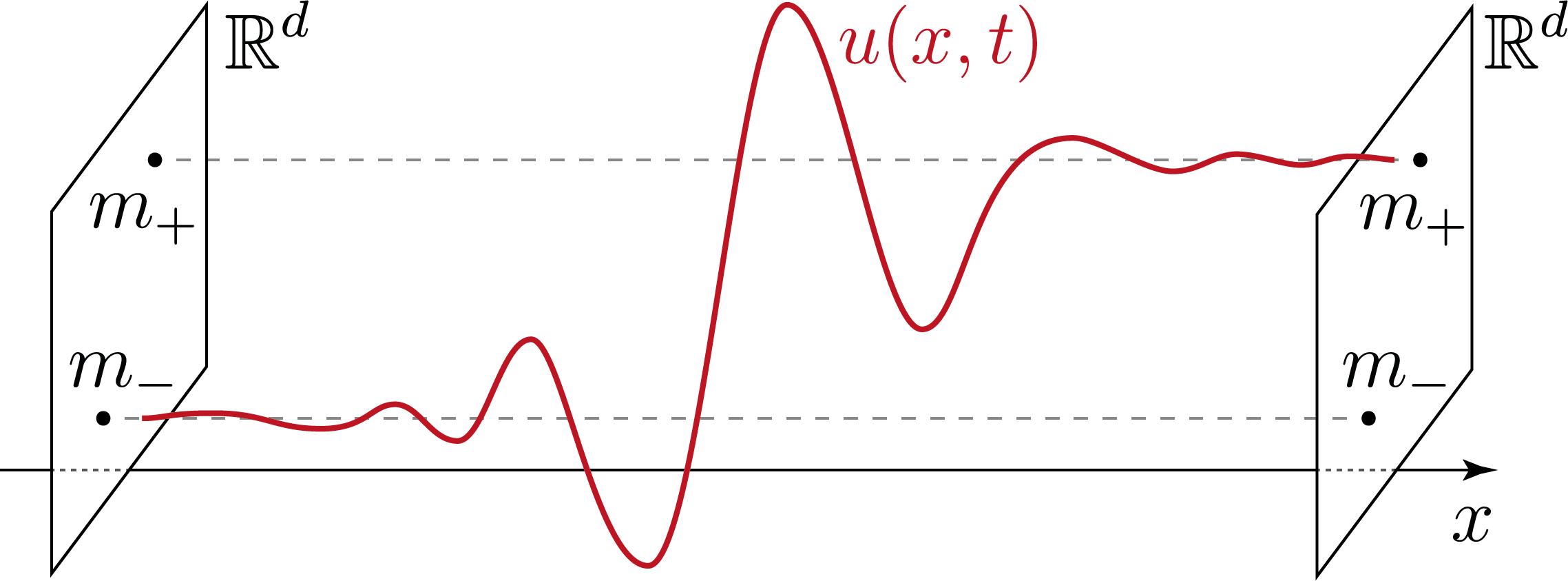}
\caption{A bistable solution connecting $m_-$ to $m_+$.}
\label{fig:bist_sol}
\end{figure}
A function $u_0$ in $X$ is called a \emph{bistable initial condition (connecting $m_-$ to $m_+$)} if the solution of system~\cref{init_syst} corresponding to this initial condition is a bistable solution (connecting $m_-$ to $m_+$).
\end{definition}
Let $m_-$ and $m_+$ denote two (possibly equal) points in $\mmm$.
\begin{notation}
Let
\[
\Xbist(m_-,m_+)
\]
denote the subset of $X$ made of bistable initial conditions connecting $m_-$ to $m_+$.
\end{notation}
By construction, this set is positively invariant under the semi-flow of system \cref{init_syst}; it is in addition nonempty and open in $X$ (for the usual norm on this function space), and contains all functions close enough to $m_-$ and $m_+$ at the ends of space (for proofs see for instance \cite{Risler_globCVTravFronts_2008,Risler_globalRelaxation_2016}).

The aim of this paper is to study the asymptotic behaviour of solutions belonging to the sets $\Xbist(m_-,m_+)$. The description of this asymptotic behaviour involves two kinds of particular solutions: stationary solutions connecting (stable) equilibria and (bistable) fronts travelling at a constant speed. 
\subsection{Stationary solutions, travelling fronts, terraces, asymptotic pattern}
\subsubsection{Stationary solutions and travelling fronts}
Let $c$ be a real quantity. A function
\[
\phi:\rr\to\rr^d,
\quad \xi\mapsto\phi(\xi)
\]
is the profile of a wave travelling at the speed $c$ (or of a standing wave if $c$ vanishes) for system \cref{init_syst} if the function  $(x,t)\mapsto \phi(x-ct)$ is a solution of this system, that is if $\phi$ is a solution of the differential system
\begin{equation}
\label{syst_trav_front}
\phi''=-c\phi'+\nabla V(\phi) 
\,.
\end{equation}
This system can be viewed as a damped oscillator (or a conservative oscillator if $c$ vanishes) in the potential $-V$, the speed $c$ playing the role of the damping coefficient. Its solutions may blow up in finite time, but only global solutions will be considered or encountered along the paper. 
\begin{notation}
If $m_-$ and $m_+$ are two points of $\mmm$ and $c$ is a real quantity, let $\Phi_c(m_-,m_+)$ denote the set of \emph{nonconstant} global solutions of system \cref{syst_trav_front} connecting $m_-$ to $m_+$. With symbols, 
\[
\begin{aligned}
\Phi_c(m_-,m_+) = \bigl\{ &
\phi:\rr\to\rr^d : 
\phi \text{ is a \emph{nonconstant} global solution of system \cref{syst_trav_front}}
\\
& \text{and}\quad\phi(\xi)\xrightarrow[\xi\to -\infty]{} m_-
\quad\text{and}\quad
\phi(\xi)\xrightarrow[\xi\to +\infty]{} m_+
\bigr\}
\,.
\end{aligned}
\]
And, if the quantity $c$ is positive, let $\Phi_c(m_+)$ denote the set of \emph{nonconstant} global and bounded solutions of system \cref{syst_trav_front} converging to $m_+$ at the right end of space (in other words, the set of profiles of bounded waves travelling at the speed $c$ and ``invading'' $m_+$). With symbols, 
\[
\begin{aligned}
\Phi_c(m_+) = \bigl\{ &
\phi:\rr\to\rr^d : 
\phi \text{ is a \emph{nonconstant} global solution of system \cref{syst_trav_front}}
\\
& \text{and}\quad
\sup_{\xi\in\rr}\abs{\phi(\xi)}<+\infty
\quad\text{and}\quad
\phi(\xi)\xrightarrow[\xi\to +\infty]{} m_+
\bigr\}
\,.
\end{aligned}
\]
\end{notation}
Let us make some comments about these sets and this notation. 
\begin{itemize}
\item The notation ``$\phi$'' and ``$\Phi$'' refers to the concept of ``front''. 
\item If $c$ is positive, then according to LaSalle's principle every function $\xi\mapsto \phi(\xi)$ belonging to $\Phi_c(m_+)$ must approach the set of critical points (but not necessarily a single critical point) of $V$ as $\xi$ goes to $-\infty$ (see assertion \cref{item:minus_infty_asymptotics_tw} of \cref{lem:asympt_behav_tw_2}). 
\item If $\phi$ is an element of some set $\Phi_c(m_-,m_+)$, then it follows from system \cref{syst_trav_front} that
\begin{equation}
\label{V_of_u_plus_minus_V_of_u_minus}
V(m_+)-V(m_-) = c \int_{\rr}\phi'(\xi)^2 \, d\xi
\,,
\end{equation}
so that if $c$ is nonzero then $m_-$ and $m_+$ differ, and in this case $\phi$ is indeed the profile of a travelling front. Since its asymptotic values $m_-$ and $m_+$ belong to $\mmm$, this front is qualified as \emph{bistable}. 
\item If conversely $\phi$ is an element of some set $\Phi_0(m_-,m_+)$ (for a null speed), then $V(m_-)$ equals $V(m_+)$ and $m_-$ and $m_+$ may be equal; in such a case, $\phi$ should rather be called a ``pulse'' than a ``front'', but for convenience and homogeneity purposes, the notation $\Phi_0(m_-,m_+)$ and $\phi$ will be maintained also in this case. 
\end{itemize}
\subsubsection{Propagating terrace of bistable travelling fronts}
\label{subsubsec:def_prop_terrace}
This \namecref{subsubsec:def_prop_terrace} and the two next ones are devoted to several definitions. Their purpose is to enable a compact formulation of the main result of this paper (\cref{thm:main} below). Some comments on the terminology and related references are given at the end of \cref{subsubsec:def_stand_terrace}.
\begin{figure}[!htbp]
\centering
\includegraphics[width=0.8\textwidth]{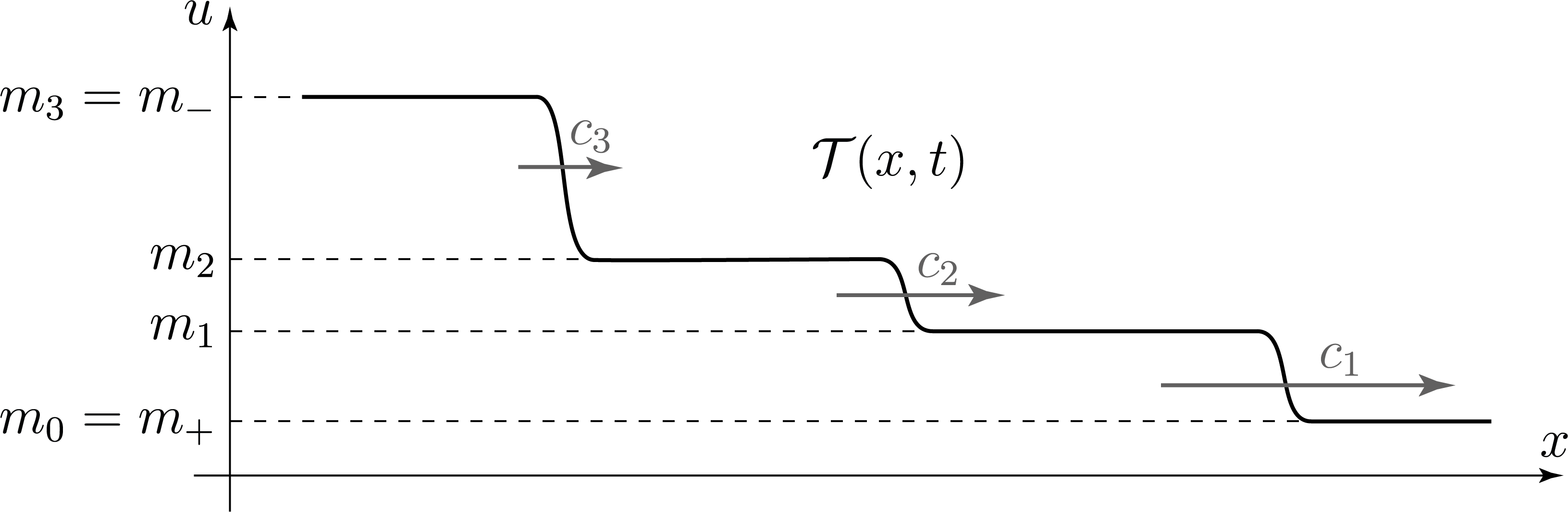}
\caption{Propagating terrace of (bistable) fronts travelling to the right.}
\label{fig:prop_terrace}
\end{figure}
\begin{definition}[propagating terrace of bistable travelling fronts, \cref{fig:prop_terrace}]
Let $m_-$ and $m_+$ be two points of $\mmm$ (satisfying $V(m_-)\le V(m_+)$). A function 
\[
\ttt : \rr\times[0,+\infty)\to\rr^d,\quad (x,t)\mapsto \ttt(x,t)
\]
is called a \emph{propagating terrace of bistable fronts travelling to the right, connecting $m_-$ to $m_+$,} if there exists a nonnegative integer $q$ such that:
\begin{enumerate}
\item if $q$ equals $0$, then $m_-=m_+$ and, for every real quantity $x$ and every nonnegative time $t$, 
\[
\ttt(x,t)=m_-=m_+
\,;
\]
\item if $q$ equals $1$, then there exist:
\begin{itemize}
\item a positive quantity $c_1$,
\item and a function $\phi_1$ in $\Phi_c(m_-,m_+)$ (that is, the profile of a bistable front travelling at the speed $c_1$ and connecting $m_-$ to $m_+$), 
\item and a $\ccc^1$-function $t\mapsto x_1(t)$, defined on $[0,+\infty)$, and satisfying $x_1'(t)\to c_1$ as time goes to $+\infty$, 
\end{itemize}
such that, for every real quantity $x$ and every nonnegative time $t$, 
\[
\ttt(x,t)=\phi_1\bigl(x-x_1(t)\bigr)
\,;
\]
\label{item:def_propagating_terrace_q_equals_one}
\item if $q$ is not smaller than $2$, then there exists $q-1$ points $m_1,\dots,m_{q-1}$ in $\mmm$, satisfying (if $m_+$ is denoted by $m_0$ and $m_-$ by $m_q$)
\[
V(m_0)>V(m_1)>\dots>V(m_q)
\,,
\]
and there exist $q$ positive quantities $c_1$, …, $c_q$ satisfying
\[
c_1\ge\dots\ge c_q
\,,
\]
and for each integer $i$ in $\{1,\dots,q\}$, there exist:
\begin{itemize}
\item a function $\phi_i$ in $\Phi_{c_i}(m_i,m_{i-1})$ (that is, the profile of a bistable front travelling at the speed $c_i$ and connecting $m_i$ to $m_{i-1}$),
\item and a $\ccc^1$-function $t\mapsto x_i(t)$, defined on $[0,+\infty)$, and satisfying $x_i'(t)\to c_i$ as time goes to $+\infty$,
\end{itemize}
such that, for every integer $i$ in $\{1,\dots,q-1\}$, 
\[
x_{i+1}(t)-x_i(t)\to +\infty 
\quad\text{as}\quad
t\to +\infty
\,,
\]
and such that, for every real quantity $x$ and every nonnegative time $t$, 
\[
\ttt(x,t) = m_0 + \sum_{i=1}^q \Bigl[\phi_i\bigl(x-x_i(t)\bigr)-m_{i-1}\Bigr]
\,.
\]
\label{item:def_propagating_terrace_q_larger_than_one}
\end{enumerate}
\end{definition}
\begin{remark}
Item \cref{item:def_propagating_terrace_q_equals_one} may have been omitted in this definition, since it boils down to item \cref{item:def_propagating_terrace_q_larger_than_one} with $q$ equals $1$. 
\end{remark} 
A \emph{propagating terrace of bistable fronts travelling to the left} may be defined similarly. 
\subsubsection{Standing terrace of bistable stationary solutions}
\label{subsubsec:def_stand_terrace}
The next three definitions deal with stationary solutions. They are identical to those of \cite{Risler_globalRelaxation_2016}.
\begin{figure}[!htbp]
\centering
\includegraphics[width=\textwidth]{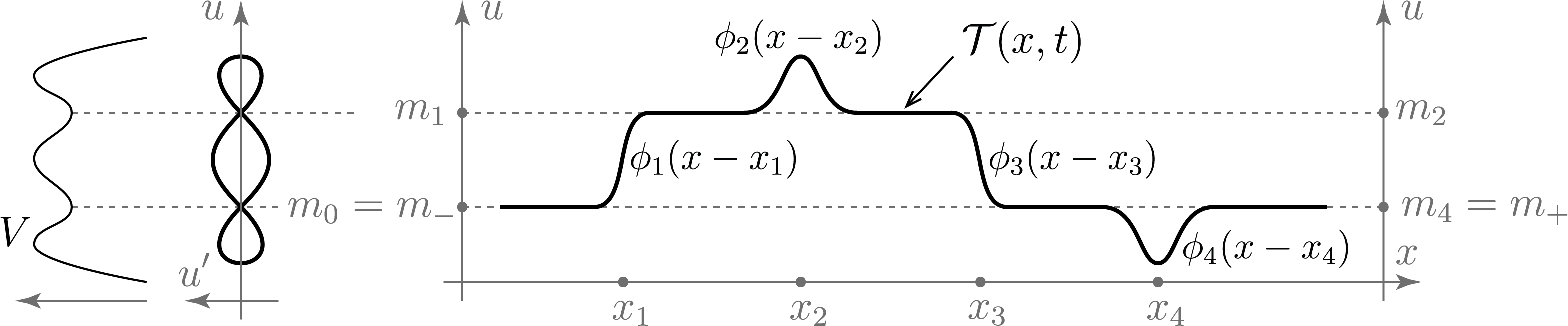}
\caption{Standing terrace (with four items, $q=4$).}
\label{fig:standing_terrace}
\end{figure}
\begin{definition}[standing terrace of bistable stationary solutions, \cref{fig:standing_terrace}]
Let $\valueOfV$ be a real quantity and let $m_-$ and $m_+$ be two points of $\mmm$ such that $V(m_-)=V(m_+)=\valueOfV$. A function
\[
\ttt : \rr\times[0,+\infty)\to\rr^d,\quad (x,t)\mapsto \ttt(x,t)
\]
is called a \emph{standing terrace of bistable stationary solutions, connecting $m_-$ to $m_+$,} if there exists a nonnegative integer $q$ such that:
\begin{enumerate}
\item if $q$ equals $0$, then $m_-=m_+$ and, for every real quantity $x$ and every nonnegative time $t$, 
\[
\ttt(x,t)=m_-=m_+
\,;
\]
\item if $q=1$, then there exist:
\begin{itemize}
\item a bistable stationary solution $\phi_1$ connecting $m_-$ to $m_+$,
\item and a $\ccc^1$-function $t\mapsto x_1(t)$ defined on $[0,+\infty)$ and satisfying $x_1'(t)\to0$ as time goes to $+\infty$,
\end{itemize}
such that, for every real quantity $x$ and every nonnegative time $t$, 
\[
\ttt(x,t) = \phi_1\bigl( x-x_1(t)\bigr)
\,;
\]
\label{item:def_standing_terrace_q_equals_one}
\item if $q$ is not smaller than $2$, then there exist $q-1$ (not necessarily distinct) points $m_1,\dots,m_{q-1}$ in $\mmm$, all in the level set $V^{-1}(\{\valueOfV\})$, and if $m_-$ is denoted by $m_0$ and $m_+$ by $m_q$, then for each integer $i$ in $\{1,\dots,q\}$, there exist:
\begin{itemize}
\item a bistable stationary solution $\phi_i$ connecting $m_{i-1}$ to $m_i$,
\item and a $\ccc^1$-function $t\mapsto x_i(t)$ defined on $[0,+\infty)$ and satisfying $x_i'(t)\to0$ as time goes to $+\infty$,
\end{itemize}
such that, for every integer $i$ in $\{1,\dots,q-1\}$,
\[
x_{i+1}(t)-x_i(t)\to +\infty 
\quad\text{as}\quad
t\to +\infty
\,,
\]
and such that, for every real quantity $x$ and every nonnegative time $t$, 
\[
\ttt(x,t) = m_0 + \sum_{i=1}^q \Bigl[\phi_i\bigl(x-x_i(t)\bigr)-m_{i-1}\Bigr]
\,.
\]
\label{item:def_standing_terrace_q_larger_than_one}
\end{enumerate}
\end{definition}
\begin{remark}
Once again item \cref{item:def_standing_terrace_q_equals_one} may have been omitted in this definition, since it boils down to item \cref{item:def_standing_terrace_q_larger_than_one} with $q$ equals $1$. 
\end{remark} 
The terminology ``propagating terrace'' was introduced by A. Ducrot, T. Giletti, and H. Matano in \cite{DucrotGiletti_existenceConvergencePropagatingTerrace_2014} (and subsequently used by several other authors \cite{Polacik_propagatingTerracesAsymptOneDimSym_2017,Polacik_propTerracesProofGibbonsConj_2016,GilettiRossi_pulsatingSolMultBistMultiStab_2019,MatanoPolacik_dynNonnegSolOneDimRDII_2020,Polacik_propagatingTerracesDynFrontLikeSolRDEquationsR_2020,GilettiMatano_existenceUniquenessPropTerr_2020,PauthierRademacherU_WeakStrongInteractKinks_2021}) to denote a stacked family of travelling fronts in a (scalar) reaction-diffusion equation. This led the author to keep the same terminology in the present context, and to introduce the term ``standing terrace'' for sake of homogeneity. Those terminologies are convenient to denote objects that would otherwise require a long description. They are also used in the companion papers \cite{Risler_globalBehaviourHyperbolicGradient_2017,Risler_globalBehaviourRadiallySymmetric_2017}.

The author hopes that these advantages balance some drawbacks of this terminological choice. Like the fact that the word ``terrace'' is probably more relevant in the scalar case $d$ equals $1$ (see the pictures in \cite{DucrotGiletti_existenceConvergencePropagatingTerrace_2014,Polacik_propagatingTerracesDynFrontLikeSolRDEquationsR_2020}) than in the more general case of systems considered here. Or the fact that the definitions above and in \cite{Risler_globalRelaxation_2016} are different from the original definition of \cite{DucrotGiletti_existenceConvergencePropagatingTerrace_2014} in that they involve not only the profiles of particular (standing or travelling) solutions, but also their positions (denoted above by $x_i(t)$). 

To finish, observe that in the present context terraces are only made of bistable solutions, by contrast with the propagating terraces introduced and used by the authors cited above; that (still in the present context) terraces are approached by solutions but are (in general) not solutions themselves; and that a (standing or propagating) terrace may be nothing but a single stable homogeneous equilibrium (if $q$ equals $0$) or may involve a single travelling front or a single inhomogeneous stationary solution (if $q$ equals $1$). 
\subsubsection{Energy of a bistable stationary solution and of a standing terrace}
\label{subsubsec:def_energy_stat_sol_stand_terrace}
\begin{definition}[energy of a bistable stationary solution]
If $x\mapsto u(x)$ is a bistable stationary solution connecting two points $m_-$ and $m_+$ of $\mmm$ (in this case $V(m_-)$ must equal $V(m_+)$), let us call \emph{energy of $u$}, and let us denote by $\eee[u]$, the quantity:
\[
\eee[u] = \int_{\rr}\Bigl(\frac{1}{2}\abs{u'(x)} ^2+V\bigl(u(x)\bigr)-V(m_\pm)\Bigr)\, dx
\,.
\]
Observe that this integral converges, since $u(x)$ approaches its limits $m_-$ and $m_+$ at both ends of space at an exponential rate. 
\end{definition}
\begin{definition}[energy of a standing terrace]
For every standing terrace $\ttt$ of bistable stationary solutions, let us call \emph{energy of $\ttt$}, and let us denote by $\eee[\ttt]$, the quantity defined (with the notation of the two definitions above) as follows:
\begin{enumerate}
\item if $q$ equals $0$, then $\eee[\ttt]=0$,
\item if $q$ equals $1$, then $\eee[\ttt]=\eee[\phi_1]$,
\item if $q$ is not smaller than $2$, then $\eee[\ttt]=\sum_{i=1}^q\eee[\phi_i]$.
\end{enumerate}
\end{definition}
\subsubsection{Bistable asymptotic pattern}
\label{subsubsec:def_asympt_patt}
\begin{figure}[!htbp]
\centering
\includegraphics[width=\textwidth]{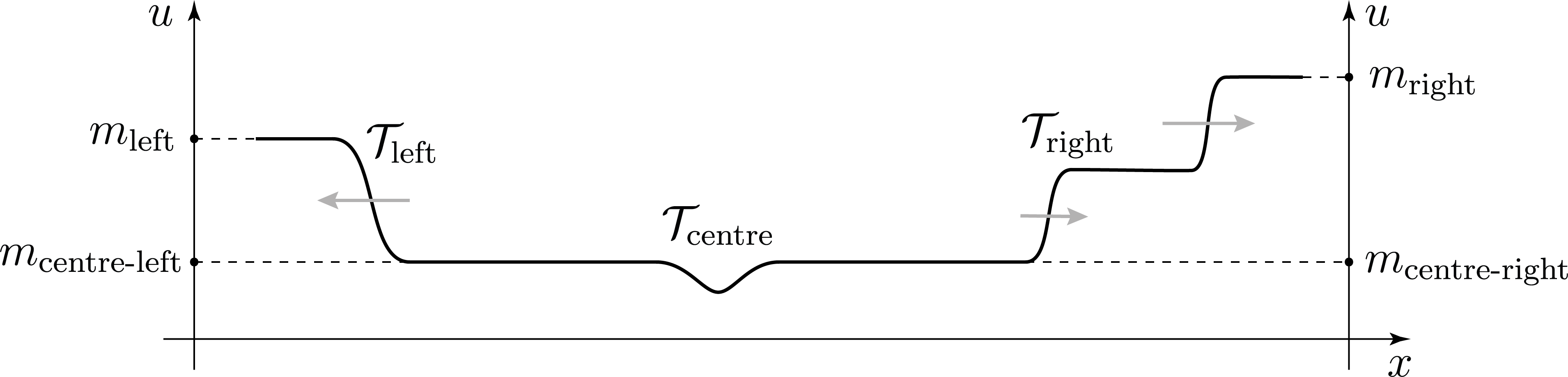}
\caption{Bistable asymptotic pattern.}
\label{fig:bist_asympt_pattern}
\end{figure}
\begin{definition}[bistable asymptotic pattern, \cref{fig:bist_asympt_pattern}]
\label{def:bistable_asympt_pattern}
Let $\mLeft$ and $\mRight$ be two points of $\mmm$. A function
\[
\ppp : \rr\times[0,+\infty)\to\rr^d,\quad (x,t)\mapsto \ppp(x,t)
\]
is called a \emph{bistable asymptotic pattern connecting $\mLeft$ to $\mRight$} if there exist:
\begin{itemize}
\item two points $\mLeftBehind$ and $\mRightBehind$ in $\mmm$, belonging to the same level set of $V$,
\item and a propagating terrace $\tttLeft$ of bistable fronts travelling to the left, connecting $\mLeft$ to $\mLeftBehind$, 
\item and a standing terrace $\tttCentre$ of bistable stationary solutions, connecting $\mLeftBehind$ to $\mRightBehind$, 
\item and a propagating terrace $\tttRight$ of bistable fronts travelling to the right, connecting $\mRightBehind$ to $\mRight$, 
\end{itemize} 
such that, for every real quantity $x$ and for every nonnegative time $t$, 
\[
\ppp(x,t) = \bigl[ \tttLeft(x,t) - \mLeftBehind \bigr] + \tttCentre(x,t) + \bigl[ \tttRight(x,t) - \mRightBehind \bigr]
\,.
\]
\end{definition}
The main result of this paper (\cref{thm:main} below) states that, under some generic assumptions on the potential $V$, every bistable solution approaches such a bistable asymptotic pattern. Results of the the same flavour have recently been obtained by H. Matano and P. Poláčik in the scalar case $d$ equals $1$ (under weaker assumptions otherwise, and by completely different methods specific to the scalar case),  \cite{MatanoPolacik_dynNonnegSolOneDimRDII_2020,Polacik_propagatingTerracesDynFrontLikeSolRDEquationsR_2020} (compare \cite[Figure~1]{MatanoPolacik_dynNonnegSolOneDimRDII_2020} and \cite[Figure~1.1]{Polacik_propagatingTerracesDynFrontLikeSolRDEquationsR_2020} with \cref{fig:bist_asympt_pattern} above). 
\subsection{Generic hypotheses on the potential}
\label{subsec:generic_assupmt_pot}
\subsubsection{Escape distance of a minimum point}
\label{subsubsec:Escape_dist}
\begin{notation}
For every $u$ in $\rr^d$, let $\sigma\bigl(D^2V(u)\bigr)$ denote the spectrum (the set of eigenvalues) of the Hessian matrix of $V$ at $u$, and let $\eigVmin(u)$ denote the minimum of this spectrum:
\begin{equation}
\label{def_eigVmin_of_u}
\eigVmin(u) = \min \Bigl(\sigma\bigl(D^2V(u)\bigr)\Bigr)
\,.
\end{equation}
\end{notation}
\begin{definition}[Escape distance of a nondegenerate minimum point]
For every $m$ in $\mmm$, let us call \emph{Escape distance of $m$}, and let us denote by $\dEsc(m)$, the supremum of the set
\begin{equation}
\label{set_for_definition_Escape_distance}
\Bigl\{\delta \in[0,1]: \text{ for all } u \text{ in } \rr^d \text{ satisfying } \abs{u-m}\le \delta, \quad\eigVmin(u) \ge\frac{1}{2} \eigVmin(m) \Bigr\}
\,.
\end{equation}
\end{definition}
Since the quantity $\eigVmin(u)$ varies continuously with $u$, this Escape distance $\dEsc(m)$ is positive (thus in $(0,1]$). In addition, for all $u$ in $\rr^d$ such that $\abs{u-m}$ is not larger than $\dEsc(m)$, the following inequality holds:
\begin{equation}
\label{property_dEsc}
\eigVmin(u) \ge\frac{1}{2} \eigVmin(m)
\,.
\end{equation}
This ``Escape'' distance will be used in two different ways. 
\begin{enumerate}
\item To ``track'' the position in space where a solution ``escapes'' a neighbourhood of $m$ (this position is called ``leading edge'' by Muratov in a framework including monostable invasion \cite{Muratov_globVarStructPropagation_2004,MuratovNovaga_globExpConvTW_2012,MuratovZhong_thresholdSymSol_2013}). The reason for the capital letter ``E'' in ``Esc'' is to make a difference with another escape distance ``$\desc(m)$'' that will be required later (see definition \vref{def_desc}).
\item To normalize the bistable stationary solutions with respect to translation invariance (in the next \namecref{subsubsec:breakup}).
\end{enumerate}
\subsubsection{Breakup of space translation invariance for stationary solutions and travelling fronts}
\label{subsubsec:breakup}
For every real quantity $c$, for every ordered pair $(m_-,m_+)$ of points of $\mmm$, and for every function $\phi$ in $\Phi_c(m_-,m_+)$, 
\[
\sup_{\xi \in\rr}\abs{\phi(\xi)-m_-}>\dEsc(m_-)
\quad\text{and}\quad
\sup_{\xi \in\rr}\abs{\phi(\xi)-m_+}>\dEsc(m_+)
\]
(assertion \cref{item:escape_spatial_asymptotics_tw} of \vref{lem:asympt_behav_tw_2}). See \cref{fig:esc_distance}. 
\begin{figure}[!htbp]
\centering
\includegraphics[width=0.6\textwidth]{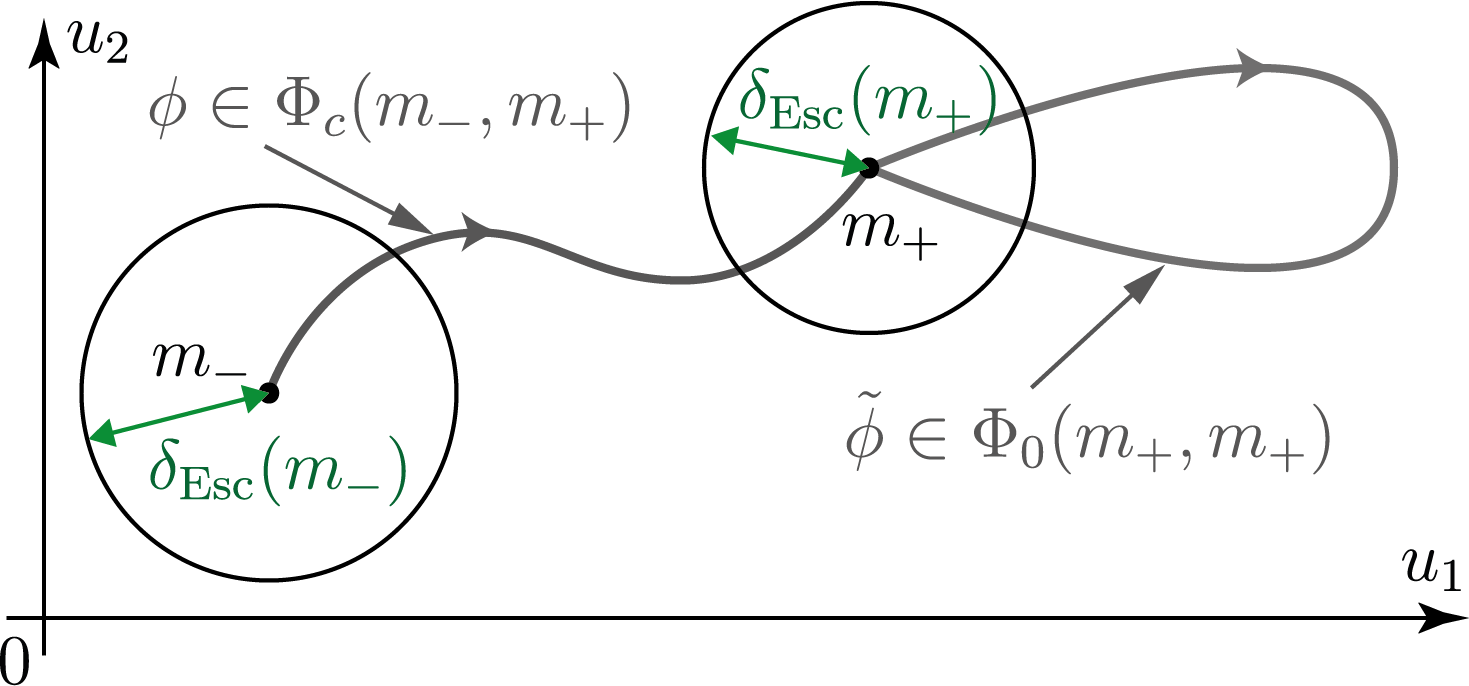}
\caption{Every function in $\Phi_{c}(m_-,m_+)$ escapes at least at distance $\dEsc(m_-)$ of $m_-$ and at distance $\dEsc(m_+)$ of $m_+$; every function in $\Phi_0(m_+,m_+)$ escapes at least at distance $\dEsc(m_+)$ of $m_+$.}
\label{fig:esc_distance}
\end{figure}
This provides a way to pick a representative among the family of all translates of $\phi$, by demanding that, say, the translate be exactly at distance $\dEsc(m_+)$ of his right-end limit $m_+$ at $\xi=0$, and closer for every positive $\xi$ (see \cref{fig:norm_stat}). 
\begin{figure}[!htbp]
\centering
\includegraphics[width=0.75\textwidth]{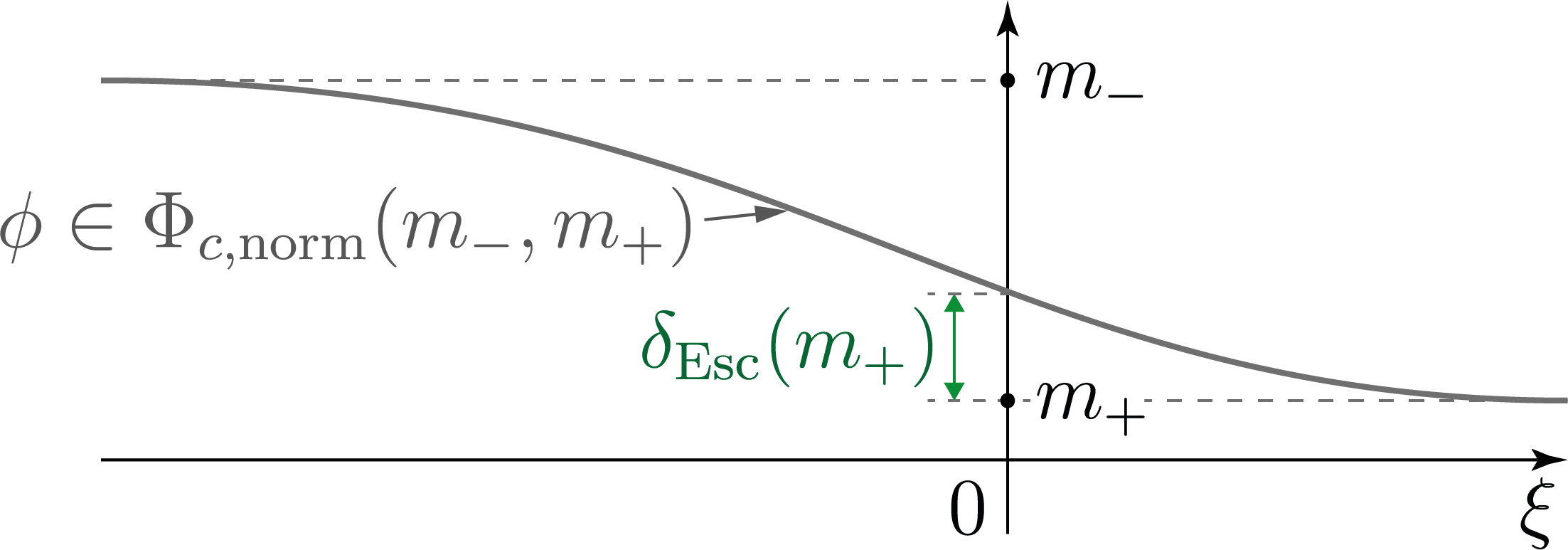}
\caption{Normalized (standing or travelling) bistable front.}
\label{fig:norm_stat}
\end{figure}
Here is a more formal definition. For $c$ in $\rr$ and $(m_-, m_+)$ in $\mmm^2$, let us introduce the set of \emph{normalized profiles of bistable fronts/stationary solutions connecting $m_-$ to $m_+$}:
\begin{equation}
\label{def_norm}
\begin{aligned}
\PhicNorm{c}(m_-,m_+) = \bigl\{ & \phi\in\Phi_{c}(m_-,m_+): \abs{\phi(0)-m_+} =\dEsc(m_+)\quad \text{and}\\
& \abs{\phi(\xi)-m_+} <\dEsc(m_+)\quad\text{for all}\quad \xi \text{ in }(0,+\infty)\bigr\} 
\,.
\end{aligned}
\end{equation}
And if $c$ is positive, let us introduce the set of \emph{normalized profiles of bounded waves travelling at the speed $c$ and ``invading'' $m_+$}:
\[
\begin{aligned}
\PhicNorm{c}(m_+) = \bigl\{ & \phi\in\Phi_{c}(m_+): \abs{\phi(0)-m_+} =\dEsc(m_+)\quad \text{and}\\
& 
\abs{\phi(\xi)-m_+} <\dEsc(m_+)\quad\text{for all}\quad \xi \text{ in } (0,+\infty)\bigr\} 
\,. 
\end{aligned}
\]
\subsubsection{Statement of the generic hypotheses}
\label{subsubsec:gen_hyp_V}
The main result stated in the next \cref{subsec:main_result} requires a number of generic hypotheses on the potential $V$, that will now be stated. A formal proof of the genericity of these hypotheses is provided in \cite{JolyRisler_genericTransversalityTravStandFrontsPulses_2023}. 
\begin{description}
\item[\hypOnlyBistLabel]\hypertarget{hypOnlyBist} Every nonconstant bounded wave travelling at a nonzero speed and invading a stable equilibrium (a point of $\mmm$) is a bistable travelling front. With symbols, for every $m_+$ in $\mmm$ and every positive quantity $c$, 
\[
\begin{aligned}
\Phi_c(m_+) &= \bigcup_{m_-\in\mmm} \Phi_c(m_-,m_+) \,,\\
\text{or equivalently}\quad
\PhicNorm{c}(m_+) &= \bigcup_{m_-\in\mmm} \PhicNorm{c}(m_-,m_+)
\,.
\end{aligned}
\]
\end{description}
In the next two hypotheses, the subscript ``disc'' refers to the concept of ``discontinuity'' or ``discreteness''. 
\begin{description}
\item[\hypDiscVelLabel]\hypertarget{hypDiscVel} For every $m_+$ in $\mmm$, the set:
\[
\bigl\{ c\text{ in }(0,+\infty) : \Phi_c(m_+)\not=\emptyset \bigr\} 
\]
has an empty interior. 
\item[\hypDiscFrontLabel]\hypertarget{hypDiscFront} For every point $m_+$ in $\mmm$ and every real quantity $c$, the set
\[
\bigl\{ \bigl(\phi(0),\phi'(0)\bigr) : \phi\in\PhicNorm{c}(m_+) \bigr\}
\]
is totally discontinuous --- if not empty --- in $\rr^{2d}$. That is, its connected components are singletons. Equivalently, the set $\PhicNorm{c}(m_+)$ is totally disconnected for the topology of compact convergence (uniform convergence on compact subsets of $\rr$).
\end{description}
The next hypothesis will be required to ensure that the number of travelling fronts involved in the asymptotic behaviour of a bistable solution is finite: 
\begin{description}
\item[\hypCriticalValuesLabel]\hypertarget{hypCriticalValues} The set of \emph{critical values} of $V$, that is the set
\[
\bigl\{V(u) : u\in\rr^d\text{ and }\nabla V(u)=0 \bigr\} 
\,,
\]
is finite. 
\end{description}
The next hypothesis will be required in order to apply the relaxation results of \cite{Risler_globalRelaxation_2016} (for the relaxation of the solution behind the travelling fronts).
\begin{description}
\item[\hypOnlyMinLabel]\hypertarget{hypOnlyMin} Every critical point of $V$ that belongs to the same level set as a point of $\mmm$ is itself in $\mmm$. 
\end{description}
In other words, for all points $u_1$ and $u_2$ in $\rr^d$,
\[
\Bigl[
\nabla V (u_1) = \nabla V (u_2) = 0 
\ \text{and}\ 
V(u_1) = V(u_2)
\ \text{and}\ 
D^2V(u_1)>0
\Bigr]
\implies
D^2V(u_2)>0
\,.
\]
Finally, let us us call \cref{hyp_gen} the union of these five generic hypotheses:
\begin{gather}
\tag{G}
\text{\textup{(\hyperlink{hypOnlyBist}{\hypOnlyBistRef})} and \textup{(\hyperlink{hypDiscVel}{\hypDiscVelRef})} and \textup{(\hyperlink{hypDiscFront}{\hypDiscFrontRef})} and \textup{(\hyperlink{hypCriticalValues}{\hypCriticalValuesRef})} and \textup{(\hyperlink{hypOnlyMin}{\hypOnlyMinRef})}}.
\label{hyp_gen}
\end{gather}
\subsection{Main result: global asymptotic behaviour}
\label{subsec:main_result}
\begin{theorem}[global asymptotic behaviour]
\label{thm:main}
Let $V$ denote a function in $\ccc^2(\rr^d,\rr)$ satisfying the coercivity hypothesis \cref{hyp_coerc} and the generic hypotheses \cref{hyp_gen}. Then, for every bistable solution $(x,t)\mapsto u(x,t)$ of system \cref{init_syst}, there exists a bistable asymptotic pattern $\ppp$ such that
\[
\sup_{x\in\rr}\abs{u(x,t)-\ppp(x,t)}\to 0
\quad\text{as}\quad
t\to + \infty
\,.
\]
\end{theorem}
\subsection{Additional results}
%
\subsubsection{Residual asymptotic energy}
%
Here is an additional conclusion to \cref{thm:main}. 
\begin{proposition}[residual asymptotic energy]
\label{prop:residual_asympt_en}
Assume that the assumptions of \cref{thm:main} hold. With the notation of this theorem, if $\tttCentre$ denotes the standing terrace involved in $\ppp$ and if $\valueOfVcentre$ denotes the value taken by $V$ at each of the two points of $\mmm$ connected by $\tttCentre$, then, for every small enough positive quantity $\varepsilon$, 
\[
\int_{-\varepsilon t}^{\varepsilon t} \Bigl( \frac{1}{2}u_x(x,t)^2 + V\bigl(u(x,t)\bigr) - \valueOfVcentre\Bigr) \, dx \to \eee[\tttCentre]
\quad\text{as}\quad
t\to+\infty
\,.
\]
\end{proposition}
The quantity $\eee[\tttCentre]$ may be called the \emph{residual asymptotic energy} of the solution. 
\subsubsection{Regularity of the correspondence between a solution and its asymptotic pattern}
\begin{notation}
Let
\[
\Xbist(\mmm) = \bigsqcup_{(m_-,m_+)\in\mmm^2} \Xbist(m_-,m_+)
\,.
\]
For $u_0$ in $\Xbist(\mmm)$, if $(x,t)\mapsto u(x,t)$ denotes the corresponding solution, using the notation of \cref{thm:main} and of \cref{def:bistable_asympt_pattern} (definition of a bistable asymptotic pattern), let: 
\begin{itemize}
\item $\qLeft$ denote the number of items involved in the left-propagating terrace $\tttLeft$, 
\item $\qRight$ denote the number of items involved in the right-propagating terrace $\tttRight$, 
\item $\cOneLeft$ denote the real quantity defined as 
\begin{itemize}
\item if $\qLeft$ equals $0$ then $\cOneLeft=0$,
\item if $\qLeft$ is not smaller than $1$ then $\cOneLeft$ is the speed of the ``first'' travelling front involved in the left-propagating terrace $\tttLeft$ (the one invading $m_-$), with the convention that $\cOneLeft$ is positive (the velocity of the front is $-\cOneLeft$), 
\end{itemize}
\item $\cOneRight$ denote the real quantity defined as 
\begin{itemize}
\item if $\qRight$ equals $0$ then $\cOneRight=0$,
\item if $\qRight$ is not smaller than $1$ then $\cOneRight$ is the (positive) speed of the ``first'' travelling front involved in the right-propagating terrace $\tttRight$ (the one invading $m_+$), 
\end{itemize}
\item $\valueOfVcentre$ denote the quantity $V(\mLeftBehind)=V(\mRightBehind)$,
\item $\eee$ denote the energy of the centre standing terrace $\tttCentre$.
\end{itemize}
This defines maps:
\[
\begin{aligned}
\qRight : \ & \Xbist(\mmm)\to \nn \,, \\
\qLeft : \ & \Xbist(\mmm)\to \nn \,, \\
\valueOfVcentre : \ & \Xbist(\mmm)\to \rr \,, \\
\eee : \ & \Xbist(\mmm)\to [0,+\infty) \,, \\
\cOneLeft : \ & \Xbist(\mmm)\to (0,+\infty) \,, \\
\cOneRight : \ & \Xbist(\mmm)\to (0,+\infty) \,. \\
\end{aligned}
\]
Finally, let 
\[
\XbistNoInv(\mmm) = \Xbist(\mmm) \cap \qLeft^{-1}(\{0\}) \cap \qRight^{-1}(\{0\}) 
\,.
\]
In this notation the subscript "no-inv" refers to the fact that these solutions are those for which none of the two stable equilibria at both ends of space is ``invaded'' by a travelling front. Note that for every solution in $\XbistNoInv(\mmm)$, the equilibria approached by the solution at both ends of spatial domain must belong to the same level set of $V$ (this follows from \cref{thm:main}). 
\end{notation}
The following proposition states some regularity properties (upper or lower semi-continuity) of the ``correspondences'' between a solution and its asymptotic pattern defined above. The underlying phenomenon is in essence nothing else than the standard upper semi-continuity with respect to initial condition of the asymptotic level set of a (descendent) gradient flow (of say a Morse function on a finite-dimensional manifold). All the assertions stand with respect to the topology induced by $\norm{\cdot}_X$ (for the domain spaces) and the topology induced by the usual distance on $\rr$ (for the arrival spaces).
\begin{proposition}[continuity properties of the asymptotic pattern with respect to initial condition]
The following assertions hold:
\begin{enumerate}
\item the maps $\cOneLeft$ and $\cOneRight$ are lower semi-continuous;
\item the restriction of the function $\eee$ to the set $\XbistNoInv(\mmm)$ is upper semi-continuous. 
\end{enumerate}
\end{proposition}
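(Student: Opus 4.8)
The plan is to derive both assertions from the single elementary principle evoked in the remark preceding the statement: along the (global) semi-flow $(S_t)_{t\ge0}$, which depends continuously on its initial condition in $X$ for every fixed time — so that, by the continuous embedding of $X$ into the space of bounded uniformly continuous functions $\rr\to\rr^n$, closeness in $X$ at a fixed time yields uniform closeness on every bounded interval of space — one follows a quantity that is, at least asymptotically in time, monotone, and one turns a finite-time comparison into an asymptotic one. For the first assertion this quantity is the position of the ``leading edge'' of the solution, which moves forward under invasion; for the second it is the energy stored in a slowly growing window of space, which decreases. In both cases \cref{thm:1} is used twice: to express the quantity under scrutiny as a genuine limit along the solution, and — together with the companion results of \cite{Risler_globCVTravFronts_2008,Risler_globalRelaxation_2016} — to promote a configuration reached at a finite time into a certificate for the asymptotic behaviour.

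\emph{Proof of assertion 1.} By the reflection $x\mapsto -x$ it is enough to treat $\cOneRight$. For a bistable solution $(x,t)\mapsto u(x,t)$ connecting $m_-$ to $m_+$, consider the rightmost escape point
\[
\bar x_+(t)=\sup\bigl\{x\in\rr:\abs{u(x,t)-m_+}\ge\dEsc\bigr\}
\,,
\]
which is finite at least for $t$ large. Since for $x\ge\varepsilon t$ the solution is uniformly close to $\tttRight$ — whose rightmost escape point moves at asymptotic speed $\cOneRight$ — whereas $\tttCenter$ is confined to a sublinearly growing region, \cref{thm:1} gives $\bar x_+(t)/t\to\cOneRight(u_0)$ as $t\to+\infty$. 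Fix $u_0$; if $\cOneRight(u_0)=0$ there is nothing to prove, so let $0<a<b<\cOneRight(u_0)$. For $T$ large the solution of $u_0$ satisfies $\bar x_+(T)\ge bT$ and, by \cref{thm:1} again, agrees uniformly on a window of prescribed length to the left of $\bar x_+(T)$ with a translate of the profile of the front invading $m_+$, hence carries there a definite amount of energy below the level $V(m_+)$. As $X_{\textrm{bist}}(m_-,m_+)$ is open and $S_T$ is continuous, every $v_0$ close enough to $u_0$ remains in $X_{\textrm{bist}}(m_-,m_+)$ and its solution reproduces this configuration at time $T$, up to an error small on a suitable bounded interval of space. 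A robustness-of-invasion estimate of the type established in \cite{Risler_globCVTravFronts_2008,Risler_globalRelaxation_2016} — to the effect that such a finite-time configuration forces $\liminf_{t\to+\infty}\bar x_+(t)/t\ge a$ — then yields $\cOneRight(v_0)\ge a$, which is the announced lower semi-continuity.

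\emph{Proof of assertion 2.} Work near $u_0\in X_{\textrm{bist, no-inv}}(\mmm)$, say $u_0\in X_{\textrm{bist}}(m_-,m_+)$; as recalled in the excerpt, $V(m_-)=V(m_+)$, a common value that we denote by $h$ and which is the same for all initial data close to $u_0$. For a bistable solution and a small $\varepsilon>0$, set
\[
\eee_\varepsilon(u_0,t)=\int_{-\varepsilon t}^{\varepsilon t}\Bigl(\frac{u_x(x,t)^2}{2}+V\bigl(u(x,t)\bigr)-h\Bigr)\,dx
\,.
\]
The complement of \cref{thm:1} gives $\eee_\varepsilon(u_0,t)\to\eee(u_0)$ as $t\to+\infty$; moreover, differentiating in $t$ and using the equation, $\frac{d}{dt}\eee_\varepsilon(u_0,t)=-\int_{-\varepsilon t}^{\varepsilon t}u_t^2\,dx+B(t)$, where $B(t)$ gathers boundary terms in $u,u_x,u_t$ at $x=\pm\varepsilon t$ which, in the no-invasion regime, are integrable in $t$ (beyond the sublinearly located center terrace the solution lies in the basin of the stable equilibria $m_\pm$ and relaxes exponentially). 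Hence $t\mapsto\eee_\varepsilon(u_0,t)$ is eventually almost non-increasing, so given $\eta>0$ one may pick $T$ large with $\eee_\varepsilon(u_0,T)\le\eee(u_0)+\eta$. Continuity of $S_T$ and the fact that the integral runs over the bounded interval $[-\varepsilon T,\varepsilon T]$ then give $\eee_\varepsilon(v_0,T)\le\eee(u_0)+2\eta$ for every $v_0$ close enough to $u_0$ in $X_{\textrm{bist, no-inv}}(\mmm)$. It remains to check that $\eee(v_0)=\lim_{t\to+\infty}\eee_\varepsilon(v_0,t)$ cannot exceed $\eee(u_0)+O(\eta)$: this is where the relaxation results of \cite{Risler_globalRelaxation_2016} enter, together with the fact that in the no-invasion regime no energy is radiated to infinity, so that the residual energy in the window cannot asymptotically exceed its value at time $T$ by more than an arbitrarily small amount. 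Letting $\eta\to0$ yields the upper semi-continuity.

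The main obstacle in both parts is precisely this last step, the passage from finite-time to asymptotic information. For assertion 1 one needs a quantitative robustness of invasion — a finite-time certificate (sufficient progress of the leading edge, with enough structure behind it) that already pins the asymptotic invasion speed from below, with a threshold uniform over a neighbourhood of initial data. For assertion 2 one needs a control of the energy budget guaranteeing that a perturbation cannot make the center window retain, asymptotically, more energy than the reference solution does — equivalently, that in the no-invasion regime the residual energy behaves, up to errors that are uniformly small near $u_0$, as a genuine Lyapunov quantity. Both inputs are of the kind supplied by the companion papers \cite{Risler_globCVTravFronts_2008,Risler_globalRelaxation_2016} and by the proof of \cref{thm:1}; once they are assembled the proposition follows, the remaining work being the bookkeeping of the finitely many regimes according to whether $\cOneRight$ (respectively $\cOneLeft$) vanishes and to the number of items in each terrace.
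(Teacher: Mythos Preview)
The paper's own proof is two sentences: assertion 1 is cited from \cite{Risler_globCVTravFronts_2008}, assertion 2 from \cite{Risler_globalRelaxation_2016}. No argument is given in the present paper.

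Your proposal is therefore not really a competing proof but an attempt to sketch what those companion papers establish. As a heuristic outline it is reasonable and in the right spirit --- leading-edge tracking plus continuity of the semi-flow for the speed, localized energy plus approximate monotonicity for the residual energy --- and you are candid that the two genuinely hard steps (a finite-time invasion certificate that is robust under perturbation of $u_0$; a uniform energy-budget control in the no-invasion regime) are imported from \cite{Risler_globCVTravFronts_2008,Risler_globalRelaxation_2016}. That is exactly the content of the paper's citation, so in substance you and the paper agree: the proposition is not proved here but in the companion works.

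One caution on your sketch of assertion 2: you restrict $v_0$ to $X_{\textrm{bist, no-inv}}(\mmm)$, which is correct for the statement, but this set need not be open, so the phrase ``for every $v_0$ close enough to $u_0$ in $X_{\textrm{bist, no-inv}}(\mmm)$'' hides a subtlety --- you must allow nearby $v_0$ that do invade, and then argue that the resulting $\eee(v_0)$ is still controlled (or note that semi-continuity is only asserted on the restricted domain, which is what the statement says). Also, the claim that the boundary terms $B(t)$ are integrable because the solution ``relaxes exponentially'' outside a sublinear window is precisely the nontrivial relaxation estimate proved in \cite{Risler_globalRelaxation_2016}; it is not a consequence of \cref{thm:1} alone.
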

\begin{proof}
The fist assertion is proved in \cite{Risler_globCVTravFronts_2008}. The second assertion is proved in \cite{Risler_globalRelaxation_2016}. 
\end{proof}
There are many other natural questions concerning the regularity of those correspondences. For instance it seems likely that the asymptotic potential level (the function $\valueOfVcentre[\cdot]$) is upper semi-continuous on $\Xbist(\mmm)$. And it would be nice to equip the ``space of asymptotic patterns'' with a topological structure ensuring similar semi-continuity properties for other (all?) features of the asymptotic pattern (speeds of the travelling fronts, values of the potential at minimum points connected by these travelling fronts, energies of the components of the centre standing terrace). This question goes beyond the scope of this paper; it is discussed in more details (in a more restricted case) in \cite{Risler_globalRelaxation_2016}. 
\subsection{Additional questions and extensions}
\subsubsection{Additional questions}
%
Besides the questions concerning the regularity of the correspondence between a solution and its asymptotic pattern mentioned above, here are some other natural questions (either that I have not been able to solve, or that are beyond the scope of this paper). 
\begin{itemize}
\item Does \cref{thm:main} hold without hypothesis \textup{(\hyperlink{hypDiscVel}{\hypDiscVelRef})}? The question is legitimate since this hypothesis is not required to prove convergence towards the ``first travelling fronts'' (those called $\phi_{1,+}$ and $\phi_{1,-}$ in the statement of \cref{thm:main}); indeed no hypothesis of this kind is made in \cite{Risler_globCVTravFronts_2008}. However, I have not been able to get rid of that hypothesis to prove convergence towards a travelling front ``following'' a previous one. For additional comments and details see \cref{subsec:cv_mean_inv_vel}.
\item Does \cref{thm:main} hold without hypothesis \textup{(\hyperlink{hypOnlyMin}{\hypOnlyMinRef})}? This question is discussed in \cite{Risler_globalRelaxation_2016}.
\item All the convergence results stated in this paper are purely qualitative (there is no quantitative estimate about the rate of convergence of a solution towards its asymptotic pattern). F. Béthuel, G. Orlandi and D. Smets \cite{BethuelOrlandi_slowMotion_2011} have obtained such quantitative estimates for the same gradient systems but in a different setting (they do not consider convergence towards travelling fronts but only relaxation towards stationary solutions connecting global minimum points of $V$). It would be interesting to see if the same approach can yield similar quantitative estimates but for general bistable solutions (and asymptotic behaviour involving travelling fronts). 
\item Concerning the existence of travelling fronts, the results obtained by Alikakos--Katzourakis and the author (\cite{AlikakosKatzourakis_heteroclinicTW_2011,Risler_globCVTravFronts_2008}) have been recently extended (by a calculus of variations approach) to the settings of degenerate stable states \cite{OliverBonafoux_heteroclinicTW1dParabSystDegenerate_2021} and parabolic systems in space dimension two \cite{OliverBonafoux_TWparabAllenCahn_2021,ChenChienHuang_varApproach3PhaseTWgradSyst_2021}. It would be interesting to know if global convergence results as those of the present paper can also be obtained in these settings. 
\end{itemize}
\subsubsection{Extensions}
%
Results similar to \cref{thm:main} hold in the following two cases that are considered in the companion papers \cite{Risler_globalBehaviourHyperbolicGradient_2017,Risler_globalBehaviourRadiallySymmetric_2017}. 
\begin{itemize}
\item Damped hyperbolic systems of the form
\[
\alpha u_{tt}+u_t=-\nabla V(u)+u_{xx}
\]
obtained by adding an inertial term $\alpha u_{tt}$ (where $\alpha$ is a positive non necessarily small quantity) to the parabolic system~\cref{init_syst} considered here. Note that this extension was already achieved by Gallay and Joly in the scalar case $d$ equals $1$ (for a bistable potential) using the gradient structure in every travelling frame \cref{dt_form_en_mf}, see \cite{GallayJoly_globStabDampedWaveBistable_2009}. 
\item Radially symmetric solutions in higher space dimension $\dSpace$, governed by systems of the form 
\[
u_t = -\nabla V(u)+ \frac{\dSpace-1}{r} u_r + u_{rr}
\]
where the nonnegative quantity $r$ denotes the radius (distance to the origin) in $\rr^{\dSpace}$. 
\end{itemize}
\subsection{Organization of the paper}
The next \cref{sec:preliminaries} is devoted to some preliminaries (existence of solutions, preliminary computations on spatially localized functionals, notation). 

Proof of \cref{thm:main} is essentially based on two propositions: \cref{prop:inv_cv,prop:relax}, together with the results of the companion paper \cite{Risler_globalRelaxation_2016}. \Cref{sec:inv_impl_cv,sec:no_inv_implies_relax} are devoted to these two propositions.

\Cref{prop:inv_cv} is the main step and is proved in \vref{sec:inv_impl_cv}. It is an extension of the main result of global convergence towards travelling fronts proved in the previous paper \cite{Risler_globCVTravFronts_2008}. As in \cite{Risler_globCVTravFronts_2008}, the proof is based on estimates about the time derivatives of energy functionals of the form \cref{form_en_mf}, with the exponential weight replaced by an integrable one. By contrast with the situation investigated in \cite{Risler_globCVTravFronts_2008}, the hypotheses of \cref{prop:inv_cv} cope with the case where the solution is not necessarily close to a single point of $\mmm$ in the whole domain ahead of the ``next'' travelling front, but may already behave in this domain as a propagating terrace of bistable travelling fronts. With respect to \cite{Risler_globCVTravFronts_2008}, the main difficulty is that an additional term appears in the time derivatives of the localized energy functionals in this case. For this reason, the relaxation scheme requires more care. 

\Cref{prop:relax} is easier and is proved in \vref{sec:no_inv_implies_relax}. It follows from \cref{prop:nonneg_asympt_en}, which states that, if a solution is close to stable homogeneous equilibria in large spatial domains on the left and on the right, and if these domains are not invaded (at a nonzero mean speed) from the ``centre'' area in between, then the energy of the solution (in a standing frame) between these two areas remains nonnegative. Again, the proof is based on estimates for localized energy functionals in a travelling frame --- this time at zero or small nonzero speed. 

The proof of \cref{thm:main} is completed in \vref{sec:proof_thm1}, by combining \cref{prop:inv_cv}, \cref{prop:relax}, and the results of \cite{Risler_globalRelaxation_2016}.

Finally, elementary properties of the profiles of travelling fronts are recalled in \vref{sec:prop_stand_trav}.
\section{Preliminaries}
\label{sec:preliminaries}
As everywhere else, let us introduce a function $V$ in $\ccc^2(\rr^d,\rr)$ satisfying the coercivity hypothesis \cref{hyp_coerc}. 
\subsection{Global existence of solutions and attracting ball for the semi-flow}
\label{subsec:glob_exist}
The following proposition follows from general existence results and the assumption \cref{hyp_coerc}. For the proof see for instance \cite{Risler_globCVTravFronts_2008,Risler_globalRelaxation_2016}. Its two conclusions are somehow redundant, since an upper bound for the $H^1_\text{ul}$-norm immediately yields an upper bound for the $L^\infty$-norm, and the converse is also true since the flow is regularizing. The reason for this redundant statement is that both quantities $\Rattinfty$ and $\RattX$ used to express these bounds will be explicitly used at some places. 
\begin{proposition}[global existence of solutions and attracting ball]
\label{prop:attr_ball}
For every function $u_0$ in $X$, system \cref{init_syst} has a unique globally defined solution $t\mapsto S_t u_0$ in $\ccc^0([0,+\infty),X)$ with initial condition $u_0$. In addition, there exist positive quantities $\Rattinfty$ and $\RattX$ (radius of attracting ball for the $L^\infty$-norm and the $H^1_\textup{\text{ul}}$-norm, respectively), depending only on the potential $V$, 
such that, for every large enough positive time $t$, 
\[
\norm{x\mapsto (S_t u_0)(x)}_{\Linfty} \le \Rattinfty 
\qquad\text{and}\qquad
\norm{x\mapsto (S_t u_0)(x)}_{X}  \le \RattX 
\,.
\]
\end{proposition}
In addition, system~\cref{init_syst} has smoothing properties (Henry \cite{Henry_geomSemilinParab_1981}). Due to these properties, since $V$ is of class $\ccc^2$ and thus the nonlinearity $\nabla V$ is of class $\ccc^1$, for every quantity $\alpha$ in the interval $(0,1)$, every solution $t\mapsto S_t u_0$ in $\ccc^0([0,+\infty),X)$ actually belongs to
\[
\ccc^0\left((0,+\infty),\cccb{2,\alpha}\right)\cap \ccc^1\left((0,+\infty),\cccb{0,\alpha}\right),
\]
and, for every positive quantity $\varepsilon$, the quantities
\begin{equation}
\label{bound_u_ut_ck}
\sup_{t\ge\varepsilon}\norm{S_t u_0}_{\cccb{2,\alpha}}
\quad\text{and}\quad
\sup_{t\ge\varepsilon}\norm{\frac{d(S_t u_0)}{dt}(t)}_{\cccb{0,\alpha}}
\end{equation}
are finite. 
\subsection{Asymptotic compactness of solutions}
\label{subsec:compactness}
The following compactness property will be called upon several times (at the ends of the proofs of \cref{prop:inv_cv,prop:relax}).
\begin{lemma}[asymptotic compactness]
\label{lem:compactness}
For every solution $(x,t)\mapsto u(x,t)$ of system \cref{init_syst}, and for every sequence $(x_n,t_n)_{n\in\nn}$ in $\rr\times[0,+\infty)$ such that $t_n\to+\infty$ as $n\to+\infty$, there exists a entire solution $\widebar{u}$ of system \cref{init_syst} in 
\[
\ccc^0\left(\rr,\cccb{2}\right)\cap \ccc^1\left(\rr,\cccb{0}\right)
\,,
\]
such that, up to replacing the sequence $(x_n,t_n)_{n\in\nn}$ by a subsequence, 
\begin{equation}
\label{compactness}
D^{2,1}u(x_n+\cdot,t_n+\cdot)\to D^{2,1}\widebar{u}
\quad\text{as}\quad
n\to+\infty
\,,
\end{equation}
uniformly on every compact subset of $\rr^2$, where the symbol $D^{2,1}v$ stands for $(v,v_x,v_{xx},v_t)$ (for $v$ equal to $u$ or $\widebar{u}$). 
\end{lemma}
\begin{proof}
See \cite[1963]{MatanoPolacik_entireSolutionBistableParabEquTwoCollidingPulses_2017} or \cite[proof of \GlobalRelaxationLemAsymptCompactness]{Risler_globalRelaxation_2016}. 
\end{proof}
\subsection{Time derivative of (localized) energy and \texorpdfstring{$L^2$}{L2}-norm of a solution in a travelling frame}
\label{subsec:1rst_ord}
Let $(x,t)\mapsto u(x,t)$ be a solution of system \cref{init_syst}, and let $m$ be a point of $\mmm$. Let $c$ be a real quantity, and let us consider the same solution viewed in a frame travelling at the speed $c$, that is the function $(\xi,t)\mapsto v(\xi,t)$ defined as
\[
v(\xi,t) = u(x,t)
\quad\text{for}\quad
x=ct+\xi
\,.
\]
As mentioned in introduction, this function is a solution of the system
\begin{equation}
\label{syst_mf_bis}
v_t - cv_\xi = -\nabla V(v) + v_{\xi\xi}
\,,
\end{equation}
that can be formally rewritten as the (descendent) gradient of the following energy (Lagrangian, action) functional:
\[
\int_{\rr} e^{c\xi} \Bigl(\frac{1}{2}v_\xi(\xi,t)^2 + V\bigl(v(\xi,t)\bigr)-V(m)\Bigr)\, d\xi
\,.
\]
The key ingredients of the proofs rely on appropriate combinations of this functional with the other most natural functional to consider, namely the $L^2$-norm of the distance to $m$ (with the same exponential weight):
\[
\int_{\rr} e^{c\xi} \frac{1}{2}\bigl(v(\xi,t)-m\bigr)^2 \, d\xi
\,.
\]
To simplify the presentation, let us assume (only in this \namecref{subsec:1rst_ord}) that
\[
m=0_{\rr^d}
\quad\text{and}\quad
V(m) = V(0_{\rr^d}) = 0
\,.
\]
In order to ensure the convergence of those last two integrals, it is necessary to localize the integrands. Let $(\xi,t)\mapsto \psi(\xi,t)$ be a function defined on $\rr\times[0,+\infty)$ and such that, for all $t$ in $(0,+\infty)$, the function $\xi\mapsto \psi(\xi,t)$ belongs to $W^{2,1}(\rr,\rr)$ and its time derivative $\xi\mapsto \psi_t(\xi,t)$ is defined and belongs to $L^1(\rr,\rr)$. 
Then, the time derivatives of the two aforementioned functionals --- localized by the ``weight'' function $\psi (\xi,t)$ --- read:
\begin{equation}
\label{ddt_loc_en}
\frac{d}{dt}\int_{\rr} \psi\Bigl( \frac{1}{2}v_\xi^2 + V(v)\Bigr) \, d\xi = 
\int_{\rr} \biggl[ - \psi\, v_t^2 + \psi_t \Bigl( \frac{1}{2}v_\xi^2 + V(v)\Bigr) + (c\psi - \psi_\xi)\, v_\xi \cdot v_t \biggr] \, d\xi
\,,
\end{equation}
and
\begin{equation}
\label{ddt_loc_L2}
\frac{d}{dt}\int_{\rr} \psi \frac{1}{2}v^2 \, d\xi = 
\int_{\rr} \biggl[ \psi\bigl( - v\cdot \nabla V(v) -  v_\xi^2 \bigr) + \frac{1}{2}(\psi_t + \psi_{\xi\xi} - c\psi_\xi)v^2 \biggr] \, d\xi
\,.
\end{equation}
Here are some basic observations about these expressions. 
\begin{itemize}
\item The time derivative of the (localized) energy is the sum of a (nonpositive) ``dissipation'' term and two additional ``flux'' terms.
\item The time derivative of the (localized) $L^2$-norm is similarly made of two ``main'' terms and an additional ``flux'' term. Among the two main terms, the second is nonpositive, and so is the first if $\abs{v}$ is small. 
\item The flux terms involving $\psi_t$ are small if $\psi$ varies slowly with respect to time. 
\item The other flux term in the time derivative of energy is small if the quantity $c\psi - \psi_\xi$ is small, that is if $\psi(\xi,t)$ behaves nearly as $\exp(c\xi)$ (up to a positive multiplicative constant). But this cannot hold for all $\xi$ in $\rr$ since $\xi\mapsto\psi(\xi,t)$ must be in $L^1(\rr,\rr)$. As a consequence, it will not be possible to avoid that this second flux term be weighted by a ``non small'' weight of the order of $c\psi$, at least somewhere in space.
\item Hopefully, the remaining flux term in the time derivative of the $L^2$-norm is ``nicer'', in the sense that it is small under any of two conditions instead of a single one:
\begin{itemize}
\item either if $\psi(\xi,t)$ is close to behave like $\exp(c\xi)$ (up to a positive multiplicative constant, like the previous flux term),
\item or if $\psi(\xi,t)$ varies slowly with $\xi$. 
\end{itemize}
\item Finally, the ``tricky'' flux term in the time derivative of energy can be balanced by the other terms of those expressions by considering a linear combination (with positive coefficients) of energy and $L^2$-norm with a large enough coefficient of the $L^2$-norm with respect to the coefficient of energy. 
\end{itemize}
These observations will be put in practice several times along the following pages. 
\subsection{Miscellanea}
\label{subsec:misc}
\subsubsection{Second order estimates for the potential around a minimum point}
\begin{lemma}[second order estimates for the potential around a minimum point]
\label{lem:estim_from_def_escape}
For every $m$ in $\mmm$ and every $u$ in $\rr^d$ satisfying $\abs{u-m} \le\dEsc(m)$, the following estimates hold:
\begin{align}
\label{posit_pot_around_loc_min}
V(u)-V(m) &\ge \frac{\eigVmin(m)}{4} (u-m)^2 \,, \\
\text{and}\qquad
\label{v_nablaV_controls_square_around_loc_min}
(u-m)\cdot \nabla V(u) &\ge \frac{\eigVmin(m)}{2} (u-m)^2 \,, \\
\text{and}\qquad
\label{v_nablaV_controls_pot_around_loc_min}
(u-m)\cdot \nabla V(u) &\ge V(u) -V(m)
\,.
\end{align}
\end{lemma}
\begin{proof}
The three inequalities follow from inequality \vref{property_dEsc} ensured by the definition of $\dEsc(m)$ and from three variants of Taylor's Theorem with Lagrange remainder applied to the function $f$ defined on $[0,1]$ by $f(\theta)=V\bigl(m+\theta (u-m)\bigr)$ (see \cite[\GlobalRelaxationLemEstimFromEscDist]{Risler_globalRelaxation_2016}).
\end{proof}
\subsubsection{Lower quadratic hulls of the potential at minimum points}
\label{subsubsec:low_quad_hull}
For the computations carried in \cref{subsec:def_fire_zero} below, it will be convenient to introduce the quantity $\qLowHull$ defined as the minimum of the convexities of the negative quadratic hulls of $V$ at the points of $\mmm$ (see \cref{fig:low_quad_hull}). 
\begin{figure}[!htbp]
\centering
\includegraphics[width=0.5\textwidth]{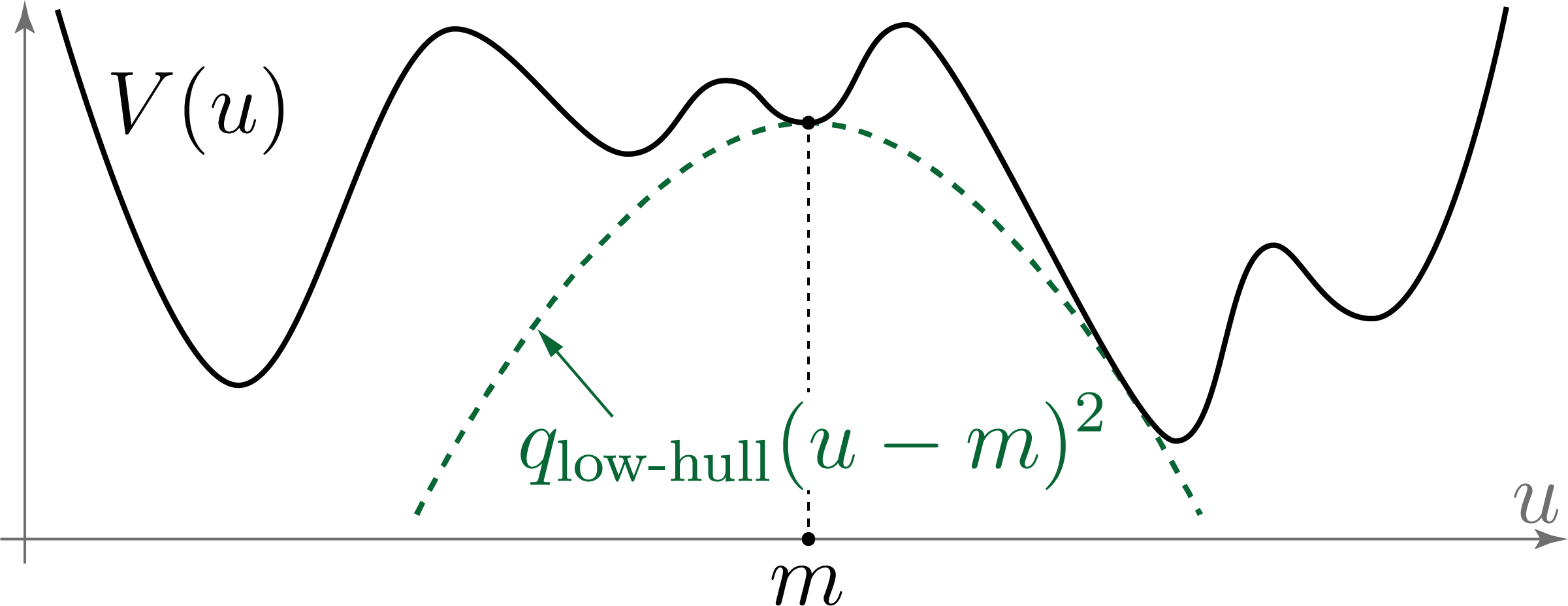}
\caption{Lower quadratic hull of the potential at a minimum point (definition of the quantity $\qLowHull$).}
\label{fig:low_quad_hull}
\end{figure}
With symbols:
\[
\qLowHull = \min_{m\in\mmm}\ \inf_{u\in\rr^d\setminus \{m\}}\ \frac{V(u)-V(m)}{(u-m)^2}
\]
(a similar quantity was defined in \cite{Risler_globalRelaxation_2016}). 
This definition ensures (as displayed by \cref{fig:low_quad_hull}) that, for every point $m$ of $\mmm$ and for every $u$ in $\rr^d$,
\[
V(u) -V(m) \ge \qLowHull (u-m)^2
\,.
\]
Let us introduce the following quantity (it will be used to define the \emph{coefficient of the energy} in the firewall functions defined in \vref{subsec:def_fire_zero,subsubsec:def_firewall}):
\[
\coeffEnZero=\frac{1}{\max(1, - 4\, \qLowHull)}
\,.
\]
It follows from this definition that, for every $m$ in $\mmm$ and for all $u$ in $\rr^d$,
\begin{equation}
\label{def_weight_en}
\coeffEnZero \, \bigl( V(u) - V(m) \bigr) +  \frac{1}{4}(u-m)^2\ge 0
\,.
\end{equation}
\section{Invasion implies convergence}
\label{sec:inv_impl_cv}
\subsection{Definitions and hypotheses}
\label{subsec:inv_cv_def_hyp}
As everywhere else, let us consider a function $V$ in $\ccc^2(\rr^d,\rr)$ satisfying the coercivity hypothesis \cref{hyp_coerc}. Let us consider a point $m$ in $\mmm$, a function (initial condition) $u_0$ in $X$, and the solution $(x,t)\mapsto u(x,t)$ of system \cref{init_syst} for this initial condition. 

It will not be assumed that this solution is bistable, but instead, as stated by the next hypothesis \textup{(\hyperlink{hypHomRight}{\hypHomRightRef})}, that there exists a growing interval, travelling at a positive speed, where the solution is close to $m$ (the subscript ``hom'' in the definitions below refers to this ``homogeneous'' area), see \cref{fig:inv_cv}.
\begin{figure}[!htbp]
\centering
\includegraphics[width=.8\textwidth]{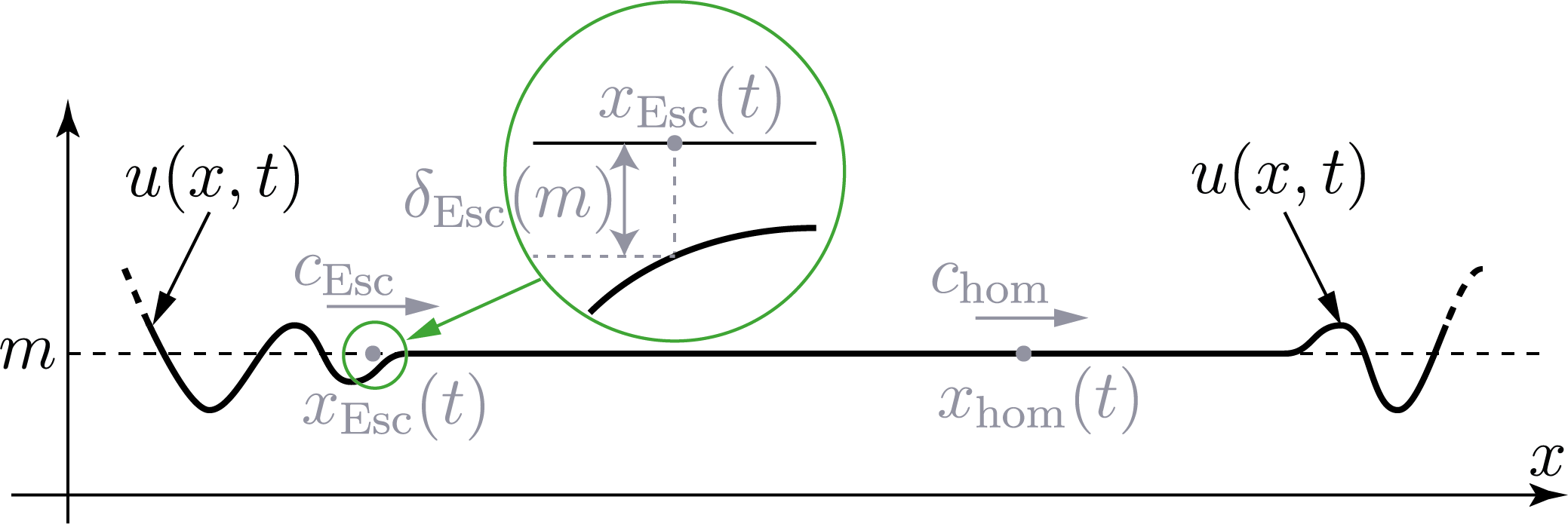}
\caption{Illustration of hypotheses \textup{(\hypHomRightRef)} and \textup{(\hypInvRef)}.}
\label{fig:inv_cv}
\end{figure}
\begin{description}
\item[\hypHomRightLabel]\hypertarget{hypHomRight} There exists a positive quantity $\cHom$ and a $\ccc^1$-function 
\[
\xHom : [0,+\infty)\to \rr\,,
\quad\text{satisfying}\quad
\xHom'(t) \to \cHom
\quad\text{as}\quad
t\to + \infty
\,,
\]
such that, for every positive quantity $L$, 
\[
\sup_{\xi\in[-L,L]} \abs{u\bigl( \xHom(t) + \xi, t\bigr) - m }\to 0
\quad\text{as}\quad
t\to + \infty
\,.
\]
\end{description}
For every $t$ in $[0+\infty)$, let us denote by $\xEsc(t)$ the supremum of the set:
\[
\Bigl\{ x\in \bigl(-\infty,\xHom(t)\bigr] : \abs{u(x,t) - m} = \dEsc(m) \Bigr\}
\,,
\]
with the convention that $\xEsc(t)$ equals $-\infty$ if this set is empty. In other words, $\xEsc(t)$ is the first point at the left of $\xHom(t)$ where the solution ``escapes'' at the distance $\dEsc(m)$ from the stable homogeneous equilibrium $m$. This point will be referred to as the ``Escape point'' (with an upper-case ``E'', by contrast with another ``escape point'' that will be introduced later, with a lower-case ``e'' and a slightly different definition). Let us consider the upper limit of the mean speeds between $0$ and $t$ of this Escape point:
\[
\cEsc = \limsup_{t\to +\infty} \frac{\xEsc(t)}{t}
\,,
\]
and let us make the following hypothesis, stating that the area around $\xHom(t)$ where the solution is close to $m$ is ``invaded'' from the left at a nonzero (mean) speed.
\begin{description}
\item[\hypInvLabel]\hypertarget{hypInv} The quantity $\cEsc$ is positive. 
\end{description}
\subsection{Statement}
\label{subsec:inv_cv_stat}
The aim of \cref{sec:inv_impl_cv} is to prove the following proposition, which is the main step in the proof of \cref{thm:main}. The proposition is illustrated by \cref{fig:inv_cv_bis}.
\begin{figure}[!htbp]
\centering
\includegraphics[width=\textwidth]{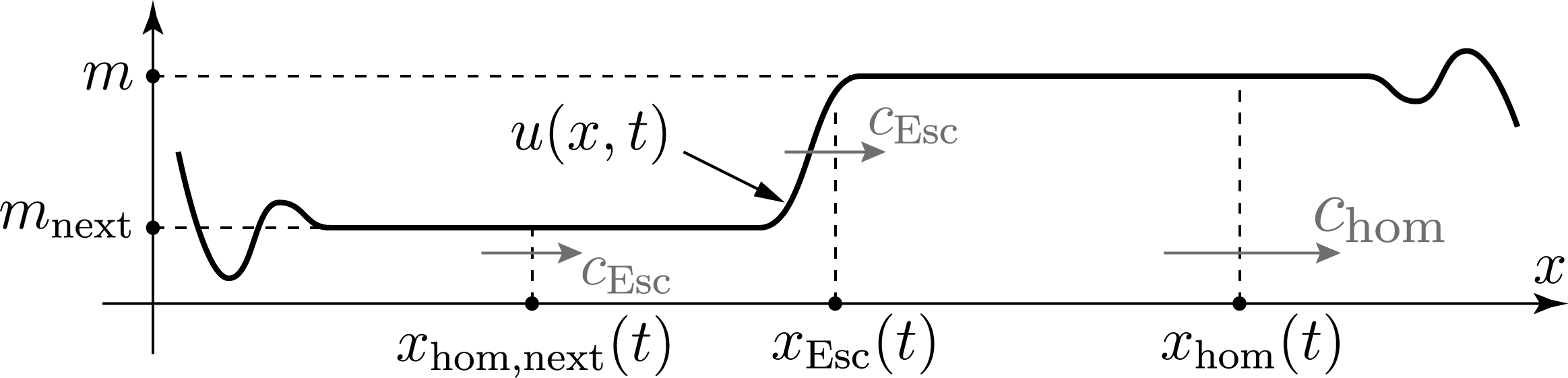}
\caption{Illustration of \cref{prop:inv_cv}.}
\label{fig:inv_cv_bis}
\end{figure}
\begin{proposition}[invasion implies convergence]
\label{prop:inv_cv}
Assume that $V$ satisfies the coercivity hypothesis \cref{hyp_coerc} and the generic hypotheses \textup{(\hyperlink{hypOnlyBist}{\hypOnlyBistRef})} and \textup{(\hyperlink{hypDiscVel}{\hypDiscVelRef})} and \textup{(\hyperlink{hypDiscFront}{\hypDiscFrontRef})}, and, keeping the definitions and notation above, let us assume that for the solution under consideration hypotheses \textup{(\hyperlink{hypHomRight}{\hypHomRightRef})} and \textup{(\hyperlink{hypInv}{\hypInvRef})} hold. Then the following conclusions hold. 
\begin{enumerate}
\item The function $t\mapsto \xEsc(t)$ is of class $\ccc^1$ as soon as $t$ is large enough and 
\[
\xEsc'(t)\to \cEsc
\quad\text{as}\quad
t\to +\infty
\,.
\]
\item There exist:
\begin{itemize}
\item a point $\mNext$ in $\mmm$ satisfying $V(\mNext)<V(m)$,
\item a profile of travelling front $\phi$ in $\PhicNorm{\cEsc}(\mNext,m)$,
\item a $\ccc^1$-function $[0,+\infty)\to\rr$, $t\mapsto \xHomNext(t)$,
\end{itemize}
such that, as time goes to $+\infty$, the following limits hold:
\[
\xEsc(t)-\xHomNext(t)\to +\infty
\quad\text{and}\quad
\xHomNext'(t)\to \cEsc
\]
and
\[
\sup_{x\in[\xHomNext(t) \, ,\ \xHom(t)]} \abs{u(x,t) - \phi\bigl( x-\xEsc(t) \bigr)} \to 0 
\]
and, for every positive quantity $L$, 
\[
\sup_{\xi\in[-L,L]}\abs{u\bigl( \xHomNext(t)+\xi,t\bigr) - \mNext}\to 0 
\,.
\]
\end{enumerate}
\end{proposition}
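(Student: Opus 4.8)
The plan is to adapt the strategy of \cite{Risler_globCVTravFronts_2008}, where global convergence to a single bistable front was obtained under the "invasion" hypothesis, to the present situation where the region ahead of the "next" front need not be close to a single minimum point $m$ but may itself be a propagating terrace. The backbone of the argument is the monotonicity (up to small flux terms) of localized energy functionals of the form \cref{form_en_mf}, but with the exponential weight $e^{cy}$ replaced by an \emph{integrable} weight that decays like $e^{cy}$ only over a finite window and is truncated far to the left; one combines this with the localized $L^2$-functional as in \cref{ddt_loc_en,ddt_loc_L2} to control the "tricky" flux term. First I would set $c=\cEsc$ and work in the frame $y=x-ct$. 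Using hypothesis \hypInv (that $\cEsc>0$) together with \hypHomRight, one shows that the mean-speed $\limsup$ defining $\cEsc$ is in fact a genuine limit and that $\xEsc$ is eventually $\ccc^1$ with $\xEsc'(t)\to\cEsc$: this is the standard "relaxation in a travelling frame forces a definite speed" argument, exploiting that the weighted energy between the Escape point and $\xHom(t)$ is bounded below (the solution is close to $m$, a minimum, at the right end) and decreasing up to controllable errors, so its decay rate pins down the speed.

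Next I would extract, along a sequence of times $t_k\to+\infty$, a limit profile of the solution recentered at $\xEsc(t_k)$. The uniform bounds \cref{bound_u_ut_ck} from parabolic smoothing give compactness in $\ccc^1_{\mathrm{loc}}$; the limit $\phi$ is an entire solution of \cref{syst_trav_front} with speed $\cEsc$, it sits exactly at distance $\dEsc$ from $m$ at $\xi=0$ and within $\dEsc$ for $\xi>0$ (by definition of the Escape point), hence $\phi(\xi)\to m$ as $\xi\to+\infty$ since near $m$ the potential is strictly convex (inequalities \cref{property_r_Esc}) and the damped-oscillator flow \cref{syst_trav_front} at nonzero speed relaxes monotonically in energy to the equilibrium — so $\phi$ extends continuously to $\Phi_{\cEsc,\mathrm{norm}}(\mNext,m)$ for some critical point $\mNext$, which by \hypOnlyBist is a minimum point, and since $\cEsc>0$ one has $V(\mNext)<V(m)$. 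The delicate part is to upgrade this subsequential, local convergence to the full uniform convergence on $[\xHomNext(t),\xHom(t)]$ claimed in the statement, and to show the limit profile does not depend on the subsequence. Here one uses \hypDiscFront: the set of possible normalized limit profiles (recorded by $(\phi(0),\phi'(0))\in\rr^{2n}$) is totally disconnected, while the map $t\mapsto(\,\cdot\,)$ tracing the recentered solution is continuous, so its $\omega$-limit set in $\rr^{2n}$ is connected, hence a single point — forcing convergence to one profile. The definition of $\xHomNext(t)$ and the limit $\xEsc(t)-\xHomNext(t)\to+\infty$, $\xHomNext'(t)\to\cEsc$ then come from tracking the leftmost point where the recentered solution is still $\dEsc$-close to $\mNext$, using again that the weighted energy in the window behind $\xEsc(t)$ is spent converging to the front, so the "new homogeneous zone" around $\mNext$ must widen.

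The main obstacle — and the point where this proof genuinely goes beyond \cite{Risler_globCVTravFronts_2008} — is the appearance of an \emph{extra term} in the time derivative of the localized energy, coming from the fact that the solution to the right of the front is no longer nearly constant (it may carry a nontrivial terrace of faster bistable fronts, each contributing positive energy flux through the right end of the truncation window). I expect the bulk of the work to be in designing the weight function $\psi(y,t)$ and the energy/$L^2$ combination carefully enough that this extra flux is dominated: the window must follow $\xEsc(t)$, its right edge must stay in a region where the "ahead" terrace is already well-separated (using an induction or a bootstrap on the speeds, since faster fronts have already been peeled off by the time the $\cEsc$-front is analyzed), and the weighting of the $L^2$-part relative to the energy part must be large enough — à la \cref{def_weight_en} — to absorb the $c\psi$-sized residual flux while keeping the functional bounded below. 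Closing this estimate, and checking the book-keeping of which minimum point and which speed label each layer, is the technical heart; once the localized functional is shown to be (eventually) nonincreasing up to integrable-in-time errors, convergence of the solution to the front in the moving window follows by the usual argument (a descending bounded-below quantity has vanishing derivative along a sequence, giving $v_t\to0$ in $L^2_{\mathrm{loc}}$, hence a stationary limit in the travelling frame, i.e. a front profile).
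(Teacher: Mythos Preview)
Your overall architecture is close to the paper's, and the use of \hypDiscFront for uniqueness of the limit profile is exactly right. But there are two genuine gaps that your plan does not address, and one point where your intuition about the extra flux term is not how the argument actually closes.

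\textbf{First gap: a regularized escape point.} You work throughout with $\xEsc(t)$, but this function has no a priori continuity --- it can jump back and forth. The paper introduces a second escape point $\xesc(t)$ (defined via a ``firewall'' functional $\fff_0$ in the laboratory frame and a ``no-escape hull''), whose key feature is that its growth rate is bounded by a fixed constant $\cnoesc$ depending only on $V$. All the relaxation estimates in the travelling frame are then written around $\xesc$, not $\xEsc$; only at the very end, once one knows the solution is locally close to a front profile, does one recover regularity of $\xEsc$ by the implicit function theorem. Without this regularization, you cannot make sense of the time intervals on which the relaxation scheme is applied (you need to find intervals where the escape point stays to the right of, and ends near, the origin of the moving frame --- this requires control on its speed).

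\textbf{Second gap: the role of \hypDiscVel.} Your plan says ``one shows that the $\limsup$ defining $\cEsc$ is in fact a genuine limit'' by a ``standard'' energy monotonicity argument. This is the heart of the matter and is \emph{not} standard here. The paper proves it by contradiction: assuming the liminf and limsup of the mean speed of $\xesc$ differ, one picks a speed $c$ strictly between them with $\Phi_c(m)=\emptyset$ --- and it is precisely \hypDiscVel that guarantees such a $c$ exists. At that speed there is no stationary profile in the moving frame, so a \emph{uniform} lower bound on the dissipation around the escape point holds; one then constructs (via a careful choice of time intervals, Lemmas on ``large excursions and returns'') a relaxation window on which this forces the energy to drop by an arbitrarily large amount, contradicting the lower bound on the energy. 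You invoke none of this, and the paper's author notes explicitly (\cref{subsec:questions}) that he does not know how to dispense with \hypDiscVel for fronts ``following'' a previous one --- which is exactly the setting of this proposition. So ``set $c=\cEsc$ and proceed'' skips the step that actually requires the most work.

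\textbf{On the extra flux term.} Your suggestion to control the flux from the terrace ahead by ``induction on faster fronts already peeled off'' is not what happens. The paper instead designs the weight $\chi(y,s)$ with a cut-off at a point $\initCut+\cCut s$ moving slowly to the right, and shows (Lemma on $\Gback$ and $\Gfront$) that the ``front'' contribution is bounded by $\exp\bigl(-\kappa\,\yHom(0)\bigr)$ times a factor depending only on the \emph{length} of the relaxation interval. One then chooses the relaxation interval of a fixed length $\sFin$ first, and only afterwards takes the starting time $\tInit$ large enough (so $\yHom(0)$ is large enough) to kill this term. No inductive structure on the terrace is used; the only information needed from \hypHomRight is that $\xHom(t)-\xesc(t)\to+\infty$. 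Your bootstrap idea would work if the proposition were only ever applied after all faster fronts are known to converge, but the proposition is stated (and proved) under the weaker hypothesis \hypHomRight alone.
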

In this statement, the very last conclusion is actually redundant with the previous one. The reason why this last conclusion is stated this way is that it emphasizes the fact that a property similar to \textup{(\hyperlink{hypHomRight}{\hypHomRightRef})} is recovered ``behind'' the travelling front. As can be expected this will be used to prove \cref{thm:main} by re-applying \cref{prop:inv_cv} as many times as required (to the left and to the right), as long as ``invasion of the equilibria behind the last front'' occurs. 
\subsection{Set-up for the proof, 1}
\label{subsec:inv_cv_set_pf}
Let us keep the notation and assumptions of \cref{subsec:inv_cv_def_hyp}, and let us assume that the hypotheses \cref{hyp_coerc} and \textup{(\hyperlink{hypOnlyBist}{\hypOnlyBistRef})} and \textup{(\hyperlink{hypDiscVel}{\hypDiscVelRef})} and \textup{(\hyperlink{hypDiscFront}{\hypDiscFrontRef})} and \textup{(\hyperlink{hypHomRight}{\hypHomRightRef})} and \textup{(\hyperlink{hypInv}{\hypInvRef})} of \cref{prop:inv_cv} hold. 
\subsubsection{Assumptions holding up to changing the origin of time}
Without loss of generality, up to changing the origin of time, it may be assumed that the following properties hold. 
\begin{itemize}
\item According to \cref{prop:attr_ball}, it may be assumed that, for every nonnegative time $t$, 
\end{itemize}
\begin{align}
\label{attr_ball_infty_inv_implies_cv}
&\norm{x\mapsto u(x,t)}_{\Linfty} \le \Rattinfty \\
\text{and}\qquad
\label{attr_ball_Hone_inv_implies_cv}
&\norm{x\mapsto u(x,t)}_X \le \RattX
\,.
\end{align}
\begin{itemize}
\item According to the bounds \vref{bound_u_ut_ck}, it may be assumed that
\end{itemize}
\begin{equation}
\label{bound_u_ut_ck_bis}
\sup_{t\ge0}\norm{x\mapsto u(x,t)}_{\cccb{2}} < +\infty
\qquad\text{and}\qquad
\sup_{t\ge0}\norm{x\mapsto u_t(x,t)}_{\cccb{0}} < +\infty
\,.
\end{equation}
\begin{itemize}
\item According to \textup{(\hyperlink{hypHomRight}{\hypHomRightRef})}, it may be assumed that, for all $t$ in $[0,+\infty)$, 
\end{itemize}
\begin{equation}
\label{hyp_xHom_prime_pos}
\xHom'(t)\ge 0
\,.
\end{equation}
\begin{itemize}
\item According to \textup{(\hyperlink{hypInv}{\hypInvRef})}, it may be assumed that, for all $t$ in $[0,+\infty)$, 
\end{itemize}
\begin{equation}
\label{hyp_xEsc_finite}
-\infty<\xEsc(t)
\,.
\end{equation}
\subsubsection{Normalized potential and corresponding solution}
\label{subsubsec:normalized_potential_solution}
For notational convenience, let us introduce:
\begin{itemize}
\item a new ``normalized'' potential $V^\dag:\rr^d\to\rr$, $v\mapsto V^\dag(v)$,
\item and the corresponding solution $u^\dag:\rr\times[0,+\infty)\to\rr$, $(x,t)\mapsto u^\dag(x,t)$, 
\end{itemize}
defined as
\[
V^\dag(v)=V(m +v)-V(m)
\quad\text{and}\quad
u^\dag(x,t) = u(x,t)-m
\,.
\]
Thus the origin $0_{\rr^d}$ of $\rr^d$ is to $V^\dag$ what $m$ is to $V$, it is a nondegenerate minimum point for $V^\dag$ (with $V^\dag(0_{\rr^d})=0$), and $u^\dag$ is a solution of system \cref{init_syst} with potential $V^\dag$ instead of $V$; and, for all $(x,t)$ in $\rr\times[0,+\infty)$, 
\[
V^\dag\bigl(u^\dag(x,t)\bigr) = V\bigl(u(x,t)\bigr)-V(m)
\,.
\]
It follows from inequality \cref{def_weight_en} satisfied by $\coeffEnZero$ that, for all $v$ in $\rr^d$, 
\begin{equation}
\label{def_weight_en_V_dag}
\coeffEnZero \, V^\dag(v) +  \frac{1}{4}v^2\ge 0
\,,
\end{equation}
and it follows from inequalities \cref{v_nablaV_controls_square_around_loc_min,v_nablaV_controls_pot_around_loc_min} that, for all $v$ in $\rr^d$ satisfying $\abs{v}\le\dEsc(m)$, 
\begin{align}
\label{v_nablaV_controls_square_around_loc_min_dag}
v\cdot \nabla V^\dag(v) &\ge \frac{\eigVmin(m)}{2} v^2 \,, \\
\text{and}\qquad
\label{v_nablaV_controls_pot_around_loc_min_dag}
v\cdot \nabla V^\dag(v) &\ge V^\dag(v)
\,.
\end{align}
This notation $V^\dag$ and $u^\dag$ will be used up to \cref{subsec:relax_sch_tr_fr}. From \cref{subsec:cv_mean_inv_vel} on, the notation $V$ and $u$ denoting the initial potential and solution will be used again, instead. 
\subsubsection{Looking for another definition of the escape point}
Unfortunately, the Escape point $\xEsc(t)$ presents a significant drawback: there is no reason why it should display any form of continuity (it may jump back and forth while time increases). This lack of control is problematic with respect to the purpose of writing down a dissipation argument precisely around the position in space where the solution escapes from $m$. 

The answer to this difficulty will be to define another ``escape point'' (this one will be denoted by ``$\xesc(t)$'' --- with a lower-case ``e'' --- instead of $\xEsc(t)$). This second definition is a bit more involved than that of  $\xEsc(t)$, but the resulting escape point will have the significant advantage of growing at a finite (and even bounded) rate (\cref{lem:inv} below). The material required to define this escape point is introduced in the next \namecref{subsec:def_fire_zero}. 
\subsection{Firewall functions in the laboratory frame}
\label{subsec:def_fire_zero}
\subsubsection{Definition} 
\label{subsubsec:def_firewall_lab_frame}
The content of this \namecref{subsec:def_fire_zero} and of the next one is almost identical to that of \cite[\GlobalRelaxationSecSpatialAsymptotics]{Risler_globalRelaxation_2016}, where details, proofs, and comments can be found (the sole difference 
in \cite{Risler_globalRelaxation_2016} is the existence of a positive definite ``diffusion matrix'' --- whereas in the present paper this diffusion matrix equals identity). Only the minimum required definitions and statements are provided below. 

The notation is the same as in \cite[\GlobalRelaxationSecSpatialAsymptotics]{Risler_globalRelaxation_2016} with an additional ``$0$'' subscript to point out that all these objects are defined in the standing frame. Similar objects will be defined in the next \cref{subsec:relax_sch_tr_fr} but this time in a travelling referential (this time without the ``$0$'' subscript). 

Let 
\begin{equation}
\label{def_kappaZero}
\kappa_0 = \min\biggl(\frac{1}{\sqrt{\coeffEnZero}},\frac{\sqrt{\eigVmin(m)}}{4}\biggr)
\,
\end{equation}
and let us introduce the weight function $\psi_0$ defined as
\[
\psi_0(x) = \exp(-\kappa_0\abs{x})
\,.
\]
For $\bar{x}$ in $\rr$, let $T_{\bar{x}}\psi_0$ denote the translate of $\psi_0$ by $\bar{x}$, that is the function defined as
\[
T_{\bar{x}}\psi_0(x) = \psi_0 (x-\bar{x})
\,,
\]
see \cref{fig_weight_fire}. 
\begin{figure}[!htbp]
\centering
\includegraphics[width=\textwidth]{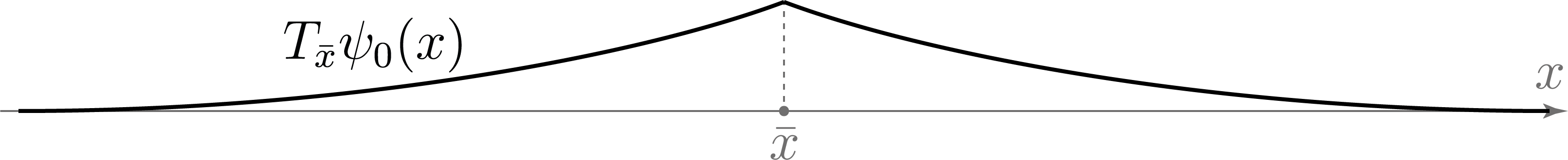}
\caption{Graph of the weight function $x\mapsto T_{\bar{x}}\psi(x)$ used to define the firewall function $\fff_0(\bar{x},t)$. The slope is small, according to the definition of $\kappa$.}
\label{fig_weight_fire}
\end{figure}
For all $t$ in $[0,+\infty)$ and $\bar{x}$ in $\rr$, let us introduce the ``firewall'' $\fff_0(\bar{x},t)$ defined as
\begin{equation}
\label{def_firewall_lab_frame}
\fff_0(\bar{x},t) = \int_{\rr} T_{\bar{x}}\psi_0(x)\biggl(\coeffEnZero\Bigl(\frac{1}{2}u^\dag_x(x,t)^2 + V^\dag \bigl(u^\dag(x,t)\bigr)\Bigr) + \frac{1}{2}u^\dag(x,t)^2\biggr)\, dx
\,,
\end{equation}
(the quantity $\coeffEnZero$ was defined in \cref{subsubsec:low_quad_hull}). 
\subsubsection{Coercivity} 
%
\begin{lemma}[coercivity of firewall function in the laboratory frame]
\label{lem:fireZero_coercivity}
For all $t$ in $[0,+\infty)$ and $\bar{x}$ in $\rr$, 
\begin{equation}
\label{fireZero_coercivity}
\fff_0(\bar x,t)\ge \min\Bigl(\frac{\coeffEnZero}{2},\frac{1}{4}\Bigr)\int_{\rr} T_{\bar{x}}\psi_0(x)\bigl(u^\dag_x(x,t)^2 +u^\dag(x,t)^2\bigr)\, dx
\,.
\end{equation}
\end{lemma}
\begin{proof}
Inequality \cref{fireZero_coercivity} follows from inequality \cref{def_weight_en_V_dag}.
\end{proof}
\subsubsection{Linear decrease up to pollution} 
%
Let 
\begin{equation}
\label{def_Sigma_Esc_0}
\SigmaEscZero(t)=\bigl\{x\in\rr: \abs{u^\dag(x,t)} >\dEsc(m)\bigr\} 
\,.
\end{equation}
\begin{lemma}[firewall linear decrease up to pollution]
\label{lem:dt_fire}
There exist positive quantities $\nuFZero$ and $\KFZero$, both depending only on $V$ and $m$, such that, for all $\bar{x}$ in $\rr$ and $t$ in $[0,+\infty)$,
\begin{equation}
\label{dt_fire}
\partial_t \fff_0(\bar{x},t)\le-\nuFZero\, \fff_0(\bar{x},t) + \KFZero\, \int_{\SigmaEscZero(t)} T_{\bar{x}}\psi_0(x)\, dx
\,.
\end{equation}
\end{lemma}
\begin{proof}
See \cite[\GlobalRelaxationLemFirewallDecreaseUpToPollution]{Risler_globalRelaxation_2016}. The quantities $\nuFZero$ and $\KFZero$ may be chosen as follows:
\begin{equation}
\label{def_nu_fire_zero}
\nuFZero = \min\Bigl(\frac{1}{4\coeffEnZero}, \frac{\eigVmin(m)}{4}\Bigr) \,, 
\end{equation}
and, according to the uniform bound \vref{attr_ball_infty_inv_implies_cv} for the solution,
\begin{equation}
\label{def_K_fire_zero}
\KFZero = \max_{v\in\rr^d, \ \abs{v}\le \Rattinfty}\biggl[ - (v-m)\cdot \nabla V(v) +\frac{1}{2}\abs{V(v)-V(m)}+ \frac{\eigVmin(m)}{4}\, (v-m)^2 \biggr] 
\,.
\end{equation}
\end{proof}
\begin{remark}
In order the proof of \cite[\GlobalRelaxationLemFirewallDecreaseUpToPollution]{Risler_globalRelaxation_2016} to apply, the quantities $\kappa_0$ and $\nuFZero$ should fulfill the following inequalities:
\begin{align}
\label{conditions_kappaZero_lab}
\frac{\coeffEnZero\, \kappa_0^2}{4}\le \frac{1}{2}
\quad&\text{and}\quad
\frac{\kappa_0^2}{2}\le \frac{\eigVmin(m)}{8}
\\
\text{and}\quad
\label{conditions_nuFireZero_lab}
\nuFZero\coeffEnZero \le \frac{1}{2}
\quad&\text{and}\quad
\frac{\nuFZero}{2} \le \frac{\eigVmin(m)}{8}
\end{align}
(compare with, respectively, the inequalities \cite[\GlobalRelaxationConditionsKappa{} and \GlobalRelaxationConditionsNuFire]{Risler_globalRelaxation_2016}), out of which the ``natural'' values for, respectively, $\kappa_0$ and $\nuFZero$ would be:
\[
\min\biggl(\sqrt{\frac{2}{\coeffEnZero}},\frac{\sqrt{\eigVmin(m)}}{2}\biggr)
\qquad\text{and}\qquad
\min\Bigl(\frac{1}{2\coeffEnZero}, \frac{\eigVmin(m)}{4}\Bigr)
\,.
\]
The reason for the (slightly more stringent) expressions \cref{def_kappaZero,def_nu_fire_zero} is that they are convenient to be used again in \cref{sec:no_inv_implies_relax}. 
\end{remark}
\subsection{Upper bound on the invasion speed}
\label{subsec:up_bd_inv_vel}
Let
\begin{equation}
\label{def_desc}
\desc(m) = \dEsc(m) \sqrt{\frac{2\min\Bigl(\frac{\coeffEnZero}{2},\frac{1}{4}\Bigr)}{1+\kappa_0}}
\,.
\end{equation}
As the quantity $\dEsc(m)$ defined in \cref{subsubsec:breakup}, this quantity $\desc(m)$ will provide a way to measure the vicinity of the solution $u^\dag$ to the minimum point $0_{\rr^d}$ of $V^\dag$, this time in terms of the firewall function $\fff_0$. The value chosen for $\desc(m)$ ensures the validity of the following lemma. 
\begin{lemma}[escape/Escape]
\label{lem:esc_Esc}
For all $x$ in $\rr$ and $t$ in $[0,+\infty)$, the following assertion holds:
\begin{equation}
\label{ineq_esc_Esc}
\fff_0(x,t)\le \desc(m)^2
\implies
\abs{u^\dag(x,t)} \le\dEsc(m)
\,.
\end{equation}
\end{lemma}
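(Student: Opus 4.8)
The plan is to prove the contrapositive in a quantitative form: if $\abs{u(\xi,t)} > \dEsc$, then the firewall functional $\fff_0(\xi,t)$ must exceed $\desc^2$. The intuition is that when the solution is at distance more than $\dEsc$ from $0_{\rr^n}$ at the point $\xi$, then by the uniform $C^1_{\textrm{b}}$-bound on the solution (from \cref{bound_u_ut_ck_bis}) it stays at distance comparable to $\dEsc$ on a whole interval around $\xi$ of length bounded below by a constant depending only on $V$; integrating the (nonnegative) integrand of $\fff_0$ against the weight $T_\xi\psi_0$ over that interval then produces a positive lower bound, and the value of $\desc$ has been chosen precisely so that this lower bound is at least $\desc^2$.

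First I would record that the integrand of $\fff_0$ is pointwise nonnegative: by the defining inequality \cref{def_weight_en} one has $\coeffEnZero V(u) + (u)^2/4 \ge 0$ (here with $m = 0_{\rr^n}$), so the bracket in the definition of $\fff_0$ is bounded below by $\coeffEnZero u_x^2/2 + u^2/4 \ge 0$. Hence it suffices to bound from below the contribution near $\xi$. Second, I would use the uniform bound $M := \sup_{t\ge 0}\norm{x\mapsto u_x(x,t)}_{C^0_{\textrm{b}}}$, finite by \cref{bound_u_ut_ck_bis}, to get: if $\abs{u(\xi,t)} > \dEsc$, then for every $x$ with $\abs{x-\xi} \le \dEsc/(2M)$ one has $\abs{u(x,t)} \ge \dEsc - M\abs{x-\xi} \ge \dEsc/2$ — so in particular the integrand of $\fff_0$ at such $x$ is at least $u(x,t)^2/4 \ge \dEsc^2/16$. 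Actually, to match the precise constant in $\desc$ one should instead keep the sharper bound coming from the term $\min(\coeffEnZero/2, 1/4)\,u^2$ and note that $V$ itself need not be controlled from below by a quadratic near $0$; but the nonnegativity just recorded lets us simply drop the $V$-contribution and the gradient contribution, keeping only $\min(\coeffEnZero/2,1/4)\,u(x,t)^2 \ge \min(\coeffEnZero/2,1/4)\,\dEsc^2/4$ on the interval where $\abs{u}\ge \dEsc/2$ (the factor $\min(\coeffEnZero/2,1/4)$ being exactly what appears under the square root in the definition of $\desc$).

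Third, I would estimate $\int T_\xi\psi_0(x)\,dx$ over the relevant small interval from below using $T_\xi\psi_0(x) = \exp(-\kappa_0\abs{x-\xi}) \ge 1 - \kappa_0\abs{x-\xi} \ge 1 - \kappa_0 \cdot (\text{half-length})$, and pick the half-length of the interval so that the combination of the length of integration, the value of the weight, and the lower bound on the integrand multiplies out to exactly $\desc^2$; the denominator $(1+\kappa_0)/2$ in the definition of $\desc$ is precisely the reciprocal of what this geometric computation produces (the "$1$" accounting for the interval length and the "$\kappa_0$" for the linear decay of the weight). Putting these together gives $\fff_0(\xi,t) > \desc^2$, which is the contrapositive of \cref{ineq_esc_Esc}. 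The main obstacle — though it is more bookkeeping than genuine difficulty — is tracking the constants so that the chosen $\desc$ is exactly the threshold that works; since this lemma is stated to be "almost identical" to a result in \cite{Risler_globalRelaxation_2016}, I would simply cite the corresponding lemma there for the detailed constant-chasing and present the argument above as the sketch.
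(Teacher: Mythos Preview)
Your approach has a genuine gap, not just bookkeeping. The interval-based argument you sketch makes the lower bound on $\fff_0(\xi,t)$ depend on the half-length $h$ of the interval where $\abs{u}\ge\dEsc/2$, and $h$ in turn depends on the $C^1$-bound $M$ on the solution. But the constant $\desc$ as defined involves only $\dEsc$, $\coeffEnZero$, and $\kappa_0$ --- no $M$ anywhere. If you actually carry out your computation you get a lower bound of the form $2h(1-\kappa_0 h)\cdot\min(\coeffEnZero/2,1/4)\cdot\dEsc^2/4$, and there is no universal choice of $h$ making this equal to $\desc^2$; your claimed identification of the factor $(1+\kappa_0)/2$ with ``interval length plus weight decay'' does not survive the arithmetic.

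The argument that does produce exactly this constant --- and which is what the companion paper \cite{Risler_globalRelaxation_2016} contains (the present paper simply cites it) --- is a weighted Sobolev-type embedding that uses no pointwise $C^1$ information at all. Write $w(x)=T_\xi\psi_0(x)=e^{-\kappa_0\abs{x-\xi}}$. Since $w(x)\,u(x,t)^2\to 0$ as $x\to\pm\infty$, integrate $\partial_x\bigl(w\,u^2\bigr)$ separately on $(\xi,+\infty)$ and on $(-\infty,\xi)$, use $\abs{w'}=\kappa_0 w$ together with $2\abs{u\cdot u_x}\le u^2+u_x^2$, and add the two resulting inequalities to obtain
\[
u(\xi,t)^2 \;\le\; \frac{1+\kappa_0}{2}\int_{\rr} w(x)\bigl(u_x(x,t)^2+u(x,t)^2\bigr)\,dx\,.
\]
Combined with the coercivity bound $\fff_0(\xi,t)\ge\min(\coeffEnZero/2,1/4)\int_{\rr}w\,(u_x^2+u^2)\,dx$ (which follows from \cref{def_weight_en} exactly as you noted), this gives $\abs{u(\xi,t)}^2\le\dEsc^2$ whenever $\fff_0(\xi,t)\le\desc^2$, with the constants matching on the nose. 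The factor $(1+\kappa_0)/2$ comes from averaging the two one-sided integrations of the weighted derivative, not from any geometric interval length.
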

\begin{proof}
See \cite[\GlobalRelaxationCorescapeEscape]{Risler_globalRelaxation_2016}.
\end{proof}
Let $L$ be a positive quantity, large enough so that
\[
2 \KFZero \frac{\exp(-\kappa_0 L)}{\kappa_0} \le \nuFZero \frac{\desc(m)^2}{8}
\,,
\quad\text{namely}\quad
L = \frac{1}{\kappa_0}\log\Bigl(\frac{16 \,\KFZero}{\nuFZero\, \desc(m)^2\, \kappa_0}\Bigr)
\,,
\]
let $\hullnoesc:\rr\to\rr\cup\{+\infty\}$ (``no-escape hull'') be the function defined as
\begin{equation}
\label{def_h_noesc}
\hullnoesc(\xi) = 
\left\{
\begin{aligned}
& +\infty & \quad\text{for}\quad & \xi<0 \,, \\
& \frac{\desc(m)^2}{2}\Bigl(1-\frac{\xi}{2\,L}\Bigr) & \quad\text{for}\quad & 0\le \xi\le L \,, \\
& \frac{\desc(m)^2}{4} & \quad\text{for}\quad & \xi\ge L \,,
\end{aligned}
\right.
\end{equation}
\begin{figure}[!htbp]
\centering
\includegraphics[width=0.6\textwidth]{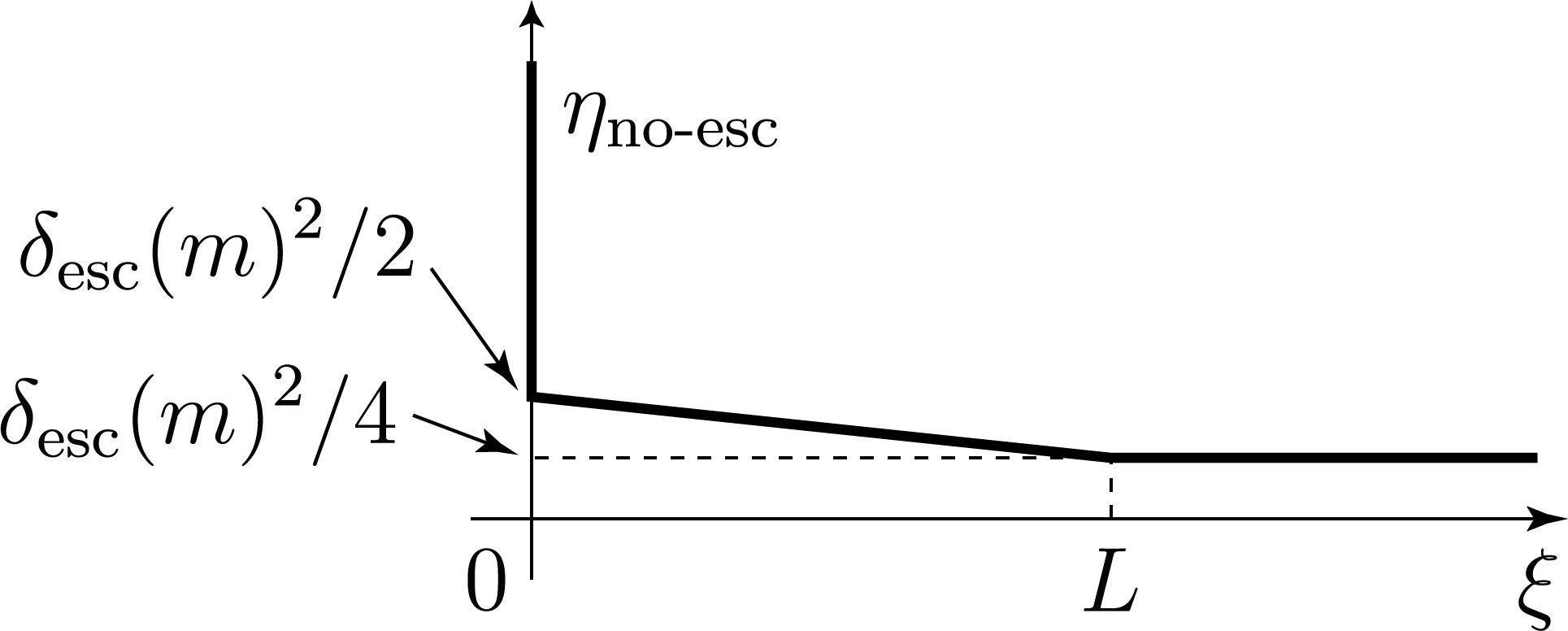}
\caption{Graph of the hull function $\hullnoesc$.}
\label{fig:graph_hull}
\end{figure}
see \cref{fig:graph_hull}, and let $\cnoesc$ (``no-escape speed'') be a positive quantity, large enough so that
\[
\cnoesc \frac{\desc(m)^2}{4L} \ge 2 \frac{\KFZero}{\kappa_0}
\,,
\quad\text{namely}\quad
\cnoesc = \frac{8\,\KFZero \, L}{\kappa_0 \, \desc(m)^2}
\,.
\]
As for the quantities $\kappa_0$ and $\nuFZero$ and $\KFZero$, the quantities $L$ and $\cnoesc$ and the function $\hullnoesc$ depend on $V$ \emph{and} $m$. 
The following lemma, illustrated by \cref{fig:hull_invasion}, is a variant of \cite[\GlobalRelaxationLemBoundInvasionSpeed]{Risler_globalRelaxation_2016}. 
\begin{figure}[!htbp]
\centering
\includegraphics[width=\textwidth]{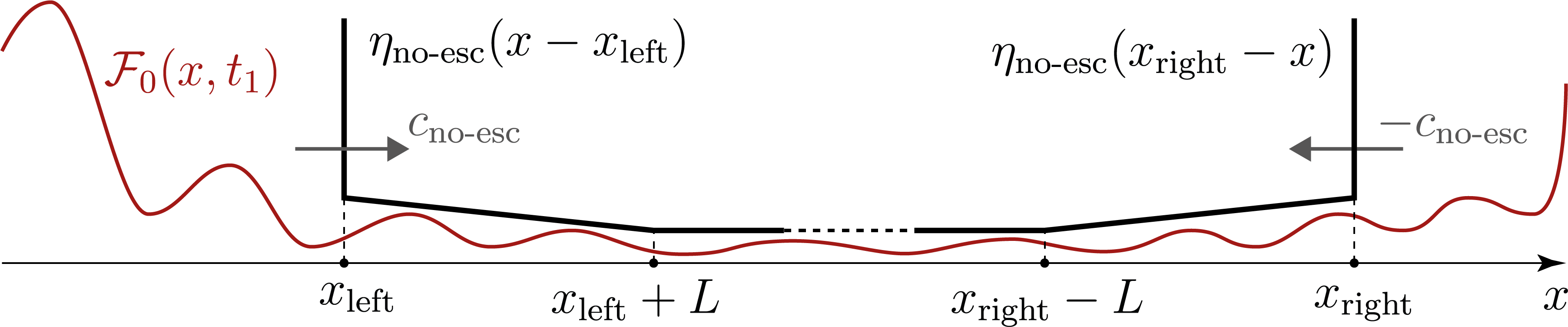}
\caption{Illustration of \cref{lem:inv}; if the firewall function is below the maximum of two mirror hulls at a certain time $t_1$ and if these two hulls travel at opposite speeds $\pm\cnoesc$, then the firewall will remain below the maximum of those travelling hulls in the future (note that after they cross this maximum equals $+\infty$ thus the assertion of being ``below'' is empty).}
\label{fig:hull_invasion}
\end{figure}
\begin{lemma}[bound on invasion speed]
\label{lem:inv}
For all real quantities $\xLeft$ and $\xRight$ and nonnegative time $t_1$, if
\[
\fff_0(x,t_1) \le \max\bigl( \hullnoesc(x-\xLeft) , \hullnoesc(\xRight - x) \bigr)
\quad\text{for all } x \text{ in }\rr
\,,
\]
then, for every time $t$ greater than or equal to $t_1$ and every real quantity $x$, 
\[
\fff_0(x,t) \le \max\Bigl( \hullnoesc\bigl(\xLeft-\cnoesc\, (t-t_1)\bigr), \hullnoesc\bigl(\xRight + \cnoesc\, (t-t_1)- x\bigr) \Bigr)
\,.
\]
\end{lemma}
\begin{proof}
See \cite[\GlobalRelaxationLemBoundInvasionSpeed]{Risler_globalRelaxation_2016}.
\end{proof}
\subsection{Set-up for the proof, 2: escape point and associated speeds}
\label{subsec:inv_cv_set_pf_cont}
With the notation and results of the previous \cref{subsec:def_fire_zero,subsec:up_bd_inv_vel} at hand, let us pursue the set-up for the proof of \cref{prop:inv_cv} ``invasion implies convergence''. According to hypothesis \textup{(\hyperlink{hypHomRight}{\hypHomRightRef})} and to the bounds \cref{bound_u_ut_ck_bis} on the solution, it may be assumed, up to changing the origin of time, that, for all $t$ in $[0,+\infty)$ and for all $x$ in $\rr$,
\begin{equation}
\label{hyp_for_def_xesc}
\fff_0(x,t) \le \max\biggl( \hullnoesc\Bigl( x-\bigl( \xHom(t)-1\bigr) \Bigr) , \hullnoesc \bigl( \xHom(t) - x \bigr) \biggr)
\,.
\end{equation}
As a consequence, for all $t$ in $[0,+\infty)$, the set
\[
\begin{aligned}
\IHom(t) = \Bigl\{ & x_{\ell} \le \xHom(t) : \text{ for all } x \text{ in } \rr\,, 
\\
& \fff_0(x,t) \le \max\Bigl( \hullnoesc( x-x_{\ell}) , \hullnoesc \bigl( \xHom(t) - x \bigr) \Bigr)
\Bigr\}
\end{aligned}
\]
is a nonempty interval (containing $[\xHom(t)-1,\xHom(t)]$) that must be bounded from below (see \cref{fig:def_xesc}). 
\begin{figure}[!htbp]
\centering
\includegraphics[width=\textwidth]{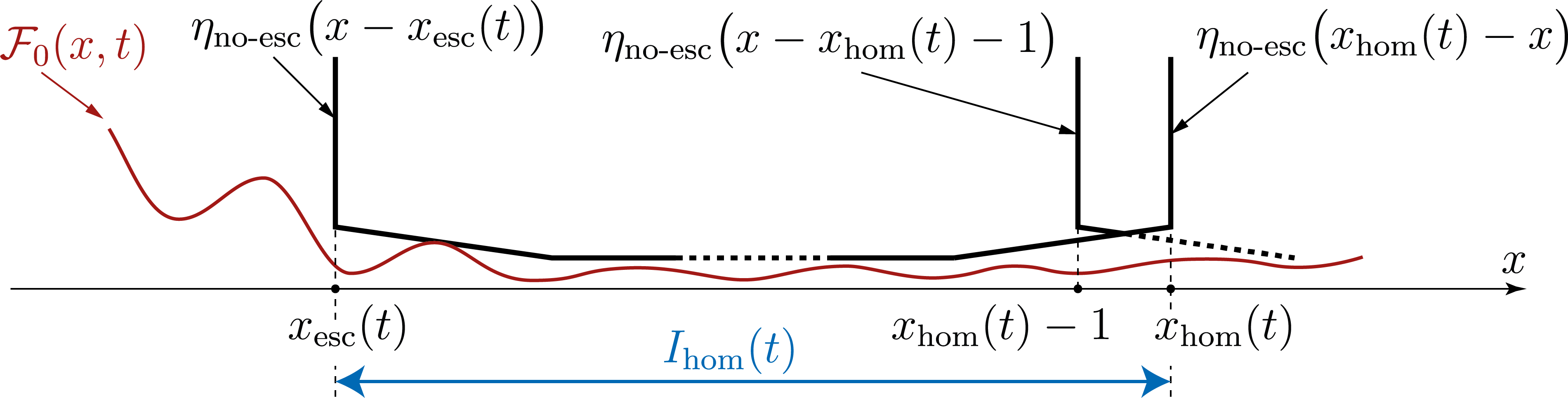}
\caption{Interval $\IHom(t)$ and definition of $\xesc(t)$.}
\label{fig:def_xesc}
\end{figure}
Indeed, if at a certain time it was not bounded from below --- in other words if it was equal to $(-\infty,\xHom(t)]$ --- then according to \cref{lem:inv} this would remain unchanged in the future, thus according to \cref{lem:esc_Esc} the point $\xEsc(t)$ would remain equal to $-\infty$ in the future, a contradiction with hypothesis \textup{(\hyperlink{hypInv}{\hypInvRef})}.

For every nonnegative time $t$, let 
\begin{equation}
\label{def_xesc}
\xesc(t) = \inf \bigl( \IHom(t) \bigr) 
\quad
\text{(thus } \xesc(t)>-\infty \text{).}
\end{equation}
Somehow like $\xEsc(t)$, this point represents the first point at the left of $\xHom(t)$ where the solution $u(x,t)$ ``escapes'' (in a sense defined by the firewall function $\fff_0$ and the no-escape hull $\hullnoesc$) at a certain distance from $m$. In the following, this point $\xesc(t)$ will be called the ``escape point'' (by contrast with the ``Escape point'' $\xEsc(t)$ defined before). According to assumption \cref{hyp_xEsc_finite} and to the ``hull inequality'' \cref{hyp_for_def_xesc} and \cref{lem:esc_Esc} (``escape/Escape''), for every nonnegative time $t$,
\begin{equation}
\label{xEsc_xesc_xHom}
-\infty<\xEsc(t) \le \xesc(t) \le \xHom(t)-1 
\quad\text{and}\quad
\SigmaEscZero(t) \cap [\xEsc(t),\xHom(t)] = \emptyset
\,,
\end{equation}
and, according to hypothesis \textup{(\hyperlink{hypHomRight}{\hypHomRightRef})} and to the bounds \cref{bound_u_ut_ck_bis} on the solution, 
\begin{equation}
\label{xHom_minus_xesc}
\xHom(t) - \xesc(t) \to +\infty
\quad\text{as}\quad
t\to +\infty
\,.
\end{equation}
The big advantage of $\xesc(\cdot)$ with respect to $\xEsc(\cdot)$ is that, according to \cref{lem:inv}, the growth of $\xesc(\cdot)$ is more under control. More precisely, according to this lemma, for all nonnegative quantities $t$ and $s$, 
\begin{equation}
\label{control_escape}
\xesc(t+s)\le \xesc(t) + \cnoesc \, s
\,.
\end{equation}
For every $s$ in $[0,+\infty)$, let us consider the ``upper and lower bounds of the variations of $\xesc(\cdot)$ over all time intervals of length $s$'':
\begin{figure}[!htbp]
\centering
\includegraphics[width=\textwidth]{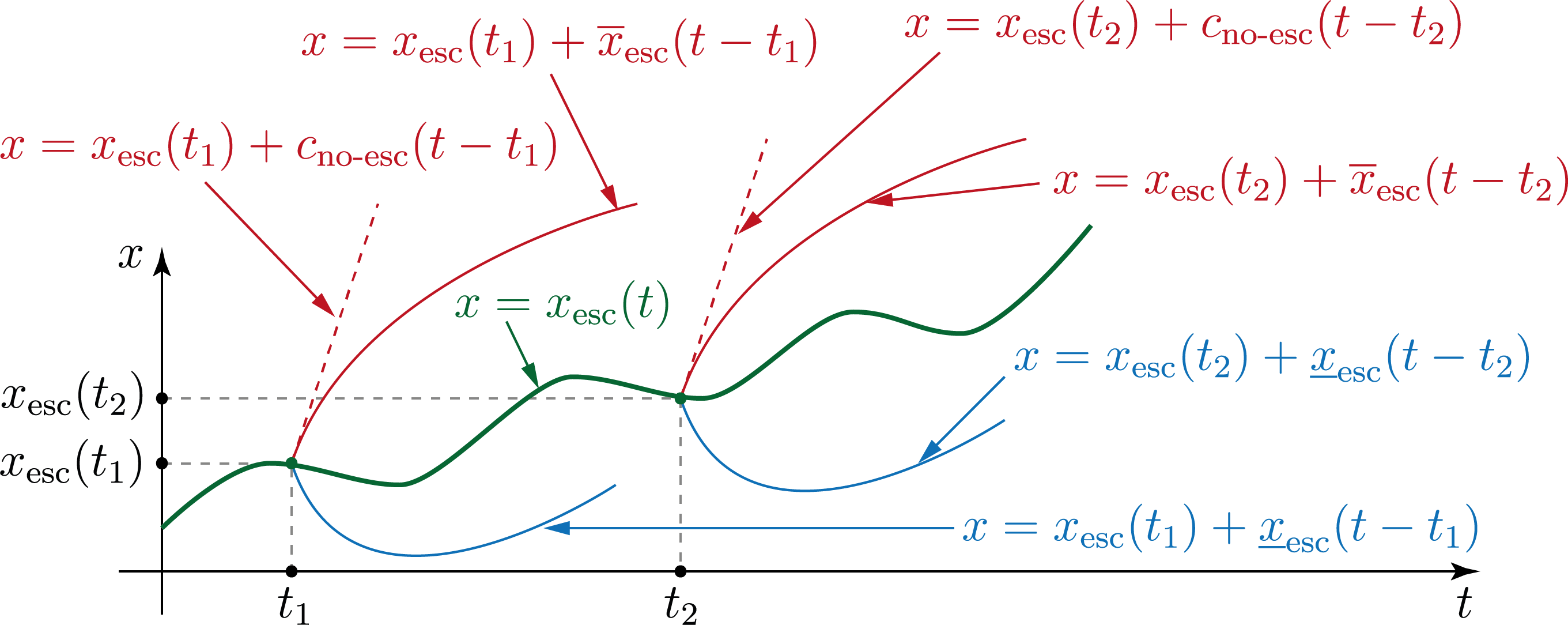}
\caption{Illustration of the bounds \cref{various_bounds_x_esc}.}
\label{fig:def_bar_underbar_xesc}
\end{figure}
\begin{equation}
\label{def_bar_underbar_xesc}
\barxesc(s) = \sup_{t\in[0,+\infty)} \xesc(t+s) - \xesc(t)
\quad\text{and}\quad
\underbarxesc(s) = \inf_{t\in[0,+\infty)} \xesc(t+s) - \xesc(t)
\,,
\end{equation}
see \cref{fig:def_bar_underbar_xesc}. According to these definitions and to inequality \cref{control_escape} above, for all $t$ and $s$ in $[0,+\infty)$,
\begin{equation}
\label{various_bounds_x_esc}
-\infty\le\underbarxesc(s)\le \xesc(t+s)-\xesc(t)\le \barxesc(s) \le \cnoesc\, s
\,.
\end{equation}
Let us consider the four limit mean speeds:
\[
\cescinf = \liminf_{t\to+\infty}\frac{\xesc(t)}{t}
\quad\text{and}\quad
\cescsup = \limsup_{t\to+\infty}\frac{\xesc(t)}{t}
\]
and
\[
\underbarcescinf = \liminf_{s\to+\infty}\frac{\underbarxesc(s)}{s}
\quad\text{and}\quad
\barcescsup = \limsup_{s\to+\infty}\frac{\barxesc(s)}{s}
\,.
\]
The following inequalities follow from these definitions and from hypothesis \textup{(\hyperlink{hypInv}{\hypInvRef})}:
\[
-\infty \le \underbarcescinf \le \cescinf \le \cescsup \le \barcescsup \le \cnoesc
\quad\text{and}\quad
0 < \cEsc \le \cescsup
\,.
\]
The four limit mean speeds defined above will turn out to be equal. The proof of this equality is based of the ``relaxation scheme'' set up in the next \namecref{subsec:relax_sch_tr_fr}. 
\begin{remark}
In the previous paper \cite{Risler_globCVTravFronts_2008} where convergence towards a single travelling front was proved, no object similar to the lower bound $\underbarxesc(\cdot)$ or the quantity $\underbarcescinf$ was defined. Here those objects will be specifically required to prove convergence towards the travelling fronts ``following'' the ``first'' ones (in the statement of \cref{thm:main}, the ``first'' ones are $\phi_{1,+}$ and $\phi_{1,-}$, and the ``following'' ones are the $\phi_{i,+}$ and $\phi_{i,-}$ with $i$ larger than $1$). Indeed, for those ``following'' travelling fronts, a tighter control over the escape point will be required.  
\end{remark}
\subsection{Relaxation scheme in a travelling frame}
\label{subsec:relax_sch_tr_fr}
The aim of this \namecref{subsec:relax_sch_tr_fr} is to set up an appropriate relaxation scheme in a travelling frame. This means defining an appropriate localized energy and controlling the ``flux'' terms occurring in the time derivative of this localized energy. The considerations made in \cref{subsec:1rst_ord} will be put in practice. 
\subsubsection{Notation for the travelling frame}
\label{subsubsec:def_trav_f}
Let us keep the notation and hypotheses introduced above (since the beginning of \cref{subsec:inv_cv_set_pf}), and let us introduce the following real quantities that will play the role of ``parameters'' for the relaxation scheme below (see \cref{fig:trav_fr}):
\begin{figure}[!htbp]
\centering
\includegraphics[width=.8\textwidth]{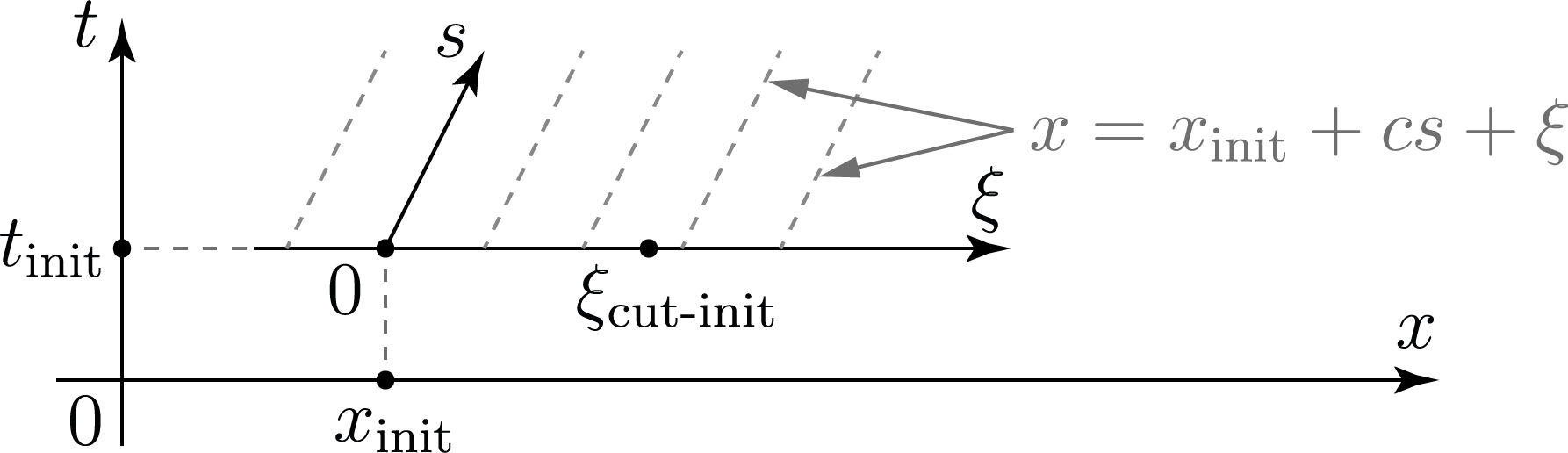}
\caption{Space coordinate $\xi$ and time coordinate $s$ in the travelling frame, and parameters $\tInit$ and $\xInit$ and $c$ and $\initCut$.}
\label{fig:trav_fr}
\end{figure}
\begin{itemize}
\item the ``initial time'' $\tInit$ of the time interval of the relaxation;
\item the position $\xInit$ of the origin of the travelling frame at initial time $t=\tInit$ (in practice it will be chosen equal to $\xesc(\tInit)$);
\item the speed $c$ of the travelling frame;
\item a quantity $\initCut$ that will be the the position of the maximum point of the weight function $\xi\mapsto\chi(\xi,\tInit)$ localizing energy at initial time $t=\tInit$ (this weight function is defined below); the subscript ``cut'' refers to the fact that this weight function displays a kind of ``cut-off'' on the interval between this maximum point and $+\infty$. Thus the maximum point is in some sense the point ``where the cut-off begins''. 
\end{itemize}
Let us make on these parameters the following hypotheses:
\begin{equation}
\label{hyp_param_relax_sch}
0\le \tInit
\quad\text{and}\quad
0 < c \le \cnoesc
\quad\text{and}\quad
0 \le \initCut
\,.
\end{equation}
The relaxation scheme will be applied several time in the next pages, for various choices of these three parameters. 

For all $\xi$ in $\rr$ and $s$ in $[0,+\infty)$, let
\[
v(\xi,s) = u^\dag(x,t)
\quad\text{where}\quad
x = \xInit + cs + \xi
\quad\text{and}\quad
t = \tInit + s
\,.
\]
This function $(\xi,s)\mapsto v(\xi,s)$ is thus defined from $(x,t)\mapsto u^\dag(x,t)$ by considering this solution in a frame travelling at the speed $c$, with $\tInit$ as the origin of times and $\xInit$ as the origin of space. It satisfies the differential system already written in \vref{syst_mf} and \vref{syst_mf_bis} (with $V^\dag$ instead of $V$), that is:
\begin{equation}
\label{syst_mf_V_dag}
v_s - c v_\xi = - \nabla V^\dag (v) + v_{\xi\xi}
\,.
\end{equation}
\subsubsection{Principle of the relaxation scheme}
Two functions will now be defined, each with its own weight function:
\begin{itemize}
\item a localized energy $s\mapsto\eee(s)$;
\item a localized ``firewall'' function $s\mapsto\fff(s)$, that will be a linear combination of the energy and the $L^2$-norm with appropriate coefficients.
\end{itemize}
Here are the constraints that have to be faced in the choice of these definitions. Some of them have already been mentioned in \cref{subsec:1rst_ord}. Note that these constraints are slightly more involved than in a standing frame (this latter case is easier and treated in details in \cite{Risler_globalRelaxation_2016}).
\begin{enumerate}
\item Both weight functions should vary slowly with time.
\item The weight function for the energy should be equal --- or at least close --- to $\exp (c\xi)$ (up to a positive multiplicative constant) in a subset of the space real line ``as large as possible'' since such a subset does not ``contribute'' to the flux terms in the time derivative of localized energy. 
\item The weight function for the firewall functional should either be equal --- or at least close --- to $\exp (c\xi)$ (up to a positive multiplicative constant), or vary slowly with respect to space and time, since each of these conditions ensures the smallness of the flux term of the $L^2$-norm. 
\item The (positive) coefficients of the energy and of the $L^2$-norm in the definition of the firewall functional should face two independent constraints, both in favour of a larger coefficient for the $L^2$-norm:
\begin{itemize}
\item the firewall should be coercive;
\item in the time derivative of the firewall, the ``non small'' flux terms of the derivative of energy (where the weight function of energy is \emph{not} close to $\exp (c\xi)$) should be balanced by the ``main'' (nonpositive) terms in the derivative of the $L^2$-norm.
\end{itemize}
\item Finally, in order the positive part of the total contribution of the flux terms of the energy to be \emph{small} (and not only bounded), as will be required to prove convergence towards stationary solutions in a travelling referential, the initial weight function for the energy will have to be chosen equal to $\exp (c\xi)$ up to far to the right, and it is the parameter $\initCut$ that will enable this ``tuning''. 
\end{enumerate}
\subsubsection{Choice of the parameters \texorpdfstring{$\kappa$}{kappa} and \texorpdfstring{$\cCut$}{cCut} and \texorpdfstring{$\coeffEn$}{wEn}}
Let $\kappa$ (rate of decrease of the weight functions), $\cCut$ (speed of the cut-off point in the travelling frame), and $\coeffEn$ (coefficient of energy in the firewall function) be three positive quantities, small enough so that
\begin{equation}
\label{conditions_kappa_cCut_coeffEn}
\begin{aligned}
&\coeffEn(c+\kappa)\Bigl(\frac{\cCut}{2}+\frac{c+\kappa}{4}\Bigr) \le \frac{1}{2} 
\qquad\text{and}\qquad
\coeffEn \cCut (c+\kappa) \le \frac{1}{4} \\
\text{and}\qquad
&\frac{(\cCut+\kappa)(c+\kappa)}{2} \le \frac{\eigVmin(m)}{8}
\end{aligned}
\end{equation}
(compare with conditions \vref{conditions_kappaZero_lab}) and
\begin{equation}
\label{coeffEn_smaller_than_coeffEnZero}
\coeffEn \le \coeffEnZero
\end{equation}
(the quantity $\coeffEnZero$ was defined in \vref{subsubsec:low_quad_hull}). 
Conditions \cref{conditions_kappa_cCut_coeffEn} will be used to prove inequality \vref{ds_fire_before_nufire}. 
Condition \cref{coeffEn_smaller_than_coeffEnZero} will be used in the proofs of \cref{lem:ds_en_trav_f} (energy decrease up to firewall) and \cref{lem:nonnegativity_F} (nonnegativity of firewall).
These quantities may be chosen as follows ($\kappa$ and $\cCut$ are chosen so that the third inequality of \cref{conditions_kappa_cCut_coeffEn} be fulfilled, and then $\coeffEn$ is chosen according to the first two inequalities of \cref{conditions_kappa_cCut_coeffEn} and to \cref{coeffEn_smaller_than_coeffEnZero}):
\[
\begin{aligned}
\kappa &= \min\Bigl( \frac{\sqrt{\eigVmin(m)}}{4} , \frac{\eigVmin(m)}{16\cnoesc} \Bigr) 
\qquad\text{and}\qquad
\cCut = \frac{\eigVmin(m)}{8(\cnoesc+\kappa)} \,, \\
\text{and}\qquad\coeffEn &= \min\Bigl(\frac{2}{(\cnoesc + \kappa)(\cnoesc + \kappa+2\cCut)} , \frac{1}{4\cCut(\cnoesc + \kappa)},  \coeffEnZero\Bigr) \,.
\end{aligned}
\]
These three quantities depend on $V$ \emph{and} $m$. 
\subsubsection{Localized energy}
\label{subsubsec:def_loc_en}
For every real quantity $s$, let us introduce the two intervals
\[
\begin{aligned}
\iMain(s) &= ( - \infty , \initCut + \cCut s] \,, \\
\text{and}\qquad
\iRight(s) &= [ \initCut + \cCut s , +\infty) \,,
\end{aligned}
\]
and let us introduce the function $\chi(\xi,s)$ (weight function for the localized energy) defined as
\[
\chi(\xi,s) =
\left\{
\begin{aligned}
&\exp(c\xi) 
& &\text{if}\quad
\xi \in \iMain(s) \,, \\
&\exp\Bigl[ c (\initCut + \cCut s) - \kappa \bigl(\xi - (\initCut +\cCut s) \bigr)\Bigr]
& &\text{if}\quad
\xi \in \iRight(s) 
\,,
\end{aligned}
\right.
\]
\begin{figure}[!htbp]
\centering
\includegraphics[width=0.9\textwidth]{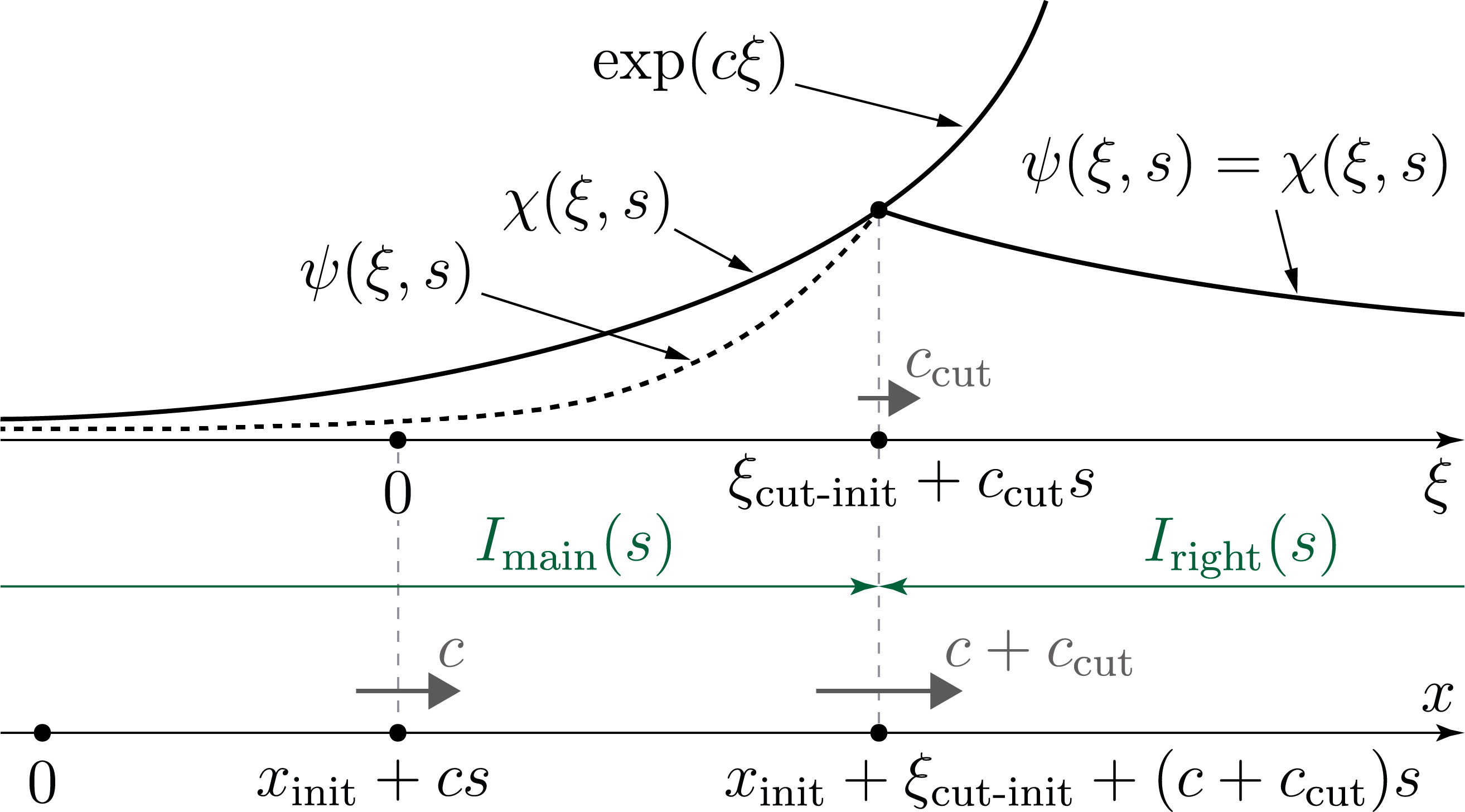}
\caption{Graphs of the functions $\xi\mapsto\chi(\xi,s)$ and $\xi\mapsto\psi(\xi,s)$.}
\label{fig:chi_psi}
\end{figure}
see \cref{fig:chi_psi}; and, for all $s$ in $[0,+\infty)$, let us define the ``energy'' $\eee(s)$ by
\[
\eee(s) = \int_{\rr} \chi(\xi,s) \left(\frac{1}{2}v_\xi(\xi,s)^2 + V^\dag\bigl(v(\xi,s)\bigr)\right) \, d\xi
\,.
\]
Note that the choice of the decrease rate of $\chi(\cdot,s)$ on the interval $[\initCut + \cCut s, +\infty)$ is not crucial (it might have been chosen equal to $1$, for instance). The sole advantage of this choice is that the weight functions $\chi$ and $\psi$ (the latter one defined below) are identical on this interval, which is convenient for the estimates below. 
\subsubsection{Time derivative of localized energy}
\label{subsubsec:der_loc_en}
For all $s$ in $[0,+\infty)$, let us define the ``dissipation'' function by
\begin{equation}
\label{def_dissip_tr_fr}
\ddd(s) = \int_{\rr} \chi(\xi,s)\, v_s(\xi,s)^2 \, d\xi
\,.
\end{equation}
\begin{lemma}[time derivative of localized energy]
\label{lem:ds_en_trav_f_prel}
For all $s$ in $[0,+\infty)$, 
\begin{equation}
\label{ds_en_trav_f_prel}
\eee'(s) \le -\frac{1}{2} \ddd(s) + 
(c+\kappa)\int_{\iRight(s)} \chi \Bigl(\frac{c+\cCut+\kappa}{2} v_\xi^2 + \cCut V^\dag(v) \Bigr) \, d\xi 
\,.
\end{equation}
\end{lemma}
\begin{proof}
For all $s$ in $[0,+\infty)$, it follows from expression \vref{ddt_loc_en} for the derivative of a localized energy that
\[
\eee'(s) = - \ddd(s) + \int_{\rr}\biggl(\chi_s\Bigl( \frac{1}{2}v_\xi^2 + V^\dag(v)\Bigr) + (c\chi-\chi_\xi) v_\xi \cdot v_s\biggl) \, d\xi 
\,.
\]
It follows from the definition of $\chi$ that:
\[
\chi_s(\xi,s) = 
\left\{
\begin{aligned}
&0  
& &\text{if}\quad \xi \in \iMain(s) \,, \\ 
&\cCut(c+\kappa)\, \chi(\xi,s) 
& &\text{if}\quad \xi \in \iRight(s) \,,
\end{aligned}
\right.
\]
and
\[
(c\chi-\chi_\xi)(\xi,s) = 
\left\{
\begin{aligned}
&0 
& &\text{if}\quad \xi \in \iMain(s) \,, \\ 
&(c+\kappa)\, \chi(\xi,s) 
& &\text{if}\quad \xi \in\iRight(s) \,.
\end{aligned}
\right.
\]
Thus it follows from these expressions that
\[
\eee'(s) = - \ddd(s) + (c+\kappa)\int_{\iRight(s)} \chi \biggl(\cCut \Bigl( \frac{1}{2}v_\xi^2 + V^\dag(v)\Bigr) + v_\xi \cdot v_s\biggl) \, d\xi 
\,.
\]
Using the inequality
\[
(c+\kappa)\,  v_\xi \cdot v_s \le \frac{1}{2} v_s^2 + \frac{(c+\kappa)^2}{2} v_\xi^2
\,,
\]
inequality \cref{ds_en_trav_f_prel} follows. \Cref{lem:ds_en_trav_f_prel} is proved. 
\end{proof}
\subsubsection{Firewall function} 
\label{subsubsec:def_firewall}
A second function (the ``firewall'') will now be defined, to get some control over the second term of the right-hand side of inequality \cref{ds_en_trav_f_prel}.
Let us introduce the function $\psi(\xi,s)$ defined as
\[
\psi(\xi,s) = 
\left\{
\begin{aligned}
&\exp\Bigl[\kappa\bigl( \xi -(\initCut + \cCut s)\bigr) \Bigr] \chi(\xi,s)
& &\text{if}\quad \xi \in \iMain(s) \,, \\ 
&\chi(\xi,s) 
& &\text{if}\quad \xi \in \iRight(s) \,,
\end{aligned}
\right.
\]
see \cref{fig:chi_psi}; and, for all $s$ in $[0,+\infty)$, let us define the ``firewall'' $\fff(s)$ by
\begin{equation}
\label{def_fff}
\fff(s) = \int_{\rr} \psi(\xi,s) \Biggl(\coeffEn \left(\frac{1}{2}v_\xi(\xi,s)^2 + V^\dag\bigl(v(\xi,s)\bigr)\right) + \frac{1}{2}v(\xi,s)^2\Biggr) \, d\xi
\,.
\end{equation}
\subsubsection{Energy decrease up to firewall}
\label{subsubsec:loc_energy_decrease}
\begin{lemma}[energy decrease up to firewall]
\label{lem:ds_en_trav_f}
There exists a positive quantiy $\KEF$, depending (only) on $V$ and $m$, such that for every nonnegative quantity $s$, 
\begin{equation}
\label{ds_en_trav_f}
\eee'(s) \le -\frac{1}{2} \ddd(s) + \KEF \fff(s)
\,.
\end{equation}
\end{lemma}
\begin{proof}
Inequality \cref{ds_en_trav_f_prel} of \cref{lem:ds_en_trav_f_prel} can be rewritten (without changing the value of the right-hand side) as follows (note the substitution of $\chi$ by $\psi$, which is allowed since these two functions are equal on $\iRight(s)$):
\[
\begin{aligned}
\eee'(s) 
&\le -\frac{1}{2} \ddd(s) + (c+\kappa)\int_{\iRight(s)} \psi \biggl(\frac{c+\cCut+\kappa}{2} v_\xi^2 + \cCut V^\dag(v) \biggr) \, d\xi \\
&\le -\frac{1}{2} \ddd(s) + (c+\kappa)\int_{\iRight(s)}  \psi \biggl(\frac{c+\cCut+\kappa}{2} v_\xi^2 + \cCut\Bigl(V^\dag(v) + \frac{1}{2\coeffEn} v^2\Bigr) \biggr) \, d\xi
\,.
\end{aligned}
\]
Since according to inequality \vref{coeffEn_smaller_than_coeffEnZero} the quantity $\coeffEn$ is not larger than $\coeffEnZero$, it follows from inequality \vref{def_weight_en_V_dag} that the quantity
\[
V^\dag(v) + \frac{1}{2\coeffEn} v^2 = \frac{1}{\coeffEn}\Bigl(\coeffEn V^\dag(v) + \frac{1}{2} v^2\Bigr)
\]
is nonnegative, and as a consequence the previous inequality still holds if the factor $\cCut$ of this quantity is replaced by the larger factor $\cCut+c+\kappa$. After this replacement, the inequality reads
\begin{equation}
\label{ds_en_trav_f_proof}
\eee'(s) \le -\frac{1}{2} \ddd(s) +  \frac{(c+\kappa)(c+\cCut+\kappa)}{\coeffEn} \int_{\iRight(s)} \psi\biggl(\coeffEn\Bigl(\frac{1}{2} v_\xi^2 +V^\dag(v) \Bigr) + \frac{1}{2} v^2\biggr) \, d\xi
\,.
\end{equation}
Again according to \cref{def_weight_en_V_dag,coeffEn_smaller_than_coeffEnZero}, this inequality \cref{ds_en_trav_f_proof} still holds if the domain of integration is extended to the whole real line; after this extension, this inequality \cref{ds_en_trav_f_proof} reads: 
\[
\eee'(s) \le -\frac{1}{2} \ddd(s) +  \frac{(c+\kappa)(c+\cCut+\kappa)}{\coeffEn} \fff(s)
\,.
\]
Finally, introducing the (positive) quantity $\KEF$ (depending only on $V$) defined as
\[
\KEF = \frac{(\cnoesc+\kappa)(\cnoesc+\cCut+\kappa)}{\coeffEn}
\,,
\]
inequality \cref{ds_en_trav_f} follows from the last inequality above. \Cref{lem:ds_en_trav_f} is proved. 
\end{proof}
\subsubsection{Relaxation scheme inequality, 1}
Let $\sFin$ be a nonnegative quantity (denoting the length of the time interval on which the relaxation scheme will be applied). It follows from the previous inequality that
\begin{equation}
\label{diff_s_en_trav_f}
\frac{1}{2} \int_0^{\sFin} \ddd(s) \, ds \le \eee(0) - \eee(\sFin) + \KEF \int_0^{\sFin} \fff(s) \, ds
\,.
\end{equation}
The approximate decrease of of the localized energy (up to a remaining term controlled by the firewall function) above, and more specifically the integrated form \cref{diff_s_en_trav_f} are the core of the relaxation scheme that is set up. Indeed, if some control can be obtained over the right-hand side (upper bound on the firewall function, upper bound on the initial localized energy, lower bound on the final localized energy), then it will provide some control on the integral of the dissipation, an information related to the vicinity of the solution to travelling fronts. The next goal (in the next \namecref{subsubsec:der_fire}) is to gain some control over the firewall function. 
\subsubsection{Firewall linear decrease up to pollution}
\label{subsubsec:der_fire}
For every $s$ in $[0,+\infty)$, let us introduce the set --- the domain of space where the solution $v$ (resp. $u$) ``Escapes'' at distance $\dEsc(m)$ from $0_{\rr^d}$ (resp. from $m$):
\[
\SigmaEsc(s)=\{\xi\in\rr: \abs{v(\xi,s)} >\dEsc(m)\} 
\,.
\]
To make the connection with the definition \vref{def_Sigma_Esc_0} of the related set $\SigmaEscZero(t)$, observe that, for all $s$ in $[0,+\infty)$ and $\xi$ in $\rr$, 
\[
\xi\in \SigmaEsc(s) \iff 
\xInit + cs + \xi \in \SigmaEscZero(\tInit+s)
\,.
\]
The next step is the following lemma (observe the strong similarity with \vref{lem:dt_fire}).
\begin{lemma}[firewall linear decrease up to pollution]
\label{lem:decrease_firewall}
There exist positive quantities $\nuF$ and $\KF$, both depending (only) on $V$ and $m$, such that, for all $s$ in $[0,+\infty)$,
\begin{equation}
\label{ds_fire}
\fff'(s) \le - \nuF \fff(s) + \KF \int_{\SigmaEsc(s)} \psi(\xi,s) \, d\xi 
\,.
\end{equation}
\end{lemma}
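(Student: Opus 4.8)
The plan is to prove \cref{ds_fire} by differentiating $\fff(s)$ under the integral sign and estimating the resulting terms one by one, following the same pattern as the proof of its laboratory-frame counterpart \cref{lem:dt_fire} (carried out in \cite{Risler_globalRelaxation_2016}); the genuinely new ingredients are the nonzero speed $c$ and the $s$-dependent cut-off structure of $\psi$, which make a few additional ``flux'' terms appear.

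Writing $\fff(s)=\coeffEn\int_{\rr}\psi\bigl(\frac{v_y^2}{2}+V(v)\bigr)\,dy+\int_{\rr}\psi\frac{v^2}{2}\,dy$ and applying the general identities \cref{ddt_loc_en} and \cref{ddt_loc_L2} (with $\psi$ playing the role of the localizing weight), one gets, after noting that $y\mapsto\psi(y,s)$ is continuous and piecewise smooth with a single downward kink at the peak $y=\initCut+\cCut s$ — so that its distributional second $y$-derivative contains there a \emph{negative} Dirac mass, which in \cref{ddt_loc_L2} is multiplied by $\frac{v^2}{2}\ge0$ and may therefore be discarded in an upper bound —
\[
\fff'(s)\le\int_{\rr}\Bigl[-\coeffEn\psi v_s^2+\coeffEn\psi_s\Bigl(\frac{v_y^2}{2}+V(v)\Bigr)+\coeffEn(c\psi-\psi_y)v_y\cdot v_s-\psi\,v\cdot\nabla V(v)-\psi v_y^2+(\psi_s+\psi_{yy}-c\psi_y)\frac{v^2}{2}\Bigr]\,dy\,.
\]
Young's inequality absorbs the cross term into the dissipation, $-\coeffEn\psi|v_s|^2+\coeffEn(c\psi-\psi_y)v_y\cdot v_s\le\frac{\coeffEn}{4}\frac{(c\psi-\psi_y)^2}{\psi}|v_y|^2$, the leftover dissipation being nonpositive and dropped. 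It then remains to control, separately on the two intervals $\iMain(s)$ and $\iRight(s)$, the coefficients of $\psi v_y^2$, $\psi v^2$ and $\psi V(v)$; here one uses the explicit form of $\psi$, which gives $c\psi-\psi_y=-\kappa\psi$ on $\iMain(s)$ and $(c+\kappa)\psi$ on $\iRight(s)$, $\psi_s=-\kappa\cCut\psi$ resp. $(c+\kappa)\cCut\psi$, and $\psi_s+\psi_{yy}-c\psi_y=\kappa(c+\kappa-\cCut)\psi$ resp. $(c+\kappa)(\cCut+\kappa)\psi$ (Dirac part discarded).

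Then one splits $\rr$ into $\SigmaEsc(s)$ and its complement. Off $\SigmaEsc(s)$, where $|v|\le\dEsc$, the estimates \cref{property_r_Esc} (which, after the normalization $m=0_{\rr^n}$, $V(0_{\rr^n})=0$, give $0\le V(v)\le\eigVmax v^2$ and $v\cdot\nabla V(v)\ge\frac{\eigVmin}{2}v^2$) together with the smallness conditions \cref{conditions_kappa_cCut_coeffEn} and \cref{coeffEn_smaller_than_coeffEnZero} (and $\coeffEn\le\coeffEnZero\le1$, $\kappa\le1$) make the coefficient of $\psi v_y^2$ at most $-\frac12$ and that of $\psi v^2$ at most $-\frac{\eigVmin}{4}$; hence the integrand is $\le-\nu_1\,\psi\,(v_y^2+v^2)$ for some fixed $\nu_1>0$ depending only on $V$. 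On $\SigmaEsc(s)$, every term is bounded by a fixed multiple of $\psi$, using $|v|\le\Rattinfty$ (from \cref{hyp_attr_ball}) and the uniform bound on $|v_y|$ (from \cref{bound_u_ut_ck_bis}). Finally, since by \cref{def_weight_en} the firewall density $\psi\bigl(\coeffEn(\frac{v_y^2}{2}+V(v))+\frac{v^2}{2}\bigr)$ is nonnegative, and is moreover bounded by $C_0\,\psi\,(v_y^2+v^2)$ for a constant $C_0$ depending only on $V$, one picks $\nuFire\le\nu_1/C_0$ and then $K_{\fff}$ large enough that, pointwise in $y$, the integrand above is $\le-\nuFire\,\psi\bigl(\coeffEn(\frac{v_y^2}{2}+V(v))+\frac{v^2}{2}\bigr)+\mathbf{1}_{\SigmaEsc(s)}(y)\,K_{\fff}\,\psi$; integrating over $y\in\rr$ yields \cref{ds_fire}.

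The main obstacle is the bookkeeping of the several flux terms, in particular of the two ``wrong-sign'' contributions — the term $\coeffEn\psi_s(\frac{v_y^2}{2}+V(v))$ on $\iRight(s)$, where $\psi_s>0$, and the smooth part of the $\psi_{yy}$-term — which must be absorbed by the genuinely dissipative terms $-\psi\,v\cdot\nabla V(v)$ and $-\psi v_y^2$; checking that they are is exactly what the four smallness inequalities in \cref{conditions_kappa_cCut_coeffEn} are tailored for. The only further point requiring care is that $\psi(\cdot,s)$ does not lie in $W^{2,1}$ at the cut point, which is handled by the sign remark on the Dirac mass above (alternatively, by approximating $\psi$ by smooth weights and passing to the limit).
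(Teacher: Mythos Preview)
Your argument is correct in spirit and follows essentially the same route as the paper: differentiate, use the weight identities for $\psi$, polarize the cross term, invoke the smallness conditions \cref{conditions_kappa_cCut_coeffEn}, and split on $\SigmaEsc(s)$. One point, however, does not quite close as written.

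On $\SigmaEsc(s)$ you bound ``every term'' of the integrand by a fixed multiple of $\psi$, invoking the uniform bound on $|v_y|$ from \cref{bound_u_ut_ck_bis}. But that bound, as stated there, is merely finite and implicitly depends on the initial datum $u_0$; your $K_{\fff}$ would then depend on $u_0$, whereas the lemma asserts dependence on $V$ only. The paper sidesteps this entirely: after arriving (via the same four inequalities in \cref{conditions_kappa_cCut_coeffEn}) at
\[
\fff'(s)\le\int_{\rr}\psi\Bigl[-\tfrac12 v_y^2 - v\cdot\nabla V(v)+\tfrac{\eigVmin}{8\eigVmax}|V(v)|+\tfrac{\eigVmin}{8}v^2\Bigr]\,dy\,,
\]
it adds and subtracts $\nuFire\bigl(\coeffEn V(v)+\tfrac12 v^2\bigr)$ under the integral. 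The first condition in \cref{def_nu_fire}, namely $\nuFire\coeffEn\le1$, ensures that $-\tfrac12\psi v_y^2$ by itself dominates the $v_y^2$-contribution to $-\nuFire\fff(s)$; the remaining integrand then depends on $v$ alone, so that on $\SigmaEsc(s)$ the bound $|v|\le\Rattinfty$ suffices and $K_{\fff}$ is defined as a maximum over the compact ball $\{|u|\le\Rattinfty\}$ in $\rr^n$, purely in terms of $V$. Your proof is easily repaired in the same way: you have already shown that the net coefficient of $\psi v_y^2$ is at most $-\tfrac12$ on \emph{both} $\iMain(s)$ and $\iRight(s)$, so keep that term rather than estimating it crudely on $\SigmaEsc(s)$, and drop the appeal to \cref{bound_u_ut_ck_bis}.
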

\begin{proof}
According to expressions \vref{ddt_loc_en,ddt_loc_L2} for the time derivatives of a localized energy and a localized $L^2$ functional, for all $s$ in $[0,+\infty)$,
\[
\begin{aligned}
\fff'(s) =  \int_{\rr} \Biggl[ & \psi \Bigl( - \coeffEn v_s^2 - v\cdot \nabla V^\dag(v) - v_\xi^2 \Bigr) 
+ \coeffEn \psi_s   \Bigl( \frac{1}{2}v_\xi^2 + V^\dag(v) \Bigr)   \\
& + \coeffEn (c\psi - \psi_\xi)  v_\xi \cdot v_s 
+ \frac{\psi_s + \psi_{\xi\xi} - c\psi_\xi}{2}v^2 \Biggr] \, d\xi
\,.
\end{aligned}
\]
According to the definition of $\psi$, 
\[
\psi_s(\xi,s) = 
\left\{
\begin{aligned}
&- \kappa\cCut\psi(\xi,s)
& &\text{if}\quad \xi \in \iMain(s) \,, \\ 
&(c+\kappa)\cCut\psi(\xi,s) 
& &\text{if}\quad \xi \in \iRight(s) \,,
\end{aligned}
\right.
\]
and 
\begin{equation}
\label{expression_c_psi_minus_psi_xi}
c\psi(\xi,s)-\psi_\xi(\xi,s) = 
\left\{
\begin{aligned}
&-\kappa\psi(\xi,s)
& &\text{if}\quad \xi \in \iMain(s) \,, \\ 
&(c+\kappa)\psi(\xi,s) 
& &\text{if}\quad \xi \in \iRight(s) \,,
\end{aligned}
\right.
\end{equation}
and, for all $\xi$ in $\rr$, if $\delta_{\initCut + \cCut s}(\cdot)$ denotes the Dirac mass at $\xi=\initCut + \cCut s$,
\begin{equation}
\label{expression_psi_xixi_minus_c_psi_xi}
\psi_{\xi\xi}(\xi,s)- c\psi_\xi(\xi,s)= \kappa(c+\kappa) \psi(\xi,s)-(c+2\kappa)\exp\bigl[c(\initCut + \cCut s)\bigr]\delta_{\initCut + \cCut s}(\xi)
\,.
\end{equation}
As a consequence, the following inequalities hold for all values of the arguments:
\begin{equation}
\label{bounds_psi_psi_s_psi_xi_etc}
\begin{aligned}
\abs{\psi_s}&\le \cCut(c+\kappa) \, \psi \,, \\
\text{and}\quad
\abs{c\psi - \psi_\xi} &\le (c + \kappa) \,  \psi \,, \\
\text{and}\quad
\psi_{\xi\xi}-c\psi_\xi &\le \kappa(c+\kappa) \, \psi
\,.
\end{aligned}
\end{equation}
Thus, for all $s$ in $[0,+\infty)$,
\[
\begin{aligned}
\fff'(s) \le  \int_{\rr} \psi \Biggl[ & - \coeffEn v_s^2 - v\cdot \nabla V^\dag(v) - v_\xi^2 + \coeffEn\, \cCut(c+\kappa) \Bigl( \frac{1}{2}v_\xi^2 + \abs{V^\dag(v)} \Bigr)  \\
& + \coeffEn (c+\kappa) \abs{v_\xi \cdot v_s} +  \frac{(\cCut + \kappa)(c+\kappa)}{2}v^2 \Biggr] \, d\xi
\,.
\end{aligned}
\]
Using the inequality
\[
\coeffEn (c+\kappa) \abs{v_\xi \cdot v_s} \le \coeffEn v_s^2 + \coeffEn \frac{(c+\kappa)^2}{4} v_\xi^2
\,,
\]
it follows that
\[
\begin{aligned}
\fff'(s)\le \int_{\rr} \psi \Biggl[ &
\biggl( \coeffEn (c+\kappa)\Bigl( \frac{\cCut}{2} + \frac{c+\kappa}{4} \Bigr) - 1 \biggr) v_\xi^2 - v\cdot \nabla V^\dag(v) \\
& + \coeffEn \, \cCut (c+\kappa) \, \abs{V^\dag(v)} + \frac{(\cCut + \kappa)(c+\kappa)}{2}v^2  \Biggr] \, d\xi
\,,
\end{aligned}
\]
and according to the conditions \vref{conditions_kappa_cCut_coeffEn} satisfied by $\kappa$ and $\cCut$ and $\coeffEn$, it follows that
\begin{equation}
\label{ds_fire_before_nufire}
\fff'(s) \le \int_{\rr} \psi\Bigl[ -\frac{1}{2}v_\xi^2 - v\cdot \nabla V^\dag(v) + \frac{1}{4} \abs{V^\dag(v)} + \frac{\eigVmin(m)}{8} v^2 \Bigr] \, d\xi
\end{equation}
(compare with \cite[inequality \GlobalRelaxationTimeDerFireBeforeNuFire]{Risler_globalRelaxation_2016} in a standing frame; the additional term $\abs{V^\dag(v)}/4$ comes from the time dependence of the weight $\psi$).

Let $\nuF$ be a positive quantity to be chosen below. It follows from the previous inequality and from the definition \cref{def_fff} of $\fff(s)$ that
\begin{equation}
\label{ds_fire_prel}
\begin{aligned}
\fff'(s) + \nuF\fff(s) \le \int_{\rr} \psi \biggl[&-\frac{1}{2}(1-\nuF\,\coeffEn)v_\xi^2 - v\cdot \nabla V^\dag(v)  \\
&  + \Bigl(\frac{1}{4}+\nuF\coeffEn\Bigr) \abs{V^\dag(v)} +  \Bigl(\frac{\eigVmin(m)}{8} + \frac{\nuF}{2}\Bigr)v^2  \biggr]\, d\xi 
\,.
\end{aligned}
\end{equation}
In view of this expression and of inequalities \vref{v_nablaV_controls_square_around_loc_min_dag,v_nablaV_controls_pot_around_loc_min_dag}, let us assume that $\nuF$ is small enough so that
\begin{equation}
\label{conditions_on_nuFire}
\nuF\, \coeffEn \le 1 
\quad\text{and}\quad
\nuF\, \coeffEn \le \frac{1}{4}
\quad\text{and}\quad
\frac{\nuF}{2} \le \frac{\eigVmin(m)}{8} 
\end{equation}
(compare with conditions \vref{conditions_nuFireZero_lab} satisfied by the quantity $\nuFZero$); this quantity $\nuF$ may be chosen as
\[
\nuF = \min\Bigl(\frac{1}{4\coeffEn}, \frac{\eigVmin(m)}{4} \Bigr)
\,.
\]
Then, it follows from \cref{ds_fire_prel,conditions_on_nuFire} that
\begin{equation}
\label{ds_fire_prel_bis}
\fff'(s) + \nuF\fff(s) \le \int_{\rr} \psi \Bigl[- v\cdot \nabla V^\dag(v) + \frac{1}{2}\abs{V^\dag(v)} + \frac{\eigVmin(m)}{4}v^2 \Bigr]\, d\xi
\,.
\end{equation}
According to \cref{v_nablaV_controls_square_around_loc_min_dag,v_nablaV_controls_pot_around_loc_min_dag}, the integrand of the integral at the right-hand side of this inequality is nonpositive as long as $\xi$ is \emph{not} in $\SigmaEsc(s)$.
Therefore this inequality still holds if the domain of integration of this integral is changed from $\rr$ to $\SigmaEsc(s)$. Besides, observe that, in terms of the ``initial'' potential $V$ and solution $u(x,t)$, the factor of $\psi$ under the integral of the right-hand side of this last inequality reads
\[
- (u-m)\cdot \nabla V(u) + \frac{1}{2}\abs{V(u)-V(m)} + \frac{\eigVmin(m)}{4}(u-m)^2
\,,
\]
where $u$ denotes $u(x,t)$ with $t=\tInit +s$ and $x=\xInit + cs + \xi$. 
Thus, according to the $L^\infty$ bound \vref{attr_ball_infty_inv_implies_cv} for the solution, if $\KF$ denotes the quantity $\KFZero$ already defined in \vref{def_K_fire_zero}, then inequality \cref{ds_fire} follows from \cref{ds_fire_prel_bis} (with the domain of integration of the integral on the right-hand side restricted to $\SigmaEsc(s)$). This finishes the proof of \cref{lem:decrease_firewall}.
\end{proof}
\begin{remark}
Because of the term $c\psi$ in the expression \cref{expression_c_psi_minus_psi_xi} of $c\psi-\psi_\xi$ when $\xi$ is in $\iRight(s)$, there is no hope to obtain an inequality like \cref{ds_fire_before_nufire} without a condition involving $\cnoesc$ on $\coeffEn$; similarly, because of the term $\kappa c\psi$ in the expression \cref{expression_psi_xixi_minus_c_psi_xi} of $\psi_{\xi\xi}-c\psi_\xi$, there is no more hope to obtain this kind of inequality without a condition involving $\cnoesc$ on $\kappa$. This is the reason why the quantities $\coeffEnZero$ and $\kappa_0$ of \cref{subsec:relax_sc_stand} could not be reused in this \cref{subsec:relax_sch_tr_fr}, and why the quantities $\coeffEn$ and $\kappa$ had to be introduced instead. 
\end{remark}
\subsubsection{Nonnegativity of firewall} 
%
\begin{lemma}[nonnegativity of firewall]
\label{lem:nonnegativity_F}
For all $s$ in $[0,+\infty)$, 
\begin{equation}
\label{coercivity_fff}
\fff(s) \ge 0
\,.
\end{equation}
\end{lemma}
\begin{proof}
Since according to inequality \vref{coeffEn_smaller_than_coeffEnZero} the quantity $\coeffEn$ is not larger than $\coeffEnZero$, it follows from inequality \vref{def_weight_en_V_dag} that the quantity 
\[
\coeffEn V^\dag\bigl(v(\xi,s)\bigr) + \frac{1}{2}v(\xi,s)^2 \ge \frac{1}{4}v(\xi,s)^2
\]
is nonnegative, and this proves \cref{lem:nonnegativity_F}.
\end{proof}
\begin{remark}
As for the firewall $\fff_0(t)$ in the laboratory frame (\cref{lem:fireZero_coercivity}), inequalities \cref{def_weight_en_V_dag,coeffEn_smaller_than_coeffEnZero} actually yield the stronger coercivity property:
\[
\fff(s) \ge \min\Bigl(\frac{\coeffEn}{2},\frac{1}{4}\Bigr) \int_{\rr} \psi(\xi,s)\bigl(v_\xi(\xi,s)^2 + v(\xi,s)^2\bigr)\, d\xi
\,;
\]
however, by contrast with $\fff_0(t)$, only the fact that $\fff(s)$ is nonnegative will be used in the following (see the next \namecref{subsubsec:relax_scheme_ineq_2}). 
\end{remark}
\subsubsection{Relaxation scheme inequality, 2}
\label{subsubsec:relax_scheme_ineq_2}
For all $s$ in $[0,+\infty)$, let 
\[
\mathcal{G} (s) = \int_{\SigmaEsc(s)} \psi(\xi,s) \, d\xi 
\,.
\]
According to \vref{lem:nonnegativity_F}, the quantity $\fff(\sFin)$ is nonnegative. As a consequence, integrating inequality \cref{ds_fire} between $0$ and a nonnegative quantity $\sFin$ yields:
\[
\int_0^{\sFin} \fff(s) \, ds \le \frac{1}{\nuF}\Bigl(\fff(0) + \KF \int_0^{\sFin} \mathcal{G} (s) \, ds\Bigr)
\,,
\]
and the ``relaxation scheme'' inequality \cref{diff_s_en_trav_f} becomes:
\begin{equation}
\label{relax_scheme_ggg}
\frac{1}{2} \int_0^{\sFin} \ddd(s) \, ds \le \eee(0) - \eee(\sFin) + \frac{\KEF}{\nuF}  \Bigl(\fff(0) + \KF \int_0^{\sFin} \mathcal{G} (s) \, ds\Bigr)
\,.
\end{equation}
The next step is to gain some control over the quantity $\mathcal{G}(s)$.
\subsubsection{Control over the pollution in the time derivative of the firewall function}
\label{subsubsec:flux_der_fire}
Let us assume that the parameter $\xInit$ is equal to $\xesc(\tInit)$, and, for every nonnegative quantity $s$, let
\[
\begin{aligned}
\xiHom(s) &= \xHom (\tInit + s) - \xInit - cs \,, \\
\text{and}\quad\xiesc(s) &= \xesc (\tInit + s) - \xInit - cs  
\,,
\end{aligned}
\]
see \cref{fig:def_yesc_yhom,fig:pos_trav_fr}.
\begin{figure}[!htbp]
\centering
\includegraphics[width=0.8\textwidth]{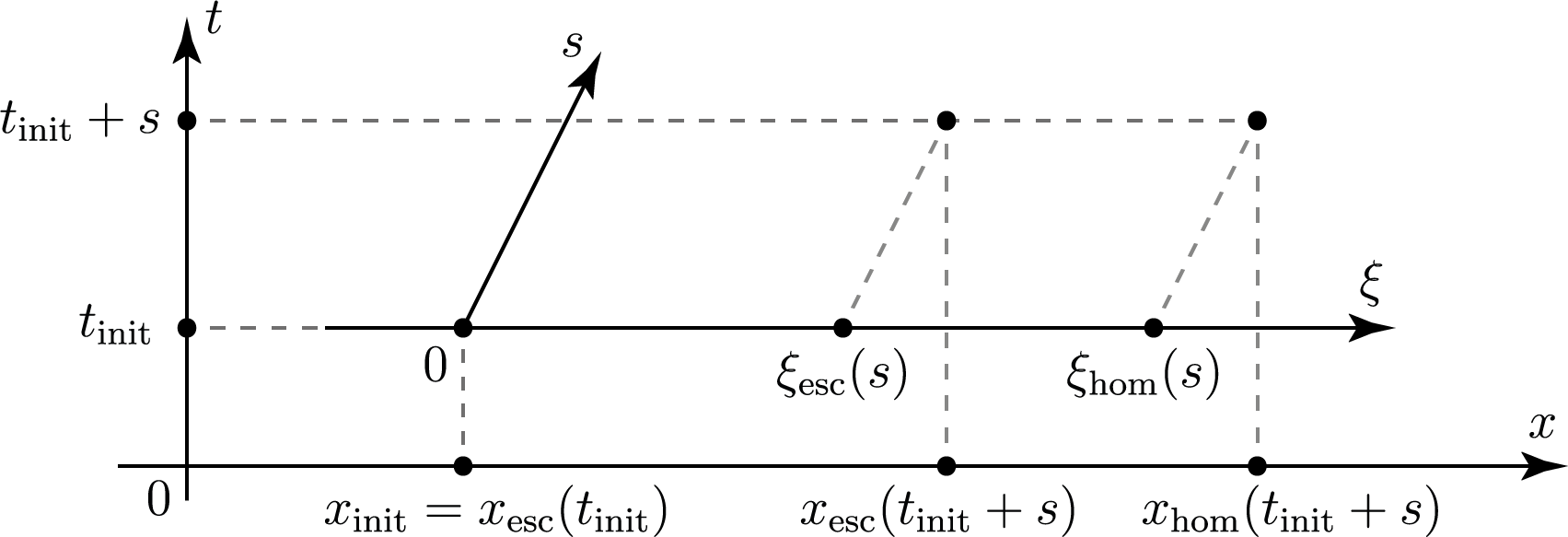}
\caption{Definitions of the ``escape'' point $\xiesc(s)$ and the point $\xiHom(s)$ marking the ``homogeneous'' area in the travelling frame.}
\label{fig:def_yesc_yhom}
\end{figure}
\begin{figure}[!htbp]
\centering
\includegraphics[width=\textwidth]{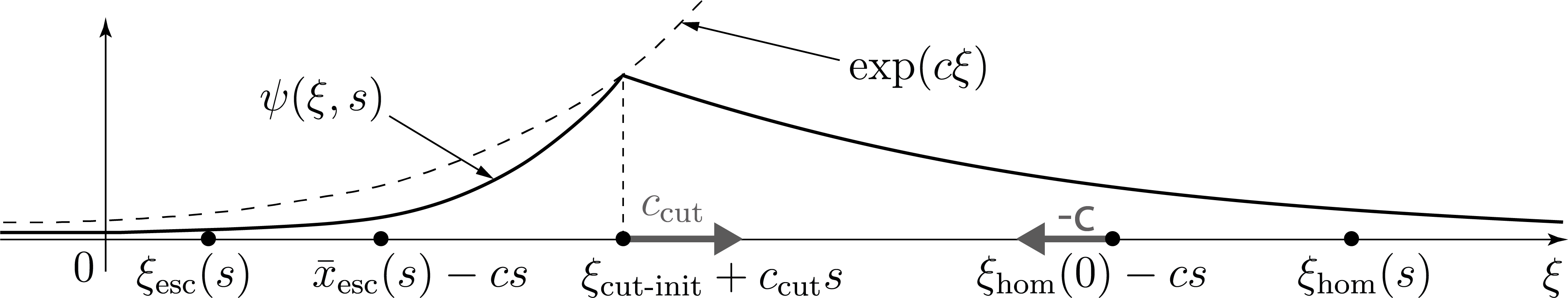}
\caption{Typical relative positions of the points $\xiesc(s)$, $\initCut + \cCut s$, $\xiHom(0)-cs$, and $\xiHom(s)$ in the travelling referential, and graph of $\xi\mapsto\psi(\xi,s)$.}
\label{fig:pos_trav_fr}
\end{figure}
According to properties \vref{xEsc_xesc_xHom} for the set $\SigmaEscZero(t)$, 
\[
\SigmaEsc(s) \subset \bigl(-\infty, \xiesc(s)\bigr] \cup \bigl[\xiHom(s),+\infty\bigr)
\,,
\]
thus, introducing the quantities 
\[
\Gback(s) = \int_{-\infty}^{\xiesc(s)} \psi (\xi,s)\, d\xi 
\quad\text{and}\quad
\Gfront(s) = \int_{\xiHom(s)}^{+\infty} \psi (\xi,s)\, d\xi 
\,,
\]
the following inequality holds:
\[
\mathcal{G}(s) \le \Gback(s)+\Gfront(s) 
\,.
\]
The aim of this \namecref{subsubsec:flux_der_fire} is to prove the bounds on $\Gback(s)$ and $\Gfront(s)$ provided by the next lemma. The following additional technical hypothesis will be required for the bound on $\Gback(s)$:
\begin{equation}
\label{hyp_c_close_to_barcescsup}
\barcescsup - \frac{\kappa \cCut}{4(\cnoesc+\kappa)} \le c
\,.
\end{equation}
This hypothesis is satisfied as soon as the speed is close enough to $\barcescsup$. It ensures that the escape point $\xiesc(s)$ remains ``more and more far away to the left'' with respect to the position $\initCut + \cCut\, s$ of the cut-off, as $s$ increases. 
\begin{lemma}[upper bounds on pollution terms in the derivative of the firewall]
\label{lem:bound_Gback_Gfront}
There exists a positive quantity $K[u_0]$, depending on $V$ and $m$ and the initial condition $u_0$ under consideration, such that for every nonnegative quantity $s$ the following estimates hold: 
\begin{align}
\label{up_bd_G_back}
\Gback(s) & \le K[u_0] \exp(-\kappa \, \initCut) \exp\Big( -\frac{\kappa\, \cCut}{2}s\Bigl)\,, \\
\label{up_bd_G_front}
\Gfront(s) & \le \frac{1}{\kappa}\exp\bigl[ (\cnoesc+\kappa)\,  \initCut + (\cnoesc + \kappa) (\cCut + \kappa)  s -\kappa \, \xiHom(0)\bigr]
\,.
\end{align}
\end{lemma}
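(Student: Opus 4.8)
\emph{Plan and reduction.} The plan is to integrate the explicit weight $\psi(\cdot,s)$ over the two half-lines that, by the inclusion $\SigmaEsc(s)\subset(-\infty,\yesc(s)]\cup[\yHom(s),+\infty)$ recalled just before the statement (a consequence of \cref{xEsc_xesc_xHom}), contain $\SigmaEsc(s)$; this reduces the estimate to separate bounds on $\Gback(s)=\int_{-\infty}^{\yesc(s)}\psi(y,s)\,dy$ and $\Gfront(s)=\int_{\yHom(s)}^{+\infty}\psi(y,s)\,dy$. Everything then hinges on locating $\yesc(s)$ and $\yHom(s)$ relative to the ``cut-off point'' $p(s):=\initCut+\cCut s$, at which $y\mapsto\psi(y,s)$ attains its maximum $\exp\!\bigl(c\,p(s)\bigr)$: on $\iMain(s)=(-\infty,p(s)]$ it grows like $\exp\!\bigl((c+\kappa)y-\kappa p(s)\bigr)$, and on $\iRight(s)$ it decays like $\exp\!\bigl((c+\kappa)p(s)-\kappa y\bigr)$.

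\emph{Localization --- the crux.} The key step is to show that, with the running choice $\xInit=\xesc(\tInit)$,
\[
\yesc(s)\ \le\ p(s)\ \le\ \yHom(s)\qquad\text{for all }s\ge0,
\]
apart from a $u_0$-dependent \emph{bounded} range of $(s,\initCut)$ absorbed into $K[u_0]$. For the right inequality one uses that $\xHom$ is nondecreasing (\cref{hyp_xHom_prime_pos}) and that $\xHom(\tInit)-\xesc(\tInit)\ge1$ (\cref{xEsc_xesc_xHom}), so that $\yHom(s)=\xHom(\tInit+s)-\xInit-cs\ge 1-cs$ and then $\yHom(s)\ge p(s)$ by bootstrapping with $\xHom'(\tau)\ge c$ for large $\tau$ (the relevant regime, in which $c$ does not exceed the asymptotic speed $\cHom$ of $\xHom$). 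For the left inequality one uses that $s\mapsto\barxesc(s)$ is subadditive --- an immediate consequence of \cref{control_escape} --- hence $\barxesc(s)/s\to\barcescsup$; hypothesis \cref{hyp_c_close_to_barcescsup} is tailored precisely so that $c+\tfrac{\kappa\cCut}{2(c+\kappa)}>\barcescsup$ (here $c\le\cnoesc$ is used), whence $\sigma\mapsto\barxesc(\sigma)-\bigl(c+\tfrac{\kappa\cCut}{2(c+\kappa)}\bigr)\sigma\to-\infty$ and is therefore bounded above by some $M[u_0]<\infty$; since $\yesc(s)\le\barxesc(s)-cs$, this gives $\yesc(s)\le M[u_0]+\tfrac{\kappa\cCut}{2(c+\kappa)}s\le p(s)$ as soon as $s$ (or $\initCut$) is large enough, the leftover bounded range of parameters being covered by the crude bound $\Gback(s),\Gfront(s)\le\int_{\rr}\psi(y,s)\,dy=O\!\bigl(e^{c\,p(s)}\bigr)$ and a sufficiently large choice of $K[u_0]$.

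\emph{The two tail integrals.} On $(-\infty,\yesc(s)]\subset\iMain(s)$ one has $\psi(y,s)=\exp\!\bigl((c+\kappa)y-\kappa p(s)\bigr)$, so that $\Gback(s)=\frac{1}{c+\kappa}\exp\!\bigl((c+\kappa)\yesc(s)-\kappa\initCut-\kappa\cCut s\bigr)$; inserting $\yesc(s)\le M[u_0]+\tfrac{\kappa\cCut}{2(c+\kappa)}s$ turns the exponent into $(c+\kappa)M[u_0]-\kappa\initCut-\tfrac{\kappa\cCut}{2}s$, which is \cref{up_bd_G_back} with $K[u_0]=\tfrac{1}{c+\kappa}e^{(c+\kappa)M[u_0]}$ (enlarged if necessary). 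On $[\yHom(s),+\infty)\subset\iRight(s)$ one has $\psi(y,s)=\exp\!\bigl((c+\kappa)p(s)-\kappa y\bigr)$, so that $\Gfront(s)=\frac1\kappa\exp\!\bigl((c+\kappa)(\initCut+\cCut s)-\kappa\yHom(s)\bigr)$; using $\yHom(s)\ge\yHom(0)-cs$ (again from $\xHom$ nondecreasing) and then $\initCut\ge0$ and $c\le\cnoesc$ to replace the coefficients of $\initCut$ and of $s$ by $\cnoesc+\kappa$ and $(\cnoesc+\kappa)(\cCut+\kappa)$ respectively, the exponent is at most $(\cnoesc+\kappa)\initCut+(\cnoesc+\kappa)(\cCut+\kappa)s-\kappa\yHom(0)$, i.e.\ exactly \cref{up_bd_G_front}.

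\emph{Main obstacle.} All the integrations above are elementary; the only genuinely delicate point is the localization claim, and within it the inequality $\yesc(s)\le p(s)$. The escape point is controlled from above only in a Lipschitz fashion (\cref{control_escape}), and the mean speed $\barxesc(s)/s$ reaches $\barcescsup$ only in the limit $s\to+\infty$, so the transient behaviour of $\xesc(\cdot)$ for the particular solution cannot be avoided --- which is exactly why $K[u_0]$ must depend on $u_0$. Hypothesis \cref{hyp_c_close_to_barcescsup} is precisely the condition on $c$ that makes the cut-off point $p(s)$ outrun $\yesc(s)$ with a strictly positive margin, and it is this margin that produces the decay factor $e^{-\kappa\cCut s/2}$ in \cref{up_bd_G_back}; without it one would only obtain boundedness of $\Gback(s)$, which would not suffice for the relaxation scheme of \cref{sec:inv_impl_cv}.
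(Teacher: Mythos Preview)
Your overall strategy --- integrate the weight over the two tails, control $\yesc(s)$ through $\barxesc(s)-cs$ and hypothesis \eqref{hyp_c_close_to_barcescsup}, and control $\yHom(s)$ through $\yHom(0)-cs$ --- is the paper's, and the tail computations you perform are correct.

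There is however a gap in the \emph{localization} paragraph. You try to establish $\yesc(s)\le p(s)\le \yHom(s)$, but the right inequality rests on ``bootstrapping with $\xHom'(\tau)\ge c$ for large $\tau$'', which in turn you justify by assuming $c\le\cHom$. Nothing in the standing hypotheses \eqref{hyp_param_relax_sch} and \eqref{hyp_c_close_to_barcescsup} guarantees $c\le\cHom$ (the inequality $\barcescsup\le\cHom$ is not available at this stage of the argument); and when $c$ is close to $\cHom$, the inequality $p(s)\le\yHom(s)$ can in fact fail for all large $s$, since asymptotically $p(s)\sim\cCut s$ while $\yHom(s)\sim(\cHom-c)s$. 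The ``bounded range of parameters'' you invoke is therefore not bounded in general, and in any case would depend on $\tInit$ and $c$, which $K[u_0]$ is not permitted to depend on.

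The fix is simple, and is what the paper does: drop the localization entirely. The two branch formulas are not only local expressions for $\psi$ but \emph{global} upper bounds,
\[
\psi(y,s)\ \le\ \exp\!\bigl((c+\kappa)y-\kappa\,p(s)\bigr)
\quad\text{and}\quad
\psi(y,s)\ \le\ \exp\!\bigl((c+\kappa)\,p(s)-\kappa y\bigr)
\qquad(y\in\rr),
\]
since $y\mapsto\log\psi(y,s)$ is concave piecewise linear. Integrating the first over $(-\infty,\yesc(s)]$ and the second over $[\yHom(s),+\infty)$ yields exactly your formulas for $\Gback(s)$ and $\Gfront(s)$, now as \emph{inequalities}, with no need to locate $\yesc(s)$ or $\yHom(s)$ relative to $p(s)$.

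A secondary point: your $K[u_0]=\tfrac{1}{c+\kappa}\,e^{(c+\kappa)M[u_0]}$ visibly depends on $c$. The paper removes this dependence by rewriting, after the substitution $\yesc(s)\le\barxesc(s)-cs$, the exponent as $(c+\kappa)\bigl(\barxesc(s)-\barcescsup s\bigr)+\bigl((c+\kappa)(\barcescsup-c)-\tfrac{\kappa\cCut}{2}\bigr)s$, using \eqref{hyp_c_close_to_barcescsup} to bound the second bracket by $-\tfrac{\kappa\cCut}{4}$, and then bounding the whole by the $c$-free quantity $\bar\beta[u_0]:=\sup_{s\ge0}\bigl\{(\cnoesc+\kappa)\bigl(\barxesc(s)-\barcescsup s\bigr)-\tfrac{\kappa\cCut}{4}s\bigr\}$; its finiteness follows directly from the definition of $\barcescsup$ as a $\limsup$, so Fekete is not needed (and incidentally, the subadditivity of $\barxesc$ that you invoke follows from its very definition as a supremum, not from \eqref{control_escape}).
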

\begin{proof}
First here are some considerations about the way the various parameters will be chosen for the relaxation scheme; according to these considerations, \cref{fig:pos_trav_fr} displays the expected positions of the various relevant points in the travelling frame. 
\begin{itemize}
\item The position $\xInit$ of the origin of the travelling frame at time $\tInit$ will always be chosen equal to the position $\xesc(\tInit)$ of the escape point at this time. 
\item The time $\tInit$ will be chosen large, so that $\xiHom(0)$ is large positive, and $\xiHom(s)\ge \xiHom(0) - cs$ remains large positive for the values of $s$ under consideration.
\item The initial position $\initCut$ of the cut-off will be chosen either equal to $0$, or large positive (thus in both cases nonnegative). 
\item The speed $c$ will be chosen close in the interval $(0,\barcescsup)$ and very close to the upper bound of this interval. As a consequence, the point $\xiesc(s)$ (the ``escape point'' viewed in the travelling frame) is expected to remain, for most of values of $s$, and for sure for $s$ large positive, at the left of the ``cut-off'' point $\initCut + \cCut s$, since this cut-off point travels to the right at a definite nonzero speed $\cCut$ in the travelling frame. 
\end{itemize}
These considerations lead us to bound from above the integrand $\psi(\xi,s)$ in the expression of $\Gback(s)$ and $\Gfront(s)$ by the following quantity:
\[
\begin{aligned}
\exp\bigl( (c+\kappa) \, \xi - \kappa(\initCut + \cCut\,  s) \bigr)
& \quad\text{for}\quad
\Gback(s) \,, \\
\text{and}\quad
\exp\bigl( (c+\kappa) (\initCut + \cCut \, s) - \kappa \, \xi\bigr)
& \quad\text{for}\quad
\Gfront(s)
\,.
\end{aligned}
\]

First let us consider the quantity $\Gback(s)$. By explicit calculation,
\[
\begin{aligned}
\Gback(s) &\le \frac{1}{c+\kappa}\exp\bigl( (c+\kappa) \xiesc(s) - \kappa \initCut - \kappa\, \cCut s\bigr) \\
&\le \frac{1}{\kappa} \exp\bigl( (c+\kappa) \, \xiesc(s) - \kappa \, \initCut - \kappa\, \cCut s\bigr)
\,.
\end{aligned}
\]
According to the definition of $\xiesc(\cdot)$ and $\barxesc(\cdot)$ and provided that $\xInit = \xesc(\tInit)$, for all $s$ in $[0,+\infty)$, 
\[
\xiesc(s) \le  \barxesc(s) - cs
\,.
\]
It follows that
\begin{align}
\nonumber
\Gback(s) &\le \frac{1}{\kappa} \exp(-\kappa \, \initCut) \exp\Bigl( (c+\kappa)\bigl(\barxesc(s) - cs\bigr) - \kappa\, \cCut s \Bigr) \\
\label{upp_bd_Gback_prel}
&\le \frac{1}{\kappa} \exp(-\kappa \, \initCut)  \exp\Bigl( (c+\kappa)\bigl(\barxesc(s) - c s\bigr)-\frac{\kappa\, \cCut}{2}s\Bigr) \exp\Big( -\frac{\kappa\, \cCut}{2}s\Bigl)
\,.
\end{align}
Let us us denote by $\beta(s)$ the argument of the second exponential of the right-hand side of this last inequality:
\[
\begin{aligned}
\beta(s) &= (c+\kappa)\bigl(\barxesc(s) - c s\bigr)-\frac{\kappa\, \cCut}{2}s \\
&= (c+\kappa)\bigl(\barxesc(s) - \barcescsup \, s \bigr)+ \Bigl( (c+\kappa)(\barcescsup -c) -\frac{\kappa\, \cCut}{2} \Bigr) s 
\,.
\end{aligned}
\]
According to hypothesis \cref{hyp_c_close_to_barcescsup} above, the following inequality holds:
\[
(c+\kappa)(\barcescsup -c) \le \frac{\kappa\, \cCut}{4}
\,,
\]
thus, for all $s$ in $[0,+\infty)$, 
\[
\beta(s) \le (c+\kappa)\bigl(\barxesc(s)- \barcescsup \, s \bigr) -\frac{\kappa\, \cCut}{4} s 
\,,
\]
and thus, according to the definition of $\barcescsup$ this quantity $\beta(s)$ goes to $-\infty$ as $s$ goes to $+\infty$. It follows from the definition of $\barxesc(\cdot)$ that $\beta(0)$ equals $0$, and, for all $s$ in $(0,+\infty)$, it follows from the last inequality that
\[
\beta(s)>0 \implies \barxesc(s)- \barcescsup \, s >0
\,.
\]
Thus, the following (nonnegative) quantity:
\[
\bar{\beta}[u_0] = \sup_{s\ge0} \ (\cnoesc+\kappa)\bigl(\barxesc(s)- \barcescsup \, s \bigr) -\frac{\kappa\, \cCut}{4} s 
\,,
\]
is an upper bound for all the values of $\beta(s)$, for all $s$ in $[0,+\infty)$. This quantity depends on $V$ and on the function $x\mapsto \barxesc(s)$, in other words on the initial condition $u_0$, but not on the parameters $\tInit$ and $c$ and $\initCut$ of the relaxation scheme. Let
\[
K[u_0] = \frac{1}{\kappa} \exp\bigl( \bar{\beta}[u_0] \bigr)
\,;
\]
with this notation, the upper bound \cref{up_bd_G_back} on $\Gback(s)$ follows from inequality \cref{upp_bd_Gback_prel}.

Now let us consider the second quantity $\Gfront(s)$. The control that will be required on this quantity is simpler, since it only relies on the value of $\xiHom(0)$, which can be assumed to be arbitrarily large positive if $\tInit$ is large enough positive. Since $\xHom'(\cdot)$ is nonnegative (see \vref{hyp_xHom_prime_pos}), for all $s$ in $[0,+\infty)$, 
\[
\xiHom'(s) \ge -c 
\quad\text{thus}\quad
\xiHom(s) \ge \xiHom(0) - cs
\,.
\]
By explicit calculation, it follows from the upper bound on the integrand of $\Gfront(s)$ that
\[
\Gfront(s) \le \frac{1}{\kappa} \exp\bigl( (c+\kappa)\,  \initCut \bigr) \exp\Bigl( \bigl((c+\kappa) \, \cCut + \kappa \,  c\bigr) s\Bigr) \exp \bigl( -\kappa \, \xiHom(0)\bigr)
\,,
\]
and inequality \cref{up_bd_G_front} on $\Gfront(s)$ follows. \Cref{lem:bound_Gback_Gfront} is proved.
\end{proof}
\subsubsection{Relaxation scheme inequality, final}
\label{subsubsec:fin_relax}
Let us introduce the quantity
\[
\KGback[u_0] = \frac{2 \, \KEF \, \KF \, K[u_0]}{ \nuF \, \kappa \cCut }
\,,
\]
and, for every nonnegative quantity $s$, the quantity
\[
\KGfront (s) = \frac{\KEF \, \KF}{\nuF \, \kappa \, (\cnoesc + \kappa) (\cCut + \kappa)}
\exp \bigl( (\cnoesc + \kappa) (\cCut + \kappa)  s \bigr)
\,.
\]
Then, according to inequalities \cref{up_bd_G_back,up_bd_G_front} of \cref{lem:bound_Gback_Gfront}, the ``relaxation scheme'' inequality \vref{relax_scheme_ggg} can be rewritten under the following (final) form:
\begin{equation}
\label{relax_scheme_final}
\begin{aligned}
\frac{1}{2} \int_0^{\sFin} \ddd(s) \, ds \le & \ \eee(0) - \eee(\sFin) + \frac{\KEF}{\nuF} \fff(0) 
+ \KGback[u_0] \exp(-\kappa \, \initCut) \\
& + \KGfront (\sFin)\exp\bigl( (\cnoesc+\kappa)\,  \initCut \bigr) \exp \bigl( -\kappa \, \xiHom(0)\bigr)
\,.
\end{aligned}
\end{equation}
In order to derive from this estimate useful information (a nice upper bound on the dissipation integral, stating that this dissipation integral is bounded or even small), the following conditions should be fulfilled:
\begin{itemize}
\item the ``initial'' value $\eee(0)$ of the localized energy should be bounded from above;
\item the ``final'' value $\eee(\sFin)$ of the localized energy should be bounded from below (or, better, close to $\eee(0)$);
\item the ``initial'' value $\fff(0)$ of the firewall function should be small (or at least\\ bounded);
\item the ``initial'' position $\initCut$ of the ``cut-off point'' should be large positive;
\item the ``initial'' position $\xiHom(0)$ of the ``homogeneous point'' should be large positive, the condition on its side depending on $\initCut$ and $\sFin$. 
\end{itemize}
\subsection{Convergence of the mean invasion speed}
\label{subsec:cv_mean_inv_vel}
\begin{remark}
For the remaining of this \namecref{sec:inv_impl_cv}, the notation $V^\dag$ and $u^\dag$ introduced in \cref{subsubsec:normalized_potential_solution} does not provide any clear benefit with respect to the initial potential $V$ and initial solution $u$. As a consequence, only the objects $V$ and $u$ will be considered. 
\end{remark}
\subsubsection{Statement and set-up of the proof}
%
The aim of this \namecref{subsec:cv_mean_inv_vel} is to prove the following proposition. 
\begin{proposition}[mean invasion speed]
\label{prop:cv_mean_inv_vel}
The following equalities hold:
\[
\cescinf = \cescsup = \barcescsup
\,.
\]
\end{proposition}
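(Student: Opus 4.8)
The chain $\underbarcescinf\le\cescinf\le\cescsup\le\barcescsup$ recalled just before the statement shows that it suffices to prove the single reverse inequality $\barcescsup\le\cescinf$, whence all four quantities coincide. I would obtain it by an energy/compactness argument in the spirit of \cite{Risler_globCVTravFronts_2008}, based on the relaxation scheme of \cref{subsec:relax_sch_tr_fr}. Two elementary preparatory remarks: the function $s\mapsto\barxesc(s)$ is real-valued (bounded above by $\cnoesc\,s$, cf.\ \cref{various_bounds_x_esc}) and subadditive, since $\xesc(r+s+\sigma)-\xesc(r)=\bigl(\xesc(r+s+\sigma)-\xesc(r+s)\bigr)+\bigl(\xesc(r+s)-\xesc(r)\bigr)\le\barxesc(\sigma)+\barxesc(s)$ for all $r,s,\sigma\ge0$; Fekete's lemma then gives $\barcescsup=\lim_{s\to\infty}\barxesc(s)/s=\inf_{s>0}\barxesc(s)/s$, so that in particular $\barxesc(s)\ge\barcescsup\,s$ for every $s$. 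Symmetrically $s\mapsto\underbarxesc(s)$ is superadditive, whence $\underbarcescinf=\sup_{s>0}\underbarxesc(s)/s$ and $\underbarxesc(s)\le\underbarcescinf\,s\le\cescinf\,s$ for every $s$; in particular, for every $\varepsilon>0$ and every length $\sigma$ there is an initial time $\tInit$ with $\xesc(\tInit+\sigma)-\xesc(\tInit)\le(\cescinf+\varepsilon)\sigma$ --- I shall call such an interval \emph{slow}. The plan is then to argue by contradiction: assuming $\cescinf<\barcescsup$, fix a frame speed $c$ in $(\cescinf,\barcescsup)$ close enough to $\barcescsup$ for the technical hypothesis \cref{hyp_c_close_to_barcescsup} to hold, run the relaxation scheme on slow intervals, and deduce that $\Phi_c(m)\ne\emptyset$; since this would hold for every $c$ in a nonempty open interval, it contradicts \hypDiscVel.

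On a slow interval $[\tInit,\tInit+\sFin]$ I would run the scheme of \cref{subsec:relax_sch_tr_fr} with this speed $c$, with $\xInit=\xesc(\tInit)$, with $\initCut$ fixed (large, but independent of $\sFin$), and with $\tInit$ taken very large, so that $\yHom(0)=\xHom(\tInit)-\xesc(\tInit)$ is as large as wanted (possible by \cref{xHom_minus_xesc} and \cref{hyp_xHom_prime_pos}). In the final relaxation inequality \cref{relax_scheme_final} the ``back'' pollution term is then controlled by \cref{lem:bound_Gback_Gfront} --- this is where \cref{hyp_c_close_to_barcescsup} is used --- and the ``front'' pollution term is controlled by the largeness of $\yHom(0)$ relative to $\sFin$ and $\initCut$. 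Moreover, since the interval is slow, $\yesc(\sFin)=\xesc(\tInit+\sFin)-\xesc(\tInit)-c\sFin\le(\cescinf+\varepsilon-c)\sFin\to-\infty$, so at the final time the escape point lies far to the left of the travelling frame, the weight $\chi(\cdot,\sFin)$ is exponentially small there, and the possibly negative potential carried behind the escape point contributes negligibly; hence $\eee(\sFin)$ is bounded below, uniformly in $\sFin$. Since $\eee(0)$ is bounded above (a routine estimate using the a priori bounds \cref{hyp_attr_ball}, the fact that $m$ is a strict minimum of $V$, and the decay of $\chi(\cdot,0)$ beyond $\initCut$), inequality \cref{relax_scheme_final} reduces to $\tfrac12\int_0^{\sFin}\ddd(s)\,ds\le C$ with $C$ independent of the interval, so the mean dissipation $\tfrac1{\sFin}\int_0^{\sFin}\ddd(s)\,ds$ tends to $0$ as $\sFin\to\infty$.

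From there I would pick a time $s_\star$ at which $\ddd(s_\star)$ is small and at which the escape point is still ``in view'' of the frame (see below), use \cref{lem:bd_deriv_dissip} to make $\ddd$ uniformly small on a whole unit interval around $s_\star$, and apply the compactness \cref{lem:compact} to the solution recentred near the escape point at time $\tInit+s_\star\to+\infty$. The limit profile $u_\infty$ then satisfies $\tilde u_\infty+c\,u_\infty'=0$ (from $v_s\to0$) together with $\tilde u_\infty=-\nabla V(u_\infty)+u_\infty''$, i.e.\ $u_\infty''=-c\,u_\infty'+\nabla V(u_\infty)$: it is the profile of a wave travelling at speed $c$ (cf.\ \cref{syst_trav_front}). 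By the way the escape point is defined it escapes at distance $\dEsc$ from $m$ near the origin, hence is nonconstant, it is close to $m$ to the right, and --- by \hypOnlyBist and the monotonicity of the Hamiltonian of the oscillator \cref{syst_trav_front} --- it belongs to $\Phi_c(m',m)$ for some $m'$ in $\mmm$ with $V(m')<V(m)$. Thus $\Phi_c(m)\ne\emptyset$ for every admissible $c$, which contradicts \hypDiscVel and yields $\barcescsup\le\cescinf$.

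The two places where I expect real work are the following. First, the orchestration of the parameter choices in the second step: one needs, for \emph{arbitrarily large} lengths $\sFin$, a slow interval whose initial time $\tInit$ is nonetheless large enough --- roughly, comparable to $\sFin$ --- for $\yHom(0)$ to dominate the $\sFin$-dependent front-pollution term in \cref{relax_scheme_final}; this is more delicate than the crude superadditivity argument of the first paragraph, and presumably requires working along the $\liminf$ subsequence of times together with a careful accounting (via the bounded-growth bound \cref{control_escape}) that the escape point cannot have been far ahead at the common initial time. Second, the choice of the recentring time $s_\star$: one needs $\ddd(s_\star)$ small \emph{and} $\yesc(s_\star)$ not too negative, so that the weight $\chi(\cdot,s_\star)$ is not exponentially small at the escape point --- this is what converts smallness of the dissipation integral into pointwise smallness of $v_s$ there and makes the limit a genuine, normalized, nonconstant front. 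Once these are settled, the remainder is a repackaging of the estimates already assembled in \cref{subsec:relax_sch_tr_fr,subsec:compactness}; the present situation is harder than the single-front case of \cite{Risler_globCVTravFronts_2008} precisely because of the ``following front'' phenomenon that forces the use of the lower bound $\underbarxesc(\cdot)$ and of \hypDiscVel.
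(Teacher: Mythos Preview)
Your overall strategy --- contradiction via the relaxation scheme, with \hypDiscVel as the key input --- is the right one, and several of your preparatory observations (subadditivity of $\barxesc$, Fekete) are correct and pleasant. However, your implementation inverts the paper's in a way that creates the very difficulty you flag at the end, and I do not see how to close it along your lines.

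The paper reverses the roles of the two main ingredients. Instead of choosing an arbitrary $c\in(\cescinf,\barcescsup)$ and \emph{concluding} $\Phi_c(m)\ne\emptyset$, it uses \hypDiscVel at the outset to pick $c\in(\cescinf,\barcescsup)$ with $\Phi_c(m)=\emptyset$ (and close enough to $\barcescsup$ for \cref{hyp_c_close_to_barcescsup}). From the emptiness of $\Phi_c(m)$, a compactness argument (\cref{lem:dissip_no_tf_vel} --- the contrapositive of your front-extraction step) yields a \emph{uniform lower bound} $\epsDissip>0$ on the unweighted dissipation around the escape point, valid at all times.

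Crucially, the paper does \emph{not} work on slow intervals where $\yesc$ drifts to the left. It does the opposite: starting from intervals on which $\yesc$ makes a large excursion to the right and later returns (\cref{lem:first_attempt_time_int}, which uses $c<\barcescsup$ for the excursion and $c>\cescinf$ for the return), it extracts sub-intervals $[t'_p,t'_p+s'_p]$ of \emph{controlled length} $\tau\le s'_p\le 2\tau$ on which $\yesc(s)\ge0$ for all $s\in[0,\tau]$ and $\yesc(s'_p)\le 1$ (\cref{lem:sec_attempt_time_int}). On such an interval the weight $\chi(\cdot,s)$ at the escape point is at least $1$, so the weighted dissipation satisfies $\ddd(s)\ge e^{-cL}\epsDissip$ for $s\in[s_0,\tau]$, giving $\int_0^{s'_p}\ddd\ge(\tau-s_0)e^{-cL}\epsDissip\to\infty$; meanwhile $\eee(0)$, $\fff(0)$ are bounded above and $\eee(s'_p)$ is bounded below (since $\yesc(s'_p)\le1$), and --- because the length is at most $2\tau$ --- the front-pollution term is killed by taking $p$ (hence $\yHom(0)$) large \emph{after} $\tau$ is fixed. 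This contradicts \cref{relax_scheme_final} directly.

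Your slow-interval approach runs into exactly the obstruction you diagnose: when $\yesc(s)$ is to the left, $\chi\bigl(\yesc(s),s\bigr)=e^{c\,\yesc(s)}$ is exponentially small, so the bound $\int_0^{\sFin}\ddd\le C$ says nothing about the unweighted dissipation at the escape point, and there is no a priori reason that the set $\{s:\yesc(s)\ge -L\}$ occupies a positive proportion of a slow interval. The fix is not to search for a recentring time $s_\star$ on a slow interval, but to change the class of intervals so that the escape point is kept in view throughout --- which is precisely what \cref{lem:first_attempt_time_int} and \cref{lem:sec_attempt_time_int} accomplish. Once you do that, the cleaner route is to spend \hypDiscVel up front and get a uniform dissipation floor, rather than to construct a front for every $c$ at the end.

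Two minor points. First, your claim that $\barcescsup\le\cescinf$ would make ``all four quantities coincide'' overreaches: it gives $\cescinf=\cescsup=\barcescsup$ but not $\underbarcescinf$, which is the content of the separate \cref{prop:further_control}. Second, taking $\initCut$ large serves no purpose here (the paper takes $\initCut=0$); it shrinks the back-pollution term but does not help bound $\eee(0)$, and a fixed bound is all that is needed.
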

\begin{proof}[Set-up for the proof of \cref{prop:cv_mean_inv_vel}]
\renewcommand{\qedsymbol}{}
Let us proceed by contradiction and assume that
\[
\cescinf < \barcescsup
\,.
\]
Then, let us take and fix a positive quantity $c$ satisfying the following conditions:
\begin{equation}
\label{hyp_c_cv_mean_vel}
\cescinf < c < \barcescsup \le c + \frac{\kappa \cCut}{4(\cnoesc+\kappa)}
\quad\text{and}\quad
\Phi_c(m) = \emptyset
\,.
\end{equation}
The first condition is satisfied as soon as $c$ is smaller than and close enough to $\barcescsup$, thus existence of a quantity $c$ satisfying the two conditions follows from hypothesis \textup{(\hyperlink{hypDiscVel}{\hypDiscVelRef})}.

The contradiction will follow from the relaxation scheme set up in \cref{subsec:relax_sch_tr_fr}. The main ingredient is: since the set $\Phi_c(m)$ is empty, some dissipation must occur permanently around the escape point in a referential travelling at the speed $c$. This is stated by the following lemma. 
\end{proof} 
\subsubsection{Nonzero dissipation around escape point}
\begin{lemma}[nonzero dissipation around escape point due to the absence of travelling front]
\label{lem:dissip_no_tf_vel}
There exist positive quantities $L$ and $\epsDissip$ such that, for every large enough positive time $t$, 
\begin{equation}
\label{dissip_no_tf_vel}
\norm{\xi\mapsto u_t\bigl( \xesc (t) + \xi, t\bigr) + c u_x \bigl( \xesc (t) + \xi, t\bigr) }_{L^2([-L,L],\rr^d)} \ge \epsDissip
\,.
\end{equation}
\end{lemma}
\begin{remark}
This statement is actually still true without the condition ``$t$ large enough positive'', which is assumed only to fit with \cref{lem:compactness} (compactness).
\end{remark}
\begin{proof}[Proof of \cref{lem:dissip_no_tf_vel}]
Let us proceed by contradiction and assume that the converse is true. Then, there exists a sequence $(t_n)_{n\in\nn}$ of positive quantities going to $+\infty$ as $n\to+\infty$ such that, 
for every positive integer $n$, 
\begin{equation}
\label{proof_lem_dissip}
\norm{\xi\mapsto u_t\bigl( \xesc (t_n) + \xi, t_n \bigr) + c u_x \bigl( \xesc (t_n) + \xi, t_n \bigr) }_{L^2([-n,n],\rr^d)} \le \frac{1}{n}
\,.
\end{equation}
By compactness (\cref{lem:compactness}), up to replacing the sequence $(t_n)_{n\in\nn}$ by a subsequence, there exists an entire solution $\widebar{u}$ of system \cref{init_syst} such that, with the notation of \cref{compactness},
\[
D^{2,1}u\bigl( \xesc (t_n) + \cdot, t_n + \cdot \bigr)\to D^{2,1}\widebar{u}
\quad\text{as}\quad 
n\to+\infty
\,,
\]
uniformly on every compact subset of $\rr^2$. According to hypothesis \cref{proof_lem_dissip}, the function $\xi\mapsto\widebar{u}_t(\xi,0) + c \widebar{u}_x(\xi,0)$ vanishes identically, so that the function $\xi\mapsto \widebar{u}(\xi,0)$ is a solution of system \vref{syst_trav_front} governing the profiles of waves travelling at the speed $c$ for system \cref{init_syst}. According to the properties of the escape point \vref{xEsc_xesc_xHom,xHom_minus_xesc}, 
\[
\sup_{\xi\in[0,+\infty)} \abs{\widebar{u}(\xi,0)-m} \le \dEsc(m)
\,,
\]
thus it follows from assertion \cref{item:cv_spatial_asymptotics_tw} of \vref{lem:asympt_behav_tw_2} that $\widebar{u}(\xi,0)$ goes to $m$ as $\xi$ goes to $+\infty$. On the other hand, according to the bound \cref{attr_ball_infty_inv_implies_cv} on the solution, $\abs{\widebar{u}(\cdot,0)}$ is bounded (by $\Rattinfty$), and since $\Phi_c(m)$ is empty, it follows from hypothesis \textup{(\hyperlink{hypOnlyBist}{\hypOnlyBistRef})} that $\widebar{u}(\cdot,0)$ must be identically equal to $m$, a contradiction with the definition of $\xesc(\cdot)$. 
\end{proof}
\subsubsection{Time interval and origin of space for the relaxation scheme}
%
The next step is the choice of the time interval and the travelling frame (at the speed $c$) where the relaxation scheme will be applied. Those should display the following features:
\begin{enumerate}
\item the escape point should not go far to the left (or else, due to the exponential weight, no lower bound on the dissipation would hold);
\item at the end of the time interval the escape point should not be far to the right, so that the final value of the localized energy be bounded from below;
\item the length of the time interval should be large enough so that a sufficient dissipation to occur;
\item for a given length of that time interval, its lower bound should be chosen arbitrarily large positive, in order to control the ``front'' flux term involving the quantity $\KGfront$ in the relaxation scheme final inequality \vref{relax_scheme_final}. 
\end{enumerate}
\paragraph{Large excursions to the right and returns for the escape point.}
The following lemma is a first attempt to find such a time interval (see \cref{fig:first_attempt_time_int}).
\begin{figure}[!htbp]
\centering
\includegraphics[width=0.75\textwidth]{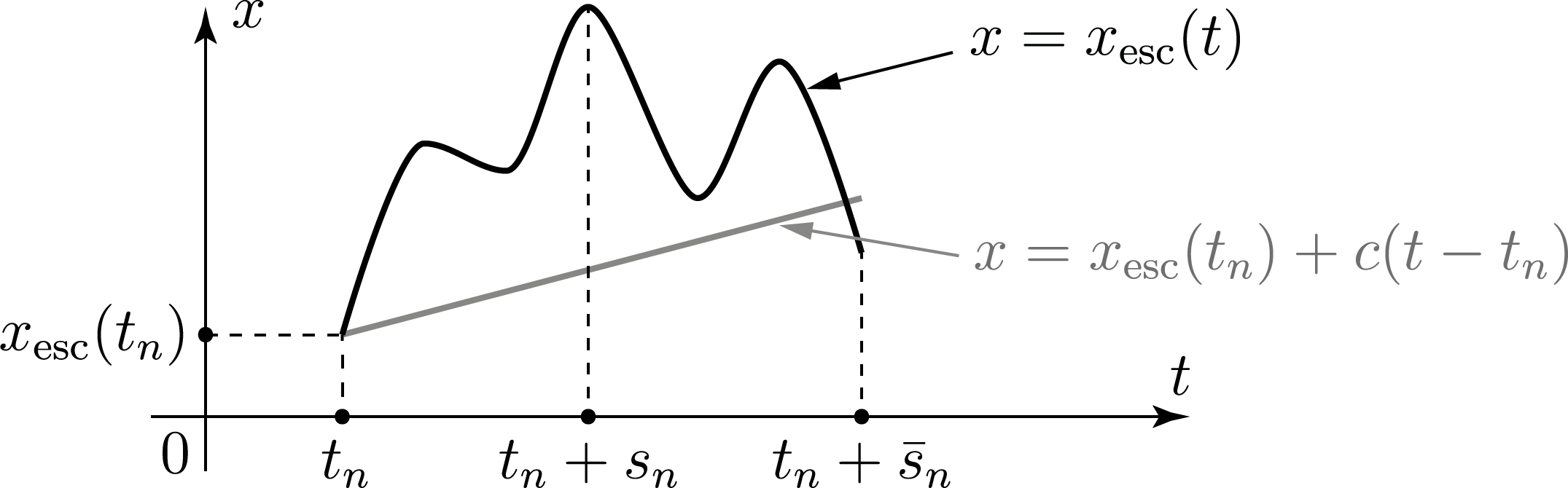}
\caption{Illustration of \cref{lem:first_attempt_time_int}.}
\label{fig:first_attempt_time_int}
\end{figure}
\begin{lemma}[large excursions to the right and returns for the escape point in travelling frame]
\label{lem:first_attempt_time_int}
There exist sequences $(t_n)_{n\in\nn}$ and $(s_n)_{n\in\nn}$ and $(\bar{s}_n)_{n\in\nn}$ of real quantities such that the following properties hold.
\begin{enumerate}
\item For every $n$ in $\nn$, the following inequalities hold: $0\le t_n$ and $0 \le s_n \le \bar{s}_n$\,;
\label{item:item_one_lem_first_attempt_time_int}
\item $s_n\to+\infty$ and $\xesc(t_n+s_n) - \xesc(t_n) - c s_n \to +\infty$ as $n$ goes to $+\infty$\,;
\label{item:item_two_lem_first_attempt_time_int}
\item For every $n$ in $\nn$, the following inequality holds: $\xesc(t_n+\bar{s}_n) - \xesc(t_n) - c \bar{s}_n \le 0$\,.
\label{item:item_three_lem_first_attempt_time_int}
\end{enumerate}
\end{lemma}
\begin{proof}[Proof of \cref{lem:first_attempt_time_int}]
According to the definition of $\barcescsup$, there exists a sequence \\
$(s_n)_{n\in\nn}$ of positive real quantities, satisfying
\[
s_n\to +\infty
\quad\text{and}\quad
\frac{\barxesc(s_n)}{s_n} \to \barcescsup
\quad\text{as}\quad
n\to +\infty
\,.
\]
Then, according to the definition \vref{def_bar_underbar_xesc} of $\barxesc(\cdot)$, for every $n$ in $\nn$ there exists a nonnegative quantity $t_n$ such that
\[
\xesc(t_n + s_n) - \xesc(t_n) \ge \barxesc(s_n) - 1
\,.
\]
Then,
\[
\frac{\xesc(t_n + s_n) - \xesc(t_n) - c s_n}{s_n} \ge \frac{\barxesc(s_n) - 1 - c s_n}{s_n} \xrightarrow[n\to +\infty]{}  \barcescsup - c > 0
\]
thus
\[
\xesc(t_n + s_n) - \xesc(t_n) - c s_n\to +\infty
\quad\text{as}\quad
n\to +\infty
\,,
\]
which proves property \cref{item:item_two_lem_first_attempt_time_int}.
On the other hand, for every $n$ in $\nn$, 
\[
\liminf_{s\to+\infty} \frac{\xesc(t_n+s) - \xesc(t_n) - c s }{s} = \cescinf -c <0 
\,,
\]
thus
\[
\liminf_{s\to+\infty} \xesc(t_n+s) - \xesc(t_n) - c s = -\infty
\,,
\]
and thus there exists $\bar{s}_n$ greater than $s_n$ such that 
\[
\xesc(t_n+\bar{s}_n) - \xesc(t_n) - c \bar{s}_n \le 0
\,,
\]
which proves property \cref{item:item_three_lem_first_attempt_time_int}. According to how $t_n$ and $s_n$ and $\bar{s}_n$ where chosen, property \cref{item:item_one_lem_first_attempt_time_int} also holds. \Cref{lem:first_attempt_time_int} is proved.
\end{proof}
\paragraph{Time intervals of controlled length.}
Intervals $[t_n,t_n+\bar{s}_n]$ defined by the previous lemma are not convenient to apply the relaxation scheme because their length is arbitrary large (thus the ``front'' flux term involving the quantity $\KGfront$ in the relaxation scheme final inequality \vref{relax_scheme_final} cannot be controlled on such time intervals). The following lemma provides another sequence of intervals (derived from the sequences defined in \cref{lem:first_attempt_time_int}) without this drawback (see \cref{fig:second_attempt_time_int}). 
\begin{figure}[!htbp]
\centering
\includegraphics[width=0.9\textwidth]{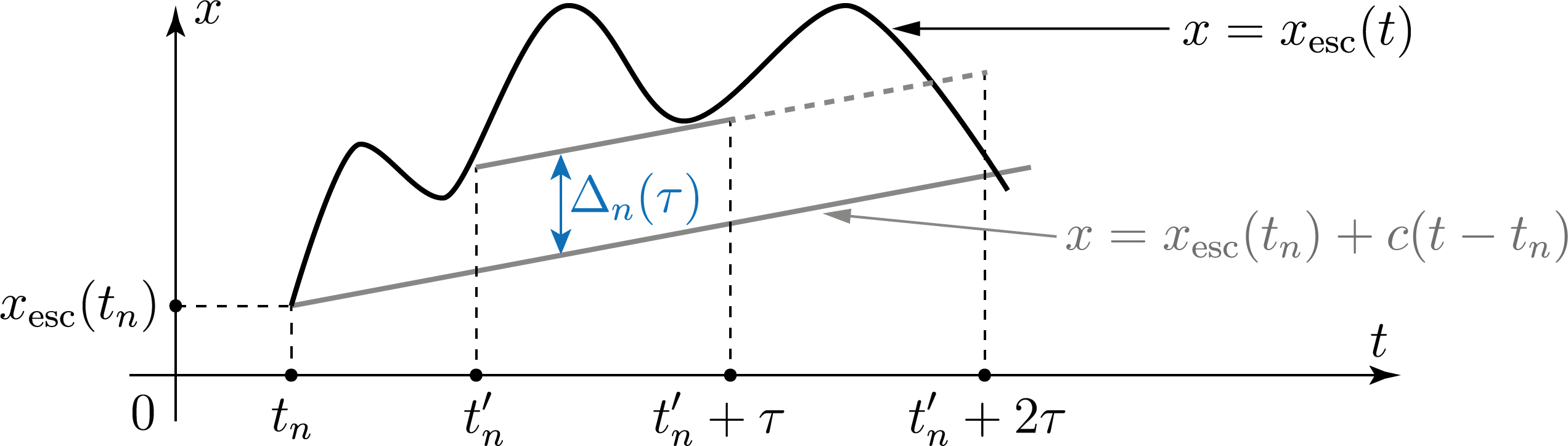}
\caption{Illustration of \cref{lem:sec_attempt_time_int}.}
\label{fig:second_attempt_time_int}
\end{figure}

Let $\tau$ denote a (large) positive quantity to be chosen below. This quantity will determine the length of the time intervals where the relaxation scheme will be applied (more precisely that length will be between $\tau$ and $2\tau$). The value of $\tau$ will be chosen large enough so that a sufficient amount of dissipation occurs during the relaxation scheme. 
\begin{lemma}[escape point remains to the right and ends up to the left in travelling frame, controlled duration]
\label{lem:sec_attempt_time_int}
There exist sequences $(t'_n)_{n\in\nn}$ and $(s'_n)_{n\in\nn}$ such that, for every $n$ in $\nn$ the following properties hold:
\begin{enumerate}
\item $0\le t'_n$ and $\tau \le s'_n \le 2\tau$\,,
\label{item:item_one_lem_sec_attempt_time_int}
\item for all $s$ in $[0,\tau]$, the following inequality holds: $\xesc(t'_n + s) - \xesc(t'_n) - cs \ge 0$\,,
\label{item:item_two_lem_sec_attempt_time_int}
\item $\xesc(t'_n + s'_n) - \xesc(t'_n) - c s'_n \le 1$\,,
\label{item:item_three_lem_sec_attempt_time_int}
\end{enumerate}
and such that
\[
t'_n\to+\infty
\quad\text{as}\quad
n\to +\infty
\,.
\]
\end{lemma}
\begin{proof}[Proof of \cref{lem:sec_attempt_time_int}]
For every $n$ in $\nn$ let us introduce the set:
\[
\begin{aligned}
\bigl\{ \Delta \in [0,+\infty) : \ & \text{there exists } t \text{ in } [t_n,t_n + \bar{s}_n] \text{ such that, for all } s \text{ in } [0,\tau], \\
& \xesc(t+s) - \xesc(t_n) - c (t+s-t_n) \ge \Delta \bigr\}
\,,
\end{aligned}
\]
and let us denote by $\Delta_n(\tau)$ the supremum of this set (with the convention that $\Delta_n(\tau)$ equals $-\infty$ if this set is empty). 

First let us prove that $\Delta_n(\tau)$ goes to $+\infty$ as $n$ goes to $+\infty$. For this purpose, observe that, according to the control on the growth of $\xesc(\cdot)$ \vref{control_escape}, for every nonnegative integer $n$ and for all $s$ in $[0,\tau]$, 
\[
\xesc(t_n + s_n -\tau + s ) \ge \xesc ( t_n + s_n) - \cnoesc (\tau - s)
\,,
\]
and as a consequence,
\[
\begin{aligned}
\xesc(t_n + s_n -\tau + s ) - \xesc (t_n) & - c (s_n -  \tau + s ) \\
&\ge \xesc(t_n + s_n) - \xesc (t_n)  - c s_n - ( \cnoesc -c) (\tau - s) \\
&\ge \xesc(t_n + s_n) - \xesc (t_n)  - c s_n - ( \cnoesc -c) \tau
\,,
\end{aligned}
\]
and according to \cref{lem:first_attempt_time_int} this last quantity goes to $+\infty$ as $n$ goes to $+\infty$. This shows that $\Delta_n(\tau)$ goes to $+\infty$ as $n$ goes to $+\infty$. Up to replacing the sequence $(t_n,s_n,\bar{s}_n)_{n\in\nn}$ by a subsequence, let us assume that, for every nonnegative integer $n$ the quantity $\Delta_n(\tau)$ is not smaller than $1$.

Then, for every nonnegative integer $n$, according to the definition of $\Delta_n(\tau)$, the set
\[
\Bigl\{ t\in[t_n,t_n+\bar{s}_n] : \text{ for all } s \text{ in } [0,\tau], 
\xesc(t+s)-\xesc(t_n) - c (t+s-t_n) \ge \Delta_n(\tau) -\frac{1}{2} \Bigr\}
\]
is nonempty. Let $t'_n$ denote the infimum of this set. Then $t'_n$ is greater than $t_n$, and, according to the control on the growth of $\xesc(\cdot)$ \vref{control_escape}, 
\begin{equation}
\label{t_prime_starts_at_center}
\xesc(t'_n) - \xesc(t_n) - c (t'_n-t_n) = \Delta_n(\tau) - \frac{1}{2}
\,,
\end{equation}
and, for all $s$ in $[0,\tau]$, 
\begin{equation}
\label{t_prime_n_remains_to_the_right}
\xesc(t'_n+s) - \xesc(t_n) - c (t'_n+s-t_n) \ge \Delta_n(\tau) - \frac{1}{2}
\,.
\end{equation}
Since $t'_n$ is less than or equal to $t_n+\bar{s}_n$ and according to the last assertion of \cref{lem:first_attempt_time_int}, and since $\Delta_n(\tau) -1/2$ is positive, this shows that
\[
t'_n+ \tau \le t_n + \bar{s}_n
\,.
\]
As a consequence, according to the definition of $\Delta_n(\tau)$, there exists $s'_n$ in $[\tau,2\tau]$ such that
\begin{equation}
\label{t_prime_n_finishes_to_the_left}
\xesc(t'_n + s'_n) - \xesc(t_n) - c (t'_n + s'_n - t_n) \le \Delta_n(\tau) + \frac{1}{2}
\,.
\end{equation}
Finally, it follows from equality \cref{t_prime_starts_at_center} and inequality \cref{t_prime_n_remains_to_the_right} that, for all all $s$ in $[0,\tau]$, 
\[
\xesc(t'_n+s) - \xesc(t'_n) - cs \ge 0
\,,
\]
which proves property \cref{item:item_two_lem_sec_attempt_time_int}, and it follows from inequalities \cref{t_prime_n_remains_to_the_right,t_prime_n_finishes_to_the_left} that
\[
\xesc(t'_n + s'_n) - \xesc(t'_n) - c s'_n \le 1
\,,
\]
which proves property \cref{item:item_three_lem_sec_attempt_time_int}. According to how $t'_n$ and $s'_n$ where chosen, property \cref{item:item_one_lem_sec_attempt_time_int} also holds. 
In addition, according to equality \cref{t_prime_starts_at_center} and to the control on the growth of $\xesc(\cdot)$ \vref{control_escape}, since $\Delta_n(\tau)$ goes to $+\infty$ as $n$ goes to $+\infty$, 
\[
t'_n-t_n\to+\infty
\quad\text{as}\quad
n\to +\infty
\quad\text{thus}\quad
t'_n\to+\infty
\quad\text{as}\quad
n\to +\infty
\,.
\]
\Cref{lem:sec_attempt_time_int} is proved. 
\end{proof} 
Since $t'_n\to+\infty$ as $t\to+\infty$, we may assume, up to dropping the first terms of the sequence $(t'_n,s'_n)_{n\in\nn}$, that, for every $n$ in $\nn$ and for every time $t$ greater than or equal to $t'_n$, inequality \cref{dissip_no_tf_vel} of \cref{lem:dissip_no_tf_vel} holds.  
\subsubsection{Relaxation scheme}
%
\Cref{lem:sec_attempt_time_int} provides intervals $[t'_n,t'_n+s'_n]$ suitable to apply the relaxation scheme set up in \cref{subsec:relax_sch_tr_fr}. For every nonnegative integer $n$ this relaxation scheme is going to be applied for the following parameters: 
\[
\tInit = t'_n
\quad\text{and}\quad
\xInit = \xesc(\tInit)
\quad\text{and}\quad
c \quad\text{as chosen above, and}\quad
\initCut = 0 
\]
(the relaxation scheme thus depends on $n$). Observe that, according to hypothesis \vref{hyp_c_cv_mean_vel} for the speed $c$, both hypotheses \vref{hyp_param_relax_sch} and \vref{hyp_c_close_to_barcescsup} (required to apply this relaxation scheme) hold. 
Let us denote by 
\[
v^{(n)}(\cdot,\cdot)
\quad\text{and}\quad
\eee^{(n)}(\cdot)
\quad\text{and}\quad
\ddd^{(n)}(\cdot)
\quad\text{and}\quad
\fff^{(n)}(\cdot)
\quad\text{and}\quad
{\xiesc}^{(n)}(\cdot)
\quad\text{and}\quad
{\xiHom}^{(n)}(\cdot)
\]
the objects defined in \cref{subsec:relax_sch_tr_fr} (with the same notation except the ``$(n)$'' superscript that is here to remind that all these objects depend on the integer $n$). 
By definition the quantity ${\xiesc}^{(n)}(0)$ equals $0$, and according to the conclusions of \cref{lem:sec_attempt_time_int}, 
\[
{\xiesc}^{(n)}(s)\ge 0 \text{ for all }s\text{ in }[0,\tau]
\quad\text{and}\quad
{\xiesc}^{(n)}(s'_n) \le 1
\,.
\]
The next two lemmas will be shown to be in contradiction with the relaxation scheme final inequality \vref{relax_scheme_final} (see inequality \cref{relax_scheme_final_control_invasion} below), and this will complete the proof of \cref{prop:cv_mean_inv_vel}.
\paragraph{Bounds on energy and firewall at the ends of the relaxation scheme.}
\begin{lemma}[bounds on energy and firewall at the ends of the relaxation scheme]
\label{lem:claim_energy}
The quantities $\eee^{(n)}(0)$ and $\fff^{(n)}(0)$ are bounded from above and the quantity $\eee^{(n)}(s'_n)$ is bounded from below, and these bounds are uniform with respect to $\tau$ and $n$. 
\end{lemma}
\begin{proof}
The fact that $\eee^{(n)}(0)$ and $\fff^{(n)}(0)$ are bounded from above follows from the fact that $\initCut$ equals $0$ and from the bounds \vref{attr_ball_infty_inv_implies_cv,attr_ball_Hone_inv_implies_cv} for the solution. The fact that $\eee^{(n)}(s'_n)$ is bounded from below follows from the fact that ${\xiesc}^{(n)}(s'_n) \le 1$ and from the fact that, according to hypothesis \cref{hyp_coerc}, $V$ is bounded from below.
\end{proof}
\paragraph{Large dissipation integral.}
\begin{lemma}[large dissipation integral]
\label{lem:claim_dissip}
The quantity 
\[
\int_0^{s'_n} \ddd^{(n)}(s) \, ds
\]
goes to $+\infty$ as $\tau$ goes to $+\infty$, uniformly with respect to $n$. 
\end{lemma}

\begin{proof}
By definition of $\ddd^{(n)}(\cdot)$, for all $s$ in $[0,s'_n]$, 
\[
\ddd^{(n)}(s) \ge \int_{-\infty}^{\cCut s} e^{c\xi} \ v_s^{(n)}(\xi,s) ^2 \, d\xi
\,. 
\]
thus, for all $s$ in $[0,\tau]$, since ${\xiesc}^{(n)}(s)\ge 0$ (performing the change of variables $\xi = {\xiesc}^{(n)}(s) + \tilde{\xi}$), 
\[
\ddd^{(n)}(s) \ge \int_{-\infty}^{ \cCut s - {\xiesc}^{(n)}(s) } e^{c\tilde{\xi}} \ v_s^{(n)} \bigl( {\xiesc}^{(n)}(s) + \tilde{\xi} , s \bigr) ^2 \, d\tilde{\xi}
\,.
\]
For all $s$ in $[0,+\infty)$ and $\tilde{\xi}$ in $\rr$, 
\[
v_s^{(n)} \bigl( {\xiesc}^{(n)}(s) + \tilde{\xi} , s \bigr) = u_t \bigl(\xesc(t'_n +s) + \tilde{\xi}, t'_n + s \bigr) + c \, u_x \bigl(\xesc(t'_n +s) + \tilde{\xi}, t'_n + s \bigr)
\,,
\]
thus, according to inequality \cref{dissip_no_tf_vel} of \vref{lem:dissip_no_tf_vel}, there exist positive quantities $L$ and $\epsDissip$ such that, uniformly with respect to $n$ in $\nn$ and $s$ in $[0,+\infty)$, 
\begin{equation}
\label{non_zero_dissip_tr_fr}
\int_{-L}^L v_s^{(n)} \bigl( {\xiesc}^{(n)}(s) + \tilde{\xi} , s \bigr) ^2 \, d\tilde{\xi} \ge \epsDissip
\,.
\end{equation}
Observe that the condition \vref{hyp_c_cv_mean_vel} satisfied by $c$ implies that $\barcescsup - c$ is less than or equal to $\cCut/4$. Thus, since 
\[
{\xiesc}^{(n)}(s) \le \barxesc(s) -cs \quad\text{for all } s \text{ in } [0,+\infty)
\,,
\]
there exists a positive quantity $s_0$, depending only on $L$ and on the function $\barxesc(\cdot)$ such that, for every $s$ in $[s_0,+\infty)$, 
\[
\cCut s - {\xiesc}^{(n)}(s) \ge L
\,.
\]
It follows from inequality \cref{non_zero_dissip_tr_fr} that, for all $s$ in $[s_0,\tau]$, 
\[
\ddd^{(n)}(s) \ge e^{-cL} \epsDissip
\,,
\]
and finally,
\[
\int_0^{s'_n} \ddd^{(n)}(s) \, ds \ge (\tau-s_0) e^{-cL} \epsDissip
\,,
\]
and this finishes the proof of \cref{lem:claim_dissip}.
\end{proof}
\subsubsection{Contradiction and end of the proof of \texorpdfstring{\cref{prop:cv_mean_inv_vel}}{Proposition 4}}
%
\begin{proof}[End of the proof of \cref{prop:cv_mean_inv_vel}]
For every nonnegative integer $n$, the relaxation scheme final inequality \vref{relax_scheme_final} yields (since $\initCut$ equals $0$):
\begin{equation}
\label{relax_scheme_final_control_invasion}
\begin{aligned}
\frac{1}{2} \int_0^{s'_n} \ddd^{(n)}(s) \, ds \le & \ \eee^{(n)}(0) - \eee^{(n)}(s'_n) + \frac{\KEF}{\nuF} \fff^{(n)}(0) 
+ \KGback[u_0]  \\
& + \KGfront (2\tau) \exp \bigl( -\kappa \, {\xiHom}^{(n)}(0)\bigr)
\,.
\end{aligned}
\end{equation}
Since $t'_n$ goes to $+\infty$ as $n$ goes to $+\infty$, the quantity ${\xiHom}^{(n)}(0)$ also goes to $+\infty$ as $n$ goes to $+\infty$. Thus, according to \cref{lem:claim_energy}, the right-hand side of inequality \cref{relax_scheme_final_control_invasion} is bounded, uniformly with respect to $\tau$, provided that $n$ (depending on $\tau$) is large enough. This is contradictory to \cref{lem:claim_dissip}, and completes the proof of \vref{prop:cv_mean_inv_vel}. 
\end{proof}
\subsubsection{Definition of escape speed \texorpdfstring{$\cesc$}{cesc}}
%
According to \cref{prop:cv_mean_inv_vel}, the three quantities $\cescinf$ and $\cescsup$ and $\barcescsup$ are equal; let
\[
\cesc
\]
denote their common value.
\subsection{Further control on the escape point}
\label{subsec:further_control}
At this stage two cases can be distinguished. 
\begin{enumerate}
\item Either $\cesc$ is less than $\cHom$, and in this case the end of the proof of \cref{prop:inv_cv} ``invasion implies convergence'' is very similar to the end of the proof of the main result of \cite{Risler_globCVTravFronts_2008}. The sole difference is the presence of the additional ``front'' flux term coming from $\Gfront(\cdot)$. But if $\cesc < \cHom$ this term can be easily handled provided that the parameter $c$ involved in the relaxation scheme and the quantity $\cCut$ are chosen in such a way that
\[
c+\cCut < \cHom 
\]
(this is possible since $\cesc$ is less than $\cHom$). Then the flux terms due to $\Gfront(\cdot)$ decrease at an exponential rate, and can be made arbitrarily small provided that $\tInit$ is large enough. The details are skipped since this approach is anyway not sufficient in the case where $\cesc$ equals $\cHom$.
\item The other case $\cesc$ equals $\cHom$ is slightly more delicate, and taking this case into account will require the more precise control on the invasion point provided by \cref{prop:further_control} below. 
\end{enumerate}
The remaining of the proof of \cref{prop:inv_cv} covers both cases above, but is specifically designed to take into account the (more difficult) second case (again, in the first case $\cesc$ less than $\cHom$, adapting the proof of \cite{Risler_globCVTravFronts_2008} as sketched above would lead to a simpler proof). 

The aim of this \namecref{subsec:further_control} is to prove the following proposition that enforces the control on the behaviour of the ``escape'' point $\xesc(\cdot)$ (this will be used for the additional relaxation arguments carried on in the next \namecrefs{subsec:further_control}).
\begin{proposition}[mean invasion speed, further control]
\label{prop:further_control}
The following equality holds:
\[
\underbarcescinf = \cesc
\,.
\]
\end{proposition}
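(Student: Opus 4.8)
The plan is to argue by contradiction, following closely the scheme of the proof of \cref{prop:cv_mean_inv_vel}, of which the present statement is a refinement. Since the inequalities established above give $\underbarcescinf\le\cescinf=\cesc$, it suffices to rule out the strict inequality $\underbarcescinf<\cesc$.

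So assume $\underbarcescinf<\cesc$. Using hypothesis \hypDiscVel and the fact that $\barcescsup=\cesc$, I would fix a speed $c$ such that
\[
\underbarcescinf<c<\cesc
\quad\mbox{and}\quad
\barcescsup-\frac{\kappa\,\cCut}{4(\cnoesc+\kappa)}\le c
\quad\mbox{and}\quad
\Phi_c(0_{\rr^n})=\emptyset\,;
\]
the first two conditions hold as soon as $c$ is smaller than and close enough to $\cesc=\barcescsup$, and the third then holds for a suitable such $c$ because the set of speeds $c'$ with $\Phi_{c'}(0_{\rr^n})\neq\emptyset$ has empty interior. For this $c$ the statement of \cref{lem:dissip_no_tf_vel} still holds: its proof uses only that $\Phi_c(0_{\rr^n})$ is empty, together with the properties \cref{xEsc_xesc_xHom,xHom_minus_xesc} of the escape point and \cref{lem:asympt_behav_tw_1,lem:asympt_behav_tw_2}; hence there exist $L>0$ and $\epsDissip>0$ such that a definite amount of dissipation occurs, in the frame travelling at speed $c$, around the escape point, at every time.

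The heart of the argument is then, for every large parameter $\tau$, to produce sequences $t'_p\to+\infty$ and $s'_p\in[\tau,2\tau]$ for which the relaxation scheme of \cref{subsec:relax_sch_tr_fr}, run with $\tInit=t'_p$, $\xInit=\xesc(t'_p)$, $\initCut=0$ and the speed $c$ above, contradicts \cref{relax_scheme_final}. The mechanism is the mirror image of the one in \cref{lem:first_attempt_time_int,lem:sec_attempt_time_int}: since $\underbarcescinf<c$, the quantity $\xesc(t+s)-\xesc(t)-cs$ takes values tending to $-\infty$ along windows of length tending to $+\infty$ (the escape point ``retreats'' in the frame travelling at speed $c$); since $\cescinf=\cesc>c$, in any frame travelling at speed $c$ the escape point eventually drifts to $+\infty$, so it must ``recover'' after each such retreat; and since $\xesc$ grows at rate at most $\cnoesc$ (inequality \cref{control_escape}), this recovery cannot be arbitrarily fast. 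Exploiting the last time at which $\xesc(t+\cdot)-\xesc(t)-c\,(\cdot)$ lies below a fixed sublevel during such a recovery phase --- and the fact that these events occur at times tending to $+\infty$ --- I would extract intervals $[t'_p,t'_p+s'_p]$ on which, in the frame based at $t'_p$, the escape point ${\yesc}^{(p)}(\cdot)$ stays nonnegative on $[0,\tau]$, so that (as in the proof of \cref{lem:claim_dissip}, using \cref{lem:dissip_no_tf_vel} and the fact that $\chi=e^{cy}$ near ${\yesc}^{(p)}(s)$, which follows from ${\yesc}^{(p)}(s)\le\barxesc(s)-cs\le(\cesc-c)s+o(s)$ and the choice of $c$) one gets $\frac{1}{2}\int_0^{s'_p}\ddd^{(p)}(s)\,ds\ge c_0(\tau-s_0)$ for some positive $c_0,s_0$ independent of $\tau$ and $p$, while ${\yesc}^{(p)}(s'_p)$ stays bounded above, so that (as in \cref{lem:claim_energy}, since $\initCut=0$ and $V$ is bounded from below by \hypCoerc) $\eee^{(p)}(s'_p)$ is bounded from below uniformly in $\tau$ and $p$. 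The a priori bounds \cref{hyp_attr_ball} give uniform bounds on $\eee^{(p)}(0)$ and $\fff^{(p)}(0)$ (again since $\initCut=0$), and ${\yHom}^{(p)}(0)\to+\infty$ makes the ``front'' flux term $K_{\mathcal{G},\textrm{front}}(2\tau)\exp(-\kappa\,{\yHom}^{(p)}(0))$ negligible once $p$ is large for each fixed $\tau$; hence the right-hand side of \cref{relax_scheme_final} is bounded uniformly in $\tau$ --- a contradiction for $\tau$ large. Therefore $\underbarcescinf\ge\cesc$, and the proposition follows.

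The main obstacle is the extraction of these time intervals. One must control the escape point, seen in the frame based at $t'_p$ and travelling at speed $c$, on both sides at once: it must not drift too far to the left during $[0,\tau]$ (else the exponential weight $e^{cy}$ destroys the dissipation estimate of \cref{lem:dissip_no_tf_vel}) and it must not be too far to the right at the final time $s'_p$ (else the final energy $\eee^{(p)}(s'_p)$ is arbitrarily negative), while the duration $s'_p$ must remain comparable to $\tau$ and the starting time $t'_p$ must tend to $+\infty$. Achieving all of this simultaneously requires balancing the ``retreat'' forced by $\underbarcescinf<c$ against the ``recovery'' forced by $\cescinf=\cesc>c$, and a careful use of the one-sided growth bound \cref{control_escape}; this is where the construction of \cref{lem:first_attempt_time_int,lem:sec_attempt_time_int} must be adapted with the most care.
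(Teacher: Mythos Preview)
Your proposal is correct and follows essentially the same approach as the paper. The paper likewise argues by contradiction, fixes $c$ with $\underbarcescinf<c<\cesc$, $c\ge\cesc-\frac{\kappa\cCut}{4(\cnoesc+\kappa)}$, and $\Phi_c(0_{\rr^n})=\emptyset$, and then reproves the analogue of \cref{lem:first_attempt_time_int} by your ``retreat/recovery'' mechanism (using $\underbarcescinf<c$ to produce deep retreats and $\cescinf=\cesc>c$ to force a last crossing of level~$0$ before them), after which the remainder of the argument is literally identical to that of \cref{prop:cv_mean_inv_vel}.
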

\begin{proof}
\renewcommand{\qedsymbol}{}
The proof is rather similar to that of \cref{prop:cv_mean_inv_vel}. Recall that the quantity $\underbarcescinf$ is less than or equal to $\cesc$. Let us proceed by contradiction and assume that
\[
\underbarcescinf < \cesc
\,.
\]
Then, let us take and fix a positive quantity $c$ satisfying the following conditions:
\[
\underbarcescinf < c < \cesc < c + \cCut
\quad\text{and}\quad
c \ge \cesc - \frac{\kappa \cCut}{4(\cnoesc+\kappa)}
\quad\text{and}\quad
\Phi_c(m) = \emptyset
\,.
\]
The two first conditions are satisfied as soon as $c$ is smaller than and close enough to $\cesc$, thus existence of a quantity $c$ satisfying the three conditions follows from hypothesis \textup{(\hyperlink{hypDiscVel}{\hypDiscVelRef})}. The following lemma is identical to \vref{lem:first_attempt_time_int} (but the proof will be slightly different, see \cref{fig:fist_attempt_time_int_bis}).
 \begin{figure}[!htbp]
\centering
\includegraphics[width=0.9\textwidth]{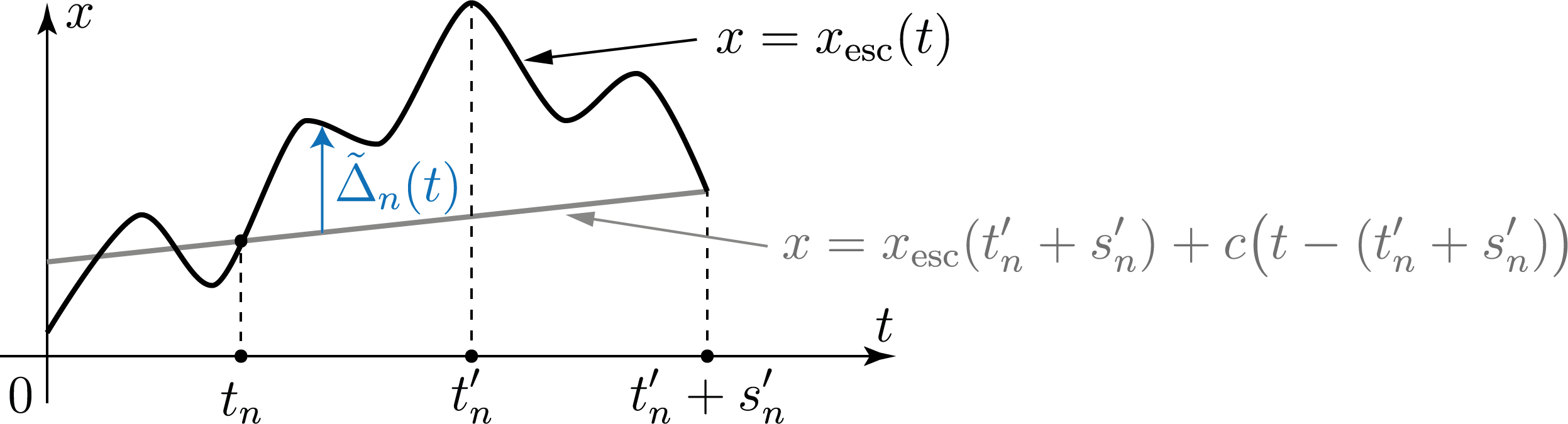}
\caption{Illustration of the proof of \cref{lem:first_attempt_time_int_bis}.}
\label{fig:fist_attempt_time_int_bis}
\end{figure}
\begin{lemma}[large excursions to the right and returns for escape point in travelling frame]
\label{lem:first_attempt_time_int_bis}
There exist sequences $(t_n)_{n\in\nn}$ and $(s_n)_{n\in\nn}$ and $(\bar{s}_n)_{n\in\nn}$ of real quantities such that the following properties hold.
\begin{enumerate}
\item For every nonnegative integer $n$ the following inequalities hold: $0\le t_n$ and $0 \le s_n \le \bar{s}_n$\,;
\label{item:item_one_lem_first_attempt_time_int_bis}
\item $\xesc(t_n+s_n) - \xesc(t_n) - c s_n \to +\infty$ as $n$ goes to $+\infty$\,;
\label{item:item_two_lem_first_attempt_time_int_bis}
\item For every nonnegative integer $n$ the following inequality holds: $\xesc(t_n+\bar{s}_n) - \xesc(t_n) - c \bar{s}_n \le 0$\,.
\label{item:item_three_lem_first_attempt_time_int_bis}
\end{enumerate}
\end{lemma}
\end{proof} 
\begin{proof}[Proof of \cref{lem:first_attempt_time_int_bis}]
According to the definition of $\underbarcescinf$, there exists a sequence $(s'_n)_{n\in\nn}$ of positive real quantities satisfying
\[
s'_n\to+\infty
\quad\text{and}\quad
\frac{\underbarxesc(s'_n)}{s'_n}\to \underbarcescinf
\quad\text{as}\quad
n\to+\infty
\,.
\]
In addition, up to replacing the sequence $(s'_n)_{n\in\nn}$ by a subsequence, it may be assumed that
\[
\underbarxesc(s'_n) > -\infty
\quad\text{for all}\quad p\text{ in }\nn
\,.
\]
Then, by definition of $\underbarxesc(\cdot)$, for every nonnegative integer $n$ there exists a nonnegative real quantity $t'_n$ such that
\begin{equation}
\label{underbar_x_esc_of_s_prime_n_larger_than}
\xesc(t'_n + s'_n) \le \xesc(t'_n) + \underbarxesc(s'_n) - 1
\,.
\end{equation}
Observe that $t'_n$ must go to $+\infty$ as $n$ goes to $+\infty$, or else this last inequality would yield
\[
\liminf_{n\to+\infty} \frac{\xesc(t'_n + s'_n) }{t'_n + s'_n} \le \underbarcescinf < \cesc
\]
and this would be contradictory to the definition of $\cescinf$ and \cref{prop:cv_mean_inv_vel}. For every nonnegative integer $n$ and every time $t$ in $[0,+\infty)$, let
\[
\tilde{\Delta}_n(t) = \xesc(t) - \Bigl( \xesc(t'_n + s'_n) + c\bigl( t - (t'_n+s'_n)\bigr)\Bigr)
\,.
\]
Then $\tilde{\Delta}_n(t'_n+s'_n)=0$ and it follows from \cref{underbar_x_esc_of_s_prime_n_larger_than} that
\[
\frac{\tilde{\Delta}_n(t'_n)}{s'_n} = \frac{\xesc(t'_n)-\xesc(t'_n+s'_n)}{s'_n} + c \ge \frac{1-\underbarxesc(s'_n)}{s'_n} + c
\,,
\]
thus
\[
\liminf_{n\to+\infty} \frac{\tilde{\Delta}_n(t'_n)}{s'_n} \ge c-\underbarcescinf > 0
\,,
\]
and finally 
\begin{equation}
\label{limit_of_tilde_Delta_n_of_t_prime_n}
\tilde{\Delta}_n(t'_n) \to +\infty 
\quad\text{as}\quad
n\to +\infty \,.
\end{equation}
On the other hand, 
\[
\frac{\tilde{\Delta}_n(0)}{t'_n+s'_n} = \frac{\xesc(0)-\xesc(t'_n+s'_n)}{t'_n+s'_n} + c 
\]
and this quantity goes to the negative quantity $c-\cesc$ as $n$ goes to $+\infty$. As a consequence,
\[
\tilde{\Delta}_n(0)\to -\infty 
\quad\text{as}\quad
n\to +\infty \,.
\]
Thus, up to replacing the sequence $(t'_n,s'_n)_{n\in\nn}$ by a subsequence, it may be assumed that, for every nonnegative integer $n$, $\tilde{\Delta}_n(0)$ is negative. Then, for every nonnegative integer $n$, let  
\[
t_n = \sup \, \bigl\{t\in[0,t'_n] : \tilde{\Delta}_n(t)\le 0\bigr\}
\quad\text{and}\quad
s_n = t'_n - t_n 
\quad\text{and}\quad
\bar{s}_n = t'_n + s'_n - t_n
\,.
\]
Property \cref{item:item_one_lem_first_attempt_time_int_bis} follows from these definitions, and, according to the control on the growth of $\xesc(\cdot)$ \vref{control_escape}, the quantity $\tilde{\Delta}_n(t_n)$ must be equal to $0$; or in other words,
\begin{equation}
\label{consequence_of_tilde_Delta_n_of_tn_vanishes}
\xesc(t_n+\bar{s}_n) = \xesc(t_n) + c \bar{s}_n
\,.
\end{equation}
Property \cref{item:item_three_lem_first_attempt_time_int_bis} follows from equality \cref{consequence_of_tilde_Delta_n_of_tn_vanishes}. It also follows from equality \cref{consequence_of_tilde_Delta_n_of_tn_vanishes} that
\[
\tilde{\Delta}_n (t'_n) = \xesc(t_n+s_n) - \xesc(t_n) - c s_n
\,,
\]
so that property \cref{item:item_two_lem_first_attempt_time_int_bis} follows from the limit \cref{limit_of_tilde_Delta_n_of_t_prime_n}. \Cref{lem:first_attempt_time_int_bis} is proved.
\end{proof}
\begin{proof}[End of the proof of \cref{prop:further_control}]
Since the conclusions of \cref{lem:first_attempt_time_int_bis} are identical to those of \vref{lem:first_attempt_time_int}, the end of the proof of \cref{prop:further_control} (``mean invasion speed, further control'') can be achieved exactly as for \cref{prop:cv_mean_inv_vel} (``mean invasion speed''). 
\end{proof}
\subsection{Dissipation approaches zero at regularly spaced times}
\label{subsec:dissip_zero_some_times}
The key argument behind \vref{prop:cv_mean_inv_vel,prop:further_control} (``mean invasion speed'' and ``… further control'') is that a dissipation uniformly bounded from below around the escape point cannot occur during large time intervals, since it is forbidden by the relaxation scheme set up in \cref{subsec:relax_sch_tr_fr} (and in particular by the relaxation scheme final inequality \vref{relax_scheme_final}). The aim of this \namecref{subsec:dissip_zero_some_times} is to state another result (\cref{prop:dissp_zero_some_times} below) that just formalizes this argument, this time considering the dissipation in a frame travelling precisely at the ``sole relevant'' speed $\cesc$ given by \cref{prop:cv_mean_inv_vel,prop:further_control}.

For all $t$ in $[0,+\infty)$, the following set:
\[
\left\{\varepsilon \text{ in } (0,+\infty) : \int_{-1/\varepsilon}^{1/\varepsilon} \Bigl( u_t\bigl(\xesc(t)+\xi,t\bigr)+\cesc u_x\bigl(\xesc(t)+\xi,t\bigr) \Bigr)^2\, d\xi \le \varepsilon \right\}
\]
is (according to the bounds \vref{bound_u_ut_ck_bis} for the solution) a nonempty (and unbounded from above) interval. Let 
\begin{equation}
\label{def_deltaDissip}
\deltaDissip(t)
\end{equation}
denote the infimum of this interval. This quantity measures to what extent the solution is, at time $t$ and around the escape point $\xesc(t)$, close to be stationary in a frame travelling at the speed $\cesc$. The aim of the next \namecref{subsec:dissip_zero_some_times} will be to prove that
\[
\deltaDissip(t) \to 0
\quad\text{as}\quad
t\to +\infty
\,.
\]
\cref{prop:dissp_zero_some_times} below can be viewed as a first step towards this goal. 
\begin{proposition}[regular occurrence of small dissipation]
\label{prop:dissp_zero_some_times}
For every positive quantity $\varepsilon$, there exists a positive quantity $T(\varepsilon)$ such that, for every $t$ in $[0,+\infty)$, 
\[
\inf_{t'\in[t,t+T(\varepsilon)]} \deltaDissip(t') \le \varepsilon
\,.
\]
\end{proposition}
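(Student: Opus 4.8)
The plan is to argue by contradiction and to reproduce, in a frame travelling at the speed $\cesc$, the relaxation argument underlying \cref{prop:cv_mean_inv_vel,prop:further_control}. Suppose the conclusion fails: there is a positive quantity $\varepsilon_0$ such that, for every positive $T$, there is a nonnegative time $t_T$ with $\deltaDissip(t')>\varepsilon_0$ for all $t'$ in $[t_T,t_T+T]$. If $\sup_T t_T=+\infty$ one extracts a subsequence along which both $T$ and $t_T$ tend to $+\infty$; if $\sup_T t_T<+\infty$ then in fact $\deltaDissip>\varepsilon_0$ on a half-line $[M,+\infty)$ and any large starting time may be used. In either case one disposes of intervals $[\alpha_n,\beta_n]$ with $\alpha_n\to+\infty$, $\beta_n-\alpha_n\to+\infty$, and $\deltaDissip(t')>\varepsilon_0$ for $t'$ in $[\alpha_n,\beta_n]$. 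By definition of $\deltaDissip$, writing $L_0=1/\varepsilon_0$, this means that
\[
\int_{-L_0}^{L_0}\Bigl(u_t\bigl(\xesc(t')+y,t'\bigr)+\cesc\,u_x\bigl(\xesc(t')+y,t'\bigr)\Bigr)^2\,dy>\varepsilon_0
\qquad\text{for all }t'\text{ in }[\alpha_n,\beta_n]\,,
\]
that is, a lower bound on the dissipation around the escape point, uniform over $[\alpha_n,\beta_n]$, exactly of the kind produced by \cref{lem:dissip_no_tf_vel} in the proof of \cref{prop:cv_mean_inv_vel} --- the difference being that here it is handed over by the contradiction hypothesis, so that hypothesis \hypDiscVel and the emptiness of some set $\Phi_c(0_{\rr^n})$ play no role, and the relevant speed is $\cesc$ itself.

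Next I would apply the relaxation scheme of \cref{subsec:relax_sch_tr_fr} in the frame travelling at speed $c=\cesc$ (admissible: hypotheses \cref{hyp_param_relax_sch} hold since $0<\cesc\le\cnoesc$, and hypothesis \cref{hyp_c_close_to_barcescsup} is trivially satisfied because $\barcescsup=\cesc$), with $\xInit=\xesc(\tInit)$, $\initCut=0$, the origin of time $\tInit$ a large time contained in one of the intervals $[\alpha_n,\beta_n]$, and a length $\sFin$ such that $[\tInit,\tInit+\sFin]\subset[\alpha_n,\beta_n]$. Exactly as in \cref{lem:claim_energy}: since $\initCut=0$ the a priori bounds \cref{hyp_attr_ball} make $\eee(0)$ and $\fff(0)$ bounded above, the term $K_{\mathcal{G},\textrm{back}}[u_0]\exp(-\kappa\initCut)$ is a fixed constant, and since $\tInit$ is large, $\yHom(0)=\xHom(\tInit)-\xesc(\tInit)$ is large by \cref{xHom_minus_xesc}, so the front flux term $K_{\mathcal{G},\textrm{front}}(\sFin)\exp(-\kappa\yHom(0))$ is negligible once $\tInit$ has been chosen large enough in terms of $\sFin$; $\eee(\sFin)$ is bounded below using that on the whole stretch between the escape point and the homogeneous point the solution stays within $\dEsc$ of $0_{\rr^n}$ (property \cref{xEsc_xesc_xHom}), hence $V\ge0$ there by \cref{property_r_Esc}, that $V$ is bounded below everywhere by \hypCoerc, and --- this being where care is needed --- that in the travelling frame the escape point does not run far from the origin. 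On the other hand, performing the change of variables $y=\yesc(s)+z$ in the definition of $\ddd(s)$ and using that $\yesc(s)$ lies, for $s$ large, inside the region where the weight $\chi(\cdot,s)$ equals $\exp(\cesc y)$ (because $\barcescsup=\cesc$ keeps $\yesc(s)$ from running too far to the right) while $\underbarcescinf=\cesc$ (\cref{prop:further_control}) keeps $\yesc(s)$ from lagging too far behind so that $\exp(\cesc\,\yesc(s))$ stays bounded below, the uniform dissipation lower bound above forces $\int_0^{\sFin}\ddd(s)\,ds$ to grow with $\sFin$; this is incompatible with the relaxation-scheme final inequality \cref{relax_scheme_final}, whose right-hand side has just been bounded.

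The technical heart of the argument --- and its main obstacle --- is precisely to choose the interval $[\tInit,\tInit+\sFin]\subset[\alpha_n,\beta_n]$ so that the dissipation lower bound genuinely translates into a lower bound on $\int_0^{\sFin}\ddd(s)\,ds$ growing with $\sFin$; equivalently, so that $s\mapsto\xesc(\tInit+s)$ stays over $[0,\sFin]$ within a bounded distance of the line $s\mapsto\xesc(\tInit)+\cesc\,s$, which is exactly what prevents the travelling-frame weight $\exp(\cesc\,\yesc(s))$ from degenerating and $\eee(\sFin)$ from drifting to $-\infty$. I would obtain this as in \cref{lem:first_attempt_time_int,lem:sec_attempt_time_int}: exploiting the equalities $\cescinf=\cescsup=\barcescsup=\underbarcescinf=\cesc$ established in \cref{prop:cv_mean_inv_vel,prop:further_control}, together with the control \cref{control_escape} on the upward growth of $\xesc(\cdot)$, one extracts starting times $\tInit$ arbitrarily large (and, for $T$ large, contained in $[\alpha_n,\beta_n]$) and lengths $\sFin$ (which may be taken as large as one wishes) along which $\xesc(\tInit+\cdot)$ stays near the line of slope $\cesc$ through the origin of the travelling frame; the dissipation then accumulates at a rate bounded below while the energy and flux terms stay controlled, and letting $\sFin\to+\infty$ produces the contradiction. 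This proves the proposition; it is the first step towards the stronger statement $\deltaDissip(t)\to0$ as $t\to+\infty$.
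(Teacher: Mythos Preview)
Your overall strategy matches the paper's: argue by contradiction, obtain a persistent lower bound on the dissipation around the escape point, and collide this with the relaxation-scheme inequality \cref{relax_scheme_final}. The gap is in the choice of the frame speed. Working at $c=\cesc$ exactly, you claim that $\barcescsup=\cesc$ keeps $\yesc(s)$ from running too far to the right and that $\underbarcescinf=\cesc$ keeps it from lagging too far behind. But these equalities only give $\barxesc(s)-\cesc s=o(s)$ and $\underbarxesc(s)-\cesc s=o(s)$, which may well be unbounded (e.g.\ of order $\sqrt{s}$); they do not force $\yesc(s)$ to stay within a bounded strip on an interval of prescribed length. In particular the bound $\yesc(\sFin)\le K'$ needed for the final-energy lower bound is not available, and the appeal to \cref{lem:first_attempt_time_int,lem:sec_attempt_time_int} is not valid here since those lemmas exploit a strict inequality $\cescinf<c<\barcescsup$, which has now collapsed.

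The paper circumvents this by taking the frame speed slightly \emph{supercritical}: $c=\cesc+1/\tau$ for a large parameter $\tau$. Since $c>\barcescsup=\cesc$, there exists $\bar\tau\ge\tau$ with $\barxesc(\bar\tau)/\bar\tau\le c$, whence $\yesc(\bar\tau)\le0$ and the final energy at $s=\bar\tau$ is bounded below. The lower control on $\yesc(s)$ for $s\in[0,\tau]$ comes from a separate pigeonhole argument (\cref{lem:time_int_ter}), which uses $\underbarcescinf=\cesc$ to locate a starting time $\tInit$ inside the interval of persistent dissipation with $\xesc(\tInit+s)-\xesc(\tInit)-\cesc s\ge-1$ on $[0,\tau]$; in the $c$-frame this yields $\yesc(s)\ge-2$ there. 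The dissipation lower bound at speed $c$ is inherited from the one at speed $\cesc$ by the a~priori bounds on $u_x$, at the cost of a factor $1/2$. One then lets $\tau\to+\infty$ (choosing $p$ large afterwards to kill the front-flux term) to reach the contradiction. This perturbation to a slightly larger speed is the one device your argument is missing.
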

\begin{proof}
\renewcommand{\qedsymbol}{}
Let us proceed by contradiction and assume that the converse holds. Then there exist a positive quantity $\varepsilon_0$ and a sequence $(t'_n)_{n\in\nn}$ of nonnegative quantities such that, for every $t$ in $[t'_n,t'_n+n]$,
\begin{equation}
\label{lower_bound_dissip}
\int_{-1/\varepsilon_0}^{1/\varepsilon_0} \Bigl( u_t\bigl(\xesc(t)+\xi,t\bigr)+\cesc u_x\bigl(\xesc(t)+\xi,t\bigr) \Bigr)^2\, d\xi\ge \varepsilon_0
\,.
\end{equation}
Up to replacing $t'_n$ by $t'_{2n}+n$, it may be assumed that $t'_n$ goes to $+\infty$ as $n$ goes to $+\infty$. 

As for the proof of \cref{prop:cv_mean_inv_vel,prop:further_control}, the strategy is to apply the relaxation scheme in travelling frame where the escape point $\xesc(\cdot)$:
\begin{itemize}
\item remains around the origin or to the right during a significant time interval (to recover enough dissipation);
\item finishes around the origin or to the left (so that the final energy be bounded from below).
\end{itemize}
In order these two conditions to hold simultaneously, the relaxation scheme will not be applied on the whole intervals $[t'_n,t'_n+n]$, but rather on smaller convenient intervals that will be introduced. Because of the second of these conditions, the speed of the travelling frame will be chosen slightly greater than $\cesc$, more precisely equal to
\[
\cesc + \frac{1}{\tau}
\,,
\]
where $\tau$ is a positive quantity to be chosen later (this quantity will play the same kind of role as in the proof of \cref{prop:cv_mean_inv_vel}). According to \cref{prop:cv_mean_inv_vel}, there exists a positive quantity $\bar{\tau}$ larger than $\tau$ (thus depending on $\tau$) such that
\[
\frac{\barxesc(\bar{\tau})}{\bar{\tau}} \le \cesc + \frac{1}{\tau}
\]
(this will ensure that the second of the conditions above is satisfied on every interval of length $\bar{\tau}$). 
Besides, according to hypothesis \cref{lower_bound_dissip} and to the bounds \cref{bound_u_ut_ck_bis} on the solution, it may be assumed that $\tau$ is large enough so that, for every $t$ in $[t'_n,t'_n+n]$, 
\[
\int_{-1/\varepsilon_0}^{1/\varepsilon_0} \biggl( u_t\bigl(\xesc(t)+\xi,t\bigr)+\Bigl(\cesc + \frac{1}{\tau}\Bigr) u_x\bigl(\xesc(t)+\xi,t\bigr) \biggr)^2\, d\xi \ge \frac{\varepsilon_0}{2}
\,.
\]
The following lemma provides the initial times of the time intervals where the relaxation scheme will be applied, ensuring that the first of the two conditions above is fulfilled despite the fact that the travelling speed is slightly above $\cesc$.
\end{proof} 
\begin{lemma}[escape point remains to the right in travelling frame]
\label{lem:time_int_ter}
For every large enough positive integer $n$, there exists a time $t_n$ in the interval $[t'_n,t'_n + n - \tau]$ such that, for every $s$ in $[0,\tau]$, 
\[
\xesc(t_n + s) - \xesc(t_n) - \cesc  s \ge -1
\,.
\]
\end{lemma}
\begin{proof}[Proof of \cref{lem:time_int_ter}]
Let us proceed by contradiction and assume that the converse holds. Then, there exists an arbitrarily large positive integer $n$ such that, for every $t$ in the interval $[t'_n,t'_n + n - \tau]$, there exists $s$ in $(0,\tau]$ such that 
\[
\xesc(t + s) - \xesc(t) - \cesc  s < -1
\,,
\]
ensuring that the mean speed of $\xesc(\cdot)$ on the interval $[t,t+s]$ is less than $\cesc - 1/\tau$. This shows that there exists $t'$ in the interval $[t'_n + n - \tau, t'_n +n ]$ such that the interval $[t'_n,t']$ can be cut into a finite number of subintervals (defined one after another, starting from $t'_n$) so that, on each of these subintervals, the mean speed of $\xesc(\cdot)$ is less than $\cesc - 1/\tau$. As a consequence, the mean speed of $\xesc(\cdot)$ on the whole interval $[t'_n,t']$ is less than $\cesc - 1/\tau$. 

But on the other hand, according to \cref{prop:further_control}, for every large enough positive quantity $s$, the following inequality holds:
\[
\underbarxesc(s) \ge \Bigl(\cesc-\frac{1}{\tau}\Bigr) s
\,,
\]
in other words the mean speed of $\xesc(\cdot)$ cannot not be less than $\cesc - 1/\tau$ on a large enough time interval, a contradiction with the previous assertion if $n$ is large enough. \Cref{lem:time_int_ter} is proved. 
\end{proof}
\begin{proof}[End of the proof of \cref{prop:dissp_zero_some_times}]
Thus, according to this lemma, for every nonnegative integer $n$, large enough so that $t_n$ is defined, the following assertion holds: for every $s$ in $[0,\tau]$,
\[
\xesc(t_n+s) - \xesc(t_n) - \Big(\cesc + \frac{1}{\tau}\Bigr) s \ge -2
\,,
\]
and
\[
\xesc(t_n+\bar{\tau}) - \xesc(t_n) - \Big(\cesc + \frac{1}{\tau}\Bigr) s \le 0
\,.
\]
These two last assertions are of the same nature as those of \cref{lem:first_attempt_time_int}, and as a consequence the end of the proof of \cref{prop:dissp_zero_some_times} can be carried out along the same lines as the proof of \cref{prop:cv_mean_inv_vel} (or \cref{prop:further_control}). 
\end{proof}
\subsection{Relaxation}
\label{subsec:relax}
The aim of this \namecref{subsec:relax} is to prove the following proposition. 
\begin{proposition}[relaxation]
\label{prop:dissip_app_zero}   
The following assertion holds:
\[
\deltaDissip(t)\to0\quad\text{as}\quad
 t\to+\infty \,. 
\]
\end{proposition}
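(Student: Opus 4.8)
The plan is to argue by contradiction, along the same lines as the proofs of \cref{prop:cv_mean_inv_vel,prop:further_control}: a definite amount of dissipation around the escape point, occurring infinitely often in the frame travelling at the (now identified) speed $\cesc$, is forbidden by the relaxation scheme of \cref{subsec:relax_sch_tr_fr} applied at speed exactly $\cesc$. The new ingredient is that the sharp speed identities $\cescinf=\cescsup=\barcescsup=\underbarcescinf=\cesc$ now make it possible to keep the escape point close to the origin of that frame, and hence to actually see this dissipation through the (exponential) weight. So I would begin by assuming that $\deltaDissip(t)$ does not tend to $0$: there are then $\varepsilon_0>0$ and a sequence $t_p\to+\infty$ with $\deltaDissip(t_p)\ge\varepsilon_0$, which by the very definition of $\deltaDissip$ means that
\[
\int_{-2/\varepsilon_0}^{2/\varepsilon_0}\bigl(u_t+\cesc u_x\bigr)^2\bigl(\xesc(t_p)+y,t_p\bigr)\,dy\ \ge\ \varepsilon_0/2 .
\]

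Next I would run the relaxation scheme at speed $c=\cesc$, with $\xInit=\xesc(\tInit)$ and $\initCut=0$ (hypotheses \cref{hyp_param_relax_sch,hyp_c_close_to_barcescsup} hold since $\cesc=\barcescsup\le\cnoesc$), choosing the initial times $\tInit$ — by a selection argument in the spirit of \cref{lem:time_int_ter}, resting on \cref{prop:further_control} ($\underbarcescinf=\cesc$) — so that at the corresponding frame time $s_p$ the escape point still lies near the origin, i.e.\ $\yesc(s_p)$ stays bounded. Then $\chi(\cdot,s_p)$ is bounded below on $[\yesc(s_p)-2/\varepsilon_0,\yesc(s_p)+2/\varepsilon_0]$, so the dissipation function of the scheme satisfies $\ddd(s_p)\ge a_0$ for a constant $a_0=a_0(\varepsilon_0)>0$; since $s\mapsto\ddd(s)e^{-K_{\ddd}s}$ is nonincreasing by \cref{lem:bd_deriv_dissip}, this propagates to $\ddd(s)\ge a_0 e^{-K_{\ddd}}$ on the whole unit interval $[s_p-1,s_p]$. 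Thus each $t_p$ contributes at least $a_0 e^{-K_{\ddd}}$ to the cumulative dissipation, and the contradiction will be obtained by bounding that cumulative dissipation from above through the relaxation-scheme inequality \cref{relax_scheme_final}, where — as foreseen in \cref{subsec:further_control} — the two cases $\cesc<\cHom$ and $\cesc=\cHom$ must be separated.

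When $\cesc<\cHom$ one may in addition require $\cCut$ small enough that $\cesc+\cCut<\cHom$; then $\yHom(s)\to+\infty$ at a linear rate, the cut-off point remains to the left of $\yHom(s)$, and both $\Gback(s)$ and $\Gfront(s)$ — hence $\mathcal{G}(s)$ — decay exponentially (for $\Gback$ this is \cref{lem:bound_Gback_Gfront}; for $\Gfront$ one redoes that estimate with $\yHom(s)$ in place of $\yHom(0)$), so $\int_0^{+\infty}\mathcal{G}(s)\,ds<+\infty$. Together with the boundedness of $\eee(0)$ and $\fff(0)$ (a priori bounds \cref{hyp_attr_ball}), the fact that $K_{\mathcal{G},\textrm{back}}[u_0]$ is a constant, and a lower bound on $\eee(\sFin)$ (the escape point satisfies $\yesc(\sFin)\le\barxesc(\sFin)-\cesc\sFin$ with $\barxesc(\sFin)/\sFin\to\cesc$ by \cref{prop:cv_mean_inv_vel}, while $V$ is bounded below and $\yHom(\sFin)\to+\infty$), inequality \cref{relax_scheme_final} gives $\int_0^{+\infty}\ddd(s)\,ds<+\infty$ with the frame origin fixed; hence $\ddd(s)\to0$ (tails of a convergent integral, again using $\ddd'\le K_{\ddd}\ddd$), contradicting $\ddd(s_p)\ge a_0$ and yielding, back in the laboratory frame, $\deltaDissip(t)\to0$. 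This case is essentially the end-of-proof scheme of \cite{Risler_globCVTravFronts_2008}.

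The case $\cesc=\cHom$ is the genuine obstacle: here $\yHom(s)$ stays bounded (since $\xHom'(t)\to\cHom=\cesc$), the front flux term $\Gfront(\cdot)$ is no longer integrable on $[0,+\infty)$, and the scheme cannot be run on $[0,+\infty)$ at once. The way out — this is precisely what the extra control of \cref{prop:further_control} is for — is to apply the scheme on finite intervals $[\tInit,\tInit+\sFin]$, fixing the length $\sFin$ first and then taking $\tInit$ so large that $\exp(-\kappa\,\yHom(0))=\exp\bigl(-\kappa(\xHom(\tInit)-\xesc(\tInit))\bigr)$ — which tends to $0$ by \cref{xHom_minus_xesc} — kills the term $K_{\mathcal{G},\textrm{front}}(\sFin)\exp(-\kappa\,\yHom(0))$ and, likewise, $\int_0^{\sFin}\Gfront(s)\,ds$; the bound $\underbarcescinf=\cesc$ keeps $\yesc(\cdot)$ from drifting far to the left (so the dissipation at the times $t_p$ is still captured by $\chi$) and $\barcescsup=\cesc$ keeps $\yesc(\sFin)$ from drifting far to the right (so $\eee(\sFin)$ stays bounded below), whence the relaxation-scheme inequality bounds the cumulative dissipation over each such interval by a constant \emph{independent of $\sFin$ and $\tInit$}. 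Combining this with \cref{prop:dissp_zero_some_times} to position the scheme intervals relative to the bad times $t_p$, and with \cref{lem:bd_deriv_dissip} to spread a definite amount $a_0 e^{-K_{\ddd}}$ of dissipation over a unit interval around each $t_p$, produces the same contradiction. I expect the only delicate point to be the bookkeeping of the positions $\yesc(\cdot)$ and $\yHom(\cdot)$ and of the localized energy at the end of the relaxation interval in this second case.
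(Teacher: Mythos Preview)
Your contradiction setup is correct, but the sketch has a real gap and misses the mechanism the paper actually uses.

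The gap is in the control of $\yesc(s)$ in the frame at speed exactly $\cesc$. You need $\yesc(s_p)$ bounded \emph{below} (so the weight $\chi$ sees the dissipation bump) and $\yesc(\sFin)$ bounded \emph{above} (so $\eee(\sFin)$ has a uniform lower bound: the negative-potential region contributes roughly $V_{\min}\,e^{\cesc\,\yesc(\sFin)}/\cesc$). But the identities $\cescinf=\cescsup=\barcescsup=\underbarcescinf=\cesc$ only yield $\yesc(s)=o(s)$; nothing prevents $\yesc(s)$ from drifting sublinearly to $\pm\infty$. A selection ``in the spirit of \cref{lem:time_int_ter}'' gives one-sided control ($\yesc\ge -1$ on a given interval), not two-sided, and you cannot simultaneously force $\yesc(\sFin)\le C$, capture a prescribed bump time, and make $\sFin$ large. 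Your case split $\cesc<\cHom$ versus $\cesc=\cHom$ is about the \emph{front}-flux term only and does not touch this difficulty.

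The paper avoids this altogether by two devices you do not use. First, it runs the scheme at the \emph{mean speed} $c_{p,q}$ of $\xesc$ over each relaxation interval (not $\cesc$), so that $\yesc$ is \emph{exactly} $0$ at both endpoints; this is what makes the energy comparisons clean. Second, it does not try to bound cumulative dissipation. It sandwiches each bad time $t_p$ between two ``good'' times $t_{p,q}^{\textrm{bef}}<t_p<t_{p,q}^{\textrm{aft}}$ from \cref{prop:dissp_zero_some_times} and runs the scheme \emph{twice}, with $\initCut=\ell$ large (not $0$). At the good times the solution is, along subsequences, close to a profile in $\Phi_{\cesc}(0_{\rr^n})$, whose weighted energy \emph{vanishes} by \cref{lem:zero_en_tw}. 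The left-hand scheme then forces the weighted energy of the limit $u_\infty$ at the bump time to be strictly negative (\cref{lem:neg_energy}); the right-hand scheme starts with this strictly negative energy, ends near zero (near a front again), has nonnegative dissipation, and has all flux and firewall terms driven below any fixed fraction of $\epsEnergy$ by choosing first $\ell$, then $q$, then $p$ large --- this is the contradiction. Choosing $\initCut=0$ as you do would leave the back-flux and initial-firewall contributions as fixed constants, which is fatal for these ``energy close to zero'' comparisons even if it is harmless for mere boundedness.
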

\begin{proof}
Let us proceed by contradiction and assume that the converse assertion holds. Then there exists a positive quantity $\varepsilon_0$ and a sequence $(t_n)_{n\in\nn}$ of (positive) times such that $t_n$ goes to $+\infty$ as $n$ goes to $+\infty$ and such that, for every nonnegative integer $n$, 
\begin{equation}
\label{deltaDissip_of_tn_larger_than_eps_zero}
\deltaDissip(t_n)\ge\varepsilon_0
\,.
\end{equation}
In other words, there is a ``bump'' of dissipation at each time $t_n$. On the other hand, according to \vref{prop:dissp_zero_some_times}, on every large enough time interval, there exist times where the dissipation around the escape point is low. Roughly speaking, the strategy will be to apply the relaxation scheme set up in \cref{subsec:relax_sch_tr_fr} on a time interval containing a dissipation bump (at a certain time $t_n$) and bounded by two times where the dissipation is low. At both ends of the intervals, it will follow from the smallness of the dissipation that the solution is close to a front travelling at the speed $\cesc$, and therefore that its energy $\eee_{\cesc}$ (properly localized) is close to $0$. Provided that the energy fluxes can be sufficiently controlled along the relaxation scheme on this time interval, this will be in contradiction with the dissipation bump occurring at time $t_n$. 

In order to reach this contradiction, a number of conditions need to be fulfilled. Here are three of them. 
\begin{itemize}
\item The relaxation scheme will actually be applied twice, on each side of the dissipation bump occurring at time $t_n$. Indeed, as illustrated on \cref{fig:two_intervals_required}, applying the relaxation scheme only once on the whole interval may lead to a dissipation bump occurring ``far to the left'' in the appropriate travelling frame, ending up with a negligible influence on the (localized) energy $\eee_{\cesc}$ of the solution. 
\begin{figure}[!htbp]
\centering
\includegraphics[width=\textwidth]{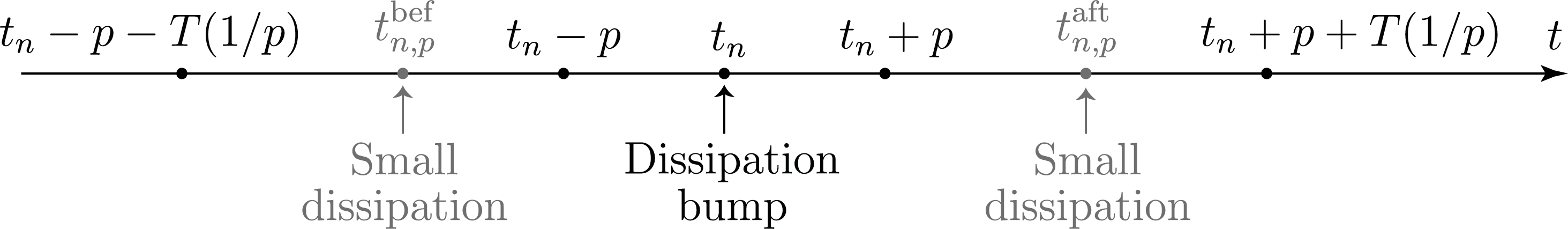}
\caption{Time of ``dissipation bump'' $t_n$, framed by two times where dissipation is almost zero.}
\label{fig:dissipation_bump}
\end{figure}
\item The lengths of the two time intervals where the relaxation scheme will be applied need to be large, in order the mean speed of the escape point on each of these interval to be close to its asymptotic value $\cesc$ (indeed, it is for this speed $\cesc$ that the dissipation will be close to $0$ at the ends and that a dissipation bump occurs at time $t_n$). 
\item Finally, depending on the length of the time intervals where the relaxation scheme is applied, the integer $n$ will have to be chosen large enough, in order the ``front'' flux term (the last term in the right hand side of inequality \vref{relax_scheme_final}) to be controlled. 
\end{itemize}
The third of these conditions leads us to introduce a second integer parameter $p$ that will be related altogether to the length of these two time intervals and to the smallness of the dissipation at both ends. Let us call upon the notation $T(\cdot)$ introduced in \cref{prop:dissp_zero_some_times}. Up to replacing the sequence $(t_n)_{n\in\nn}$ by a subsequence, it may be assumed that this sequence is increasing. Then, since $t_n$ goes to $+\infty$ as $n$ goes to $+\infty$, for every positive integer $p$ there exists a nonnegative integer $n_{\min}(p)$  such that, for every integer $n$, 
\[
n\ge n_{\min}(p) \iff t_n \ge p + T(1/p)
\,.
\]
Note that $n_{\min}(p)$ goes to $+\infty$ as $p$ goes to $+\infty$. 
According to \cref{prop:dissp_zero_some_times}, for every positive integer $p$ and every integer $n$ greater than or equal to $n_{\min}(p)$, there exist 
(see \cref{fig:dissipation_bump}):
\[
\tnpBef \text{ in } \bigl[t_n-p-T(1/p),t_n-p\bigr]
\quad\text{and}\quad
\tnpAft \text{ in } \bigl[t_n+p,t_n+p+T(1/p)\bigr]
\]
such that
\begin{equation}
\label{small_dissip_frame}
\deltaDissip(\tnpBef)\le 1/p
\quad\text{and}\quad
\deltaDissip(\tnpAft)\le 1/p
\,.
\end{equation}
The mentions ``bef'' and ``aft'' are reminders of the fact that these times occur ``before'' or ``after'' the ``dissipation bump time'' $t_n$. In those definitions, the significant features are that both quantities 
\[
\deltaDissip(\tnpBef)
\quad\text{and}\quad
\deltaDissip(\tnpAft)
\]
go to $0$ as $p$ goes to $+\infty$ (uniformly with respect to $n$ greater than $n_{\min}(p)$), and both quantities
\[
t_n - \tnpBef 
\quad\text{and}\quad
\tnpAft - t_n
\]
go to $+\infty$ as $p$ goes to $+\infty$, while remaining bounded with respect to $n$ for every fixed positive integer $p$. 
\begin{figure}[!htbp]
\centering
\includegraphics[width=0.5\textwidth]{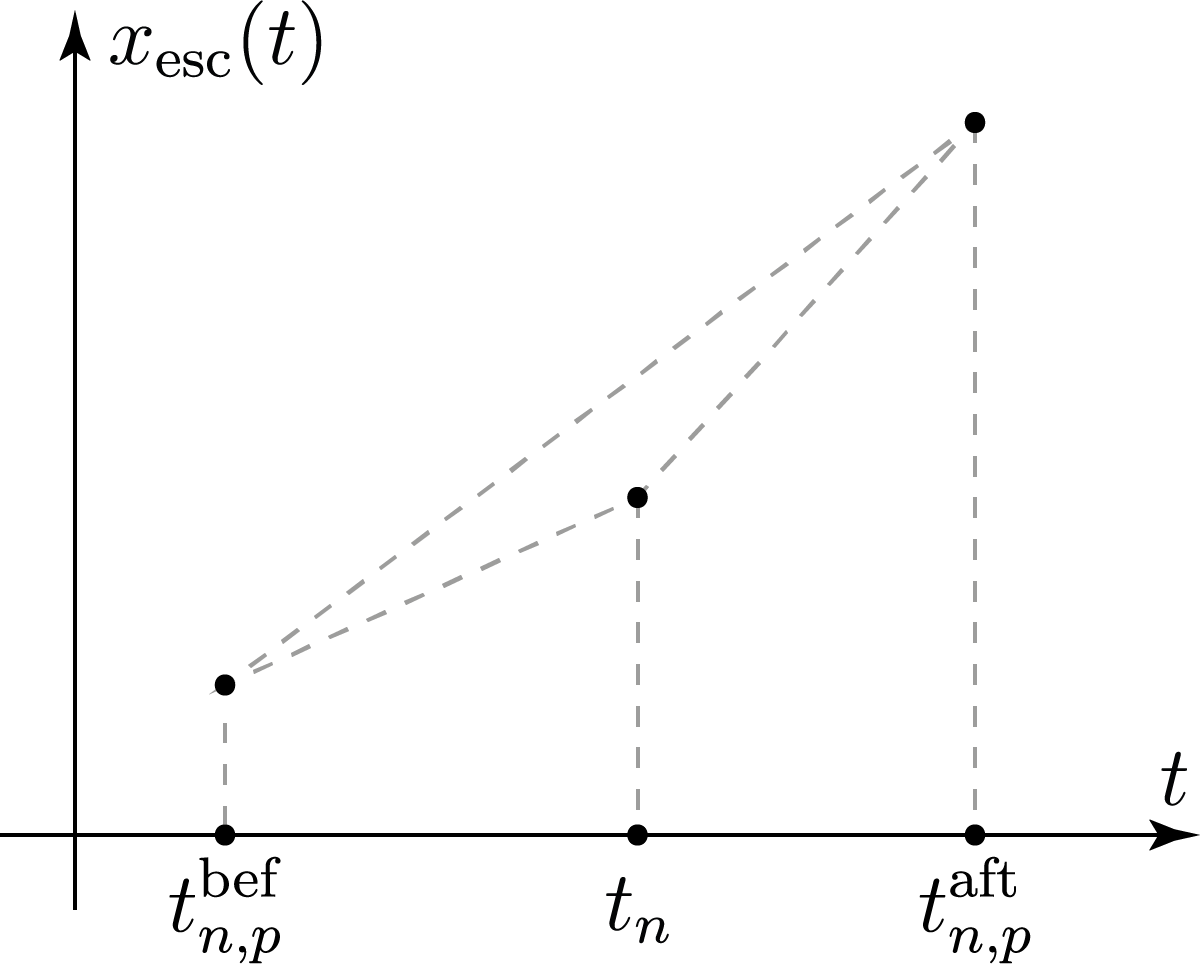}
\caption{Illustration of the reason why the relaxation scheme is applied separately on two intervals: although the slopes on these two (sub)intervals are close to $\cesc$, applying the relaxation scheme on a single interval may lead to a dissipation bump occurring ``far to the left'' of the origin in the moving frame.}
\label{fig:two_intervals_required}
\end{figure}
According to inequality \cref{control_escape} (controlling the growth of the escape point), both quantities 
\[
\frac{ \xesc(t_n) - \xesc(\tnpBef)}{t_n - \tnpBef}
\quad\text{and}\quad
\frac{ \xesc(\tnpAft) - \xesc(t_n) }{\tnpAft - t_n}
\]
(that is the mean speeds of the escape point on the two intervals surrounding the dissipation bump time $t_n$) are bounded from above by $\cnoesc$, and according to \cref{prop:cv_mean_inv_vel,prop:further_control}, both quantities go to $\cesc$ as $p$ goes to $+\infty$, uniformly with respect to $n$ greater than or equal to $n_{\min}(p)$. 

By compactness (\cref{lem:compactness}), up to replacing the sequence $(t_n)_{n\in\nn}$ by a subsequence, there exists an entire solution $\widebar{u}$ of system \cref{init_syst} such that, with the notation of \cref{compactness},
\begin{equation}
\label{asymt_compactness_relax_trav_frame}
D^{2,1}u\bigl( \xesc (t_n) + \cdot, t_n +\cdot \bigr)\to D^{2,1}\widebar{u}
\quad\text{as}\quad
n\to+\infty
\,,
\end{equation}
uniformly on every compact subset of $\rr^2$. Let us denote by $u_\infty$ the function $\xi\mapsto \widebar{u}(\xi,0)$. 

In the two next \namecrefs{subsubsec:relax_sch_left}, the relaxation scheme set up in \cref{subsec:relax_sch_tr_fr} will be applied to the two intervals $[\tnpBef,t_n]$ and $[t_n,\tnpAft]$. The following notation will be extensively used. 
\begin{notation}
For every nonnegative time $t$ and every real quantity $\xi$, let 
\[
\begin{aligned}
E(\xi,t) =& \frac{1}{2}u_x\bigl(\xesc(t) + \xi, t\bigr)^2 + V\Bigr( u\bigl( \xesc(t) + \xi, t\bigr)\Bigr)-V(m)  \,, \\
F(\xi,t) =&\,  \coeffEn\biggl( \frac{1}{2}u_x\bigl(\xesc(t) + \xi, t\bigr)^2 + V\Bigr( u\bigl( \xesc(t) + \xi, t\bigr)\Bigr)-V(m) \biggr) \\
&+ \frac{1}{2}\Bigl(u\bigl(\xesc(t) + \xi, t\bigr)-m\Bigr)^2 \,,
\end{aligned}
\]
and, for every function $\phi$ in $\ccc^1(\rr,\rr^d)$, let 
\[
\begin{aligned}
E_{[\phi]}(\xi) &= \frac{1}{2}\phi'(\xi)^2 + V\bigl(\phi(\xi)\bigr)-V(m) \,, \\
F_{[\phi]}(\xi) &= \coeffEn\Bigl(\frac{1}{2}\phi'(\xi)^2 + V\bigl(\phi(\xi)\bigr)-V(m)\Bigr) + \frac{1}{2}\bigl(\phi(\xi)-m\bigr)^2 \,,
\end{aligned}
\]
\end{notation}
\subsubsection{Relaxation scheme to the left of the dissipation bump}
\label{subsubsec:relax_sch_left}
The aim of this \namecref{subsubsec:relax_sch_left} is to prove the following lemma, stating that the energy at the right-end of the ``left-hand'' interval $[\tnpBef,t_n]$ is negative. To this end, the relaxation scheme set up in \cref{subsec:relax_sch_tr_fr} will be applied to this interval. 
\begin{lemma}[negative energy at right-end of left-hand interval]
\label{lem:neg_energy}
The following inequality holds (and the integral on the left hand side converges):
\begin{equation}
\label{neg_en_pf_dissip_app_0}
\int_{-\infty}^{+\infty} \exp(\cesc \xi) \, E_{[u_{\infty}]}(\xi) \, d\xi <0
\,.
\end{equation}
\end{lemma}
\begin{proof}[Proof of \cref{lem:neg_energy}]
Let us still consider two integers $n$ and $p$ with $p$ positive and $n$ greater than or equal to $n_{\min}(p)$, and let
\[
s_{n,p} = t_n - \tnpBef
\quad\text{and}\quad
c_{n,p} = \frac{ \xesc(t_n) - \xesc(\tnpBef)}{s_{n,p}}
\,.
\]
Let us assume that $p$ is large enough so that 
\[
0< c_{n,p}
\quad\text{and}\quad
\cesc - \frac{\kappa \cCut}{4(\cnoesc+\kappa)} \le c_{n,p}
\,,
\]
and let $\ell$ denote a nonnegative quantity to be chosen below. 
The relaxation scheme set up in \cref{subsec:relax_sch_tr_fr} will be applied with the following parameters:
\[
\tInit = \tnpBef
\quad\text{and}\quad
\xInit = \xesc(\tInit)
\quad\text{and}\quad
c = c_{n,p}
\quad\text{and}\quad
\initCut = \ell
\,.
\]
Thus the relaxation scheme will depend on the three parameters $(n,p,\ell)$. Observe that both hypotheses \vref{hyp_param_relax_sch} and \vref{hyp_c_close_to_barcescsup} (required to apply the relaxation scheme) hold. 
Let us denote by 
\[
\begin{aligned}
& 
v^{(n,p)}(\cdot,\cdot)
\quad\text{and}\quad
\chi^{(n,p,\ell)}(\cdot,\cdot)
\quad\text{and}\quad
\eee^{(n,p,\ell)}(\cdot)
\quad\text{and}\quad
\ddd^{(n,p,\ell)}(\cdot)
\\
&
\text{and}\quad
\psi^{(n,p,\ell)}(\cdot,\cdot)
\quad\text{and}\quad
\fff^{(n,p,\ell)}(\cdot)
\end{aligned}
\]
the objects defined in \cref{subsec:relax_sch_tr_fr} (with the same notation except the ``$(n,p)$'' or ``$(n,p,\ell)$'' superscripts to emphasize the dependency with respect to the parameters). 

The proof is based on the relaxation scheme final inequality \vref{relax_scheme_final} on the $s$-time interval $[0,s_{n,p}]$, which will provide an upper bound on the quantity $\eee^{(n,p,\ell)}(s_{n,p})$ (the localized energy at the right end of this time interval). This will require a careful choice of the three parameters $n$, $p$, and $\ell$ to control the various other quantities in this inequality and the difference between $\eee^{(n,p,\ell)}(s_{n,p})$ and the integral \cref{neg_en_pf_dissip_app_0}. Here are the quantities that have to be controlled:
\begin{enumerate}
\item the dissipation term $\int_0^{s_{n,p}}\ddd^{(n,p,\ell)}(s)\, ds$\,;
\item the initial value $\eee^{(n,p,\ell)}(0)$ of the localized energy;
\item the initial value $\fff^{(n,p,\ell)}(0)$ of the firewall function;
\item the ``back flux'' term (involving the factor $\KGback[u_0]$);
\item the ``front flux'' term (involving the factor $\KGfront$);
\item the difference between the final energy $\eee^{(n,p,\ell)}(s_{n,p})$ and the integral \cref{neg_en_pf_dissip_app_0}. 
\end{enumerate}
Those controls will be stated by a series of lemmas (\cref{lem:dissip_bump,lem:small_init_en_bef,lem:non_small_fin_en_bef,lem:small_init_fire_bef,lem:small_back_flux_fire_bef,lem:small_front_flux_fire_bef}). 
\Cref{fig:dependency_param_relax} summarizes the requirements on the three parameters $n$, $p$, and $\ell$, and the dependencies between those parameters as well, in order all these controls to hold. 
\begin{figure}[!htbp]
\centering
\includegraphics[width=0.75\textwidth]{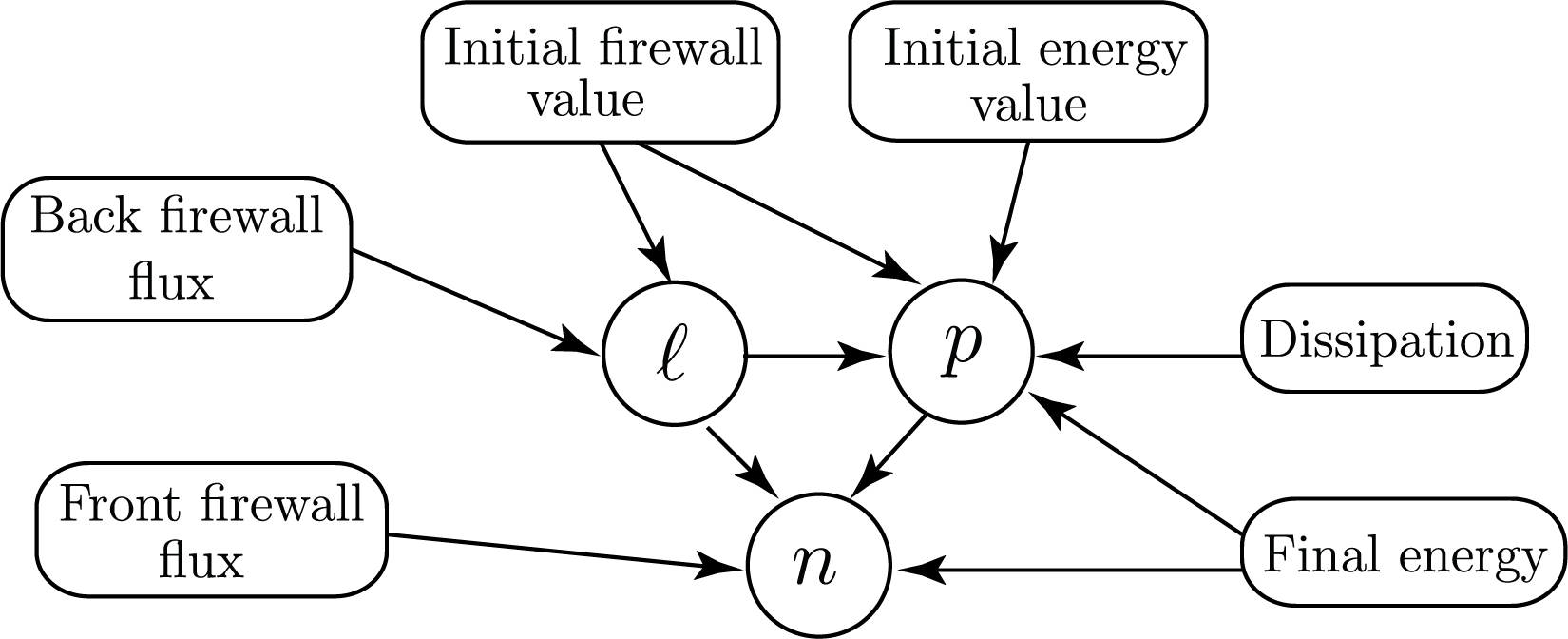}
\caption{Illustration of the constraints on parameters $n$, $p$ and $\ell$ to ensure the estimates required to use the relaxation scheme final inequality \vref{relax_scheme_final}.}
\label{fig:dependency_param_relax}
\end{figure}
As illustrated by this figure, the final choice of those parameters, satisfying all these requirements, will be, in short: 
\begin{itemize}
\item a quantity $\ell$ large enough positive, 
\item an integer $p$ large enough positive (depending on $\ell$),
\item an integer $n$ large enough positive (depending on $\ell$ and $p$). 
\end{itemize}
The proof of \cref{lem:neg_energy} will be presented as a sequence of lemmas. Here is the first one.
\begin{lemma}[lower bound on dissipation]
\label{lem:dissip_bump}
There exists a positive quantity $\epsDissip$ and a positive integer $\pMinDissip$ such that, for every integer $p$ greater than or equal to $\pMinDissip$, for every integer $n$ greater than or equal to $n_{\min}(p)$, and for every nonnegative quantity $\ell$, the following inequality holds:
\[
\frac{1}{2} \int_0^{s_{n,p}} \ddd^{(n,p,\ell)} (s) \, ds \ge \epsDissip
\,.
\]
\end{lemma}
\begin{proof}[Proof of \cref{lem:dissip_bump}]
Indeed, if the converse was true, there would exist a sequence $(p_j)_{j\in\nn}$ of integers, going to $+\infty$ as $j$ goes to $+\infty$, and a sequence $(n_j)_{j\in\nn}$ of integers with $n_j$ greater than or equal to $n_{\min}(p_j)$ for every nonnegative integer $j$ (thus $n_j$ also goes to $+\infty$ as $j$ goes to $+\infty$), and a sequence $(\ell_j)_{j\in\nn}$ of nonnegative quantities, such that, if we consider the integral
\[
I_j = \int_0^{s_{n_j,p_j}} \ddd^{(n_j,p_j,\ell_j)} (s) \, ds
\,,
\]
then $I_j\to0$ as $j\to +\infty$.

Observe that
\[
I_j \ge \int_{s_{n_j,p_j}-1}^{s_{n_j,p_j}} \ddd^{(n_j,p_j,\ell_j)} (s) \, ds= \int_{-1}^0 \ddd^{(n_j,p_j,\ell_j)} (s_{n_j,p_j}+s) \, ds
\,,
\]
so that, according to the definition \cref{def_dissip_tr_fr} of $\ddd(\cdot)$,
\[
\begin{aligned}
I_j &\ge \int_{-1}^0 \left(\int_{\rr}\chi^{(n_j,p_j,\ell_j)}(\xi,s_{n_j,p_j}+s)v^{(n_j,p_j)}(\xi,s_{n_j,p_j}+s)^2\, d\xi\right)\, ds \\
&\ge \int_{-1}^0 \left(\int_{-\infty}^{\ell_j+\cCut s_{n_j,p_j}}\exp(c_{n_j,p_j} \xi)v^{(n_j,p_j)}(\xi,s_{n_j,p_j}+s)^2\, d\xi\right)\, ds \\
&\ge \int_{-1}^0 \left(\int_{-\infty}^{\cCut s_{n_j,p_j}}\exp(c_{n_j,p_j} \xi)v^{(n_j,p_j)}(\xi,s_{n_j,p_j}+s)^2\, d\xi\right)\, ds \\
&\ge \int_{-1}^0 \left(\int_{-\infty}^{\cCut s_{n_j,p_j}}\exp(c_{n_j,p_j} \xi)(u_t+c_{n_j,p_j}u_x)(\xesc(t_n)+c_{n_j,p_j} s + \xi,t_n+s)^2\, d\xi \right)\, ds 
\,,
\end{aligned}
\]
and since $s_{n_j,p_j}$ goes to $+\infty$ as $j$ goes to $+\infty$, if $j$ is large enough positive then $I_j$ is greater than or equal to the quantity
\[
\int_{-1}^0 \left(\int_{-2/\varepsilon_0}^{2/\varepsilon_0+c_{n_j,p_j}}\exp(c_{n_j,p_j} \xi)\Bigl((u_t+c_{n_j,p_j}u_x)\bigl(\xesc(t_n)+c_{n_j,p_j} s + \xi,t_n+s\bigr)\Bigr)^2\, d\xi\right)\, ds 
\,,
\]
so that
\[
\begin{aligned}
I_j \exp&\left(\frac{2}{\epsilon_0}c_{n_j,p_j}\right) \\
&\ge \int_{-1}^0 \left(\int_{-2/\varepsilon_0}^{2/\varepsilon_0+c_{n_j,p_j}}\Bigl((u_t+c_{n_j,p_j}u_x)\bigl(\xesc(t_n)+c_{n_j,p_j} s + \xi,t_n+s\bigr)\Bigr)^2\, d\xi\right)\, ds \\
&\ge \int_{-1}^0 \left(\int_{-2/\varepsilon_0 + c_{n_j,p_j} s}^{2/\varepsilon_0+c_{n_j,p_j}(1+s)}\Bigl((u_t+c_{n_j,p_j}u_x)\bigl(\xesc(t_n) + \xi,t_n+s\bigr)\Bigr)^2\, d\xi\right)\, ds \\
&\ge \int_{-1}^0 \left(\int_{-2/\varepsilon_0}^{2/\varepsilon_0}\Bigl((u_t+c_{n_j,p_j}u_x)\bigl(\xesc(t_n) + \xi,t_n+s\bigr)\Bigr)^2\, d\xi\right)\, ds
\,.
\end{aligned}
\]
In view of the limit \cref{asymt_compactness_relax_trav_frame}, is follows that
\[
\liminf_{j\to+\infty} I_j \exp\left(\frac{2}{\epsilon_0}\cesc\right)\ge \int_{-1}^0 \left(\int_{-2/\varepsilon_0}^{2/\varepsilon_0}\bigl(\widebar{u}_t(\xi,s)+\cesc\widebar{u}_x(\xi,s)\bigr)^2\, d\xi\right)\, ds
\,,
\]
and since $I_j\to0$ as $j\to+\infty$, it follows that the function $\xi\mapsto \widebar{u}_t(\xi,0)+\cesc\widebar{u}_x(\xi,0)$ must vanish identically on the interval $[-2/\varepsilon_0,2/\varepsilon_0]$, a contradiction with inequality \cref{deltaDissip_of_tn_larger_than_eps_zero} and the definition \cref{def_deltaDissip} of $\deltaDissip(\cdot)$. \Cref{lem:dissip_bump} is proved. 
\end{proof}
The conclusions of \cref{lem:neg_energy} will follow from the next five (independent) lemmas. 
\begin{lemma}[upper bound on initial energy]
\label{lem:small_init_en_bef}
For every nonnegative quantity $\ell$, there exists a positive integer $\pMinInitEn(\ell)$ such that, for every integer $p$ greater than or equal to $\pMinInitEn(\ell)$ and every integer $n$ greater than or equal to $n_{\min}(p)$, 
\[
\eee^{(n,p,\ell)}(0) \le \frac{\epsDissip}{8}
\,.
\]
\end{lemma}
\begin{lemma}[lower bound on final energy]
\label{lem:non_small_fin_en_bef}
For every nonnegative quantity $L$, there exists a positive integer $\pMinFinEn(L)$ such that, for every integer $p$ greater than or equal to $\pMinFinEn(L)$, and for every nonnegative quantity $\ell$, there exists a nonnegative integer $\nMinFinEn(L,\ell,p)$ such that, for every integer $n$ greater than or equal to $\nMinFinEn(L,\ell,p)$ and to $n_{\min}(p)$, 
\[
\eee^{(n,p,\ell)}(s_{n,p}) \ge \int_{-\infty}^L \exp(\cesc \xi) \, E_{[u_{\infty}]}(\xi) \, d\xi - \frac{\epsDissip}{8}
\,.
\]
\end{lemma}
\begin{lemma}[upper bound on initial firewall]
\label{lem:small_init_fire_bef}
There exists a positive quantity $\ellMinInitFire$ such that, for every quantity $\ell$ greater than or equal to $\ellMinInitFire$, there exists a positive integer $\pMinInitFire(\ell)$ such that, for every integer $p$ greater than or equal to $\pMinInitFire(\ell)$, and for every integer $n$ greater than or equal to $n_{\min}(p)$, the following inequality holds (the constants $\KEF$ and $\nuF$ being those of inequality \vref{relax_scheme_final}):
\[
\frac{\KEF}{\nuF} \fff^{(n,p,\ell)}(0) \le \frac{\epsDissip}{8}
\]
\end{lemma}
\begin{lemma}[upper bound on back flux for the firewall]
\label{lem:small_back_flux_fire_bef}
There exists a nonnegative \\
quantity $\ellMinBackFire$ such that, for every positive integer $p$ and every integer $n$ greater than or equal to $n_{\min}(p)$, the ``back-flux'' term in inequality \vref{relax_scheme_final} is less than or equal to $\epsDissip/8$.
\end{lemma}
\begin{lemma}[upper bound on front flux for the firewall]
\label{lem:small_front_flux_fire_bef}
For every nonnegative quantity $\ell$ and for every positive integer $p$, there exists a nonnegative integer $\nMinFrontFire(\ell,p)$ such that, for every integer $n$ greater than or equal to $\nMinFrontFire(\ell,p)$ and greater than or equal to $n_{\min}(p)$, the ``front flux'' term in inequality \vref{relax_scheme_final} is less than or equal to $\epsDissip/8$.
\end{lemma}
Postponing the proofs of these five lemmas, let us first conclude with the proof of \cref{lem:neg_energy}. For every nonnegative quantity $L$, let:
\[
\begin{aligned}
\ell &= \max( \ellMinInitFire , \ellMinBackFire )
\,, \\
p &= \max\bigl( \pMinDissip , \pMinInitEn(\ell) , \pMinFinEn(L) , \pMinInitFire(\ell) \bigr)
\,, \\ 
n &= \max\bigl(n_{\min}(p), \nMinFinEn(L,\ell,p) , \nMinFrontFire(\ell,p) \bigr) 
\,.
\end{aligned}
\]
Then, according to \cref{lem:dissip_bump,lem:small_init_en_bef,lem:non_small_fin_en_bef,lem:small_init_fire_bef,lem:small_back_flux_fire_bef,lem:small_front_flux_fire_bef}, it follows from inequality \vref{relax_scheme_final} that
\[
\int_{-\infty}^L \exp(\cesc \xi) \, E_{[u_{\infty}]}(\xi) \, d\xi \le -\frac{3}{8}\epsDissip
\,,
\]
and since the nonnegative quantity $L$ is any, this finishes the proof of \cref{lem:neg_energy} (provided that \cref{lem:small_init_en_bef,lem:non_small_fin_en_bef,lem:small_init_fire_bef,lem:small_back_flux_fire_bef,lem:small_front_flux_fire_bef} hold).
\end{proof}
\begin{proof}[Proof of \cref{lem:small_init_en_bef} (upper bound on initial energy)]
Let us proceed by contradiction \\ 
and assume that the converse holds. Then there exists a nonnegative quantity $\ell_0$, a sequence $(p_j)_{j\in\nn}$ of positive integers going to $+\infty$ as $j$ goes to $+\infty$, and a sequence $(n_j)_{j\in\nn}$ of integers such that $n_j$ is greater than or equal to $n_{\min}(p_j)$ for all $j$ in $\nn$ (thus $n_j$ goes to $+\infty$ as $j$ goes to $+\infty$) and such that, for every $j$ in $\nn$, 
\begin{equation}
\label{hyp_proof_small_init_en_bef}
\eee^{(n_j,p_j,\ell_0)}(0) \ge \frac{\epsDissip}{8}
\,.
\end{equation}

By compactness (\cref{lem:compactness}), up to replacing the sequence $\bigl((n_j,p_j)\bigr)_{j\in\nn}$ by a subsequence, there exists an entire solution $\tilde{u}$ of system \cref{init_syst} such that, with the notation of \cref{compactness},
\begin{equation}
\label{compactness_bef}
D^{2,1}u\bigl( \xesc(\tnjpjBef) + \cdot, \tnjpjBef + \cdot \bigr)\to D^{2,1}\tilde{u}
\quad\text{as}\quad
j\to+\infty
\,,
\end{equation}
uniformly on every compact subset of $\rr^2$. According to the definition \vref{small_dissip_frame} of $\tnpBef$, for every nonnegative integer $j$, 
\[
\deltaDissip(\tnjpjBef) \le \frac{1}{p_j}
\,,
\]
therefore the function
\[
\xi\mapsto\tilde u_t(\xi,0) + \cesc \tilde u_x(\xi,0)
\]
must be identically zero on $\rr$. Let us denote by $\phi$ the function $\xi\mapsto\tilde u_t(\xi,0)$. It follows that $\phi$ is a solution of system \vref{syst_trav_front} for the speed $\cesc$, or in other words is the profile of a wave travelling at the speed $\cesc$ for system \cref{init_syst}. Since (as stated in \vref{xHom_minus_xesc}) the difference $\xHom(t)-\xesc(t)$ goes to $+\infty$ as time goes to $+\infty$, the following estimate holds:
\begin{equation}
\label{est_front_bef}
\abs{\phi (\xi)-m} \le \dEsc(m)
\quad\text{for all}\quad \xi \text{ in }[0,+\infty)
\,,
\end{equation}
thus according to assertion \cref{item:cv_spatial_asymptotics_tw} of \vref{lem:asympt_behav_tw_2}, 
\[
\phi(\xi)\to m
\quad\text{as}\quad
\xi \to +\infty
\,.
\]
On the other hand, according to the bound \vref{attr_ball_infty_inv_implies_cv}, the function $\abs{\phi(\cdot)}$ is bounded on $\rr$, and according to the definition \vref{def_xesc} of $\xesc(\cdot)$ the function $\abs{\phi(\cdot)-m}$ cannot vanish identically. In short, the function $\phi$ must belong to the set $\Phi_{\cesc}(m)$ of bounded profiles of waves travelling at the speed $\cesc$ and ``invading'' the equilibrium $m$. As a consequence, according to \vref{lem:zero_en_tw}, 
\begin{equation}
\label{zero_en_tw_small_init_en_bef}
\int_{\rr} \exp(\cesc \xi) E_{[\phi]}(\xi) \, d\xi =0
\,.
\end{equation}
Recall that
\[
\chi^{(n_j,p_j,\ell_0)}(\xi,0) = \left\{
\begin{aligned}
& \exp(c_{n_j,p_j} \xi) 
& & \quad\text{if}\quad \xi\le \ell_0 \,, \\
& \exp \bigl( c_{n_j,p_j} \ell_0 - \kappa(\xi-\ell_0) \bigr)
& & \quad\text{if}\quad \xi\ge \ell_0 \,,
\end{aligned}
\right.
\]
and let
\begin{equation}
\label{def_chi_infty}
\chi^{(\infty,\infty,\ell_0)}(\xi) = \left\{
\begin{aligned}
& \exp(\cesc \xi) 
& & \quad\text{if}\quad \xi\le \ell_0 \,, \\
& \exp \bigl( \cesc\ell_0 - \kappa(\xi-\ell_0) \bigr)
& & \quad\text{if}\quad \xi\ge \ell_0 \,.
\end{aligned}
\right.
\end{equation}
Since the convergence
\[
\chi^{(n_j,p_j,\ell_0)}(\xi,0) \to 0
\quad\text{as}\quad
\xi\to \pm\infty
\]
is uniform with respect to $j$ provided that $j$ is large enough, it follows that
\[
\chi^{(n_j,p_j,\ell_0)}(\cdot,0) \to \chi^{(\infty,\infty,\ell_0)}(\cdot) 
\quad\text{in}\quad 
L^1(\rr)
\quad\text{as}\quad
j\to +\infty
\]
and (according to the definition \cref{compactness_bef} of $\phi$ and the bounds \vref{bound_u_ut_ck_bis} for the solution) that 
\[
\xi\mapsto \chi^{(n_j,p_j,\ell_0)}(\xi,0) E(\xi,t_{n_j,p_j}^{\textup{bef}})
\quad\text{approaches}\quad
\xi\mapsto \chi^{(\infty,\infty,\ell_0)}(\xi) E_{[\phi]}(\xi) 
\quad\text{in}\quad
L^1(\rr,\rr)
\]
as $j$ goes to $+\infty$.
As a consequence, 
\begin{equation}
\label{conv_en_bef}
\eee^{(n_j,p_j,\ell_0)}(0) \to \int_{\rr} \chi^{(\infty,\infty,\ell_0)}(\xi) E_{[\phi]}(\xi) \, d\xi
\quad\text{as}\quad
j\to +\infty
\,.
\end{equation}
According to inequality \cref{est_front_bef} the quantity $V\bigl(\phi(\xi)\bigr)\bigr)-V(m)$ is nonnegative (actually positive) for all $\xi$ in $[0,+\infty)$, therefore according to the identity \cref{zero_en_tw_small_init_en_bef},
\begin{equation}
\label{neg_en_front_trunc_weight_bef}
\int_{\rr} \chi^{(\infty,\infty,\ell_0)}(\xi) E_{[\phi]}(\xi) \, d\xi \le 0
\,.
\end{equation}
The contradiction with hypothesis \cref{hyp_proof_small_init_en_bef} follows from \cref{conv_en_bef} and \cref{neg_en_front_trunc_weight_bef}.
\Cref{lem:small_init_en_bef} is proved.
\end{proof}
\begin{proof}[Proof of \cref{lem:non_small_fin_en_bef} (lower bound on final energy)]
Recall that
\[
\eee^{(n,p,\ell)} (s_{n,p})= \int_{\rr} \chi^{(n,p,\ell)}(\xi,s_{n,p}) \, E(\xi,t_n) \, d\xi
\]
and that
\[
\chi^{(n,p,\ell)}(\xi,s_{n,p}) = \left\{
\begin{aligned}
& \exp(c_{n,p} \xi) 
& &  \text{if}\quad \xi\le \ell + \cCut s_{n,p} \,, \\
& \exp \Bigl( c_{n,p} (\ell + \cCut s_{n,p}) - \kappa\bigl(\xi-(\ell + \cCut s_{n,p}) \bigr) \Bigr)
& &  \text{if}\quad \xi\ge \ell + \cCut s_{n,p} \,.
\end{aligned}
\right.
\]
It follows from the definition of $\xesc(\cdot)$ that
\[
V\Bigl( u\bigl(\xesc(t_n) + \xi, t_n\bigr) -m\Bigr) - V(m)\ge 0
\quad\text{for all}\quad
\xi
\quad\text{in}\quad
[0,\xHom(t_n) - \xesc(t_n)]
\,.
\]
Let $L$ be a positive quantity, and let us assume that $n$ is large enough so that 
\[
L\le \xHom(t_n) - \xesc(t_n)
\]
(this is possible according to assertion \vref{xHom_minus_xesc}). Then, 
\begin{equation}
\label{low_bd_non_small_fin_en_bef}
\begin{aligned}
\eee^{(n,p,\ell)} (s_{n,p}) \ge & \int_{-\infty}^L \chi^{(n,p,\ell)}(\xi,s_{n,p}) \, E(\xi,t_n) \, d\xi  \\
& + \int_{\xHom(t_n) - \xesc(t_n)}^{+\infty} \chi^{(n,p,\ell)}(\xi,s_{n,p}) \, E(\xi,t_n) \, d\xi 
\,.
\end{aligned}
\end{equation}
Recall that the choice of $\tnpBef$ ensures that $s_{n,p}$ is greater than or equal to $p$, therefore as soon as $p$ is large enough (namely greater than or equal to $L/\cCut$), the first integral at the right-hand of this last inequality \cref{low_bd_non_small_fin_en_bef} equals
\begin{equation}
\label{first_int_non_small_fin_en_bef}
\int_{-\infty}^L \exp(c_{n,p} \xi) \, E(\xi,t_n) \, d\xi 
\,.
\end{equation}
Since the convergence 
\[
\exp(c_{n,p} \xi) \to 0 
\quad\text{as}\quad
\xi\to -\infty
\]
is uniform with respect to $n$ and $p$ provided that $p$ is large enough, and according to the bounds \vref{bound_u_ut_ck_bis}, 
\[
\xi\mapsto \exp(c_{n,p} \xi) \, E(\xi,t_n)
\quad\text{approaches}\quad
\xi\mapsto \exp(\cesc \xi) \, E_{[u_{\infty}]}(\xi)
\]
in $L^1\bigl((-\infty,L],\rr\bigr)$, as both $n$ and $p$ go to $+\infty$.

On the other hand, for $\ell$ and $p$ fixed and $n$ large enough (depending on the values of $\ell$ and $p$), the second integral of the right-hand of inequality \cref{low_bd_non_small_fin_en_bef} equals
\begin{equation}
\label{second_int_non_small_fin_en_bef}
\exp \bigl( (c_{n,p}+\kappa) (\ell + \cCut s_{n,p}) \bigr) \int_{\xHom(t_n) - \xesc(t_n)}^{+\infty} e^{- \kappa\xi} \, E(\xi,t_n) \, d\xi 
\,.
\end{equation}
According to the bounds \cref{bound_u_ut_ck_bis} and since according to the choice of $t_{n,p}^{\textup{bef}}$ the quantity $s_{n,p}$ is less than or equal to $p+T(1/p)$ (it is thus bounded from above, uniformly with respect to $n$), this last quantity \cref{second_int_non_small_fin_en_bef} goes to $0$ as $n$ goes to $+\infty$ and $\ell$ and $p$ are fixed. 

The desired lower bound of \cref{lem:non_small_fin_en_bef} thus follows from inequality \cref{low_bd_non_small_fin_en_bef}. 
This finishes the proof of \cref{lem:non_small_fin_en_bef}.
\end{proof}
The next step is to prove \cref{lem:small_init_fire_bef}. Let us first introduce some notation and state an intermediate lemma, before actually proceeding to the proof. Let
\begin{equation}
\label{definition_rho_infty}
\psi^{(\infty,\infty,\ell)} (\xi) = 
\left\{
\begin{aligned}
& \exp\bigl((\cesc + \kappa) \xi - \kappa \ell \bigr) & & \quad\text{if}\quad \xi \le \ell \\
& \exp\bigl((\cesc+\kappa)\ell - \kappa \xi \bigr) & & \quad\text{if}\quad \xi \ge \ell \,.
\end{aligned} 
\right. \\
\end{equation} 
Observe that, for every $\xi$ in $\rr$, 
\[
\psi^{(n,p,\ell)} (\xi,0) \to \psi^{(\infty,\infty,\ell)} (\xi) 
\quad\text{as}\quad
p\to +\infty
\,,
\]
uniformly with respect to $n$ greater than or equal to $n_{\min}(p)$. Let
\[
\epsDissipFire = \frac{\nuF \, \epsDissip}{8 \, \KEF}
\,.
\]
The aim of \cref{lem:small_init_fire_bef} is to prove, under suitable conditions on the integers $n$ and $p$ and the quantity $\ell$, that $\fff^{(n,p,\ell)}(0)$ is less than or equal to $\epsDissipFire$. The following intermediate lemma provides a step towards this purpose.
\begin{lemma}[smallness of firewall with a weight bulk far to the right for a travelling front]
\label{lem:small_fire_tw}
There exists a positive quantity $\ell_1$ such that, for every quantity $\ell$ greater than or equal to $\ell_1$, and for every function $\phi$ in the set $\Phi_{\cesc}(m)$ of bounded profiles of waves travelling at the speed $\cesc$ and invading $m$, such that $\abs{\phi(\xi)-m}$ is not larger than $\dEsc(m)$ for every nonnegative quantity $\xi$, the following inequality holds:
\begin{equation}
\label{ineq_small_fire_tw}
\int_{\rr} \psi^{(\infty,\infty,\ell)} (\xi) \, F_{[\phi]}(\xi) \, d\xi \le \frac{\epsDissipFire}{2}
\,.
\end{equation}
\end{lemma}
\begin{proof}[Proof of \cref{lem:small_fire_tw}]
Observe that, for every real quantity $\xi$ in $(-\infty,\ell]$, 
\[
\psi^{(\infty,\infty,\ell)} (\xi) = \exp\bigl(-\frac{\kappa\ell}{2}\bigr) \exp\biggl( (\cesc + \kappa) \Bigl( \xi - \frac{\kappa \ell}{2(\cesc + \kappa)}\Bigr) \biggr)
\,.
\]
Thus, according to the bounds \cref{bound_u_ut_ck_bis}, there exists a (large) positive quantity $\ell_2$ such that, for every $\ell$ greater than or equal to $\ell_2$, 
\[
\int_{-\infty}^{\kappa \ell/\bigl(2(\cesc + \kappa)\bigr)} \psi^{(\infty,\infty,\ell)} (\xi) \, F_{[\phi]}(\xi) \, d\xi \le \frac{\epsDissipFire}{4}
\,.
\]
It follows that
\[
\int_{\rr} \psi^{(\infty,\infty,\ell)} (\xi) \, F_{[\phi]}(\xi) \, d\xi 
\le 
\frac{\epsDissipFire}{4} + \int_{\kappa \ell/\bigl(2(\cesc + \kappa)\bigr)}^{+\infty} \exp(\cesc \xi) \, F_{[\phi]}(\xi) \, d\xi 
\,.
\]
According to assertion \cref{item:exp_cv_spatial_asymptotics_tw} of \vref{lem:asympt_behav_tw_2}, the second term (the integral) in the right-hand side of this inequality goes to $0$ as $\ell$ goes to $+\infty$. In addition, according to the Local Stable Manifold Theorem, this convergence is uniform with respect to the profile $\phi$ belonging to $\Phi_{\cesc}(m)$ and satisfying $\abs{\phi(\xi)-m}\le\dEsc(m)$ for every nonnegative quantity $\xi$. This proves \cref{lem:small_fire_tw}.
\end{proof}
\begin{proof}[Proof of \cref{lem:small_init_fire_bef} (upper bound on initial firewall)]
Let us proceed by contradiction and assume that the converse holds. Then, there exists a quantity $\ell_3$ greater than or equal to the quantity $\ell_1$ introduced in \cref{lem:small_fire_tw}, a sequence $(p_j)_{j\in\nn}$ of positive integers going to $+\infty$, and a sequence $(n_j)_{j\in\nn}$ of integers with $n_j$ greater than or equal to $n_{\min}(p_j)$ for all $j$ (thus $n_j$ also goes to $+\infty$ as $j$ goes to $+\infty$), such that, for every nonnegative integer $j$, 
\[
\fff^{(n_j,p_j,\ell_3)}(0) \ge \epsDissipFire
\,.
\]
Up to replacing the sequence $\bigl((n_j,p_j)\bigr)_{j\in\nn}$ by a subsequence, and proceeding as in the proof of \cref{lem:small_init_en_bef}, it may be assumed that there exists a function $\phi$ in the set $\Phi_{\cesc}(m)$ of bounded profiles of waves travelling at the speed $\cesc$ and ``invading'' the equilibrium $m$, such that, for every positive quantity $L$, 
\[
\norm{ \xi\mapsto m+v^{(n_j,p_j)}(\xi,0) - \phi(\xi) }_{\ccc^1\bigl([-L,L],\rr^d\bigr)} \to 0
\quad\text{as}\quad
j \to +\infty
\,.
\]
Moreover, according to the definition of $\xesc(\cdot)$ and to \vref{lem:esc_Esc} (escape / Escape), the quantity $\abs{\phi(\xi)-m}$ is not larger than $\dEsc(m)$ for every nonnegative quantity $\xi$. 
According to the bounds \vref{bound_u_ut_ck_bis}, the function
\begin{equation}
\label{integrand_small_init_fire_bef}
\xi\mapsto \psi^{(n_j,p_j,\ell_3)}(\xi,0) \, F(\xi,t_{n_j,p_j}^{\textup{bef}}) 
\end{equation}
goes to $0$ as $\xi$ goes to $\pm\infty$, and this convergence is uniform with respect to $j$ (provided that $j$ is large enough). As a consequence, the function \cref{integrand_small_init_fire_bef} above approaches 
\[
\xi\mapsto \psi^{(\infty,\infty,\ell_3)} (\xi) \, F_{[\phi]}(\xi)
\]
in $L^1(\rr)$, as $j$ goes to $+\infty$. It follows that $\fff^{(n_j,p_j,\ell_3)}(0)$ goes to the quantity
\[
\int_{\rr} \psi^{(\infty,\infty,\ell_3)} (\xi) \, F_{[\phi]}(\xi) \, d\xi
\]
as $j$ goes to $+\infty$, a contradiction with \cref{lem:small_fire_tw}. \Cref{lem:small_init_fire_bef} is proved. 
\end{proof}
\begin{proof}[Proof of \cref{lem:small_back_flux_fire_bef} (upper bound on back flux for the firewall)]
The statement follows from the expression of the back flux term in the relaxation scheme final inequality \vref{relax_scheme_final}. 
\end{proof}
\begin{proof}[Proof of \cref{lem:small_front_flux_fire_bef} (upper bound on front flux for the firewall)]
The statement\\ 
follows from the expression of the front flux term in the relaxation scheme final inequality \vref{relax_scheme_final}.
\end{proof}
The proof of \cref{lem:neg_energy} is complete.
\subsubsection{Relaxation scheme to the right of the dissipation bump}
\label{subsubsec:relax_sch_right}
The purpose of this \namecref{subsubsec:relax_sch_right} is to apply once again the relaxation scheme set up in \cref{subsec:relax_sch_tr_fr} to the ``second'' sub-interval (between $t_n$ and $t_{n,p}^{\textup{aft}}$, see \vref{fig:two_intervals_required}), in order to complete the proof of \cref{prop:dissip_app_zero}. The arguments are very similar to those of the proof of \cref{lem:neg_energy} in the previous \namecref{subsubsec:relax_sch_right}.

Let us introduce the positive quantity $\epsEnergy$ defined as
\[
\epsEnergy = - \int_{\rr} \exp(\cesc \xi) E_{[u_\infty]}(\xi) \, d\xi
\,.
\]
The same notation as in the previous \namecref{subsubsec:relax_sch_right} will be used again to denote objects that are defined similarly but with respect to the ``second'' subinterval $[t_n,t_{n,p}^{\textup{aft}}]$ (by contrast with the ``first'' one $[t_{n,p}^{\textup{bef}},t_n]$). In other words, for notational simplicity, the superscripts ``aft'' (versus ``bef'') will be omitted for those objects. This begins with the following notation. 

For every positive integer $p$ and every integer $n$ greater than or equal to $n_{\min}(p)$, let
\[
s_{n,p} = t_{n,p}^{\textup{aft}} - t_n 
\quad\text{and}\quad
c_{n,p} = \frac{\xesc(t_{n,p}^{\textup{aft}})-\xesc(t_n)}{s_{n,p}}
\,.
\]
Let us assume that $p$ is large enough so that
\[
0 < c_{n,p} 
\quad\text{and}\quad
c_{n,p} \ge \cesc - \frac{\kappa \cCut}{4(\cnoesc+\kappa)}
\,,
\]
and let $\ell$ denote a nonnegative quantity to be chosen below. The relaxation scheme set up in \cref{subsec:relax_sch_tr_fr} will be applied with the following parameters:
\[
\tInit = t_{p} 
\quad\text{and}\quad
\xInit = \xesc(\tInit)
\quad\text{and}\quad
c = c_{n,p}
\quad\text{and}\quad
\initCut = \ell
\,.
\]
As in the previous \namecref{subsubsec:relax_sch_right}, the relaxation scheme thus depends on the three parameters $n$, $p$, and $\ell$. Observe that both hypotheses \vref{hyp_param_relax_sch} and \vref{hyp_c_close_to_barcescsup} (required to apply the relaxation scheme) hold. Let us denote by
\[
\begin{aligned}
& 
v^{(n,p)}(\cdot,\cdot)
\quad\text{and}\quad
\chi^{(n,p,\ell)}(\cdot,\cdot)
\quad\text{and}\quad
\eee^{(n,p,\ell)}(\cdot)
\quad\text{and}\quad
\ddd^{(n,p,\ell)}(\cdot)
\\
&
\text{and}\quad
\psi^{(n,p,\ell)}(\cdot,\cdot)
\quad\text{and}\quad
\fff^{(n,p,\ell)}(\cdot)
\end{aligned}
\]
the objects defined in \cref{subsec:relax_sch_tr_fr} (with the same notation except the ``$(n,p)$'' or ``$(n,p,\ell)$'' superscripts to emphasize the dependency with respect to the parameters). The contradiction completing the proof of \cref{prop:dissip_app_zero} will follow from the next five lemmas.
\begin{lemma}[upper bound on initial energy]
\label{lem:small_init_en_aft}
For every nonnegative quantity $\ell$, there exists a positive integer $\pMinInitEn(\ell)$ such that, for every integer $p$ greater than or equal to $\pMinInitEn(\ell)$ and every integer $n$ greater than or equal to $n_{\min}(p)$, 
\[
\eee^{(n,p,\ell)}(0) \le -\frac{7}{8} \epsEnergy
\,.
\]
\end{lemma}
\begin{lemma}[lower bound on final energy]
\label{lem:non_small_fin_en_aft}
There exists a positive integer $\pMinFinEn$ such that, for every integer $p$ greater than or equal to $\pMinFinEn$ and for every nonnegative quantity $\ell$, there exists an nonnegative integer $\nMinFinEn(\ell,p)$ such that, for every integer $n$ greater than or equal to $\nMinFinEn(\ell,p)$ and to $n_{\min}(p)$, 
\[
\eee^{(n,p,\ell)}(s_{n,p}) \ge -\frac{\epsEnergy}{8}
\,.
\]
\end{lemma}
\begin{lemma}[upper bound on initial firewall]
\label{lem:small_init_fire_aft}
There exists a positive quantity $\ellMinInitFire$ such that, for every quantity $\ell$ greater than or equal to $\ellMinInitFire$, there exists a positive integer $\pMinInitFire(\ell)$ such that, for every integer $p$ greater than or equal to $\pMinInitFire(\ell)$, and for every integer $n$ greater than or equal to $n_{\min}(p)$, the following inequality holds (the constants $\KEF$ and $\nuF$ being those of inequality \vref{relax_scheme_final}):
\[
\frac{\KEF}{\nuF} \fff^{(n,p,\ell)}(0) \le \frac{\epsEnergy}{8}
\]
\end{lemma}
\begin{lemma}[upper bound on back flux for the firewall]
\label{lem:small_back_flux_fire_aft}
There exists a nonnegative \\
quantity $\ellMinBackFire$ such that, for every positive integer $p$ and every integer $n$ greater than or equal to $n_{\min}(p)$, the ``back-flux'' term in inequality \vref{relax_scheme_final} is less than or equal to $\epsEnergy/8$.
\end{lemma}
\begin{lemma}[upper bound on front flux for the firewall]
\label{lem:small_front_flux_fire_aft}
For every nonnegative quantity $\ell$ and for every positive integer $p$, there exists an integer $\nMinFrontFire(\ell,p)$ such that, for every integer $n$ greater than or equal to $\nMinFrontFire(\ell,p)$ and to $n_{\min}(p)$, the ``front flux'' term in inequality \vref{relax_scheme_final} is less than or equal to $\epsEnergy/8$.
\end{lemma}
Postponing the proofs of these five lemmas, let us first conclude with the proof of \cref{prop:dissip_app_zero}. Let
\[
\begin{aligned}
\ell &= \max( \ellMinInitFire , \ellMinBackFire )
\,, \\
p &= \max\bigl( \pMinInitEn(\ell) , \pMinFinEn , \pMinInitFire(\ell) \bigr)
\,, \\ 
n &= \max\bigl(n_{\min}(p), \nMinFinEn(\ell,p) , \nMinFrontFire(\ell,p) \bigr) 
\,.
\end{aligned}
\] 
Then, according to \cref{lem:small_init_en_aft,lem:non_small_fin_en_aft,lem:small_init_fire_aft,lem:small_back_flux_fire_aft,lem:small_front_flux_fire_aft}, inequality \vref{relax_scheme_final} leads to an immediate contradiction. This finishes the proof of \cref{prop:dissip_app_zero} (provided that \cref{lem:small_init_en_aft,lem:non_small_fin_en_aft,lem:small_init_fire_aft,lem:small_back_flux_fire_aft,lem:small_front_flux_fire_aft} hold).
\end{proof}
\begin{proof}[Proof of \cref{lem:small_init_en_aft} (upper bound on initial energy)]
Let $\ell$ denote a nonnegative quantity, and let (as in the definition \vref{def_chi_infty})
\[
\chi^{(\infty,\infty,\ell)}(\xi) = \left\{
\begin{aligned}
& \exp(\cesc \xi) 
& & \quad\text{if}\quad \xi\le \ell \,, \\
& \exp \bigl( \cesc\ell - \kappa(\xi-\ell) \bigr)
& & \quad\text{if}\quad \xi\ge \ell \,.
\end{aligned}
\right.
\]
For every large enough positive integer $p$, and every integer $n$ greater than or equal to $n_{\min}(p)$, the mean speed is close to $\cesc$, thus (say) greater than $\cesc/2$. As a consequence, the convergence
\[
\chi^{(n,p,\ell)}(\xi,0) \to \chi^{(\infty,\infty,\ell)}(\xi)
\quad\text{as}\quad
\xi\to \pm\infty
\] 
is uniform with respect to $n$ and $p$ (provided that $p$ is large enough and that $n$ is greater than or equal to $n_{\min}(p)$ and that $\ell$ is fixed). It follows from the convergence above that
\[
\xi\mapsto\chi^{(n,p,\ell)}(\xi,0) \, E(\xi,t_n) 
\quad\text{approaches}\quad
\xi\mapsto\chi^{(\infty,\infty,\ell)}(\xi) \, E_{[u_{\infty}]}(\xi)
\]
in $L^1(\rr,\rr)$ as $p$ goes to $+\infty$ (uniformly with respect to $n$ greater than or equal to $n_{\min}(p)$). Thus
\[
\eee^{(n,p,\ell)}(0) \to \int_{\rr} \chi^{(\infty,\infty,\ell)}(\xi) \, E_{[u_{\infty}]}(\xi) \, d\xi
\quad\text{as}\quad
p\to +\infty
\,,
\]
uniformly with respect to $n$ greater than or equal to $n_{\min}(p)$. Since the quantity $V\bigl(u_{\infty}(\xi)\bigr)-V(m)$ is nonnegative for every nonnegative quantity $\xi$, the following inequality holds:
\[
\int_{\rr} \chi^{(\infty,\infty,\ell)}(\xi) \, E_{[u_{\infty}]}(\xi) \, d\xi 
\le
\int_{\rr} \exp(\cesc \xi) \, E_{[u_{\infty}]}(\xi) \, d\xi = -\epsEnergy
\,,
\]
and \cref{lem:small_init_en_aft} follows. 
\end{proof}
\begin{proof}[Proof of \cref{lem:non_small_fin_en_aft} (lower bound on final energy)]
According to\\ 
\vref{lem:zero_en_tw}, there exists a positive quantity $L$ such that, for every function $\phi$ in the set $\Phi_{\cesc}(m)$ of bounded profiles of waves travelling at the speed $\cesc$ and ``invading'' the equilibrium $m$, satisfying $\abs{\phi(\xi)-m}\le \dEsc(m)$ for every nonnegative quantity $\xi$, the following estimate holds:
\[
\int_{-\infty}^L \exp(\cesc \xi) \, E_{[\phi]}(\xi) \, d\xi \ge -\frac{\epsEnergy}{24}
\,.
\]
As in the proof of \cref{lem:small_init_en_bef}, let us assume that $n$ is a positive integer, large enough so that
\[
L \le \xHom(t_{n,p}^{\textup{aft}}) - \xesc(t_{n,p}^{\textup{aft}})
\]
(this is possible according to assertion \vref{xHom_minus_xesc}). Then,
\begin{equation}
\label{low_bd_non_small_fin_en_aft}
\begin{aligned}
\eee^{(n,p,\ell)}(s_{n,p}) \ge  & 
\int_{-\infty}^L \chi^{(n,p,\ell)} (\xi,s_{n,p}) \, E (\xi,t_{n,p}^{\textup{aft}}) \, d\xi \\
& + \int_{\xHom(t_{n,p}^{\textup{aft}}) - \xesc(t_{n,p}^{\textup{aft}})}^{+\infty} \chi^{(n,p,\ell)} (\xi,s_{n,p}) \, E (\xi,t_{n,p}^{\textup{aft}}) \, d\xi
\,.
\end{aligned}
\end{equation}
Recall that the choice of $\tnpAft$ ensures that $s_{n,p}$ is greater than or equal to $p$, therefore as soon as $p$ is large enough (namely greater than or equal to $L/\cCut$), the quantity $\ell + \cCut s_{n,p}$ is greater than or equal to $L$, and therefore the first integral at the right-hand of this last inequality \cref{low_bd_non_small_fin_en_aft} equals
\begin{equation}
\label{first_int_non_small_fin_en_aft}
\int_{-\infty}^L \exp(c_{n,p} \xi) \, E(\xi,t_{n,p}^{\textup{aft}}) \, d\xi 
\,.
\end{equation}
The following lemma deals with this integral.
\begin{lemma}[lower bound on final energy, integral between $-\infty$ and $L$]
\label{lem:non_small_fin_en_aft_step}
There exists a positive integer $\pMinFinEn$ such that, for every integer $p$ greater than or equal to $\pMinFinEn$, for every integer $n$ greater than or equal to $n_{\min}(p)$, and for every nonnegative quantity $\ell$, 
\[
\int_{-\infty}^L \exp(c_{n,p} \xi) \, E(\xi,t_{n,p}^{\textup{aft}}) \, d\xi \ge -\frac{\epsEnergy}{12}
\,.
\]
\end{lemma}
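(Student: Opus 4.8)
The plan is to argue by contradiction, reproducing almost verbatim the compactness-and-limit scheme already used in \cref{lem:small_init_en_bef}. Note first that the integral in the statement does not depend on $\ell$ (only the weight $\exp(c_{p,q}y)$ appears, not $\chi^{(p,q,\ell)}$), so the quantifier over $\ell$ is vacuous and it suffices to produce one threshold $q_{\textup{\textrm{min,fin-en}}}$ such that the inequality holds for all $q\ge q_{\textup{\textrm{min,fin-en}}}$ and all $p\ge p_{\min}(q)$. Assume no such threshold exists; then there are sequences $(q_j)_{j\in\nn}$ of nonzero integers with $q_j\to+\infty$ and $(p_j)_{j\in\nn}$ with $p_j\ge p_{\min}(q_j)$ (hence $p_j\to+\infty$) such that, for every $j$,
\[
\int_{-\infty}^L \exp\bigl(c_{p_j,q_j}\,y\bigr)\,E\bigl(y,t_{p_j,q_j}^{\textup{aft}}\bigr)\,dy < -\frac{\epsEnergy}{12}
\,.
\]

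Next I would extract a limiting travelling front at the escape point. Applying \cref{lem:compact} to the sequence $\bigl(\xesc(t_{p_j,q_j}^{\textup{aft}}),t_{p_j,q_j}^{\textup{aft}}\bigr)_{j\in\nn}$ and passing to a subsequence, there exist $\phi$ in $\ccc^k_\textrm{b}(\rr,\rr^n)$ and $\tilde{\phi}$ in $\ccc^{k-2}_\textrm{b}(\rr,\rr^n)$ with, for every positive $L'$,
\[
\norm*{y\mapsto u\bigl(\xesc(t_{p_j,q_j}^{\textup{aft}})+y,t_{p_j,q_j}^{\textup{aft}}\bigr)-\phi(y)}_{\ccc^k([-L',L'],\rr^n)}\rightarrow0
\]
when $j\to+\infty$ (and similarly for the time derivative, with limit $\tilde\phi$). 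Since the choice \cref{small_dissip_frame} of $t_{p_j,q_j}^{\textup{aft}}$ gives $\deltaDissip\bigl(t_{p_j,q_j}^{\textup{aft}}\bigr)\le1/q_j\to0$, the function $\tilde\phi+\cesc\phi'$ is identically zero, so passing to the limit in system \cref{init_syst} yields $\phi''+\cesc\phi'-\nabla V(\phi)=0$. By the escape-point properties \cref{xEsc_xesc_xHom} and \cref{xHom_minus_xesc} one has $\abs{\phi(y)}\le\dEsc$ for all $y\ge0$, whence $\phi(y)\to0_{\rr^n}$ as $y\to+\infty$ by \cref{lem:asympt_behav_tw_2}; together with the a priori bound \cref{hyp_attr_ball} ($\abs{\phi(\cdot)}$ bounded) and the definition \cref{def_xesc} of $\xesc(\cdot)$ (which forbids $\phi\equiv0$), the profile $\phi$ belongs to $\Phi_{\cesc}(0_{\rr^n})$ and satisfies $\abs{\phi(y)}\le\dEsc$ for $y\ge0$ — exactly the class for which the quantity $L$ was selected at the beginning of the proof of \cref{lem:non_small_fin_en_aft}.

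Then I would pass to the limit in the weighted integral. Since $c_{p_j,q_j}$ is the mean speed of the escape point on $[t_{p_j},t_{p_j,q_j}^{\textup{aft}}]$, it tends to $\cesc$ by \cref{prop:cv_mean_inv_vel,prop:further_control}, and because $0<\cEsc\le\cesc$ we have $c_{p_j,q_j}\ge\cesc/2>0$ for $j$ large. Hence on $(-\infty,0]$ the weights $\exp(c_{p_j,q_j}y)$ are dominated by the fixed integrable function $\exp(\cesc y/2)$; combined with the uniform bound on $E$ furnished by \cref{bound_u_ut_ck_bis} and the local uniform convergence $E\bigl(y,t_{p_j,q_j}^{\textup{aft}}\bigr)\to E_{[\phi]}(y)$, dominated convergence gives
\[
\int_{-\infty}^L \exp\bigl(c_{p_j,q_j}\,y\bigr)\,E\bigl(y,t_{p_j,q_j}^{\textup{aft}}\bigr)\,dy\longrightarrow\int_{-\infty}^L \exp(\cesc\,y)\,E_{[\phi]}(y)\,dy
\]
as $j\to+\infty$. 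By the defining property of $L$ in the proof of \cref{lem:non_small_fin_en_aft}, the right-hand side is at least $-\epsEnergy/24$, strictly larger than $-\epsEnergy/12$; this contradicts the displayed inequality above for $j$ large, and the lemma follows.

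I expect the only genuinely delicate point to be the justification of the $L^1$-convergence of the weighted integrand on the unbounded half-line $(-\infty,L]$: this is precisely where it is used that the mean speeds $c_{p,q}$ stay bounded away from zero once $q$ is large, since otherwise the exponential weight would fail to provide uniform integrable control of the tail at $-\infty$. The remaining ingredients — compactness, identification of the limit as a front in $\Phi_{\cesc}(0_{\rr^n})$, and the choice of $L$ — are a straightforward repetition of arguments already in place (\cref{lem:small_init_en_bef,lem:small_init_en_aft}).
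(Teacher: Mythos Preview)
Your proof is correct and follows essentially the same contradiction-and-compactness route as the paper's own argument. At the final step you invoke the defining property of $L$ (giving the limit integral $\ge -\epsEnergy/24$), which is indeed the intended contradiction; the paper's printed text there instead cites \cref{lem:zero_en_tw} and records only that the limit is $\le 0$, a slip that does not by itself yield the contradiction, so your version is actually the cleaner one.
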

\begin{proof}[Proof of \cref{lem:non_small_fin_en_aft_step}]
Let us proceed by contradiction and assume that the converse holds. Then there exist a sequence $(p_j)_{j\in\nn}$ of positive integers going to $+\infty$ as $j$ goes to $+\infty$, a sequence $(n_j)_{j\in\nn}$ of nonnegative integers such that $n_j$ is greater than or equal to than $n_{\min}(p_j)$ for every nonnegative integer $j$, and a sequence $(\ell_j)_{j\in\nn}$ of nonnegative quantities such that, for every nonnegative integer $j$, 
\begin{equation}
\label{hyp_proof_lem_non_small_fin_en_aft_step}
\int_{-\infty}^L \exp(c_{n_j,p_j} \xi) \, E (\xi,t_{n_j,p_j}^{\textup{aft}}) \, d\xi \le -\frac{\epsEnergy}{12}
\,.
\end{equation}
Up to replacing the sequence $\bigl((n_j,p_j,\ell_j)\bigr)_{j\in\nn}$ by a subsequence, it may be assumed (proceeding as in the proof of \cref{lem:small_init_en_bef}) that there exists a function $\phi$ in the set $\Phi_{\cesc}(m)$ of bounded profiles of waves travelling at the speed $\cesc$ and ``invading'' the equilibrium $m$, such that, for every positive quantity $L'$, 
\[
\norm{\xi\mapsto u \bigl(\xesc(\tnjpjAft)+\xi,\tnjpjAft\bigr) - \phi(\xi)}_{\ccc^2([-L',L'],\rr^d)} \to 0
\quad\text{as}\quad
j\to +\infty
\,,
\]
and such that $\abs{\phi(\xi)-m}\le\dEsc(m)$ for all $\xi$ in $[0,+\infty)$. 
Since the convergence
\[
\exp(c_{n_j,p_j} \xi) \to 0
\quad\text{as}\quad
\xi\to -\infty
\]
is uniform with respect to $j$ (provided that $j$ is large enough positive), it follows that
\[
\xi\mapsto \exp(c_{n_j,p_j} \xi) \, E (\xi,t_{n_j,p_j}^{\textup{aft}})
\quad\text{approaches}\quad
\xi\mapsto \exp(\cesc \xi) \, E_{[\phi]}(\xi)
\quad\text{in}\quad
L^1\bigl((-\infty,L],\rr\bigr)
\]
as $j$ goes to $+\infty$. According to \vref{lem:zero_en_tw}, 
\[
\int_{\rr} \exp(\cesc \xi) \, E_{[\phi]}(\xi) \, d\xi = 0
\quad\text{thus}\quad
\int_{-\infty}^L \exp(\cesc \xi) \, E_{[\phi]}(\xi) \, d\xi \le 0
\,,
\]
a contradiction with hypothesis \cref{hyp_proof_lem_non_small_fin_en_aft_step}. \Cref{lem:non_small_fin_en_aft_step} is proved.
\end{proof}
Let us pursue with the end of the proof of \cref{lem:non_small_fin_en_aft}. Let us assume that $p$ is greater than $\pMinFinEn$. For such fixed integer $p$ and for every nonnegative fixed quantity $\ell$, since $s_{n,p}$ is bounded from above uniformly with respect to $n$, the following inequality holds for every large enough positive integer $n$:
\[
\ell + \cCut s_{n,p} \ge \xHom(t_{n,p}^{\textup{aft}}) - \xesc(t_{n,p}^{\textup{aft}})
\,.
\]
Thus, the second integral of the right-hand side of inequality\cref{low_bd_non_small_fin_en_aft} reads
\[
\exp\bigl((c_{n,p}+\kappa)(\ell+\cCut s_{n,p})\bigr) \int_{\xHom(t_{n,p}^{\textup{aft}}) - \xesc(t_{n,p}^{\textup{aft}})}^{+\infty} \exp(-\kappa \xi) \, E (\xi,t_{n,p}^{\textup{aft}}) \, d\xi
\,.
\]
According to the bounds \cref{bound_u_ut_ck_bis}, this last quantity goes to $0$ as $n$ goes to $+\infty$ and $\ell$ and $p$ are fixed. 
This finishes the proof of \cref{lem:non_small_fin_en_aft}.
\end{proof}
The next step is to prove \cref{lem:small_init_fire_aft}. Let us first state an intermediate result. 
For every nonnegative quantity $\ell$, every nonnegative integer $p$, every integer $n$ greater than or equal to $n_{\min}(p)$, let us define the function $\xi\mapsto \psi^{(\infty,\infty,0)} (\xi)$ as in \cref{definition_rho_infty}. Let 
\[
\tidleEpsEnergy = \frac{\nuF}{8\KEF} \epsEnergy
\,.
\]
\begin{lemma}[small upper bound on the firewall with a weight bulk far to the right for $u_{\infty}$]
\label{lem:small_fire_tw_aft}
There exists a positive quantity $\ell_4$ such that, for every quantity $\ell$ greater than or equal to $\ell_4$, the following inequality holds:
\begin{equation}
\label{small_fire_tw_aft}
\int_{\rr} \psi^{(\infty,\infty,\ell)} (\xi) \, F_{[u_{\infty}]}(\xi) \, d\xi \le \frac{\tidleEpsEnergy}{2}
\,.
\end{equation}
\end{lemma}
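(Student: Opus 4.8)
The plan is to follow, step for step, the argument of \cref{lem:small_fire_tw}, splitting the integral $\int_{\rr}\psi^{(\infty,\infty,\ell)}(y)\,F_{[u_\infty]}(y)\,dy$ at the point $y_\ell=\kappa\ell/\bigl(2(\cesc+\kappa)\bigr)$. The only substantive change is that here $u_\infty$ is in general \emph{not} the profile of a travelling front (a dissipation bump occurs at time $t_p$), so the exponential decay of the profile at $+\infty$ that \cref{lem:asympt_behav_tw_2} supplied in \cref{lem:small_fire_tw} must be replaced by the exponentially weighted $L^2$-integrability of $u_\infty$ on $[0,+\infty)$, which I would extract from \cref{lem:neg_energy}.

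First I would record two elementary facts, valid for every $\ell\ge0$. From the definition \vref{definition_rho_infty} one gets the pointwise bound $\psi^{(\infty,\infty,\ell)}(y)\le\exp(\cesc y)$ for every $y$ in $\rr$ (the two branches of $\psi^{(\infty,\infty,\ell)}$ satisfy it precisely on $(-\infty,\ell]$ and on $[\ell,+\infty)$ respectively), and on $(-\infty,\ell]$ one rewrites $\psi^{(\infty,\infty,\ell)}(y)=\exp(-\kappa\ell/2)\exp\bigl((\cesc+\kappa)(y-y_\ell)\bigr)$. Since $u_\infty$ lies in $\ccc^k_{\textrm{b}}(\rr,\rr^n)$ and $\abs{u_\infty(\cdot)}\le\Rattinfty$ by the a priori bound \vref{hyp_attr_ball}, the quantity $F_{[u_\infty]}$ is bounded on $\rr$ by a constant $C_F$; because $y_\ell\le\ell$, the contribution of $(-\infty,y_\ell]$ to the integral is then at most $\frac{C_F}{\cesc+\kappa}\exp(-\kappa\ell/2)$, hence below $\tidleEpsEnergy/4$ as soon as $\ell$ is large enough — exactly the first step of \cref{lem:small_fire_tw}.

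For the tail $[y_\ell,+\infty)$ I would use that $\abs{u_\infty(y)}\le\dEsc$ for all $y\ge0$, a consequence of \vref{xEsc_xesc_xHom,xHom_minus_xesc}, so that (as $y_\ell\ge0$) the estimates \vref{property_r_Esc} apply along $u_\infty$ on $[y_\ell,+\infty)$; this gives $F_{[u_\infty]}(y)\le C'\bigl(u_\infty(y)^2+u_\infty'(y)^2\bigr)$ there, with $C'=\max(\coeffEn/2,\coeffEn\eigVmax+1/2)$, and likewise $E_{[u_\infty]}(y)\ge\min(1/2,\eigVmin/4)\bigl(u_\infty(y)^2+u_\infty'(y)^2\bigr)\ge0$ for all $y\ge0$. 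The key input is that $\int_0^{+\infty}\exp(\cesc y)\bigl(u_\infty(y)^2+u_\infty'(y)^2\bigr)\,dy$ is finite: indeed \cref{lem:neg_energy} asserts that $\int_{\rr}\exp(\cesc y)\,E_{[u_\infty]}(y)\,dy$ converges, and since $\exp(\cesc y)\abs{E_{[u_\infty]}(y)}$ is integrable on $(-\infty,0]$ (bounded integrand against a decaying exponential) while $E_{[u_\infty]}$ is nonnegative on $[0,+\infty)$, the half-line integral $\int_0^{+\infty}\exp(\cesc y)\,E_{[u_\infty]}(y)\,dy$ is finite, whence the claimed finiteness. Using $\psi^{(\infty,\infty,\ell)}\le\exp(\cesc y)$, the contribution of $[y_\ell,+\infty)$ is then bounded by $C'$ times the tail of a convergent integral, which tends to $0$ as $\ell\to+\infty$, hence is $\le\tidleEpsEnergy/4$ for $\ell$ large. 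Choosing $\ell_4$ above the two thresholds yields \vref{small_fire_tw_aft}.

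The only point requiring genuine care — and the main obstacle relative to the ``before'' case — is precisely this last one: since $u_\infty$ is not a front, one cannot invoke the travelling-front asymptotics used in \cref{lem:small_fire_tw}, and the decay of $u_\infty$ at $+\infty$ must instead be read off from the already-established \cref{lem:neg_energy}, via the convergence it states together with the sign of $E_{[u_\infty]}$ on $[0,+\infty)$. I do not anticipate any other difficulty; the remaining bookkeeping is identical to that of \cref{lem:small_fire_tw}.
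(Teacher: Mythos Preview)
Your proposal is correct and follows essentially the same route as the paper: split at $y_\ell=\kappa\ell/\bigl(2(\cesc+\kappa)\bigr)$, bound the left piece via the explicit factor $\exp(-\kappa\ell/2)$ and the a~priori boundedness of $F_{[u_\infty]}$, and control the right piece using $\psi^{(\infty,\infty,\ell)}\le e^{\cesc y}$ together with the convergence of $\int_\rr e^{\cesc y}E_{[u_\infty]}(y)\,dy$ from \cref{lem:neg_energy} and the fact that on $[0,+\infty)$ the quantities $E_{[u_\infty]}$ and $F_{[u_\infty]}$ are comparable (because $\abs{u_\infty}\le\dEsc$ there). Your identification of the one genuine new point --- that $u_\infty$ need not be a front, so \cref{lem:asympt_behav_tw_2} is unavailable and \cref{lem:neg_energy} must supply the weighted integrability instead --- is exactly the paper's reasoning, spelled out more carefully than the paper does.
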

\begin{proof}[Proof of \cref{lem:small_fire_tw_aft}]
Let us proceed like in the proof of \cref{lem:small_fire_tw}. According to the bounds \cref{bound_u_ut_ck_bis}, there exists a (large) positive quantity $\ell_5$ such that, for every $\ell$ greater than or equal to $\ell_5$, 
\[
\int_{-\infty}^{\kappa\ell / \bigl(2(\cesc + \kappa)\bigr)} \psi^{(\infty,\infty,\ell)} (\xi) \, F_{[u_{\infty}]}(\xi) \, d\xi \le \frac{\tidleEpsEnergy}{4}
\,.
\]
It follows that
\begin{equation}
\label{small_fire_tw_aft_intermediary_step}
\int_{\rr} \psi^{(\infty,\infty,\ell)} (\xi) \, F_{[u_{\infty}]}(\xi) \, d\xi \le \frac{\tidleEpsEnergy}{4} + \int_{\kappa\ell / \bigl(2(\cesc + \kappa)\bigr)}^{+\infty}\exp(\cesc \xi) \, F_{[u_{\infty}]}(\xi) \, d\xi
\,.
\end{equation}
Besides, according to inequality \vref{posit_pot_around_loc_min}, for every nonnegative $\xi$, 
\[
\bigl(u_{\infty}(\xi)-m\bigr)^2 \le \frac{4}{\eigVmin(m)} \Bigl(V\bigl(u_\infty(\xi)\bigr)-V(m)\Bigr)
\,,
\]
thus it follows from \vref{lem:neg_energy} that the function $\xi\mapsto \exp(\cesc \xi) \, F_{[u_{\infty}]}(\xi)$ is in $L^1(\rr,\rr)$. As a consequence the second term (the integral) on the right-hand side of inequality \cref{small_fire_tw_aft_intermediary_step} goes to $0$ as $\ell$ goes to $+\infty$. This proves \cref{lem:small_fire_tw_aft}.
\end{proof}
\begin{proof}[Proof of \cref{lem:small_init_fire_aft} (upper bound on initial firewall)]
Let us assume that $\ell$ is greater than or equal to the quantity $\ell_4$ introduced in \cref{lem:small_fire_tw_aft} above. According to the bounds \cref{bound_u_ut_ck_bis}, the function
\begin{equation}
\label{small_init_fire_aft}
\xi\mapsto \psi^{(n,p,\ell)}(\xi,0) \, F(\xi,t_n)
\end{equation}
goes to $0$ as $\xi$ goes to $\pm\infty$, and this convergence is uniform with respect to $n$ and $p$ (provided that $p$ is large enough and that $n$ is greater than or equal to $n_{\min}(p)$). As a consequence, the function \cref{small_init_fire_aft} above approaches the function
\[
\xi\mapsto \psi^{(\infty,\infty,\ell)} (\xi) \, F_{[u_{\infty}]}(\xi)
\]
in $L^1(\rr,\rr)$ as $p$ goes to $+\infty$, uniformly with respect to $n$ greater than or equal to $n_{\min}(p)$. This proves \cref{lem:small_init_fire_aft}. 
\end{proof}
\begin{proof}[Proof of \cref{lem:small_back_flux_fire_aft} (upper bound on back flux for the firewall)]
As for \cref{lem:small_back_flux_fire_bef} on \\ page \pageref{lem:small_back_flux_fire_bef}, the statement follows from the expression of the back flux term in the relaxation inequality \vref{relax_scheme_final}.
\end{proof}
\begin{proof}[Proof of \cref{lem:small_front_flux_fire_aft} (upper bound on front flux for the firewall)]
As for \cref{lem:small_front_flux_fire_bef} on \\ page \pageref{lem:small_front_flux_fire_bef}, the statement follows from the expression of the back flux term in the relaxation inequality \vref{relax_scheme_final}.
\end{proof}
The proof of \vref{prop:dissip_app_zero} (``relaxation'') is complete. Note that, at this stage, hypothesis \textup{(\hyperlink{hypDiscFront}{\hypDiscFrontRef})} has not been used yet. 
\subsection{Convergence}
\label{subsec:convergence}
The end of the proof of \vref{prop:inv_cv} (``invasion implies convergence'') is a straightforward consequence of \cref{prop:dissip_app_zero}, and is very similar to the end of the proof of the main result of \cite{Risler_globCVTravFronts_2008} or of \cite{Risler_globalRelaxation_2016}, thus this \namecref{subsec:convergence} is rather similar to the corresponding sections of those papers. 

Let us call upon the notation $\xEsc(t)$ and $\xesc(t)$ and $\xHom(t)$ introduced in \cref{subsec:inv_cv_def_hyp,subsec:inv_cv_set_pf_cont}. Recall that, according to inequalities \cref{hyp_xEsc_finite,xEsc_xesc_xHom} and to the hypotheses of \cref{prop:inv_cv}, for every nonnegative time $t$, 
\[
-\infty < \xEsc(t) \le \xesc(t) \le \xHom(t) < +\infty
\,.
\]
\begin{lemma}[existence of Escape point and transversality]
\label{lem:xEsc_finite_transv}
The following inequalities hold:
\begin{align}
\label{xesc_minus_xEsc_bounded_from_above}
&\limsup_{t\to+\infty} \xesc(t) - \xEsc(t) < +\infty \,,\\
\text{and}\quad
\label{transversality_at_xEsc}
&\limsup_{t\to+\infty} \Bigl(u\bigl( \xEsc(t), t \bigr)-m\Bigr) \cdot u_x\bigl( \xEsc(t), t \bigr) < 0
\,.
\end{align}
\end{lemma}
\begin{proof}
To prove inequality \cref{xesc_minus_xEsc_bounded_from_above}, let us proceed by contradiction and assume that the converse holds. Then there exists a sequence $(t_n)_{n\in\nn}$ of nonnegative quantities going to $+\infty$ such that $\xesc(t_n) - \xEsc(t_n)$ goes to $+\infty$ as $n$ goes to $+\infty$. Proceeding as in the proof of \vref{lem:small_init_en_bef}, it may be assumed, up to replacing the sequence $(t_n)_{n\in\nn}$ by a subsequence, that there exists a function $\phi_1$ in the set $\Phi_{\cesc}(m)$ of bounded profiles of waves travelling at the speed $\cesc$ and ``invading'' the local minimum $m$ such that, for every positive quantity $L$, 
\[
\norm{\xi\mapsto u \bigl( \xesc(t_n) +\xi, t_n\bigr) - \phi_1(\xi) }_{\ccc^2([-L,L],\rr^d)} \to 0
\quad\text{as}\quad
n\to +\infty
\,.
\]
In addition, it follows from the definition of $\xEsc(\cdot)$ that
\[
\abs{\phi_1(\xi)-m} \le \dEsc(m)
\quad\text{for all}\quad 
\xi\text{ in } \rr
\,,
\]
a contradiction with assertion \cref{item:escape_spatial_asymptotics_tw} of \vref{lem:asympt_behav_tw_2}. Inequality \cref{xesc_minus_xEsc_bounded_from_above} is proved. 

The proof of inequality \cref{transversality_at_xEsc} is similar. Let us proceed by contradiction and assume that the converse holds. Then there exists a sequence $(t'_n)_{n\in\nn}$ of nonnegative quantities going to $+\infty$ such that, for every positive integer $n$,
\begin{equation}
\label{contradiction_barely_transversal_at_Escape_point}
\Bigl(u\bigl( \xEsc(t'_n), t'_n \bigr)-m\Bigr) \cdot u_x\bigl( \xEsc(t'_n), t'_n \bigr) \ge -\frac{1}{n}
,.
\end{equation}
Proceeding as in the proof of \vref{lem:small_init_en_bef},  may  assume, up to replacing the sequence $(t'_n)_{n\in\nn}$ by a subsequence, that there exists a function $\phi_2$ in the set $\PhicNorm{\cesc}(m)$ of (normalized) bounded profiles of waves travelling at the speed $\cesc$ and ``invading'' the local minimum $m$ such that, for every positive quantity $L$, 
\[
\norm{\xi\mapsto u \bigl( \xEsc(t_n) +\xi, t_n\bigr) - \phi_2(\xi) }_{\ccc^2([-L,L],\rr^d)} \to 0
\quad\text{as}\quad
n\to +\infty
\,.
\]
It follows from inequality \cref{contradiction_barely_transversal_at_Escape_point} that
\[
(\phi_2(0)-m) \cdot \phi'_2(0) \ge 0
,,
\]
a contradiction with assertion \cref{item:transv_spatial_asymptotics_tw} of \vref{lem:asympt_behav_tw_2}. Inequality \cref{transversality_at_xEsc} and \cref{lem:xEsc_finite_transv} are proved.
\end{proof}
\begin{lemma}[regularity of Escape point]
\label{regularity_Esc_point}
The function $t\mapsto\xEsc(t)$ is of class $\ccc^1$ on a neighbourhood of $+\infty$ and
\[
\xEsc'(t)\to\cesc
\quad\text{as}\quad
t\to+\infty
\,.
\]
\end{lemma}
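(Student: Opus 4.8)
The plan is to describe $\xEsc(\cdot)$, for $t$ large, as an implicitly defined level set of the function $G(x,t)=\tfrac12\bigl(\abs{u(x,t)}^2-\dEsc^2\bigr)$; the transversality making this possible is exactly the second assertion of \cref{lem:xEsc_finite_transv}. First, from that assertion there are $\delta_0>0$ and $T_0$ with $u(\xEsc(t),t)\cdot u_x(\xEsc(t),t)\le-\delta_0$ for $t\ge T_0$, and since the a priori bounds \vref{bound_u_ut_ck_bis} make $(x,t)\mapsto u(x,t)\cdot u_x(x,t)$ uniformly Lipschitz on $\rr\times[T_0,+\infty)$ (this uses $k\ge2$), there is $\rho\in(0,1)$ such that $u(x,t)\cdot u_x(x,t)\le-\delta_0/2$ whenever $\abs{x-\xEsc(t)}\le\rho$ and $t\ge T_0$; in particular $x\mapsto\abs{u(x,t)}^2$ is strictly decreasing on $[\xEsc(t)-\rho,\xEsc(t)+\rho]$. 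Combining this ``transversality tube'' with the very definition of $\xEsc(t)$ as the supremum of the set $\{\abs{u(\cdot,t)}=\dEsc\}$ to the left of $\xHom(t)$, with $\abs{u(\xHom(t),t)}\to0$ (from \hypHomRight), and with the intermediate value theorem, I would record, after enlarging $T_0$: for all $t\ge T_0$, $\abs{u(\xEsc(t),t)}=\dEsc$, $\abs{u(x,t)}>\dEsc$ on $[\xEsc(t)-\rho,\xEsc(t))$, and $\abs{u(x,t)}<\dEsc$ on $(\xEsc(t),\xHom(t)]$. Finally, the first assertion of \cref{lem:xEsc_finite_transv} together with \cref{prop:cv_mean_inv_vel} gives $\xEsc(t)/t\to\cesc>0$, so (enlarging $T_0$ again) $\xEsc$ is bounded below on $[T_0,+\infty)$ and bounded above on every compact subinterval.

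Next I would prove that $\xEsc$ is continuous on $[T_0,+\infty)$. Given $t_1\ge T_0$ and a sequence $t_n\to t_1$, the numbers $\xEsc(t_n)$ stay bounded; let $\ell$ be an accumulation point. Joint continuity of $G$ gives $\abs{u(\ell,t_1)}=\dEsc$ and $\ell\le\xHom(t_1)$, hence $\ell\le\xEsc(t_1)$, and by the recorded facts at time $t_1$ either $\ell=\xEsc(t_1)$ or $\ell<\xEsc(t_1)-\rho$. In the second case $\xEsc(t_n)<\xEsc(t_1)-\rho$ for $n$ large, so at time $t_n$ the quantity $\abs{u(\cdot,t_n)}^2$ is $>\dEsc^2$ at $\xEsc(t_1)-\rho$ (transversality tube) and $<\dEsc^2$ at $\xEsc(t_1)$ (the recorded facts at $t_n$, since $\xEsc(t_1)$ then lies ahead of $\xEsc(t_n)$ and behind $\xHom(t_n)$), hence it meets the level $\dEsc^2$ at some point of $(\xEsc(t_1)-\rho,\xEsc(t_1))$, a point which is $\le\xEsc(t_1)\le\xHom(t_1)-1<\xHom(t_n)$ --- contradicting the definition of $\xEsc(t_n)$ as a supremum. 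Thus $\ell=\xEsc(t_1)$ and $\xEsc(t_n)\to\xEsc(t_1)$.

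The remaining steps are routine. By the smoothing properties of the semi-flow, $G$ is of class $\ccc^1$ on $\rr\times(0,+\infty)$ with $\partial_xG=u\cdot u_x$ and $\partial_tG=u\cdot u_t$. At $(\xEsc(t_1),t_1)$, $t_1\ge T_0$, one has $G=0$ and $\partial_xG\le-\delta_0/2\neq0$, so the implicit function theorem yields a $\ccc^1$ branch $\gamma$ with $\gamma(t_1)=\xEsc(t_1)$ which is the unique zero of $G(\cdot,t)$ in $(\xEsc(t_1)-\rho,\xEsc(t_1)+\rho)$ for $t$ near $t_1$; by continuity of $\xEsc$ this forces $\xEsc(t)=\gamma(t)$ near $t_1$, so $\xEsc$ is of class $\ccc^1$ on a neighbourhood of $+\infty$. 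Differentiating the identity $\abs{u(\xEsc(t),t)}^2=\dEsc^2$ gives
\[
\xEsc'(t)=-\frac{u(\xEsc(t),t)\cdot u_t(\xEsc(t),t)}{u(\xEsc(t),t)\cdot u_x(\xEsc(t),t)}\,.
\]
By \cref{prop:dissip_app_zero}, $\deltaDissip(t)\to0$; with the uniform $\ccc^1$-bounds and a one-dimensional Sobolev embedding on a fixed window $[-C,C]$ ($C$ being an upper bound for $\limsup_{t\to\infty}(\xesc(t)-\xEsc(t))$, finite by \cref{lem:xEsc_finite_transv}), this yields $u_t(\xEsc(t),t)+\cesc\,u_x(\xEsc(t),t)\to0$. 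Substituting $u_t=-\cesc u_x+o(1)$ into the formula above and using $\abs{u(\xEsc(t),t)}=\dEsc$ together with $\abs{u\cdot u_x}\ge\delta_0/2>0$ gives $\xEsc'(t)\to\cesc$.

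The analytic parts (the implicit function theorem, the differentiation) are routine; the delicate point is the continuity step, where one must check that the local branch furnished by the implicit function theorem really is $\xEsc$ --- the \emph{rightmost} crossing of $\{\abs u=\dEsc\}$ to the left of $\xHom$ --- and not some spurious component of that level set. This is where both assertions of \cref{lem:xEsc_finite_transv} must be played against the supremum definition of $\xEsc(t)$, the decisive ingredient being that ahead of $\xEsc(t)$ and behind $\xHom(t)$ the solution stays strictly below $\dEsc$ in norm.
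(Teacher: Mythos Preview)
Your proof is correct and follows essentially the same approach as the paper: define $G(x,t)=\tfrac12(\abs{u(x,t)}^2-\dEsc^2)$, apply the implicit function theorem using the transversality from \cref{lem:xEsc_finite_transv}, obtain the formula $\xEsc'(t)=-(u\cdot u_t)/(u\cdot u_x)$, and conclude via \cref{prop:dissip_app_zero} and the boundedness of $\xesc-\xEsc$. The paper's proof is terser and does not spell out the continuity step you include; you are right that, strictly speaking, the implicit function theorem only furnishes a local $\ccc^1$ branch $\gamma$, and one must argue that this branch coincides with $\xEsc$ (the \emph{rightmost} crossing), which your continuity argument does carefully. One minor wording point: where you invoke the ``transversality tube'' to assert $\abs{u(\xEsc(t_1)-\rho,t_n)}>\dEsc$, what you are really using is the recorded fact at time $t_1$ (that $\abs{u(\cdot,t_1)}>\dEsc$ on $[\xEsc(t_1)-\rho,\xEsc(t_1))$) together with continuity in $t$; the tube at time $t_n$ is centred at $\xEsc(t_n)$, not at $\xEsc(t_1)$.
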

\begin{proof}
Let us introduce the function
\[
f:\rr\times[0,+\infty) \to \rr, \quad (x,t)\mapsto \frac{1}{2}\Bigl(\abs{(u(x,t)-m}^2-\dEsc(m)^2\Bigr)
\,.
\]
According to the regularity of the solution (see \cref{subsec:glob_exist}), this function $f$ is of class $\ccc^1$, and, for every nonnegative time $t$, the quantity $f\bigl(\xEsc(t),t\bigr)$ is equal to $0$ and 
\[
\partial_x f\bigl(\xEsc(t), t \bigr) = \Bigl(u\bigl(\xEsc(t), t\bigr)-m\Bigr)  \cdot u_x \bigl(\xEsc(t), t\bigr)
\,.
\]
Thus, according to inequality \cref{transversality_at_xEsc} of \cref{lem:xEsc_finite_transv}, for every large enough positive time $t$,
\[
\partial_x f\bigl(\xEsc(t), t \bigr) < 0
\,.
\]
Thus it follows from the Implicit Function Theorem that the function $x\mapsto \xEsc(t)$ is of class $\ccc^1$ on a neighbourhood of $+\infty$, and that, for every large enough positive time $t$,
\begin{equation}
\label{xEsc_prime_of_t}
\xEsc'(t) 
= -\frac{\partial_t f \bigl(\xEsc(t),t\bigr)}{\partial_x f \bigl(\xEsc(t),t\bigr)}
= -\frac{\Bigl(u\bigl(\xEsc(t), t\bigr)-m\Bigr) \cdot u_t \bigl(\xEsc(t), t\bigr)}{\Bigl(u\bigl(\xEsc(t), t\bigr)-m\Bigr) \cdot u_x \bigl(\xEsc(t), t\bigr)}
\,.
\end{equation}
According to inequality \cref{transversality_at_xEsc} of \cref{lem:xEsc_finite_transv}, the denominator of this expression remains bounded away from $0$ as time goes to plus infinity. On the other hand, according to inequality \cref{xesc_minus_xEsc_bounded_from_above} of the same lemma and to \vref{prop:dissip_app_zero} and to the bounds \vref{bound_u_ut_ck} for the solution, 
\[
u_t\bigl(\xEsc(t)+\xi,t\bigr)+\cesc u_x\bigl(\xEsc(t)+\xi,t\bigr) \to 0
\quad\text{as}\quad t\to +\infty
\,.
\]
Thus it follows from \cref{xEsc_prime_of_t} that $\xEsc'(t)$ goes to $\cEsc$ as time goes to $+\infty$. \Cref{regularity_Esc_point} is proved.
\end{proof}
The next lemma is the only place throughout the proof of \cref{prop:inv_cv} where hypothesis \textup{(\hyperlink{hypDiscFront}{\hypDiscFrontRef})} is required. 
\begin{lemma}[convergence around Escape point]
\label{lem:cv}
There exists a function $\phi$ in the set $\PhicNorm{\cesc}(m)$ of (normalized) bounded profiles of waves travelling at the speed $\cesc$ and invading the equilibrium $m$ such that, for every positive quantity $L$, 
\[
\sup_{x\in\bigl[\xEsc(t)-L,\xEsc(t)+L\bigr]} \abs{u(x,t)-\phi\bigl(x-\xEsc(t)\bigr)} \to 0 
\quad\text{as}\quad 
t\to+\infty 
\,.
\]
In particular, the set $\PhicNorm{\cesc}(m)$ is nonempty.
\end{lemma}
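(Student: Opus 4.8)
The plan is to argue by compactness: first describe all possible subsequential limits of $y\mapsto u\bigl(\xEsc(t)+y,t\bigr)$ as $t\to+\infty$, showing each is a normalized profile invading $0_{\rr^n}$; then invoke hypothesis \hypDiscFront to force a single such limit; and finally upgrade to locally uniform convergence.

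\emph{Step 1 (subsequential limits are normalized invading profiles).} Let $t_p\to+\infty$ and apply \cref{lem:compact} with $x_p=\xEsc(t_p)$: along a subsequence, $u\bigl(\xEsc(t_p)+\cdot,t_p\bigr)\to u_\infty$ in $\ccc^k$ and $u_t\bigl(\xEsc(t_p)+\cdot,t_p\bigr)\to\tilde u_\infty$ in $\ccc^{k-2}$ on compact intervals, with $\tilde u_\infty=-\nabla V(u_\infty)+u_\infty''$. Since $\deltaDissip(t)\to0$ (\cref{prop:dissip_app_zero}) and $0\le\xesc(t)-\xEsc(t)$ is bounded (\cref{lem:xEsc_finite_transv}), the function $y\mapsto u_t\bigl(\xEsc(t)+y,t\bigr)+\cesc\,u_x\bigl(\xEsc(t)+y,t\bigr)$ tends to $0$ in $L^2$ on every compact interval, so $\tilde u_\infty+\cesc\,u_\infty'\equiv0$ and hence $u_\infty''=-\cesc\,u_\infty'+\nabla V(u_\infty)$: $u_\infty$ is the profile of a wave travelling at speed $\cesc$. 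By definition of $\xEsc(\cdot)$ one has $\abs{u_\infty(0)}=\dEsc$, and by \cref{xHom_minus_xesc} (whence $\xHom(t)-\xEsc(t)\to+\infty$) one has $\abs{u_\infty(y)}\le\dEsc$ for $y\ge0$; so $u_\infty$ is nonconstant and bounded. By \cref{lem:asympt_behav_tw_2}, $u_\infty(y)\to0_{\rr^n}$ as $y\to+\infty$, and by \cref{lem:asympt_behav_tw_1}, $u_\infty\in\Phi_{\cesc}(u_-,0_{\rr^n})$ for some critical point $u_-$; by \hypOnlyBist the point $u_-$ lies in $\mmm$, so $u_\infty\in\Phi_{\cesc}(0_{\rr^n})$. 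Finally the transversality estimate of \cref{lem:xEsc_finite_transv} gives $u_\infty(0)\cdot u_\infty'(0)<0$, which together with \cref{lem:asympt_behav_tw_2} yields $\abs{u_\infty(y)}<\dEsc$ for every $y>0$; hence $u_\infty\in\Phi_{\cesc,\textrm{norm}}(0_{\rr^n})$, and in particular this set is nonempty.

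\emph{Step 2 (uniqueness of the limit via \hypDiscFront).} For $t$ large the curve $\gamma(t):=\bigl(u(\xEsc(t),t),u_x(\xEsc(t),t)\bigr)$ is continuous (\cref{regularity_Esc_point} and the smoothing properties of the semi-flow) and bounded, so its $\omega$-limit set $\Omega$ (the intersection of the compact connected sets $\overline{\gamma([T,+\infty))}$) is nonempty, compact and connected. By Step 1, $\Omega\subset\bigl\{\bigl(\phi(0),\phi'(0)\bigr):\phi\in\Phi_{\cesc,\textrm{norm}}(0_{\rr^n})\bigr\}$, which is totally disconnected by \hypDiscFront; a connected subset of a totally disconnected set is a point, so $\Omega=\bigl\{\bigl(\phi(0),\phi'(0)\bigr)\bigr\}$ for a single $\phi\in\Phi_{\cesc,\textrm{norm}}(0_{\rr^n})$, uniquely determined as the solution of $\phi''=-\cesc\phi'+\nabla V(\phi)$ with those Cauchy data. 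Being bounded with a singleton $\omega$-limit set, $\gamma(t)\to\bigl(\phi(0),\phi'(0)\bigr)$ as $t\to+\infty$. Then, if the desired conclusion failed, there would be $L_0,\varepsilon_0>0$ and $t_p\to+\infty$ with $\sup_{\abs{y}\le L_0}\abs{u(\xEsc(t_p)+y,t_p)-\phi(y)}\ge\varepsilon_0$; running Step 1 along this sequence produces a subsequential limit $\psi\in\Phi_{\cesc,\textrm{norm}}(0_{\rr^n})$ with $\bigl(\psi(0),\psi'(0)\bigr)=\lim\gamma(t_p)=\bigl(\phi(0),\phi'(0)\bigr)$, hence $\psi=\phi$, contradicting $\sup_{\abs{y}\le L_0}\abs{\psi(y)-\phi(y)}\ge\varepsilon_0$. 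This proves \cref{lem:cv}.

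The main obstacle is Step 2: extracting a single profile $\phi$ valid for all large times rather than possibly distinct limits along distinct time sequences. This is the only point in the proof of \cref{prop:inv_cv} where \hypDiscFront intervenes, precisely through the connectedness of the $\omega$-limit set of $\gamma$. A secondary technical point is verifying in Step 1 that the limits are genuinely \emph{normalized} and \emph{invading} $0_{\rr^n}$, for which the escape-distance bookkeeping (\cref{xEsc_xesc_xHom}, \cref{xHom_minus_xesc}), the transversality assertion of \cref{lem:xEsc_finite_transv}, and the elementary facts about front profiles recalled in \cref{lem:asympt_behav_tw_1} and \cref{lem:asympt_behav_tw_2} are all needed.
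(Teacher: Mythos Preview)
Your proof is correct and follows essentially the same route as the paper: identify every subsequential limit as an element of $\Phi_{\cesc,\textrm{norm}}(0_{\rr^n})$ via \cref{prop:dissip_app_zero} and compactness, then use connectedness of the $\omega$-limit set together with \hypDiscFront to pin down a single profile. The only cosmetic difference is that you run the connectedness argument on the Cauchy-data curve $\gamma(t)=\bigl(u(\xEsc(t),t),u_x(\xEsc(t),t)\bigr)$ in $\rr^{2n}$, whereas the paper does it on the set of functional limits in the space of profiles; since \hypDiscFront is stated in both formulations, the two are equivalent.
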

\begin{proof}
Take a sequence $(t_n)_{n\in\nn}$ of positive times going to $+\infty$ as $n$ goes to $+\infty$. Proceeding as in the proof of \vref{lem:small_init_en_bef}, it may be assumed, up to replacing the sequence $(t_n)_{n\in\nn}$ by a subsequence, that there exists a function $\phi$ in the set $\Phi_{\cesc}(m)$ of bounded profiles of waves travelling at the speed $\cesc$ and ``invading'' the local minimum $m$ such that, for every positive quantity $L$, 
\[
\norm{\xi\mapsto u \bigl( \xEsc(t_n) +\xi, t_n\bigr) - \phi(\xi) }_{\ccc^2([-L,L],\rr^d)} \to 0
\quad\text{as}\quad
n\to +\infty
\,.
\]
According to the definition of $\xEsc(\cdot)$, 
\[
\abs{\phi(0)-m} = \dEsc(m)
\quad\text{and}\quad
\abs{\phi(\xi)-m}\le \dEsc(m)
\quad\text{for all}\quad
\xi \text{ in } [0,+\infty)
\,,
\]
thus according to assertion \cref{item:closer_spatial_asymptotics_tw} of \vref{lem:asympt_behav_tw_2}, it follows that $\phi$ actually belongs to the set $\PhicNorm{\cesc}(m)$ of ``normalized'' profiles. 

Let $\mathcal{L}$ denote the set of all possible limits (in the sense of uniform convergence on compact subsets of $\rr$) of sequences of maps
\[
\xi\mapsto u \bigl( \xEsc(t'_n) +\xi, t'_n\bigr)
\]
for all possible sequences $(t'_n)_{n\in\nn}$ such that $t'_n$ goes to $+\infty$ as $n$ goes to $+\infty$. This set $\mathcal{L}$ is included in the set $\PhicNorm{\cesc}(m)$, and, because the semi-flow of system \cref{init_syst} is continuous on $X$, this set $\mathcal{L}$ is a continuum (a compact connected subset) of $X$. 

Since on the other hand --- according to hypothesis \textup{(\hyperlink{hypDiscFront}{\hypDiscFrontRef})} --- the set $\PhicNorm{\cesc}(m)$ is totally disconnected in $X$, this set $\mathcal{L}$ must actually be reduced to the singleton $\{\phi\}$. \Cref{lem:cv} is proved. 
\end{proof}
\begin{lemma}[convergence up to $\xHom(t)$]
\label{lem:cv_right_of_front}
For every positive quantity $L$, 
\[
\sup_{x\in[\xEsc(t)-L,\xHom(t)]} \abs{u(x,t)-\phi\bigl(x-\xEsc(t)\bigr)} \to 0 
\quad\text{as}\quad 
t\to+\infty 
\,.
\]
\end{lemma}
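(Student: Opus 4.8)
The plan is to argue by contradiction, reducing everything to \cref{lem:cv} (convergence around the Escape point), to \hypHomRight (closeness to $0_{\rr^n}$ near $\xHom(t)$), and to the relaxation of the firewall functional $\fff_0$ of \cref{subsec:def_fire_zero} in the ``interior'' region strictly between the Escape point and $\xHom(t)$. Fix $L>0$ (note that $\xEsc(t)-L\le\xHom(t)$ for $t$ large by \vref{xHom_minus_xesc} and \vref{xEsc_xesc_xHom}) and suppose the conclusion fails: then there exist $\varepsilon_0>0$, a sequence $t_p\rightarrow+\infty$, and points $x_p\in[\xEsc(t_p)-L,\xHom(t_p)]$ with $\abs{u(x_p,t_p)-\phi(x_p-\xEsc(t_p))}\ge\varepsilon_0$. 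Write $y_p=x_p-\xEsc(t_p)\in[-L,\xHom(t_p)-\xEsc(t_p)]$ and pass to a subsequence along which either $(y_p)$ is bounded or $y_p\rightarrow+\infty$. If $(y_p)$ is bounded, say $\abs{y_p}\le M$ with $M\ge L$, then \cref{lem:cv} applied with $M$ in place of $L$ gives $\sup_{\abs{y}\le M}\abs{u(\xEsc(t_p)+y,t_p)-\phi(y)}\rightarrow0$, hence $\abs{u(x_p,t_p)-\phi(y_p)}\rightarrow0$, a contradiction. So we may assume $y_p\rightarrow+\infty$; since $\phi\in\Phi_{\cesc,\textup{\textrm{norm}}}(0_{\rr^n})$ we have $\phi(y)\rightarrow0_{\rr^n}$ as $y\rightarrow+\infty$ (see \vref{lem:asympt_behav_tw_2}), so it remains only to show that $u(x_p,t_p)\rightarrow0_{\rr^n}$.

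To this end I would distinguish (again up to a subsequence) the case where $\xHom(t_p)-x_p$ stays bounded and the case where $\xHom(t_p)-x_p\rightarrow+\infty$. In the first case, writing $z_p=x_p-\xHom(t_p)\le0$ bounded, \hypHomRight gives $u(\xHom(t_p)+\cdot,t_p)\rightarrow0_{\rr^n}$ uniformly on compact intervals, hence $u(x_p,t_p)=u(\xHom(t_p)+z_p,t_p)\rightarrow0_{\rr^n}$. The second case is the heart of the matter: $x_p$ is then ``far'' from both $\xEsc(t_p)$ and $\xHom(t_p)$, and I would control $\fff_0(x_p,t_p)$ directly. Fix $T>0$. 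By \cref{regularity_Esc_point} the map $\xEsc$ is Lipschitz near $+\infty$ with a constant $\le\cesc+1$, and by \hypHomRight the map $\xHom$ is Lipschitz near $+\infty$ with a constant $\le\cHom+1$; therefore, for $p$ large and $s\in[t_p-T,t_p]$,
\[
x_p-\xEsc(s)\ge x_p-\xEsc(t_p)-(\cesc+1)T
\quad\text{and}\quad
\xHom(s)-x_p\ge\xHom(t_p)-x_p-(\cHom+1)T\,,
\]
both of which tend to $+\infty$ with $p$ for $T$ fixed. Since $\SigmaEscZero(s)\cap[\xEsc(s),\xHom(s)]=\emptyset$ by \vref{xEsc_xesc_xHom}, for such $s$ the pollution term in \cref{lem:dt_fire} obeys
\[
\int_{\SigmaEscZero(s)}T_{x_p}\psi_0(x)\,dx\le\frac{1}{\kappa_0}\Bigl(e^{-\kappa_0(x_p-\xEsc(s))}+e^{-\kappa_0(\xHom(s)-x_p)}\Bigr)\xrightarrow[p\to+\infty]{}0
\]
uniformly in $s\in[t_p-T,t_p]$.

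Integrating the differential inequality of \cref{lem:dt_fire} on $[t_p-T,t_p]$ (Gronwall) then yields
\[
\fff_0(x_p,t_p)\le e^{-\nuFireZero T}\fff_0(x_p,t_p-T)+\frac{\KFireZero}{\nuFireZero}\sup_{s\in[t_p-T,t_p]}\int_{\SigmaEscZero(s)}T_{x_p}\psi_0(x)\,dx\,.
\]
The first term is bounded by $Ce^{-\nuFireZero T}$ with $C$ independent of $p$ (the firewall is uniformly bounded by the a priori estimates \vref{hyp_attr_ball} and \vref{bound_u_ut_ck_bis}), and the second tends to $0$ as $p\rightarrow+\infty$; letting $p\rightarrow+\infty$ and then $T\rightarrow+\infty$ gives $\fff_0(x_p,t_p)\rightarrow0$. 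By the coercivity inequality $\fff_0(x_p,t_p)\ge\tfrac14\int T_{x_p}\psi_0(x)\,u(x,t_p)^2\,dx\ge\tfrac{e^{-\kappa_0}}{4}\int_{x_p-1}^{x_p+1}u(x,t_p)^2\,dx$ (a consequence of \vref{def_weight_en} since $0_{\rr^n}\in\mmm$), this forces $\int_{x_p-1}^{x_p+1}u(x,t_p)^2\,dx\rightarrow0$, and, combined with the uniform $\ccc^1$-bound \vref{bound_u_ut_ck_bis} on the solution, with $u(x_p,t_p)\rightarrow0_{\rr^n}$. This contradicts $\abs{u(x_p,t_p)-\phi(x_p-\xEsc(t_p))}\ge\varepsilon_0$ together with $\phi(x_p-\xEsc(t_p))\rightarrow0_{\rr^n}$, and \cref{lem:cv_right_of_front} follows.

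The main obstacle is precisely the second case above: while closeness of $u$ to $0_{\rr^n}$ near $\xHom(t)$ is given for free by \hypHomRight and closeness of $u$ to $\phi$ near $\xEsc(t)$ by \cref{lem:cv}, the region strictly in between carries no a priori decay beyond $\abs{u}\le\dEsc$, and one must run the firewall relaxation over time windows $[t_p-T,t_p]$ that remain clear of $\SigmaEscZero$; the Lipschitz control on $\xEsc$ from \cref{regularity_Esc_point} and on $\xHom$ from \hypHomRight is exactly what makes those windows stay clear.
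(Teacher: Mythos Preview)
Your proof is correct and follows essentially the same approach as the paper's. Both argue by contradiction, reduce via \cref{lem:cv} and \hypHomRight to the case where $x_p$ lies deep in the interior between $\xEsc(t_p)$ and $\xHom(t_p)$, and then exploit the firewall inequality \cref{dt_fire} on a backward time window $[t_p-T,t_p]$, using that $\xEsc'$ and $\xHom'$ are bounded to keep $x_p$ clear of $\SigmaEscZero$. The only cosmetic difference is that you integrate forward via Gronwall to force $\fff_0(x_p,t_p)\rightarrow0$, whereas the paper phrases the same inequality as backward-in-time exponential growth of $s\mapsto\fff_0(x_p,t_p-s)$ contradicting the a priori bounds; your explicit case-splitting (bounded $y_p$, bounded $\xHom(t_p)-x_p$) makes the reductions the paper leaves implicit more transparent.
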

\begin{proof}
Let us proceed by contradiction and assume that the converse holds. Then, according to \cref{lem:cv} above, there exists a positive quantity $\varepsilon$, and sequences $(t_n)_{n\in\nn}$ and $(\xi_n)_{n\in\nn}$ of real quantities both going to $+\infty$ as $n$ goes to $+\infty$, such that, for every positive integer $n$, 
\[
\abs{u\bigl( \xEsc(t_n)+\xi_n,t_n \bigr)-\phi(\xi_n)} \ge \varepsilon
\,,
\]
and thus, if $n$ is large enough, 
\[
\abs{u\bigl( \xEsc(t_n)+\xi_n,t_n \bigr)-m} \ge \varepsilon
\,.
\]
Using the notation $\fff_0(\cdot,\cdot)$ of \cref{subsec:def_fire_zero}, this yields the existence of a positive quantity $\varepsilon'$ such that, if $n$ is large enough, 
\begin{equation}
\label{fff_zero_larger_than_small_positive_quantity}
\fff_0 \bigl( \xEsc(t_n)+\xi_n,t_n \bigr) \ge \varepsilon'
\,.
\end{equation}
According to hypothesis \textup{(\hyperlink{hypHomRight}{\hypHomRightRef})} and to the bounds \cref{bound_u_ut_ck_bis} on the solution,
\[
\xHom(t_n) - \bigl( \xEsc(t_n) + \xi_n\bigr) \to +\infty
\quad\text{as}\quad
n\to +\infty
\,.
\]
According to inequality \vref{dt_fire} about the time derivative of $\fff_0$ and to the fact that both derivatives $\xEsc'(t)$ and $\xHom'(t)$ have finite limits as time goes to $+\infty$, this shows that the function:
\[
[0,t_n]\to \rr, 
\quad
s\mapsto \fff_0 \bigl( \xEsc(t_n)+\xi_n,t_n - s\bigr)
\]
is increasing at an exponential rate on an arbitrarily large interval starting from $0$, provided that $n$ is large enough. In view of \cref{fff_zero_larger_than_small_positive_quantity} this contradicts the bounds \cref{bound_u_ut_ck_bis} on the solution. 
\end{proof}
\subsection{Homogeneous point behind the travelling front}
\label{subsec:hom_behind_front}
According to hypothesis \textup{(\hyperlink{hypOnlyBist}{\hypOnlyBistRef})}, the function $\phi$ is actually the profile of a bistable travelling front; in other words, there exists $\mNext$ in $\mmm$ such that
\begin{equation}
\label{phi_converges_towards_mnext_at_minus_infinity}
\phi(\xi)\to\mNext
\quad\text{as}\quad
\xi\to-\infty
\,.
\end{equation}
The following lemma completes the proof of \cref{prop:inv_cv} (``invasion implies convergence''). 
\begin{lemma}[``next'' homogeneous point behind the front]
\label{lem:next_hom_point}
There exists a $\rr$-valued function $\xHomNext$, defined and of class $\ccc^1$ on a neighbourhood of $+\infty$, such that the following limits hold as time goes to $+\infty$:
\[
\begin{aligned}
& \xEsc(t) - \xHomNext(t) \to +\infty
\quad\text{and}\quad
\xHomNext'(t) \to \cesc \\
\text{and}\quad
& \sup_{x\in[\xHomNext(t),\ \xHom(t)]} \abs{u(x,t)-\phi\bigl(x-\xEsc(t)\bigr)} \to 0 
\,,
\end{aligned}
\]
and, for every positive quantity $L$, 
\[
\sup_{\xi\in[-L,L]}\abs{u\bigl(\xHomNext(t) + \xi, t\bigr) - \mNext } \to 0
\,.
\] 
\end{lemma}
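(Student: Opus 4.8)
The plan is to reduce the statement to a claim about closeness of $u$ to $m_{\textrm{next}}$ on windows of arbitrary but \emph{fixed} width just behind the front — which is within reach of the compactness already established in \cref{lem:cv} — and then to build $\xHomNext$ by a slow diagonal construction. Throughout I shall use that, by \cref{lem:asympt_behav_tw_1,lem:asympt_behav_tw_2}, $\phi(\xi)$ (and $\phi'(\xi)$) tends to $m_{\textrm{next}}$ exponentially as $\xi\to-\infty$, so that for every $\varepsilon>0$ there is $M_\varepsilon>0$ with $\abs{\phi(\xi)-m_{\textrm{next}}}<\varepsilon$ whenever $\xi\le-M_\varepsilon$ (and one may take $\varepsilon\mapsto M_\varepsilon$ nonincreasing).

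The key step is to prove that, for every $\varepsilon>0$, the quantity
\[
W_\varepsilon(t)=\sup\bigl\{w\ge 0 : \abs{u(x,t)-m_{\textrm{next}}}<\varepsilon \text{ for all } x\in[\xEsc(t)-w,\ \xEsc(t)-M_\varepsilon]\bigr\}
\]
tends to $+\infty$ as $t\to+\infty$. Indeed, shrink $M_\varepsilon$ so that in fact $\abs{\phi(\xi)-m_{\textrm{next}}}<\varepsilon/2$ for $\xi\le-M_\varepsilon$; given any $N\ge M_\varepsilon$, \cref{lem:cv} gives $\sup_{x\in[\xEsc(t)-N,\xEsc(t)+N]}\abs{u(x,t)-\phi(x-\xEsc(t))}\to 0$, hence this supremum is $<\varepsilon/2$ for $t$ large, so $\abs{u(x,t)-m_{\textrm{next}}}<\varepsilon$ on $[\xEsc(t)-N,\xEsc(t)-M_\varepsilon]$, i.e. $W_\varepsilon(t)\ge N$; letting $N\to\infty$ proves the claim. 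One checks directly (comparing the defining intervals) that $W_\varepsilon$ is nondecreasing in $\varepsilon$.

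Since each $W_\varepsilon$ diverges and the family is monotone, a routine diagonal construction produces a function $g\in\ccc^1([0,+\infty),(0,+\infty))$ with $g(t)\to+\infty$, $g'(t)\to 0$, and such that for every $\varepsilon>0$ and every $L>0$ one has $g(t)+L\le W_\varepsilon(t)$ for all sufficiently large $t$. I then set $\xHomNext(t)=\xEsc(t)-g(t)$; by \cref{regularity_Esc_point} this is $\ccc^1$ on a neighbourhood of $+\infty$, with $\xHomNext'(t)=\xEsc'(t)-g'(t)\to\cesc$ and $\xEsc(t)-\xHomNext(t)=g(t)\to+\infty$. For the convergence to $\phi$: fix $\varepsilon>0$; for $t$ large, $[\xHomNext(t),\xEsc(t)-M_\varepsilon]$ is nonempty and contained in $[\xEsc(t)-W_\varepsilon(t),\xEsc(t)-M_\varepsilon]$ (as $g(t)\le W_\varepsilon(t)$ and $g(t)\ge M_\varepsilon$), so there $\abs{u(x,t)-m_{\textrm{next}}}<\varepsilon$ and $\abs{\phi(x-\xEsc(t))-m_{\textrm{next}}}<\varepsilon$, whence $\abs{u(x,t)-\phi(x-\xEsc(t))}<2\varepsilon$; on $[\xEsc(t)-M_\varepsilon,\xHom(t)]$, \cref{lem:cv_right_of_front} gives $\abs{u(x,t)-\phi(x-\xEsc(t))}\to 0$. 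As $\varepsilon$ is arbitrary, $\sup_{x\in[\xHomNext(t),\xHom(t)]}\abs{u(x,t)-\phi(x-\xEsc(t))}\to 0$. Finally, fix $L>0$; for $t$ large, $\{\xHomNext(t)+y:\abs{y}\le L\}\subseteq[\xEsc(t)-W_\varepsilon(t),\xEsc(t)-M_\varepsilon]$ (using $g(t)-L\ge M_\varepsilon$ and $g(t)+L\le W_\varepsilon(t)$), so $\sup_{\abs{y}\le L}\abs{u(\xHomNext(t)+y,t)-m_{\textrm{next}}}\le\varepsilon$; thus $u(\xHomNext(t)+\cdot,t)\to m_{\textrm{next}}$ in $\ccc^0([-L,L])$, and this upgrades to convergence in $H^1([-L,L],\rr^n)$ by interpolation against the uniform $\ccc^{k+1}_{\textrm b}$-bounds \cref{bound_u_ut_ck} ($k\ge 2$).

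The single delicate point is the one that motivates the whole scheme: the $\ccc^{k}_{\textup{loc}}$-convergence of \cref{lem:cv} controls $u$ only on windows of \emph{fixed} width around $\xEsc(t)$ and says nothing about points receding from $\xEsc(t)$, so one cannot directly extend the convergence to $\phi$ past such windows. The remedy is to work with the fixed-window quantity $W_\varepsilon$ — for which compactness suffices — and to let $g$ diverge slowly enough (with a growing margin $L$) to sit inside all of the $W_\varepsilon$ simultaneously; everything else is bookkeeping, and, as noted, no further use of \hypDiscFront or of the firewall machinery is needed here.
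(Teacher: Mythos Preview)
Your proof is correct and follows essentially the same approach as the paper's: both use \cref{lem:cv} on windows of fixed width behind $\xEsc(t)$ and then set $\xHomNext(t)=\xEsc(t)$ minus a slowly diverging $\ccc^1$ function (the paper via an explicit sequence of times $t_p$ and smooth interpolation, you via a diagonal argument). One small caveat: the claim that $W_\varepsilon$ is nondecreasing in $\varepsilon$ does not follow just from ``comparing the defining intervals'', since the right endpoint $\xEsc(t)-M_\varepsilon$ shifts with $\varepsilon$; the issue disappears if you instead define $W_\varepsilon$ through closeness of $u$ to $\phi$ on $[\xEsc(t)-w,\xEsc(t)]$ (as the paper implicitly does), where monotonicity in $\varepsilon$ is immediate and the rest of your argument goes through unchanged.
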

\begin{proof}
Let us introduce the sequence of times $(t_n)_{n\in\nn}$ defined as follows: $t_0$ is positive and large enough so that the function $t\mapsto \xEsc(t)$ is defined and of class $\ccc^1$ on $[t_0,+\infty)$, and, for every positive integer $n$, 
\[
t_n = \max \left( t_{n-1} + n, \sup \left\{ t \text{ in } [0,+\infty) : \sup_{\xi\in [-2n,0]} \abs{u\bigl(\xesc(t) + \xi, t\bigr)- \phi(\xi)}\ge \frac{1}{n}\right\}\right)
\]
(the key point being that, according to \cref{lem:cv} above, this quantity $t_n$ is finite). 
Let $\chi$ denote a smooth function $\rr\to\rr$ satisfying
\[
\chi\equiv 0 \text{ on } (-\infty,0]
\quad\text{and}\quad
\chi\equiv 1 \text{ on } [1,+\infty)
\quad\text{and}\quad
0 \le \chi \le 1 \text{ and }\chi' \ge 0\text{ on }[0,1]
\,,
\]
\begin{figure}[!htbp]
\centering
\includegraphics[width=0.85\textwidth]{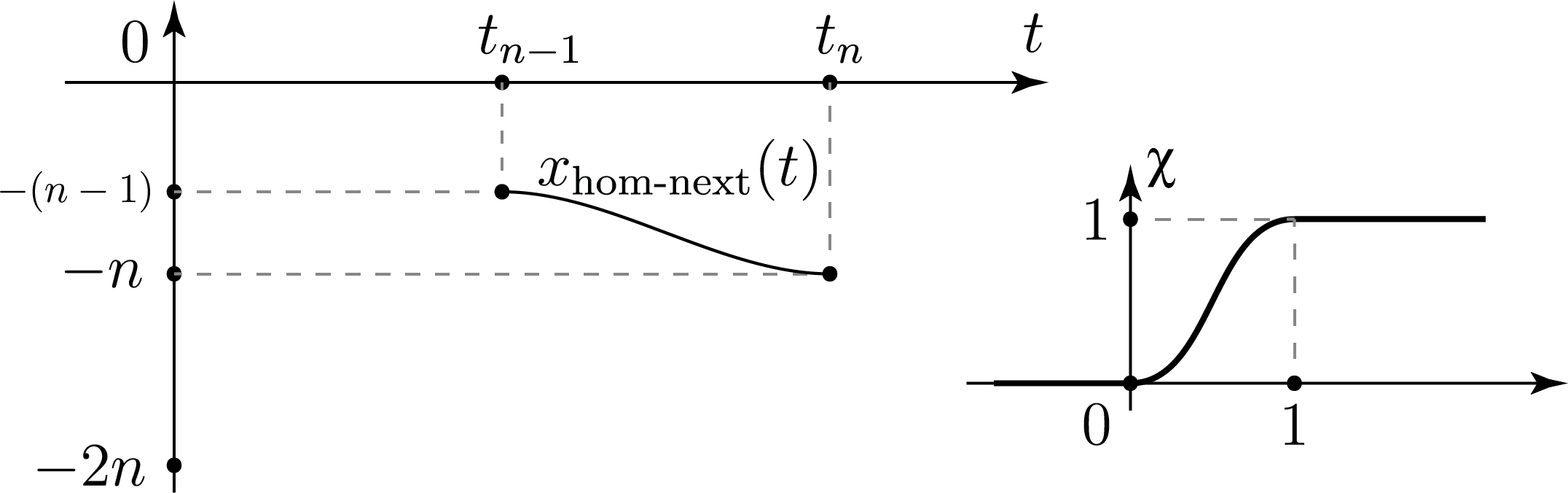}
\caption{Illustration of the definition of the function $\xHomNext(\cdot)$.}
\label{fig:hom_behind_front}
\end{figure}
see \cref{fig:hom_behind_front}; and let us define the function $\xHomNext:[t_0,+\infty)\to\rr$ by
\[
\xHomNext(t) = \xEsc(t) - n - \chi\left(\frac{t-t_{n-1}}{t_{n}-t_{n-1}}\right) 
\quad\text{for $n$ in $\nn^*$ and $t$ in $[t_{n-1},t_{n}]$.}
\]
This function is of class $\ccc^1$ on $[t_0,+\infty)$ and the other conclusions of \cref{lem:next_hom_point} follow from the limit \cref{phi_converges_towards_mnext_at_minus_infinity} and the definition of $\xHomNext(\cdot)$. 
\end{proof}
The proof of \cref{prop:inv_cv} is complete. 
\section{No invasion implies relaxation}
\label{sec:no_inv_implies_relax}
As everywhere else, let us consider a function $V$ in $\ccc^2(\rr^d,\rr)$ satisfying the coercivity hypothesis \cref{hyp_coerc}. 
\subsection{Definitions and hypotheses}
\label{subsec:def_hyp_dichot}
Let us consider two points $m_-$ and $m_+$ in $\mmm$ and a solution $(x,t)\mapsto u(x,t)$ of system \cref{init_syst} defined on $\rr\times[0,+\infty)$. 
Without assuming that this solution is bistable, let us make the following hypothesis \textup{(\hyperlink{hypHom}{\hypHomRef})}, which is similar to hypothesis \textup{(\hyperlink{hypHomRight}{\hypHomRightRef})} made in \cref{sec:inv_impl_cv} (``invasion implies convergence''), but this time both to the right and to the left in space (see \cref{fig:inv_relax_dichot}).
\begin{figure}[!htbp]
\centering
\includegraphics[width=\textwidth]{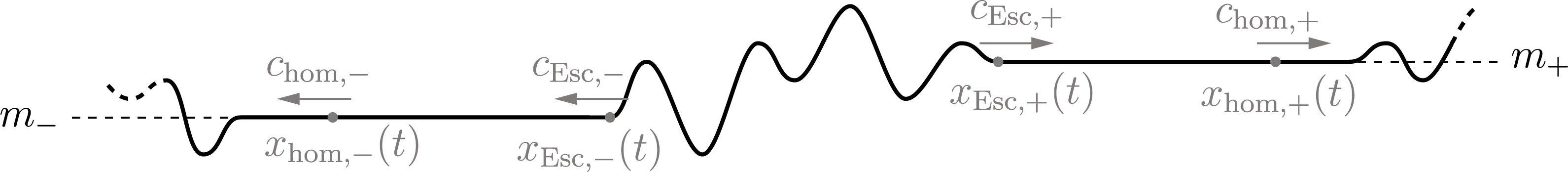}
\caption{Illustration of hypothesis \textup{(\hypHomRef)} and of \cref{prop:relax}.}
\label{fig:inv_relax_dichot}
\end{figure}
\begin{description}
\item[\hypHomLabel]\hypertarget{hypHom} There exist a positive quantity $\cHomPlus$ and a negative quantity $\cHomMinus$ and $\ccc^1$-functions
\[
\begin{aligned}
& \xHomMinus:[0,+\infty)\to\rr
\quad\text{satisfying}\quad
\xHomMinus'(t)\to \cHomMinus 
\quad\text{as}\quad
t\to +\infty \\
\text{and}\quad
& \xHomPlus:[0,+\infty)\to\rr
\quad\text{satisfying}\quad
\xHomPlus'(t)\to \cHomPlus 
\quad\text{as}\quad
t\to +\infty 
\end{aligned}
\]
such that, for every positive quantity $L$, 
\[
\begin{aligned}
&\sup_{\xi\in[-L,L]}\abs{u \bigl( \xHomMinus(t) + \xi, t \bigr) - m_-} \to 0
\quad\text{as}\quad
t\to +\infty \\
\text{and}\quad
&\sup_{\xi\in[-L,L]}\abs{u \bigl( \xHomPlus(t) + \xi, t \bigr) - m_+} \to 0
\quad\text{as}\quad
t\to +\infty 
\,.
\end{aligned}
\]
\end{description}
For every $t$ in $[0,+\infty)$, let us denote by $\xEscMinus(t)$ the infimum of the set
\[
\bigl\{ x\in\rr : \xHomMinus(t) \le x \le \xHomPlus(t) \quad\text{and}\quad \abs{u(x,t)-m_-}=\dEsc(m_-) \bigr\}
\]
(with the convention that $\xEscMinus(t)$ equals $+\infty$ if this set is empty), and let us denote by $\xEscPlus(t)$ the supremum of the set
\[
\bigl\{ x\in\rr : \xHomMinus(t) \le x \le \xHomPlus(t) \quad\text{and}\quad \abs{u(x,t)-m_+}=\dEsc(m_+) \bigr\}
\]
(with the convention that $\xEscPlus(t)$ equals $-\infty$ if this set is empty). Let 
\[
\cEscMinus = \liminf_{t\to+\infty} \frac{\xEscMinus(t)}{t}
\quad\text{and}\quad
\cEscPlus = \limsup_{t\to+\infty} \frac{\xEscPlus(t)}{t}
\,,
\]
see \cref{fig:inv_relax_dichot}. It follows from the definitions of $\xEscMinus(t)$ and $\xEscPlus(t)$ that, for all $t$ in $[0,+\infty)$, 
\[
\xHomMinus(t) \le \xEscMinus(t)
\quad\text{and}\quad
\xEscPlus(t) \le \xHomPlus(t)
\]
thus
\[
\cHomMinus \le \cEscMinus
\quad\text{and}\quad
\cEscPlus \le \cHomPlus
\,.
\]
If the quantity $\cEscPlus$ was positive or if the quantity $\cEscMinus$ was negative, this would mean that the corresponding equilibrium is ``invaded'' at a nonzero mean speed, a situation already studied in \cref{sec:inv_impl_cv}. Let us introduce the following (converse) ``no invasion'' hypothesis:
\begin{description}
\item[\hypNoInvLabel]\hypertarget{hypNoInv} The following inequalities hold:
\[
0\le \cEscMinus
\quad\text{and}\quad
\cEscPlus \le 0 
\,.
\]
\end{description}
\subsection{Statement}
\label{subsec:statement_dichot}
The aim of \cref{sec:no_inv_implies_relax} is to prove the following proposition. 
\begin{proposition}[no invasion implies relaxation]
\label{prop:relax}
Assume that $V$ satisfies hypothesis \\ \cref{hyp_coerc} and that the solution $(x,t)\mapsto u(x,t)$ under consideration satisfies hypotheses \textup{(\hyperlink{hypHom}{\hypHomRef})} and \textup{(\hyperlink{hypNoInv}{\hypNoInvRef})}. Then the following conclusions hold. 
\begin{enumerate}
\item The quantities $V(m_-)$ and $V(m_+)$ are equal.
\label{item:Vm_minus_equals_Vm_plus}
\item There exists a nonnegative quantity $\eeeResAsympt[u]$ (``residual asymptotic energy'') such that, for all quantities $c_-$ in $(\cHomMinus,0)$ and $c_+$ in $(0,\cHomPlus)$,
\begin{equation}
\label{asympt_energy_prop_relax}
\int_{c_-t}^{c_+t}\left(\frac{1}{2}u_x(x,t)^2 + V\bigl(u(x,t)-V(m_\pm)\right) \to \eeeResAsympt[u]
\quad\text{as}\quad
t\to+\infty
\,.
\end{equation}
\label{item:asympt_energy_prop_relax}
\item The following limits hold as time goes to $+\infty$:
\begin{equation}
\label{dissip_goes_to_zero_prop_relax}
\sup_{x\in[\xHomMinus(t),\xHomPlus(t)]} \abs{u_t(x,t)} \to 0
\,,
\end{equation}
and, for every quantity $\varepsilon$ which is positive and smaller than $\min\left(\abs{\cHomMinus},\cHomPlus\right)$, 
\begin{equation}
\label{sol_goes_to_m_plus_minus_prop_relax}
\sup_{x\in[\xHomMinus(t),-\varepsilon t]}\abs{u(x,t)-m_-} \to 0
\quad\text{and}\quad
\sup_{x\in[\varepsilon t,\xHomPlus(t)]}\abs{u(x,t)-m_+} \to 0
\,.
\end{equation}
\label{item:zero_limits_prop_relax}

\end{enumerate}
\end{proposition}
\subsection{Relaxation scheme in a standing or almost standing frame}
\label{subsec:relax_sc_stand}
\subsubsection{Principle}
The aim of this \namecref{subsec:relax_sc_stand} is to set up an appropriate relaxation scheme in a standing or almost standing frame. This means defining an appropriate localized energy and controlling the ``flux'' terms occurring in the time derivative of that localized energy. The argument will be quite similar to that of \cref{subsec:relax_sch_tr_fr} (the relaxation scheme in the travelling frame), the main difference being that $c$ is now either equal or close to $0$, and as a consequence the weight function for the localized energy will be defined with a cut-off on the right and another on the left, instead of a single one; accordingly firewall functions will be introduced to control the fluxes along each of these cutoffs.

Let us keep the notation and assumptions of \cref{subsec:def_hyp_dichot}, and let us assume that hypotheses \cref{hyp_coerc} and \textup{(\hyperlink{hypHom}{\hypHomRef})} and \textup{(\hyperlink{hypNoInv}{\hypNoInvRef})} of \cref{prop:relax} hold. According to \vref{prop:attr_ball}, it may be assumed (without loss of generality, up to changing the origin of time) that, for all $t$ in $[0,+\infty)$, 
\begin{align}
\label{att_ball_dichot}
\norm{x\mapsto u(x,t)}_{\Linfty} &\le \Rattinfty \\
\text{and}\qquad 
\label{att_ball_X_dichot}
\norm{x\mapsto u(x,t)}_{X} &\le \RattX
\,.
\end{align}
\subsubsection{Notation for the travelling frame}
%
Let $c$ denote a real quantity. By contrast with \cref{subsec:relax_sch_tr_fr}, the other parameters --- namely the initial time $\tInit$, the initial position of origin of travelling frame $\xInit$, and the initial position of the cut-off $\initCut$ --- are not required here; or in other words they are chosen equal to $0$). 
The relaxation scheme will be applied in the next \cref{subsec:low_bd_en} for $c$ very close or equal to $0$. 

Let us introduce the function $(\xi,t)\mapsto v(\xi,t)$ defined on $\rr\times[0,+\infty)$ by
\[
v(\xi,t) = u(x,t)
\quad\text{for}\quad
x=ct+\xi
\,.
\]
This function is a solution of the differential system
\[
v_t - c v_\xi = -\nabla V (v) + v_{\xi\xi}
\,.
\]
\subsubsection{Choice of the parameters and conditions on the speed \texorpdfstring{$c$}{c}}
%
A localized energy and two firewall functions associated with this solution will now be introduced. Let us call upon the quantity $\coeffEnZero$ involved in \cref{def_weight_en_V_ddag}, let us denote by $\kappa_0(m_-)$ and by $\kappa_0(m_+)$ the quantities defined in \vref{def_kappaZero} for the two minimum points $m_-$ and $m_+$, and let 
\[
\kappa_0 = \min\bigl(\kappa_0(m_-),\kappa_0(m_+)\bigr)
\quad\text{and}\quad
\eigVmin = \min\bigl(\eigVmin(m_-),\eigVmin(m_+)\bigr)
\,.
\]
Let $\cCutZero$ (speed of the cut-off point in the travelling frame) be a positive quantity, small enough so that the following inequalities hold:
\begin{equation}
\label{conditions_on_coeffEnZero_and_kappaZero_and_cutZero_relax}
\coeffEnZero\kappa_0\Bigl(\frac{\cCutZero}{2} + \frac{\kappa_0}{4}\Bigr) \le \frac{1}{2}
\quad\text{and}\quad
\coeffEnZero \, \cCutZero\, \kappa_0 \le \frac{1}{4} 
\quad\text{and}\quad
\frac{\kappa_0(\cCutZero+\kappa_0)}{2} \le \frac{\eigVmin}{16} 
\end{equation}
(compare with conditions \vref{conditions_kappaZero_lab} on $\kappa_0$ and \vref{conditions_kappa_cCut_coeffEn} on $\kappa$).
For instance, the quantity $\cCutZero$ may be chosen as
\[
\cCutZero = \min\biggl(\frac{1}{4\sqrt{\coeffEnZero}},\frac{\sqrt{\eigVmin}}{4}, \frac{\cHomPlus}{2},\frac{\abs{\cHomMinus}}{2}\biggr)
\,.
\]
Let us make on the parameter $c$ the following hypotheses (see comments below):
\begin{equation}
\label{hyp_param_relax}
\abs{c}\le\frac{\kappa_0}{10}
\quad\text{and}\quad
\abs{c}\le \frac{\sqrt{\eigVmin}}{4}
\quad\text{and}\quad
\abs{c}\le\frac{\cCutZero}{6}
\,.
\end{equation}
 According to hypotheses \textup{(\hyperlink{hypHom}{\hypHomRef})} and \textup{(\hyperlink{hypNoInv}{\hypNoInvRef})} and to the value of the quantity $\cCutZero$ chosen above, there exists a nonnegative time $T$ such that, for every time $t$ greater than or equal to $T$,
\begin{equation}
\label{hyp_pos_relax}
\begin{aligned}
\xHomMinus(t)&\le -\frac{11}{6}\cCutZero t
&\quad&\text{and}\quad&
-\frac{1}{6}\cCutZero t &\le \xEscMinus(t) \\
\text{and}\quad
\xEscPlus(t)&\le \frac{1}{6}\cCutZero t 
&\quad&\text{and}\quad&
\frac{11}{6}\cCutZero t &\le \xHomPlus(t)
\,.
\end{aligned}
\end{equation}
Let us briefly comment hypotheses \cref{hyp_param_relax,hyp_pos_relax}. 
\begin{itemize}
\item The relaxation scheme below will be applied either for $c$ equals $0$, or for $c$ very close to $0$. 
\item The choice of the value $1/10$ as upper bound of the quotient $\abs{c}/\kappa_0$ in \cref{hyp_param_relax} will turn out to be convenient to prove \vref{lem:der_fire_dichot_next} (that is, more precisely, to control the pollution terms involved in the time derivative of the firewall functions that will be defined below). The same is true concerning the choice of the factors $\pm 1/6$ and $\pm 11/6$ in \cref{hyp_pos_relax}. 
\end{itemize}
\subsubsection{Notation ``\texorpdfstring{$\pm$}{plus or minus}''}
Let us adopt, for the remaining of this \cref{sec:no_inv_implies_relax}, the following convention: the symbol ``$\pm$'' denotes one the the signs ``$+$'' and ``$-$'', this sign remaining the same along a whole expression, or an equality/inequality between two expressions, or a definition. 
\subsubsection{Normalized potential}
Let us introduce the ``normalized'' potential $V^\ddag: \rr^d\to\rr$, $v\mapsto V^\ddag(v)$ defined as
\begin{equation}
\label{def_V_ddag}
V^\ddag (v) = V(v) - \max\bigl(V(m_-),V(m_+)\bigr)
\,.
\end{equation}
As a consequence $\max\bigl(V^\ddag(m_-),V^\ddag(m_+)\bigr) = 0$, and $\nabla V$ and $\nabla V^\ddag$ are equal. With the convention above, it follows from inequality \cref{def_weight_en} satisfied by $\coeffEnZero$ that, for all $v$ in $\rr^d$, 
\begin{equation}
\label{def_weight_en_V_ddag}
\coeffEnZero \bigl(V^\ddag(v) - V^\ddag(m_\pm)\bigr)  +  \frac{1}{4}(v-m_\pm)^2\ge 0
\,,
\end{equation}
and it follows from inequalities \cref{v_nablaV_controls_square_around_loc_min,v_nablaV_controls_pot_around_loc_min} that, for all $v$ in $\rr^d$ satisfying $\abs{v-m_\pm}\le\dEsc(m_\pm)$, 
\begin{align}
\label{v_nablaV_controls_square_around_loc_min_ddag}
(v-m_\pm)\cdot \nabla V^\ddag(v) &\ge \frac{\eigVmin}{2} (v-m_\pm)^2 \,, \\
\text{and}\qquad
\label{v_nablaV_controls_pot_around_loc_min_ddag}
(v-m_\pm)\cdot \nabla V^\ddag(v) &\ge V^\ddag(v) - V^\ddag(m_\pm)
\,.
\end{align}
\subsubsection{Localized energy}
\label{subsubsec:en_dichot}
For every time $t$, let us introduce the three intervals
\[
\begin{aligned}
\iLeft(t) &= ( - \infty , -\cCutZero t ] \,, \\
\text{and}\qquad
\iMain(t) &= [-\cCutZero t, \cCutZero t] \,, \\
\text{and}\qquad
\iRight(t) &= [ \cCutZero t , +\infty) \,,
\end{aligned}
\]
and let us introduce the functions $\chi_0(\xi,t)$ and $\chi(\xi,t)$ (weight function for the localized energy) defined on $\rr\times[0,+\infty)$ by
\[
\chi_0(\xi,t)=
\left\{
\begin{aligned}
&1
&& \quad\text{if}\quad \xi\in\iMain(t)  \,, \\
& \exp\bigl(-\kappa_0 (\abs{\xi}-\cCutZero t)\bigr) 
&& \quad\text{if}\quad \xi\not\in\iMain(t)  \,,
\end{aligned}
\right.
\quad\text{and}\quad
\chi(\xi,t) = e^{c\xi} \chi_0(\xi,t)
\,,
\]
see \cref{fig:graph_weight_zero_dichot,fig:graph_weight_dichot}.
\begin{figure}[!htbp]
\centering
\includegraphics[width=.8\textwidth]{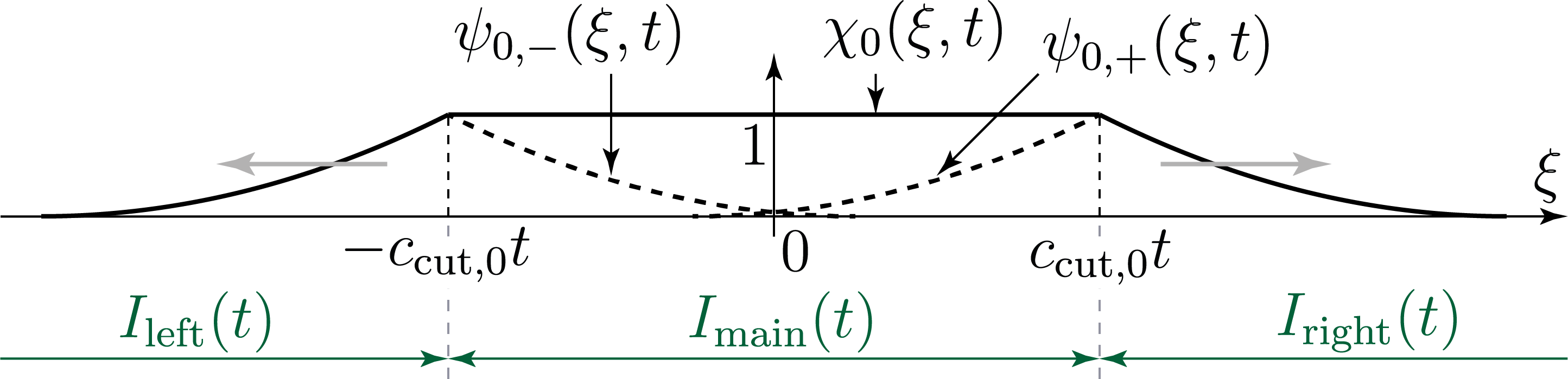}
\caption{Graphs of functions $\xi\mapsto\chi_0(\xi,t)$ and $\xi\mapsto\psi_{0,+}(\xi,t)$ and $\xi\mapsto\psi_{0,-}(\xi,t)$.}
\label{fig:graph_weight_zero_dichot}
\end{figure}
\begin{figure}[!htbp]
\centering
\includegraphics[width=\textwidth]{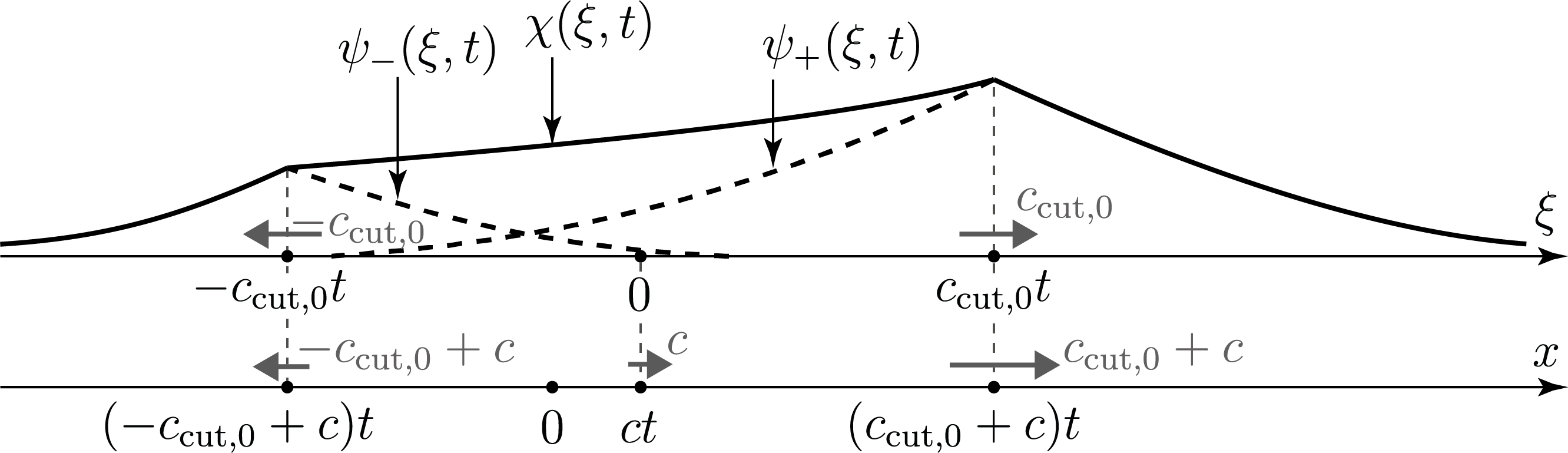}
\caption{Graphs of the weight functions $\xi\mapsto\chi(\xi,t)$ and $\xi\mapsto\psi_+(\xi,t)$ and $\xi\mapsto\psi_-(\xi,t)$ (case of a positive speed $c$).}
\label{fig:graph_weight_dichot}
\end{figure}
For all $t$ in $[0,+\infty)$, let us define the ``energy'' $\eee(t)$ by
\[
\eee(t) = \int_{\rr} \chi(\xi,t)\left(\frac{1}{2}v_\xi(\xi,t)^2 + V^\ddag\bigl( v(\xi,t)\bigr)\right) \, d\xi 
\,.
\]
The notation $\chi$ and $\eee$ is the same as in \cref{subsubsec:def_loc_en} but the definitions above are slightly different from those introduced in \cref{subsubsec:def_loc_en}. 
\begin{remark}
There is more than one possible choice (with equivalent outcomes) for the expressions of $\chi(\xi,t)$ and of the other weight functions $\psi_\pm(\xi,t)$ that will be defined in \cref{def_psi_plus_minus} below. The chosen expressions are closer to those of \vref{subsubsec:def_firewall_lab_frame} (laboratory frame) than to those of \vref{subsec:relax_sch_tr_fr} (travelling frame). In the context of this paper this is really a matter of taste. However, it seems that this choice extends more naturally to higher space dimension, \cite{Risler_noInvasionCaseHigherSpace_2020}.
\end{remark}
\subsubsection{Time derivative of localized energy}
\label{subsubsec:time_der_loc_en_dichot}
For all $t$ in $[0,+\infty)$, let us define the ``dissipation'' function by
\begin{equation}
\label{def_dissip_dichot}
\ddd(t) = \int_{\rr} \chi(\xi,t)\, v_t(\xi,t)^2 \, d\xi
\,.
\end{equation}
\begin{lemma}[time derivative of the localized energy]
\label{lem:ds_en_trav_f_prel_relax}
For all $t$ in $[0,+\infty)$, 
\begin{equation}
\label{ds_en_trav_f_prel_relax}
\eee'(t) \le -\frac{1}{2} \ddd(t) + \kappa_0 \int_{\rr\setminus\iMain(t)} \chi \Bigl( \frac{\cCutZero+\kappa_0}{2}v_\xi^2 + \cCutZero V^\ddag(v) \Bigr) \, d\xi
\,.
\end{equation}
\end{lemma}
\begin{proof}
For all $t$ in $[0,+\infty)$, it follows from expression \vref{ddt_loc_en} (time derivative of a localized energy) that
\begin{equation}
\label{ds_en_trav_f_prel_relax_proof_1}
\eee'(t) = - \ddd(t)  + \int_{\rr} \biggl( \chi_t\Bigl( \frac{1}{2}v_\xi^2 + V^\ddag(v)\Bigr) + (c\chi - \chi_\xi) v_\xi \cdot v_t \biggr) \, d\xi
\,.
\end{equation}
It follows from the definition of $\chi$ that:
\[
\chi_t(\xi,t)=e^{c\xi}\partial_t \chi_0(\xi,t) = 
\left\{
\begin{aligned}
& 0 
&& \quad\text{if}\quad \xi\in\iMain(t) \,, \\
&\kappa_0\, \cCutZero\,  \chi(\xi,t) 
&& \quad\text{if}\quad \xi\not\in\iMain(t) \,, 
\end{aligned}
\right.
\]
and 
\[
(c\chi - \chi_\xi) (\xi,t)= - e^{c\xi} \partial_\xi \chi_0 (\xi,t) = 
\left\{
\begin{aligned}
&0 
&& \quad\text{if}\quad \xi\in\iMain(t) \,, \\
&\sgn(\xi)\, \kappa_0\, \chi(\xi,t) 
&& \quad\text{if}\quad \xi\not\in\iMain(t) \,,
\end{aligned}
\right.
\]
where $\sgn(\xi)=\xi/\abs{\xi}$ denotes the sign of $\xi$. Thus, it follows from \cref{ds_en_trav_f_prel_relax_proof_1} that
\begin{equation}
\label{ds_en_trav_f_prel_relax_proof_2}
\eee'(t) =- \ddd(t)  + \kappa_0\int_{\rr\setminus\iMain(t)} \chi \biggl( \cCutZero\Bigl( \frac{1}{2}v_\xi^2 + V^\ddag(v)\Bigr) + \sgn(\xi)\, v_\xi \cdot v_t \biggr) \, d\xi 
\,.
\end{equation}
Using the inequality
\[
\sgn(\xi)\, \kappa_0 \, v_\xi \cdot v_t \le \frac{1}{2}v_t^2 + \frac{\kappa_0^2}{2} v_\xi^2
\,,
\]
inequality \cref{ds_en_trav_f_prel_relax} follows from equality \cref{ds_en_trav_f_prel_relax_proof_2}. \Cref{lem:ds_en_trav_f_prel_relax} is proved. 
\end{proof}
\subsubsection{Firewall functions}
\label{subsubsec:def_fire_relax}
The next task is to define two firewall functions to control the two last terms of the right-hand side of inequality \cref{ds_en_trav_f_prel_relax} above. Let us introduce the functions $\psi_{0,-}(\xi,t)$ and $\psi_{0,+}(\xi,t)$ and $\psi_-(\xi,t)$ and $\psi_+(\xi,t)$ (weight functions for those firewall functions) defined as (see \cref{fig:graph_weight_zero_dichot,fig:graph_weight_dichot} and the notation of \vref{def_firewall_lab_frame}):
\[
\begin{aligned}
\psi_{0,-}(\xi,t) &= T_{-\cCutZero t}\psi_0(x,t) = \exp\bigl(-\kappa_0\abs{\xi+\cCutZero t}\bigr)
\,,
\\
\text{and}\quad
\psi_{0,+}(\xi,t) &= T_{\cCutZero t}\psi_0(x,t) = \exp\bigl(-\kappa_0\abs{\xi-\cCutZero t}\bigr)
\,,
\end{aligned}
\]
and 
\begin{equation}
\label{def_psi_plus_minus}
\psi_{-}(\xi,t) = e^{c\xi}\psi_{0,-}(\xi,t)
\quad\text{and}\quad
\psi_{+}(\xi,t) = e^{c\xi}\psi_{0,+}(\xi,t)
\,;
\end{equation}
observe that 
\[
\chi(\xi,t) = \psi_{-}(\xi,t)
\quad\text{for}\quad
\xi\in\iLeft(t)
\quad\text{and}\quad
\chi(\xi,t) = \psi_{+}(\xi,t)
\quad\text{for}\quad
\xi\in\iRight(t)
\,.
\]
For every $t$ in $[0,+\infty)$, let us define the ``firewalls'' $\fff_-(t)$ and $\fff_+(t)$ as
\begin{equation}
\label{def_firewalls_relax}
\fff_\pm(t) = \int_{\rr} \psi_\pm(\xi,t)\biggl(\coeffEnZero \Bigl( \frac{1}{2}v_\xi(\xi,t)^2 + V^\ddag\bigl(v(\xi,t) - V^\ddag(m_\pm)\bigr) \Bigr) + \frac{1}{2}\bigl(v(\xi,t)-m_\pm\bigr)^2 \biggr) \, d\xi
\,.
\end{equation}
\subsubsection{Energy decrease up to firewalls}
\label{subsubsec:decrease_up_to_firewall_relax}
\begin{lemma}[energy decrease up to firewalls]
\label{lem:ds_en_trav_f_inprogress_relax}
There exists a quantity $\KEFZero$, depending only on $V$ and $m_-$ and $m_+$, such that, for every nonnegative time $t$, 
\begin{equation}
\label{ds_en_trav_f_inprogress_relax}
\eee'(t)\le -\frac{1}{2} \ddd(t) + \KEFZero\bigl( \fff_-(t) + \fff_+(t) \bigr)
\,.
\end{equation}
\end{lemma}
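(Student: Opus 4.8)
The plan is to start from the bound \cref{ds_en_trav_f_prel_relax} already established above, which expresses $\eee'(t)$ as $-\frac{1}{2}\ddd(t)$ plus a flux integral over $\iLeft(t)$ (carrying the prefactor $-c+\kappa$) and a flux integral over $\iRight(t)$ (carrying the prefactor $c+\kappa$) --- there is no flux contribution over $\iMain(t)$ since there $\chi_t=0$ and $c\chi-\chi_y=0$. Thus it suffices to bound the $\iRight(t)$-integral by $K_{\eee}\,\fff_+(t)$ and the $\iLeft(t)$-integral by $K_{\eee}\,\fff_-(t)$, and then add. This is the verbatim analogue of the estimate carried out in \cref{subsubsec:def_firewall} in the travelling frame; the only new feature is that the firewall functionals here are recentred at $m_+$, resp.\ $m_-$, rather than at $0_{\rr^n}$.

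Before the main estimate I would record that, thanks to the normalization \cref{max_pot_mplusmimus_zero}, both $V(m_-)$ and $V(m_+)$ are $\le 0$, so that for $m\in\{m_-,m_+\}$ one has
\[
\coeffEn\bigl(V(v)-V(m)\bigr)+\frac{(v-m)^2}{2}\;\ge\;\coeffEn V(v)+\frac{(v-m)^2}{4}\;\ge\;0,
\]
the last inequality being property \cref{def_weight_en} applied at $m$, combined with $\coeffEn\le\coeffEnZero$ (\cref{coeffEn_smaller_than_coeffEnZero}), which makes $\coeffEn V(v)\ge\coeffEnZero V(v)$ on the set where $V(v)<0$. In particular the integrands defining $\fff_+$ and $\fff_-$ are pointwise nonnegative, hence $\fff_+(t)\ge 0$ and $\fff_-(t)\ge 0$; this is what will allow the domains of integration to be enlarged from $\iRight(t)$, resp.\ $\iLeft(t)$, to all of $\rr$ at the end.

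For the $\iRight(t)$-integral one uses $\psi_+=\chi$ there; since $\abs{c}\le\kappa/6$ the prefactor $\cCutZero(c+\kappa)$ is nonnegative, so $V(v)$ may be replaced by $V(v)-V(m_+)$ without decreasing the integrand. Dividing and multiplying by $\coeffEn$ and absorbing $\coeffEn\frac{v_y^2}{2}$ and $\coeffEn\bigl(\frac{v_y^2}{2}+V(v)-V(m_+)\bigr)$ into the nonnegative quantity $\coeffEn\bigl(\frac{v_y^2}{2}+V(v)-V(m_+)\bigr)+\frac{(v-m_+)^2}{2}$, exactly as in \cref{subsubsec:def_firewall}, the $\iRight(t)$-integrand is bounded above by
\[
\frac{\cCutZero(c+\kappa)+(c+\kappa)^2}{\coeffEn}\;\psi_+\biggl[\coeffEn\Bigl(\frac{v_y^2}{2}+V(v)-V(m_+)\Bigr)+\frac{(v-m_+)^2}{2}\biggr].
\]
Integrating, enlarging the domain to $\rr$, and using $\cCutZero\le\cCut$ together with $\abs{c}\le\cnoesc$, this gives $\frac{\cCut(\cnoesc+\kappa)+(\cnoesc+\kappa)^2}{\coeffEn}\fff_+(t)=K_{\eee}\fff_+(t)$. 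The $\iLeft(t)$-integral is treated identically with $\psi_-=\chi$, with $m_-$ in place of $m_+$, and with the prefactor $-c+\kappa$ in place of $c+\kappa$ --- here $-c+\kappa\ge 0$ (again because $\abs{c}\le\kappa/6$) and $-c+\kappa\le\cnoesc+\kappa$ (because $c\ge-\cnoesc$), so the same chain of inequalities produces the bound $K_{\eee}\fff_-(t)$. Adding the two bounds to $-\frac{1}{2}\ddd(t)$ yields \cref{ds_en_trav_f_inprogress_relax}.

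The computation is essentially routine; the only points requiring a little attention --- the ``main obstacle'', mild as it is --- are the bookkeeping forced by the recentring at $m_\pm$: the normalization $\max\bigl(V(m_-),V(m_+)\bigr)=0$ must be used both to pass from $V(v)$ to $V(v)-V(m_\pm)$ under a nonnegative prefactor and to obtain nonnegativity of the two firewall integrands, and one must verify that the prefactors $\cCutZero(\pm c+\kappa)$ are nonnegative and bounded by $\cCut(\cnoesc+\kappa)$ --- which is precisely what the smallness assumptions $\abs{c}\le\kappa/6$ and $\abs{c}\le\cnoesc$ in \cref{hyp_param_relax} guarantee.
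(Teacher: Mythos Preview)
Your proof is correct and follows essentially the same route as the paper's: start from \cref{ds_en_trav_f_prel_relax}, use positivity of the prefactors $\pm c+\kappa$ to substitute $\chi$ by $\psi_\pm$ and add the nonnegative terms $-V(m_\pm)$ and $\tfrac{(v-m_\pm)^2}{2}$, invoke the lower-quadratic-hull property \cref{def_weight_en} (with $\coeffEn\le\coeffEnZero$) both to absorb $\coeffEn\frac{v_y^2}{2}$ and to extend the integration domain to all of $\rr$, and finally bound the resulting prefactors by $K_{\eee}$ via $\cCutZero\le\cCut$ and $\abs{c}\le\cnoesc$. Your write-up is in fact slightly more explicit than the paper's about the role of the normalization $\max\bigl(V(m_-),V(m_+)\bigr)=0$ and of the smallness hypotheses \cref{hyp_param_relax}, which is a plus.
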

\begin{proof}
It follows from inequality \cref{ds_en_trav_f_prel_relax} that (observe the substitution of $\chi$ by $\psi_-$ on $\iLeft(t)$ and by $\psi_+$ on $\iRight(t)$ and the added nonnegative terms $-V^\ddag(m_-)$ and $-V^\ddag(m_+)$)
\[
\begin{aligned}
\eee'(t)  \le -\frac{1}{2} \ddd(t) &+ \kappa_0\int_{\iLeft(t)} \psi_-\Bigl( \frac{\cCutZero+\kappa_0}{2}v_\xi^2 + \cCutZero \bigl(V^\ddag(v) -V^\ddag(m_-)\bigr)  \Bigr) \, d\xi \\
 &+ \kappa_0\int_{\iRight(t)} \psi_+\Bigl( \frac{\cCutZero+\kappa_0}{2}v_\xi^2 + \cCutZero \bigl(V^\ddag(v) -V^\ddag(m_+)\bigr)  \Bigr) \, d\xi
\,.
\end{aligned}
\]
Thus,
\[
\begin{aligned}
&\eee'(t)  \le -\frac{1}{2} \ddd(t) \\
&+ \kappa_0\int_{\iLeft(t)} \psi_-\biggl[ \frac{\cCutZero+\kappa_0}{2} v_\xi^2+  \cCutZero \Bigl(V^\ddag(v) -V^\ddag(m_-) + \frac{1}{2\coeffEnZero}(v-m_-)^2 \Bigr)  \biggr] \, d\xi \\
&+ \kappa_0\int_{\iRight(t)} \psi_+\biggl[ \frac{\cCutZero+\kappa_0}{2} v_\xi^2 + \cCutZero\Bigl(V^\ddag(v) -V^\ddag(m_+) + \frac{1}{2\coeffEnZero}(v-m_+)^2 \Bigr)  \biggr] \, d\xi
\,.
\end{aligned}
\]
According to inequalities \vref{def_weight_en_V_ddag}, the quantities
\[
V^\ddag(v) -V^\ddag(m_\pm) + \frac{1}{2\coeffEnZero}(v-m_\pm)^2 = \frac{1}{\coeffEnZero}\Bigl(\coeffEnZero\bigl(V^\ddag(v) -V^\ddag(m_\pm)\bigr) + \frac{1}{2}(v-m_\pm)^2\Bigr)
\]
are nonnegative. As a consequence the previous inequality still holds if the factor $\cCutZero$ in front of these quantities is replaced by the greater factor $\cCutZero+\kappa_0$ and if the integration domains of the two integrals are extended to the whole real line. After these changes the inequality reads
\[
\eee'(t)  \le -\frac{1}{2} \ddd(t) + \frac{\kappa_0(\cCutZero+\kappa_0)}{\coeffEnZero}\bigl( \fff_-(t) + \fff_+(t) \bigr)
\,.
\]
Thus if $\KEFZero$ denotes the quantity $\kappa_0(\cCutZero+\kappa_0)/\coeffEnZero$, then inequality \cref{ds_en_trav_f_inprogress_relax} and \cref{lem:ds_en_trav_f_inprogress_relax} are proved. 
\end{proof}
\subsubsection{Firewalls linear decrease up to pollution}
\label{subsubsec:der_fire_dichot}
For all $t$ in $[0,+\infty)$, let
\[
\begin{aligned}
\SigmaEscMinus(t) &= \{\xi\in\rr : \abs{v(\xi,t)-m_-} > \dEsc(m_-) \} \\
\text{and}\quad
\SigmaEscPlus(t) &= \{\xi\in\rr : \abs{v(\xi,t)-m_+} > \dEsc(m_+) \} 
\,,
\end{aligned}
\]
and let us denote by $\nuFZero(m_-)$ and $\KFZero(m_-)$ (by $\nuFZero(m_+)$ and $\KFZero(m_+)$) the quantities denoted by $\nuFZero$ and $\KFZero$ in the proof of \vref{lem:dt_fire}, when the minimum point $m$ of \cref{lem:dt_fire} is replaced with $m_-$ (with $m_+$).
\begin{lemma}[firewalls linear decrease up to pollution]
\label{lem:approx_decr_fire_dichot}
For all $t$ in $[0,+\infty)$, 
\begin{equation}
\label{der_fire_dichot_prel}
\fff_\pm'(t) \le -\nuFZero(m_\pm) \fff_\pm(t) + \KFZero(m_\pm) \int_{\SigmaEscPlusMinus(t)} \psi_\pm (\xi,t) \, d\xi
\,.
\end{equation}
\end{lemma}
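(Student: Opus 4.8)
The plan is to reproduce, essentially line for line, the argument of \cref{lem:decrease_firewall}, carried out symmetrically for the $+$ and $-$ cases and with the single cut-off of \cref{subsubsec:def_firewall} replaced by the one-sided cut-offs carried by $\psi_+$ and $\psi_-$. First I would record the three elementary weight estimates that play the role of those used in \cref{subsubsec:der_fire}: directly from the piecewise-exponential definitions of $\psi_\pm$ one checks that, for all values of the arguments,
\[
\abs{\partial_t\psi_\pm} \le \cCutZero(\abs{c}+\kappa)\,\psi_\pm
\,, \quad
\abs{c\,\psi_\pm - \partial_y\psi_\pm} \le (\abs{c}+\kappa)\,\psi_\pm
\,, \quad
\partial_{yy}\psi_\pm - c\,\partial_y\psi_\pm \le \kappa(\abs{c}+\kappa)\,\psi_\pm
\,,
\]
the last inequality holding as an inequality between measures because at the cut-off point ($y=\cCutZero t$ for $\psi_+$, $y=-\cCutZero t$ for $\psi_-$) the slope of $\psi_\pm$ drops from $+\kappa\psi_\pm$ to $-\kappa\psi_\pm$, so that $\partial_{yy}\psi_\pm - c\,\partial_y\psi_\pm$ equals $(\kappa^2\mp c\kappa)\psi_\pm$ minus a Dirac mass of nonnegative weight, which is discarded. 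These are the estimates of \cref{subsubsec:der_fire} with $c$ replaced by $\abs{c}$ and $\cCut$ by $\cCutZero$; since $\abs{c}\le\cnoesc$ and $\cCutZero\le\cCut$, the conditions \cref{conditions_kappa_cCut_coeffEn,coeffEn_smaller_than_coeffEnZero} remain in force after these substitutions, and likewise the two conditions \cref{def_nu_fire} on $\nuFire$ (the same $\nuFire$ as in \cref{subsubsec:der_fire}).

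Next I would compute $\fff_\pm'(t)$ by applying the expressions \cref{ddt_loc_en} and \cref{ddt_loc_L2} to $v(\cdot,t)$ in the frame at speed $c$, with weight $\psi_\pm$, the potential shifted by the constant $V(m_\pm)$, and the $L^2$-term centred at $m_\pm$ (shifting $V$ by a constant leaves $\nabla V$ unchanged, and $v_t\cdot(v-m_\pm)$ integrates by parts exactly as $v_t\cdot v$ does). Substituting the three weight estimates, polarizing the scalar product $v_y\cdot v_t$, and using conditions \cref{conditions_kappa_cCut_coeffEn} (with $\cCutZero$ and $\abs{c}$) to absorb the $v_y^2$ coefficient and the $\coeffEn$-weighted terms exactly as in the derivation of \cref{ds_fire_prel}, I expect to reach, for both signs,
\[
\fff_\pm'(t) \le \int_{\rr} \psi_\pm\Bigl[ -\frac{v_y^2}{2} - (v-m_\pm)\cdot\nabla V(v) + \frac{\eigVmin}{8\eigVmax}\,\abs{V(v)-V(m_\pm)} + \frac{\eigVmin}{8}(v-m_\pm)^2 \Bigr]\,dy
\,.
\]

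Finally, exactly as in \cref{ds_fire_prel_2}, I would add and subtract $\nuFire\bigl(\coeffEn(V(v)-V(m_\pm)) + (v-m_\pm)^2/2\bigr)$ under the integral: the first of conditions \cref{def_nu_fire} makes the ``subtracted'' part bound $-\nuFire\fff_\pm(t)$, while on the complement of $\SigmaEscPlusMinus(t)$ — where $\abs{v-m_\pm}\le\dEsc$ — the estimates \cref{property_r_Esc} together with the second of conditions \cref{def_nu_fire} show the leftover integrand is nonpositive, so the domain of the leftover integral may be restricted to $\SigmaEscPlusMinus(t)$. Bounding that integrand there by
\[
K_{\fff} = \max_{\pm}\ \max_{\abs{u}\le\Rattinfty}\Bigl( \nuFire\bigl(\coeffEn(V(u)-V(m_\pm)) + \tfrac{(u-m_\pm)^2}{2}\bigr) - (u-m_\pm)\cdot\nabla V(u) + \tfrac{\eigVmin}{8\eigVmax}\abs{V(u)-V(m_\pm)} + \tfrac{\eigVmin}{8}(u-m_\pm)^2 \Bigr)
\,,
\]
finite by the a priori bound \cref{att_ball_dichot}, yields \cref{der_fire_dichot_prel}. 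No genuine obstacle is expected: the whole point is that the scheme of \cref{lem:decrease_firewall} is robust, and the only step needing care is verifying the three weight estimates for the two-sided weights $\psi_\pm$ — in particular that the Dirac mass created at the cut-off has the favourable (nonpositive) sign, which here is automatic since $\psi_\pm$ has a downward kink $+\kappa\to-\kappa$ there, the assumption $\abs{c}<\kappa$ from \cref{hyp_param_relax} being what keeps the intermediate constants on the right side of the conditions.
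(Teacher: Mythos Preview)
Your proposal is correct and follows essentially the same approach as the paper: the paper's proof simply records the same three weight estimates for $\psi_\pm$ (with $\abs{c}$ and $\cCutZero$ in place of $c$ and $\cCut$), notes that the computations of \cref{subsubsec:der_fire} then go through unchanged, and defines $K_{\fff}$ by the analogous maximum (the paper takes the max over all $m\in\mmm$ rather than just $m_+,m_-$, but this is immaterial). Your write-up is more detailed but not different in substance.
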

\begin{proof}
The proof is very similar to that of \vref{lem:decrease_firewall} or to that of \cref{lem:dt_fire} (see \cite[\GlobalRelaxationLemFirewallDecreaseUpToPollution]{Risler_globalRelaxation_2016}); however, since the definitions of the various parameters and functions are slightly different, the details of the calculations will be provided.

According to expressions \vref{ddt_loc_en,ddt_loc_L2} for the time derivatives of a localized energy and a localized $L^2$ functional, for all nonnegative time $t$,
\[
\begin{aligned}
\fff'_\pm(t) = & \int_{\rr} \Biggl[ \psi_\pm \Bigl( - \coeffEnZero v_t^2 - (v-m_\pm)\cdot \nabla V^\ddag(v) - v_\xi^2 \Bigr) \\
&+ \coeffEnZero \partial_t\psi_\pm   \Bigl( \frac{1}{2}v_\xi^2 + V^\ddag(v)-V^\ddag(m_\pm) \Bigr)  + \coeffEnZero (c\psi_\pm - \partial_\xi\psi_\pm)  v_\xi \cdot v_t \\
&+ \frac{\partial_t\psi_\pm + \partial^2_\xi\psi_\pm - c\partial_\xi\psi_\pm}{2}(v-m_\pm)^2 \Biggr] \, d\xi
\,.
\end{aligned}
\]
According to the definitions of $\psi_+$ and $\psi_-$, for all $(\xi,t)$ in $\rr\times[0,+\infty)$ (omitting the arguments $(\xi,t)$ of $\psi_\pm$ and of their derivatives),
\[
\begin{aligned}
\partial_t \psi_\pm = e^{c\xi}\partial_t \psi_{0,\pm}
\quad&\text{thus}\quad
\abs{\partial_t \psi_\pm } = \kappa_0 \cCutZero \psi_\pm \,, \\
c\psi_\pm -\partial_\xi \psi_\pm = -e^{c\xi} \partial_\xi \psi_{0,\pm}
\quad&\text{thus}\quad
\abs{c\psi_\pm -\partial_\xi \psi_\pm} = \kappa_0 \psi_\pm \,, \\
\partial_\xi^2 \psi_\pm - c \partial_\xi \psi_\pm 
= \partial_\xi(e^{c\xi} \partial_\xi \psi_{0,\pm})\qquad\qquad & \\
= e^{c\xi}(c\partial_\xi \psi_{0,\pm} + \partial_\xi^2\psi_{0,\pm})
\quad&\text{thus}\quad
\partial_\xi^2 \psi_\pm - c \partial_\xi \psi_\pm \le \kappa_0(\kappa_0+\abs{c})\psi_\pm
\end{aligned}
\]
(compare with the bounds \vref{bounds_psi_psi_s_psi_xi_etc}).
It follows that, for all nonnegative time $t$,
\[
\begin{aligned}
\fff'_\pm(t) \le \int_{\rr} \psi_\pm \Biggl[& -\coeffEnZero v_t^2 - (v-m_\pm)\cdot \nabla V^\ddag(v) - v_\xi^2 \\
&+ \coeffEnZero\, \kappa_0\, \cCutZero \Bigl( \frac{1}{2}v_\xi^2 + \abs{V^\ddag(v)-V^\ddag(m_\pm)} \Bigr)  + \coeffEnZero \, \kappa_0 \abs{v_\xi \cdot v_t} \\
&+  \frac{\kappa_0(\cCutZero + \kappa_0+ \abs{c})}{2}(v-m_\pm)^2
\Biggr] \, d\xi
\,,
\end{aligned}
\]
thus, using the inequality
\[
\coeffEnZero\, \kappa_0 \abs{v_\xi \cdot v_t} \le \coeffEnZero\, v_t^2 + \frac{\coeffEnZero\, \kappa_0^2}{4} v_\xi^2
\,,
\]
it follows that
\[
\begin{aligned}
\fff'_\pm(t) \le  \int_{\rr} \psi_\pm &\Biggl[ \biggl( \coeffEnZero\, \kappa_0 \Bigl( \frac{\cCutZero}{2} + \frac{\kappa_0}{4} \Bigr) - 1 \biggr) v_\xi^2 - (v-m_\pm)\cdot \nabla V^\ddag(v) \\
& + \coeffEnZero\, \kappa_0\, \cCutZero \abs{V^\ddag(v)-V^\ddag(m_\pm)} + \frac{\kappa_0(\cCutZero + \kappa_0+ \abs{c})}{2}(v-m_\pm)^2 \Biggr] \, d\xi
\,.
\end{aligned}
\]
According to the conditions \vref{conditions_on_coeffEnZero_and_kappaZero_and_cutZero_relax,hyp_param_relax} satisfied by $\coeffEnZero$ and $\kappa_0$ and $\cCutZero$ and $c$, it follows that
\[
\fff'_\pm(t) \le \int_{\rr} \psi_\pm \Bigl[ -\frac{1}{2}v_\xi^2 - (v-m_\pm)\cdot \nabla V^\ddag(v) + \frac{1}{4} \abs{V^\ddag(v)-V^\ddag(m_\pm)} + \frac{\eigVmin}{8} (v-m_\pm)^2 \Bigr] \, d\xi
\,,
\]
and as a consequence, 
\[
\begin{aligned}
\fff'_\pm(t) &+ \nuFZero(m_\pm) \fff_\pm(t)\le \int_{\rr} \psi_\pm \biggl[ -\frac{1}{2}(1-\nuFZero(m_\pm)\,\coeffEnZero)v_\xi^2 - (v-m_\pm)\cdot \nabla V^\ddag(v) \\
&+ \Bigl(\frac{1}{4} + \nuFZero(m_\pm)\,\coeffEnZero\Bigr)\abs{V^\ddag(v)-V^\ddag(m_\pm)} + \Bigl(\frac{\eigVmin}{8} + \frac{\nuFZero(m_\pm)}{2} \Bigr) (v-m_\pm)^2 \biggr] \, d\xi
\,.
\end{aligned}
\]
Since the quantity $\nuFZero(m_\pm)$ defined in \vref{def_nu_fire_zero} is such that
\[
\nuFZero(m_\pm)\, \coeffEnZero \le 1 
\quad\text{and}\quad
\nuFZero(m_\pm)\, \coeffEnZero \le \frac{1}{4} 
\quad\text{and}\quad
\frac{\nuFZero(m_\pm)}{2} \le \frac{\eigVmin(m_\pm)}{8} 
\]
(compare with conditions \vref{conditions_on_nuFire} on $\nuF$), it follows from the previous inequality that
\begin{equation}
\label{ds_fire_prel_relax}
\begin{aligned}
\fff'_\pm(t) &+ \nuFZero(m_\pm) \fff_\pm(t)\le \\
&\int_{\rr} \psi_\pm \Bigl[ - (v-m_\pm)\cdot \nabla V^\ddag(v) + \frac{1}{2}\abs{V^\ddag(v)-V^\ddag(m_\pm)} + \frac{\eigVmin}{4} (v-m_\pm)^2 \Bigr]\, d\xi
\,.
\end{aligned}
\end{equation}
According to \cref{v_nablaV_controls_square_around_loc_min_ddag,v_nablaV_controls_pot_around_loc_min_ddag}, the integrand of the integral at the right-hand side of this inequality is nonpositive as long as $x$ is \emph{not} in $\SigmaEscPlusMinus(t)$. Therefore this inequality still holds if the domain of integration of this integral is changed from $\rr$ to $\SigmaEscPlusMinus(t)$. Finally, according to the definition \vref{def_K_fire_zero} of the quantity $\KFZero$ and to the uniform bound \vref{att_ball_dichot} on the solution, inequality \cref{der_fire_dichot_prel} follows from \cref{ds_fire_prel_relax}. \Cref{lem:approx_decr_fire_dichot} is proved. 
\end{proof}
\subsubsection{Control over the pollution in the time derivative of the firewalls}
\label{subsubsec:flux_der_fire_dichot}
The following lemma calls upon the notation $T$ introduced for inequalities \cref{hyp_pos_relax}. 
\begin{lemma}[firewalls linear decrease up to pollution, continuation]
\label{lem:der_fire_dichot_next}
There exists a positive quantity $\KFprime$, depending only on $V$ and $m_-$ and $m_+$, such that, for every time $t$ greater than or equal to $T$,
\begin{equation}
\label{der_fire_dichot_next}
\fff_\pm'(t) \le - \nuFZero(m_\pm) \fff_\pm(t) + \KFprime \exp\Bigl(-\frac{\kappa_0\cCutZero}{2}t\Bigr)
\,.
\end{equation}
\end{lemma}
\begin{proof}
For all $t$ in $[0,+\infty)$, let 
\begin{equation}
\label{def_gggg_pm}
\mathcal{G}_-(t) = \int_{\SigmaEscMinus(t)} \psi_- (\xi,t) \, d\xi 
\quad\text{and}\quad
\mathcal{G}_+(t) = \int_{\SigmaEscPlus(t)} \psi_+ (\xi,t) \, d\xi 
\,,
\end{equation}
and
\[
\begin{aligned}
& \xiHomMinus(t) = \xHomMinus(t) - ct
\quad\text{and}\quad
\xiEscMinus(t) = \xEscMinus(t) - ct
 \\
\text{and}\quad
& \xiEscPlus(t) = \xEscPlus(t) - ct
\quad\text{and}\quad
\xiHomPlus(t) = \xHomPlus(t) - ct
\,.
\end{aligned}
\]
Assume that the time $t$ is \emph{greater than or equal to $T$}. Then it follows from hypotheses \cref{hyp_pos_relax} and from the last hypothesis of \cref{hyp_param_relax} that 
\begin{equation}
\label{hyp_relax_in_tf}
\begin{aligned}
\xiHomMinus(t)&\le -\frac{5}{3}\cCutZero t
&\quad&\text{and}\quad&
-\frac{1}{3}\cCutZero t &\le \xiEscMinus(t) \,, \\
\text{and}\quad
\xiEscPlus(t)&\le \frac{1}{3}\cCutZero t 
&\quad&\text{and}\quad&
\frac{5}{3}\cCutZero t &\le \xiHomPlus(t)
\,,
\end{aligned}
\end{equation}
see \cref{fig:setting_relax}. 
\begin{figure}[!htbp]
\centering
\includegraphics[width=\textwidth]{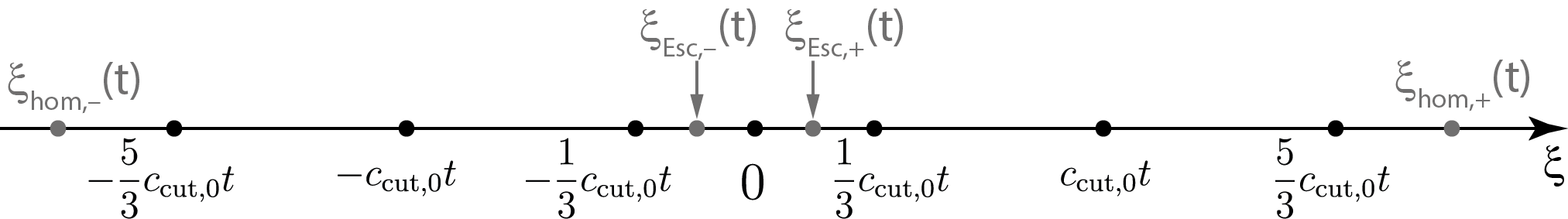}
\caption{Illustration of the notation and assumptions for the proof of \cref{prop:relax}.}
\label{fig:setting_relax}
\end{figure}
Besides, according to the definition of $\xEscPlus(t)$ and $\xEscMinus(t)$, 
\[
\begin{aligned}
\SigmaEscMinus(t) &\subset (-\infty,\xiHomMinus(t)] \cup [\xiEscMinus(t),+\infty) \\
\text{and}\quad
\SigmaEscPlus(t) &\subset (-\infty,\xiEscPlus(t)] \cup [\xiHomPlus(t),+\infty)
\,.
\end{aligned}
\]
Let us introduce the quantities
\[
\begin{aligned}
\GfrontMinus(t) &= \int_{-\infty}^{\xiHomMinus(t)} \psi_- (\xi,t) \, d\xi 
&\ &\text{and}\  &
\GbackMinus(t) &= \int_{\xiEscMinus(t)}^{+\infty} \psi_- (\xi,t) \, d\xi
\,,
\\
\text{and}\quad
\GbackPlus(t) &= \int_{-\infty}^{\xiEscPlus(t)} \psi_+ (\xi,t) \, d\xi 
&\ &\text{and}\  &
\GfrontPlus(t) &= \int_{\xiHomPlus(t)}^{+\infty} \psi_+ (\xi,t) \, d\xi 
\,.
\end{aligned}
\]
Then, it follows from the definition \cref{def_gggg_pm} of $\mathcal{G}_\pm(t)$ that
\[
\mathcal{G}_+(t) \le \GfrontPlus(t) + \GbackPlus(t)
\quad\text{and}\quad
\mathcal{G}_-(t) \le \GfrontMinus(t) + \GbackMinus(t)
\,.
\]
According to the definition of $\psi_+$ and $\psi_-$ and according to hypotheses \cref{hyp_param_relax} and inequalities \vref{hyp_relax_in_tf} it follows from explicit calculations that:
\[
\begin{aligned}
\GfrontMinus(t) &\le \frac{1}{\kappa_0+c} \exp\Bigl( \cCutZero \kappa_0 t + (\kappa_0+c) \xiHomMinus(t) \Bigr) 
&&\le \frac{1}{\kappa_0+c} \exp\Bigl(-\frac{\kappa_0\cCutZero}{2}t\Bigr) \,, \\
\GbackMinus(t) &\le \frac{1}{\kappa_0-c} \exp\Bigl(  -\cCutZero \kappa_0 t - (\kappa_0-c) \xiEscMinus(t)  \Bigr) 
&&\le \frac{1}{\kappa_0-c} \exp\Bigl(-\frac{\kappa_0\cCutZero}{2}t\Bigr) \,, \\
\GbackPlus(t) &\le \frac{1}{\kappa_0+c} \exp\Bigl( -\cCutZero \kappa_0 t + (\kappa_0+c) \xiEscPlus(t)  \Bigr) 
&&\le \frac{1}{\kappa_0-c} \exp\Bigl(-\frac{\kappa_0\cCutZero}{2}t\Bigr) \,, \\
\GfrontPlus(t) &\le \frac{1}{\kappa_0-c} \exp\Bigl( \cCutZero \kappa_0 t - (\kappa_0-c) \xiHomPlus(t)  \Bigr) 
&&\le \frac{1}{\kappa_0+c} \exp\Bigl(-\frac{\kappa_0\cCutZero}{2}t\Bigr) 
\,.
\end{aligned}
\]
It follows that
\[
\mathcal{G}_\pm(t)\le \frac{2\kappa_0}{\kappa_0^2-c^2} \exp\Bigl(-\frac{\kappa_0\cCutZero}{2}t\Bigr)
\,,
\]
and since according to the conditions \cref{hyp_param_relax} the quantity $\abs{c}$ is not larger than $\kappa_0/10$, it follows that
\[
\mathcal{G}_\pm(t)\le \frac{5}{2\kappa_0} \exp\Bigl(-\frac{\kappa_0\cCutZero}{2}t\Bigr)
\,,
\]
As a consequence, introducing the positive quantity $\KFprime$ defined as
\[
\KFprime = \frac{5}{2\kappa_0}\max\bigl(\KFZero(m_-),\KFZero(m_+)\bigr)
\,,
\]
inequality \cref{der_fire_dichot_next} follows from inequality \cref{der_fire_dichot_prel}. \Cref{lem:der_fire_dichot_next} is proved.
\end{proof}
\subsubsection{Nonnegativity of firewalls}
%
\begin{lemma}[nonnegativity of firewalls]
\label{lem:nonnegativity_firewalls}
For every nonnegative time $t$, 
\begin{equation}
\label{nonnegativity_firewalls}
\fff_\pm(t)\ge 0
\,.
\end{equation}
\end{lemma}
\begin{proof}
Inequality \cref{nonnegativity_firewalls} follows from inequality \vref{def_weight_en} and the definition \vref{def_firewalls_relax} of $\fff_\pm(t)$.
\end{proof}
\subsubsection{Energy decrease up to pollution}
\label{subsubsec:dt_en_final_dichot}
\begin{lemma}[energy decrease up to pollution]
\label{lem:dt_en_final_dichot}
There exist positive quantities $\KE$ and $\nuE$, depending only on $V$ and $m_-$ and $m_+$, such that, for every time $t$ greater than or equal to $T$,
\begin{equation}
\label{dt_en_final_dichot}
\eee'(t) \le -\frac{1}{2} \ddd(t) + \KE \exp\bigl(-\nuE (t-T)\bigr)
\,.
\end{equation}
\end{lemma}
\begin{proof}
Let 
\[
\nuE = \min\Bigl( \nuFZero(m_-), \nuFZero(m_+),\frac{\kappa_0\cCutZero}{4}\Bigr)
\,.
\]
According to Grönwall's inequality, it follows from inequalities \cref{der_fire_dichot_next} of \cref{lem:der_fire_dichot_next} that, for every time $t$ greater than or equal to $T$, 
\begin{align}
\fff_\pm(t) &\le \exp\bigl(- \nuFZero(m_\pm)(t-T)\bigr)\fff_\pm(T) \nonumber\\
&\qquad + \KFprime\int_T^t \exp\bigl(- \nuFZero(m_\pm)(t-s)\bigr)\exp\Bigl(-\frac{\kappa_0\cCutZero}{2}s\Bigr) \, ds \nonumber\\
&\le \exp\bigl(- \nuE(t-T)\Biggl(\fff_\pm(T) + \KFprime\exp\Bigl(-\frac{\kappa_0\cCutZero}{2}T\Bigr)\times \nonumber\\
& \qquad\int_T^t \exp\Bigl(- \bigl(\nuFZero(m_\pm)-\nuE\bigr)(t-s)\Bigr)\exp\biggl(-\Bigl(\frac{\kappa_0\cCutZero}{2}-\nuE\Bigr)(s-T)\biggr)\, ds \Biggr) \nonumber\\
&\le \exp\bigl(- \nuE(t-T)\bigr)\left(\fff_\pm(T) + \KFprime\int_T^t \exp\Bigl(-\frac{\kappa_0\cCutZero}{4}(s-T)\Bigr)\, ds\right)\nonumber\\
&\le \left(\fff_\pm(T) + \frac{4 \KFprime}{\kappa_0 \cCutZero}\right)\exp\bigl(- \nuE(t-T)\bigr)
\,.
\label{der_fire_dichot}
\end{align}
According to the $H^1_\text{ul}$-bound \vref{att_ball_X_dichot} for the solution, there exists a positive quantity $\fffMax$, depending only on $V$ and $m_+$ and $m_-$, such that the following estimates hold:
\[
\fff_+(T)\le \fffMax
\quad\text{and}\quad
\fff_-(T)\le \fffMax
\,.
\]
Thus, introducing the positive quantity
\[
\KE = 2 \KEFZero \Bigl(  \fffMax + \frac{4 \KFprime}{\kappa_0 \cCutZero}\Bigr)
\,,
\]
inequality \cref{dt_en_final_dichot} follows from inequalities \vref{ds_en_trav_f_inprogress_relax,der_fire_dichot}. \Cref{lem:dt_en_final_dichot} is proved.
\end{proof}
Inequality \cref{dt_en_final_dichot} is the key ingredient that will be applied in the next \cref{subsec:low_bd_en}. 
\subsection{Nonnegative asymptotic energy}
\label{subsec:low_bd_en}
\subsubsection{Notation}
Let us keep the notation and hypotheses of the previous \namecref{subsec:relax_sc_stand}. For every quantity $c$ close enough to $0$ so that conditions \vref{hyp_param_relax} be satisfied, let us denote by 
\[
\begin{aligned}
&v^{(c)}(\cdot,\cdot)
\quad\text{and}\quad
\chi^{(c)}(\cdot,\cdot)
\quad\text{and}\quad
\eee^{(c)}(\cdot) 
\quad\text{and}\quad
\SigmaEscPlus^{(c)}(\cdot) \\
\text{and}\quad
&\xiEscPlus^{(c)}(\cdot)
\quad\text{and}\quad
\xiHomPlus^{(c)}(\cdot)
\quad\text{and}\quad
\ddd^{(c)}(\cdot)
\end{aligned}
\]
the objects that were defined in \cref{subsec:relax_sc_stand} (with the same notation except the ``$(c)$'' superscript that is here to remind that these objects depend on the quantity $c$). For every such $c$, let us introduce the quantity $\eee^{(c)}(+\infty)$ in $\{-\infty\}\cup\rr$ defined as
\[
\eee^{(c)}(+\infty) = \liminf_{t\to+\infty} \eee^{(c)}(t)
\,,
\]
and let us call ``asymptotic energy at the speed $c$'' this quantity.
According to estimate \cref{dt_en_final_dichot} above, for every such $c$, 
\begin{equation}
\label{convergence_energy_almost_standing_frame}
\eee^{(c)}(t)\to \eee^{(c)}(+\infty)
\quad\text{as}\quad
t\to + \infty
\,.
\end{equation}
\subsubsection{Statement}
The aim of this \namecref{subsec:low_bd_en} is to prove the following proposition.
\begin{proposition}[nonnegative asymptotic energy at zero speed]
\label{prop:nonneg_asympt_en}
The quantity $\eee^{(0)}(+\infty)$ (the asymptotic energy at speed zero) is nonnegative.
\end{proposition}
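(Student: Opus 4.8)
The plan is to argue by contradiction. Assume $\eee^{(0)}(+\infty)<0$; recall (this is established just before the statement, from \vref{dt_en_final_dichot}) that $t\mapsto\eee^{(0)}(t)$ is nonincreasing up to an exponentially small correction, so that $\eee^{(0)}(t)\to\eee^{(0)}(+\infty)$ and there exist a positive quantity $\eta$ and a time $T_1$ with $\eee^{(0)}(t)\le-2\eta$ for all $t\ge T_1$ (the borderline possibility $\eee^{(0)}(+\infty)=-\infty$ being allowed, and we may assume $m_-\ne m_+$, the case $m_-=m_+$ being immediate since $u$ is then close to $m_-$ throughout the bulk and the normalization \vref{max_pot_mplusmimus_zero} forces $V(m_-)=0$). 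The guiding idea is the one underlying \cref{sec:inv_impl_cv}, read in a standing rather than a travelling frame: a localized energy that stays negative in a standing frame can do so only if some dissipation keeps occurring inside the bulk, and a dissipation that keeps occurring around a point of the bulk forces that point to travel at a nonzero mean speed — which, by the non-invasion bounds \vref{hyp_pos_relax} and ultimately by hypothesis \hypNoInv, is excluded. Only \hypCoerc, \hypHom and \hypNoInv are used; the role of the small nonzero speeds $c$ of \cref{subsec:relax_sc_stand} is precisely to detect such a putative invasion in either direction.

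First I would integrate the energy--dissipation inequality \vref{dt_en_final_dichot}. If $\eee^{(0)}(+\infty)>-\infty$ this yields $\int_0^{+\infty}\ddd^{(0)}(t)\,dt<+\infty$, whence — using the growth control \vref{der_diss_dichot} on $\ddd^{(0)}$ — $\ddd^{(0)}(t)\to0$ and therefore, thanks to the a priori bounds \vref{att_ball_dichot} and \vref{bound_u_ut_ck} and a bootstrap, $\sup_{|x|\le\cCutZero t}\abs{u_t(x,t)}\to0$; if instead $\eee^{(0)}(+\infty)=-\infty$ then $\int_0^{+\infty}\ddd^{(0)}(t)\,dt=+\infty$ and a definite amount of dissipation recurs near some point of the bulk. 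In the first (generic) situation, the compactness argument of \cref{lem:compact}, applied along any sequence of times $t_p\to+\infty$ and points $x_p$ with $|x_p|\le\cCutZero t_p$, produces a limit $\phi$ solving $\phi''=\nabla V(\phi)$, bounded by $\Rattinfty$; because of \vref{hyp_pos_relax}, which pins the neighbourhood of $m_-$ to the left of $\xEscMinus(t)\ge-\tfrac16\cCutZero t$ and that of $m_+$ to the right of $\xEscPlus(t)\le\tfrac16\cCutZero t$, such a limit is the constant $m_-$ (resp.\ $m_+$) when $x_p$ is taken in the $m_-$--zone (resp.\ the $m_+$--zone) — since $m_\pm$ is a hyperbolic rest point of the stationary oscillator and the only bounded entire solution remaining in its $\dEsc$--neighbourhood is the constant one — and is a genuine bounded stationary solution connecting critical points of $V$ when $x_p$ is taken in the transition zone $[\xEscMinus(t_p),\xEscPlus(t_p)]$.

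It then remains to close the two cases. When $\ddd^{(0)}$ is integrable I would evaluate $\eee^{(0)}(t)$ zone by zone: the $m_\pm$--contributions tend to $V(m_\pm)$ times the (weighted) length of the corresponding zones, while the transition zone contributes, in the limit, a sum of energies of stationary layers; but a stationary solution can only join critical values that are equal (conservation law $\tfrac12|\phi'|^2-V(\phi)\equiv\mathrm{const}$), so necessarily $V(m_-)=V(m_+)$, which after \vref{max_pot_mplusmimus_zero} equals $0$, and each stationary layer has energy equal to $\int|\phi_i'|^2>0$; hence $\eee^{(0)}(+\infty)\ge0$, a contradiction. When $\ddd^{(0)}$ is not integrable (in particular if $\eee^{(0)}(+\infty)=-\infty$), a definite amount of dissipation is concentrated, along a sequence of times, around a point of the bulk lying between the two escape points; applying the relaxation scheme of \cref{subsec:relax_sc_stand} in a frame at a small speed $c\ne0$ of the appropriate sign, and exploiting the final energy/firewall/flux estimates (the flux terms being kept exponentially small by \vref{der_fire_dichot_next} and \vref{hyp_relax_in_tf}), one shows — by the mechanism of \cref{subsec:cv_mean_inv_vel,subsec:relax}, but obtaining the contradiction directly from \hypNoInv rather than from a front-speed dichotomy, so that \hypGen is not needed — that this dissipation cannot remain at a bounded distance from the origin in that frame; that is, one of $\xEscMinus(\cdot)$, $\xEscPlus(\cdot)$ has a nonzero limit mean speed of the sign forbidden by \hypNoInv, a contradiction.

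The step I expect to be the main obstacle is this last one, together with the a priori lack of control over the transition zone $[\xEscMinus(t),\xEscPlus(t)]$: its width may be of order $t$, so its energy cannot simply be read off, and the whole purpose of the small-speed frames and the two firewall functions $\fff_\pm$ of \cref{subsec:relax_sc_stand} is to keep the attendant flux terms under control while still letting the localized energy $\eee^{(c)}$ feel that zone. Making the bookkeeping of these flux terms close — the analogue, at speed $\approx0$, of the long argument of \cref{subsec:relax} carried out there at speed $\approx\cesc$ — and checking that the degenerate case $\eee^{(0)}(+\infty)=-\infty$ is absorbed by the same dichotomy, is where the real work lies.
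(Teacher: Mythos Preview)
Your plan is far more elaborate than the paper's, and it has a real gap at the point you yourself flag as ``the main obstacle''. In Case~1 you want to conclude $V(m_-)=V(m_+)$ from the approximate conservation of the Hamiltonian $H(x,t)=\tfrac12 u_x^2-V(u)$ across the transition zone; but $\partial_x H=u_x\cdot u_t$, and you only know $\sup|u_t|\to0$ with no rate, while the transition zone has width of order $t$. Integrating an $o(1)$ quantity over a length of order $t$ gives nothing. Without $V(m_-)=V(m_+)=0$ the $m_-$--zone alone contributes $V(m_-)$ times a length of order $t$, and you have no lower bound on the transition contribution to balance it; so neither ``$\eee^{(0)}(+\infty)>-\infty$'' nor ``$\eee^{(0)}(+\infty)\ge0$'' follows from your Case~1 argument as written. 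Your Case~2 is really a promissory note for rerunning the full machinery of \cref{subsec:cv_mean_inv_vel,subsec:relax} at small speed, which is a great deal of work and in the paper's architecture is exactly what this proposition is meant to \emph{avoid}. (Also, $m_-=m_+$ is not ``immediate'': the solution can still have a bump far from $m_-$ in the bulk; \hypNoInv does not prevent that.)

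The paper's proof is a one--idea shortcut that you are missing. Since we may take $V(m_+)=0$, for any small \emph{positive} $c$ one has
\[
\eee^{(c)}(t)\ \ge\ \int_{\rr}\chi^{(c)} V(v)\,dy\ \ge\ V_{\min}\int_{\Sigma_{\textup{Esc},+}(t)}\chi^{(c)}(y,t)\,dy
\]
because outside $\Sigma_{\textup{Esc},+}(t)$ the solution is $\dEsc$--close to $m_+$ and hence $V\ge0$. The integral on the right is bounded by $\tfrac1c\exp\!\bigl(c(\xEscPlus(t)-ct)\bigr)$ plus an exponentially small tail near $\yHomPlus$, and since $\limsup \xEscPlus(t)/t\le0<c$ by \hypNoInv this vanishes as $t\to+\infty$. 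Thus $\eee^{(c)}(+\infty)\ge0$, hence by \vref{dt_en_final_dichot} $\eee^{(c)}(t)\ge-\tfrac{K_{\eee,\textup{final}}}{\tilde\nu_{\fff}}e^{-\tilde\nu_{\fff}t}$ for every such $c$. Finally $\eee^{(c)}(t)\to\eee^{(0)}(t)$ as $c\downarrow0$ for each fixed $t$, and the same lower bound passes to $c=0$. No contradiction, no case split, no analysis of the transition zone, and no use of the conservation law: the small positive speed simply tilts the weight so that the potentially bad set is exponentially suppressed.
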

The proof proceeds through the following lemmas and corollaries, that are rather direct consequences of the relaxation scheme set up in the previous \cref{subsec:relax_sc_stand}, and in particular of the inequality \cref{dt_en_final_dichot} for the time derivative of the energy.

Since according to the definition \vref{def_V_ddag} of $V^\ddag$ the maximum of $V^\ddag(m_+)$ and $V^\ddag(m_-)$ is assumed to be equal to $0$, it may be assumed (without loss of generality) that
\begin{equation}
\label{assumpt_V_ddag_of_m_plus_is_zero}
V^\ddag(m_+)=0
\,.
\end{equation}
\subsubsection{Nonnegative asymptotic energy at small nonzero speeds}
\begin{lemma}[nonnegative asymptotic energy at small nonzero speeds]
\label{lem:nonneg_en_prel}
For every quantity $c$ close enough to $0$ so that conditions \vref{hyp_param_relax} be satisfied, if in addition $c$ is positive, then
\[
\eee^{(c)}(+\infty) \ge 0
\,.
\]
\end{lemma}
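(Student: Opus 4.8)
The plan is as follows. This lemma does not actually require the relaxation estimate \cref{dt_en_final_dichot}: it follows from a direct lower bound on the weighted energy density, using only the sign of $V$ near $m_+$, the a priori bound \cref{att_ball_dichot}, and — decisively — the fact that in a frame travelling at a \emph{positive} speed the escape point recedes to $-\infty$. I would exhibit a function $t\mapsto\omega(t)$, tending to $0$ as $t\rightarrow+\infty$, such that $\eee^{(c)}(t)\ge-\omega(t)$ for all large $t$; this gives $\eee^{(c)}(+\infty)=\liminf_{t\rightarrow+\infty}\eee^{(c)}(t)\ge0$ at once.

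First I would record how \hypNoInv reads in the travelling frame. Since $\cEscPlus\le0$ and $c>0$, one has $\limsup_{t\rightarrow+\infty}\yEscPlus(t)/t=\cEscPlus-c\le-c<0$, so $\yEscPlus(t)\rightarrow-\infty$; moreover $\yHomMinus(t)\le\yEscPlus(t)\le\tfrac13\cCutZero t<\cCutZero t$ by \cref{hyp_relax_in_tf}. Consequently, on $(-\infty,\yEscPlus(t)]$ the weight obeys $\chi^{(c)}(y,t)\le e^{cy}$ for $y\in[-\cCutZero t,\yEscPlus(t)]$ and $\chi^{(c)}(y,t)\le e^{-c\,\cCutZero t}$ for $y\le-\cCutZero t$, whence
\[
\int_{-\infty}^{\yEscPlus(t)}\chi^{(c)}(y,t)\,dy\le \frac1c\,e^{c\,\yEscPlus(t)}+\frac1\kappa\,e^{-c\,\cCutZero t},
\]
a quantity tending to $0$ as $t\rightarrow+\infty$ (here the positivity of $c$ makes both terms meaningful and vanishing).

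Next I would note that, for $t$ large, the solution stays within $\dEsc$ of $m_+$ on the whole interval $(\xEscPlus(t),\xHomPlus(t)]$: by the very definition of $\xEscPlus(t)$ the continuous map $x\mapsto\abs{u(x,t)-m_+}$ never equals $\dEsc$ there, and by \hypHom it is $<\dEsc$ at $x=\xHomPlus(t)$; connectedness then forces it to be $<\dEsc$ throughout. Since $V(m_+)=0$, \cref{property_r_Esc} gives $V\bigl(v(y,t)\bigr)\ge0$ for all $y\in[\yEscPlus(t),\yHomPlus(t)]$. I would then split the integral defining $\eee^{(c)}(t)$ at $y=\yEscPlus(t)$. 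On $[\yEscPlus(t),+\infty)$ the integrand is nonnegative on $[\yEscPlus(t),\yHomPlus(t)]$ and at least $-C\,\chi^{(c)}(y,t)$ on $[\yHomPlus(t),+\infty)$, with $C=\sup_{\abs{u}\le\Rattinfty}\abs{V(u)}$; since $\yHomPlus(t)\ge\tfrac53\cCutZero t$ and $c\le\kappa/6$ (one of the conditions \cref{hyp_param_relax}), the tail $\int_{\yHomPlus(t)}^{+\infty}\chi^{(c)}(y,t)\,dy$ is at most $\tfrac1\kappa e^{-\kappa\cCutZero t/2}$. On $(-\infty,\yEscPlus(t)]$ the gradient part is nonnegative and $V(v)\ge-C$, so that piece is at least $-C\int_{-\infty}^{\yEscPlus(t)}\chi^{(c)}(y,t)\,dy$, controlled above. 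Summing the three contributions yields $\eee^{(c)}(t)\ge-\omega(t)$ with $\omega(t)=\tfrac{C}{\kappa}e^{-\kappa\cCutZero t/2}+C\bigl(\tfrac1c e^{c\,\yEscPlus(t)}+\tfrac1\kappa e^{-c\,\cCutZero t}\bigr)\rightarrow0$, which proves the lemma. In the degenerate case where the set defining $\xEscPlus(t)$ is empty for all large $t$, one has $\abs{u(\cdot,t)-m_+}<\dEsc$ — hence $V(v)\ge0$ — on all of $[\xHomMinus(t),\xHomPlus(t)]$, and the same splitting performed at $y=\yHomMinus(t)$ gives the bound directly.

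The main point to watch is simply the bookkeeping of these vanishing error terms together with the first step; there is no serious obstacle. What is essential is the role of $c>0$: it forces $\yEscPlus(t)\rightarrow-\infty$, and it makes $\int_{-\infty}^{\yEscPlus(t)}e^{cy}\,dy=\tfrac1c e^{c\,\yEscPlus(t)}$ finite and small. Both facts collapse at $c=0$, which is exactly why \cref{prop:nonneg_asympt_en} cannot be deduced from this lemma by a naive limit $c\rightarrow0^{+}$ but must instead invoke the relaxation estimate \cref{dt_en_final_dichot}.
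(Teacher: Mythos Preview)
Your proof is correct and follows essentially the same approach as the paper's: drop the nonnegative kinetic term, use $V\ge V(m_+)=0$ on $[\yEscPlus(t),\yHomPlus(t)]$, bound $V$ below by a constant on the complement, and control the remaining weight integrals using $\yEscPlus(t)\to-\infty$ (from $\cEscPlus\le0<c$) and $\yHomPlus(t)\ge\tfrac53\cCutZero t$. The only cosmetic difference is that the paper bounds the entire left tail in one stroke via $\chi^{(c)}(y,t)\le e^{cy}$ on $(-\infty,\cCutZero t]$, yielding simply $\tfrac1c e^{c\,\yEscPlus(t)}$, whereas you split it at $-\cCutZero t$; your extra term $\tfrac1\kappa e^{-c\cCutZero t}$ is harmless but unnecessary.
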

\begin{proof}
Let $c$ be a positive quantity, close enough to $0$ so that conditions \cref{hyp_param_relax} be satisfied. With the notation of \cref{subsec:relax_sc_stand} (for the relaxation scheme in a frame travelling at the speed $c$), for all $t$ in $[0,+\infty)$, 
\[
\begin{aligned}
\eee^{(c)}(t) &= \int_{\rr} \chi^{(c)}(\xi,t) \Bigl( \frac{1}{2}v^{(c)}_\xi(\xi,t)^2 + V^\ddag\bigl(v^{(c)}(\xi,t)\bigr)\Bigr) \, d\xi \\
&\ge \int_{\rr} \chi^{(c)}(\xi,t) V^\ddag\bigl(v^{(c)}(\xi,t)\bigr) \, d\xi \\
&\ge \int_{\SigmaEscPlus^{(c)}(t)} \chi^{(c)}(\xi,t)  V^\ddag\bigl(v^{(c)}(\xi,t)\bigr)  \, d\xi
\,.
\end{aligned}
\]
Thus, considering the global minimum value of $V^\ddag$:
\[
V^\ddag_{\min} = \min_{u\in\rr^d} V^\ddag(u) \le 0 
\,,
\]
it follows that, for all $t$ in $[0,+\infty)$, 
\[
\begin{aligned}
\eee^{(c)}(t)&\ge V^\ddag_{\min} \int_{\SigmaEscPlus^{(c)}(t)} \chi^{(c)}(\xi,t) \, d\xi \\
&\ge V^\ddag_{\min}\biggl[ \int_{-\infty}^{\xiEscPlus^{(c)}(t)} \chi^{(c)}(\xi,t) \, d\xi + \int_{\xiHomPlus^{(c)}(t)}^{+\infty} \chi^{(c)}(\xi,t) \, d\xi \biggr] 
\,.
\end{aligned}
\]
According to the definition of $\chi^{(c)}(\xi,t)$ (see \vref{subsubsec:en_dichot}), this yields:
\[
\begin{aligned}
\eee^{(c)}(t)&\ge V^\ddag_{\min}\Bigl[ \int_{-\infty}^{\xiEscPlus^{(c)}(t)} \exp(c\xi) \, d\xi + \int_{\xiHomPlus^{(c)}(t)}^{+\infty} \exp\bigl(c\xi - \kappa_0(\xi - \cCutZero t)\bigr) \, d\xi \Bigr] \\
&= V^\ddag_{\min}\biggl[\frac{1}{c}\exp\left(c \xiEscPlus^{(c)}(t)\right) + \frac{1}{\kappa_0 -c} \exp\bigl(\kappa_0\cCutZero t-(\kappa_0-c)\xiHomPlus^{(c)}(t) \bigr) \biggr]
\,.
\end{aligned}
\]
Let us consider the arguments of the two exponential functions in this last expression.  
\begin{itemize}
\item Since $c$ is assumed to be positive and according to hypothesis \textup{(\hyperlink{hypNoInv}{\hypNoInvRef})}, the quantity $\xiEscPlus^{(c)}(t) = \xEscPlus(t) -ct$ goes to $-\infty$ as time goes to $+\infty$.
\item If $t$ is greater than or equal to $T$, then, according to inequality \vref{hyp_relax_in_tf} on $\xiHomPlus(t)$,
\[
\kappa_0\cCutZero t-(\kappa_0-c)\xiHomPlus^{(c)}(t) \le \cCutZero t \Bigl(\kappa_0 - \frac{5}{3}(\kappa_0-c)\Bigr) 
\,,
\]
and according to inequality \cref{hyp_param_relax} bounding from above the ratio $c/\kappa_0$, it follows that the right-hand side of this inequality goes to $-\infty$ as time goes to $+\infty$.
\end{itemize}
Thus $\eee^{(c)}(t)$ is bounded from below by a quantity which goes to $0$ as time goes to $+\infty$. \Cref{lem:nonneg_en_prel} is proved. 
\end{proof}
\subsubsection{Almost nonnegative energy at small nonzero speeds}
\begin{corollary}[almost nonnegative energy at small nonzero speeds]
\label{cor:low_bd_en_c}
For every quantity $c$ close enough to $0$ so that conditions \vref{hyp_param_relax} be satisfied, if in addition $c$ is positive, then, for every time $t$ greater than or equal to $T$,
\[
\eee^{(c)}(t) \ge -\frac{\KE}{\nuE} \exp\bigl(-\nuE (t-T)\bigr)
\,.
\]
\end{corollary}
\begin{proof}
The proof follows from previous \cref{lem:nonneg_en_prel} and inequality \cref{dt_en_final_dichot} of \cref{lem:dt_en_final_dichot}.
\end{proof}
\subsubsection{Continuity of energy with respect to the speed at \texorpdfstring{$c=0$}{c=0}}
\begin{lemma}[continuity of energy with respect to the speed at $c=0$]
\label{lem:cont_en_c}
For every $t$ in $(0,+\infty)$, 
\[
\eee^{(c)}(t)\to \eee^{(0)}(t)
\quad\text{as}\quad
c \to 0
\,.
\]
\end{lemma}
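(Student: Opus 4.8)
The plan is to obtain the convergence $\eee^{(c)}(t)\to\eee^{(0)}(t)$ directly from Lebesgue's dominated convergence theorem, with $t$ in $(0,+\infty)$ fixed and $c\to 0$ through values small enough that hypotheses \cref{hyp_param_relax} (and, say, $\abs{c}\le 1$) hold. Writing $v^{(c)}(y,t)=u(ct+y,t)$ and $v^{(c)}_y(y,t)=u_x(ct+y,t)$, the integrand defining $\eee^{(c)}(t)$ is
\[
\chi^{(c)}(y,t)\Bigl(\frac{u_x(ct+y,t)^2}{2}+V\bigl(u(ct+y,t)\bigr)\Bigr),
\]
and I would control its two factors separately.

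For the energy density factor, I would use the a priori $L^{\infty}$-bound \cref{att_ball_dichot} (which gives $\abs{u(x,t)}\le\Rattinfty$ for all $x$, hence an $x$-, $y$- and $c$-uniform bound on $\abs{V(u(ct+y,t))}$) together with the smoothing properties of the semi-flow recalled after \cref{lem:attr_ball} (which, since $t>0$ is fixed, give $\norm{u(\cdot,t)}_{\ccc^1_{\textrm{b}}(\rr,\rr^n)}<+\infty$, hence a $y$- and $c$-uniform bound on $\abs{u_x(ct+y,t)}$). This is precisely where the hypothesis $t\in(0,+\infty)$, rather than $t\in[0,+\infty)$, is used. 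Consequently there is a constant $M(t)$, depending only on $t$ and $V$, such that $\abs{\frac{1}{2}u_x(ct+y,t)^2+V\bigl(u(ct+y,t)\bigr)}\le M(t)$ for all $y$ in $\rr$ and all $c$ with $\abs{c}\le 1$.

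For the weight factor, the key observation is that, in the definition of $\chi^{(c)}(\cdot,t)$ in \cref{subsubsec:en_dichot}, the intervals $\iLeft(t)$, $\iMain(t)$, $\iRight(t)$ do not depend on $c$, and on the two outer intervals the $y$-dependence of $\chi^{(c)}(y,t)$ is purely $e^{\pm\kappa y}$, with a decay rate $\kappa$ that does not depend on $c$; only bounded prefactors ($e^{-c\cCutZero t}$ on $\iLeft(t)$, $e^{cy}$ with $\abs{y}\le\cCutZero t$ on $\iMain(t)$, and $e^{c\cCutZero t}$ on $\iRight(t)$) involve $c$. Hence, setting $h_t(y)=e^{\kappa(y+\cCutZero t)}$ on $\iLeft(t)$, $h_t(y)=1$ on $\iMain(t)$, and $h_t(y)=e^{-\kappa(y-\cCutZero t)}$ on $\iRight(t)$, one checks that $h_t$ belongs to $L^{1}(\rr)$ and that $\chi^{(c)}(y,t)\le e^{\cCutZero t}\,h_t(y)$ for all $y$ in $\rr$ and all $c$ with $\abs{c}\le 1$. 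Therefore the integrand displayed above is dominated in absolute value by the fixed $L^{1}(\rr)$ function $y\mapsto M(t)\,e^{\cCutZero t}\,h_t(y)$, uniformly with respect to $c$.

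Finally, for each fixed $y$ and $t$, continuity of $u(\cdot,t)$ and $u_x(\cdot,t)$ gives $u(ct+y,t)\to u(y,t)$ and $u_x(ct+y,t)\to u_x(y,t)$ as $c\to 0$, while the piecewise-exponential formula for $\chi$ gives $\chi^{(c)}(y,t)\to\chi^{(0)}(y,t)$ (note that $c=0$ satisfies \cref{hyp_param_relax}); so the integrand converges pointwise in $y$ to the integrand defining $\eee^{(0)}(t)$. Dominated convergence then yields $\eee^{(c)}(t)\to\eee^{(0)}(t)$, which is the assertion. The only genuinely delicate point is the $c$-uniform integrable domination of $\chi^{(c)}(\cdot,t)$, and as indicated this costs essentially nothing, the exponential decay rate of the weight being $\kappa$ independently of $c$; all remaining ingredients are routine.
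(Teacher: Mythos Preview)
Your proof is correct and follows essentially the same approach as the paper's own argument: both express $\eee^{(c)}(t)$ as an integral over $\rr$ with integrand $\chi^{(c)}(y,t)$ times the energy density of $u(\cdot+ct,t)$, and conclude by dominated convergence using the continuity of $\chi^{(c)}$ in $c$ together with the a~priori bounds on $u$ and its derivatives. Your version is more explicit than the paper's --- you carefully exhibit the $c$-independent $L^1$ majorant $y\mapsto M(t)e^{\cCutZero t}h_t(y)$ by exploiting that the decay rate $\kappa$ of the weight does not depend on $c$ --- whereas the paper simply asserts that the result follows from these ingredients without writing out the dominating function.
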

\begin{proof}
For all $t$ in $(0,+\infty)$, 
\[
\eee^{(0)}(t) = \int_{\rr} \chi^{(0)}(x,t)\Bigl( \frac{1}{2}u_x(x,t)^2 + V^\ddag\bigl( u(x,t)\bigr) \Bigr) \, dx 
\,,
\]
and, for every quantity $c$ close enough to $0$ so that hypotheses \vref{hyp_param_relax} be satisfied (substituting the notation $\xi$ used in \cref{subsec:relax_sc_stand} with $x$), 
\[
\eee^{(c)}(t) = \int_{\rr} \chi^{(c)}(ct+x,t)\Bigl( \frac{1}{2}u_x(ct+x,t)^2 + V^\ddag\bigl( u(ct+x,t)\bigr) \Bigr) \, dx
\,,
\]
and the result follows from the continuity of $\chi^{(c)}(\cdot,\cdot)$ with respect to $c$, the exponential decrease to $0$ of $\chi^{(c)}(x,t)$ when $x\to\pm\infty$, and the $\HoneulAlone$-bounds \vref{att_ball_X_dichot} for the solution.  
\end{proof}
\subsubsection{Almost nonnegative energy in a standing frame}
\begin{corollary}[almost nonnegative energy in a standing frame]
\label{cor:low_bd_en_0}
For every time $t$ greater than or equal to $T$,
\[
\eee^{(0)}(t) \ge -\frac{\KE}{\nuE} \exp\bigl(-\nuE (t-T)\bigr)
\,.
\]
\end{corollary}
\begin{proof}
This lower bound follows from \cref{cor:low_bd_en_c} and \cref{lem:cont_en_c}.
\end{proof}
\Cref{prop:nonneg_asympt_en} (``nonnegative asymptotic energy'') follows from \cref{cor:low_bd_en_0}.
\subsection{End of the proof of \texorpdfstring{\cref{prop:relax}}{Proposition \ref{prop:relax}}}
\label{subsec:end_pf_prop_relax}
\begin{lemma}[integrability of dissipation in the standing frame]
\label{lem:dissip_is_integrable}
The map
\[
t\mapsto \ddd^{(0)}(t) 
\]
is integrable on $[0,+\infty)$.
\end{lemma}
\begin{proof}
The statement follows from \cref{prop:nonneg_asympt_en} (``nonnegative asymptotic energy'') and from the upper bound \cref{dt_en_final_dichot} on the time derivative of energy.
\end{proof}
\begin{lemma}[relaxation]
\label{lem:relaxation}
The following limits hold:
\begin{equation}
\label{ut_app_zero_between_xHomMinus_and_xHomPlus}
\sup_{x\in[\xHomMinus(t),\xHomPlus(t)]} \abs{ u_t(x,t) } \to 0
\end{equation}
and, for every quantity $\varepsilon$ which is positive and smaller than $\abs{\cHomMinus}$ and than $\cHomPlus$, 
\begin{equation}
\label{relax_outside_center_area}
\sup_{x\in[\xHomMinus(t),-\varepsilon t]}\abs{u(x,t)-m_-} \to 0
\quad\text{and}\quad
\sup_{x\in[\varepsilon t,\xHomPlus(t)]}\abs{u(x,t)-m_+} \to 0
\end{equation}
as time goes to $+\infty$. 
\end{lemma}
\begin{proof}
For every quantity $\varepsilon$ which is positive and smaller than $\abs{\cHomMinus}$ and than $\cHomPlus$, according to hypothesis \textup{(\hyperlink{hypNoInv}{\hypNoInvRef})} and proceeding as in the proof of \vref{lem:cv_right_of_front}, the limits \cref{relax_outside_center_area} follow. In addition, according to the bounds \cref{bound_u_ut_ck} on the solution, it follows from these limits that 
\begin{equation}
\label{ut_goes_to_zero_beyond_eps_t}
\sup_{x\in[\xHomMinus(t),-\varepsilon t]\cup[\varepsilon t,\xHomPlus(t)]} \abs{ u_t(x,t) }\to 0
\quad\text{as}\quad
t\to+\infty
\,.
\end{equation}
It remains to prove that 
\[
\sup_{x\in[-\cCutZero t, \cCutZero t]} \abs{ u_t(x,t) } \to 0
\quad\text{as}\quad
t\to +\infty
\,.
\]
Let us proceed by contradiction and assume that this limit doesn't hold. Then, there exist a positive quantity $\varepsilon$ and a sequence $(x_n,t_n)_{n\in\nn}$ satisfying $t_n\to+\infty$ as $n\to+\infty$, such that, for every nonnegative integer $n$, 
\begin{equation}
\label{contradiction_hyp_u_t_not_small}
\abs{u_t(x_n,t_n)}\ge\varepsilon
\,.
\end{equation}
According \cref{ut_goes_to_zero_beyond_eps_t}, it may be assumed (up to dropping the first terms of the sequence $(x_n,t_n)_{n\in\nn}$) that, for every $n$ in $\nn$, $x_n$ belongs to the interval $[-\cCutZero t_n,\cCutZero t_n]$. 
By compactness (\cref{lem:compactness}), there exists an entire solution $\widebar{u}$ of system \cref{init_syst} such that, up to replacing $(x_n,t_n)_{n\in\nn}$ by a subsequence, with the notation of \cref{compactness},
\begin{equation}
\label{convergence_up_to_subsequence}
D^{2,1}u\bigl( x_n + \cdot, t_n + \cdot\bigr)\to D^{2,1}\tilde{u}
\quad\text{as}\quad
n\to+\infty
\,,
\end{equation}
uniformly on every compact subset of $\rr^2$. It follows from \cref{contradiction_hyp_u_t_not_small,convergence_up_to_subsequence} that the quantity $\abs{\widebar{u}_t(0,0)}$ is positive, so that the quantity
\[
\int_0^1 \left(\int_{\rr}e^{-\kappa_0\abs{\xi}}\abs{\widebar{u}_t(\xi,s)}^2 \, d\xi\right)\, ds
\]
is also positive. This quantity is greater than or equal to the quantity
\[
\liminf_{n\to+\infty}\int_0^1 \ddd^{(0)}(t_n+s)\, ds
\]
which is therefore also positive, a contradiction with the integrability of $t\mapsto\ddd^{(0)}(t)$ (\cref{lem:dissip_is_integrable}). \Cref{lem:relaxation} is proved.
\end{proof}
\begin{lemma}[$V(m_-)$ equals $V(m_+)$]
\label{lem:V_of_m_minus_equals_V_of_m_plus}
The following equalities hold:
\[
V^\ddag(m_-)=V^\ddag(m_+)=0\,,
\quad\text{or in other words:}\quad
V(m_-)=V(m_+)
\,.
\]
\end{lemma}
\begin{proof}
It follows from the definition \cref{def_V_ddag} of $V^\ddag$ and from the assumption \cref{assumpt_V_ddag_of_m_plus_is_zero} that $V^\ddag(m_+)$ equals $0$ and that $V^\ddag(m_-)$ is nonpositive. If $V^\ddag(m_-)$ was negative, then, according to the assertions \cref{ut_app_zero_between_xHomMinus_and_xHomPlus,relax_outside_center_area} above (and according to the bounds \vref{bound_u_ut_ck} for the solution), the following estimate would hold:
\[
\eee^{(0)}(t)\sim V^\ddag(m_-)\, \cCutZero \, t
\quad\text{as}\quad
t\to+\infty
\,,
\]
a contradiction with \cref{prop:nonneg_asympt_en}. \Cref{lem:V_of_m_minus_equals_V_of_m_plus} is proved.
\end{proof}
\begin{lemma}[convergence towards asymptotic energy]
\label{lem:asympt_energy_for_various_bounds}
For all quantities $c_-$ in\\ 
$(\cHomMinus,0)$ and $c_+$ in $(0,\cHomPlus)$, 
\begin{equation}
\label{equ_asympt_energy_for_various_bounds}
\int_{c_- t}^{c_+ t} \Bigl(\frac{1}{2}u_x(x,t)^2+V\bigl(u(x,t)\bigr)-V(m_\pm)\Bigr) \to \eee^{(0)}(+\infty)
\end{equation}
as time goes to $+\infty$. 
\end{lemma}
\begin{proof}
It follows from the limit \cref{convergence_energy_almost_standing_frame} that the quantity 
\[
\eee^{(0)}(t) = \int_{\rr}\chi^{(0)}(x,t)\Bigl(\frac{1}{2}u_x(x,t)^2 + V^\ddag\bigl(u(x,t)\bigr)\Bigr) \, dx
\]
goes to $\eee^{(0)}(+\infty)$ as time goes to $+\infty$, and according to \cref{lem:V_of_m_minus_equals_V_of_m_plus} $V\ddag(\cdot)$ equals $V(\cdot)-V(m_\pm)$. The fact that the same asymptotic behaviour holds for the integral in \cref{equ_asympt_energy_for_various_bounds} (whatever the values of $c_-$ and $c_+$) can thus (once again) be derived from inequality \cref{dt_fire} of \vref{lem:dt_fire} (as in the proof of \vref{lem:cv_right_of_front}). \Cref{lem:asympt_energy_for_various_bounds} is proved. 
\end{proof}
\begin{proof}[Proof of \cref{prop:relax}]
All statements of \cref{prop:relax} have been proved: 
\begin{enumerate}
\item equality between $V(m_-)$ and $V(m_+)$ is stated in \cref{lem:V_of_m_minus_equals_V_of_m_plus};
\item limits \cref{dissip_goes_to_zero_prop_relax,sol_goes_to_m_plus_minus_prop_relax} are stated in \cref{lem:relaxation};
\item according to \cref{prop:nonneg_asympt_en} the quantity $\eee^{(0)}(+\infty)$ is nonnegative, and, denoting by $\eeeResAsympt[u]$ this quantity, the limit \cref{asympt_energy_prop_relax} is stated in \cref{lem:asympt_energy_for_various_bounds}. 
\end{enumerate}
\Cref{prop:relax} is proved. 
\end{proof}
\section{Proof of \texorpdfstring{\cref{thm:main,prop:residual_asympt_en}}{Theorem \ref{thm:main} and Proposition \ref{prop:residual_asympt_en}}}
\label{sec:proof_thm1}
The aim of this \namecref{sec:proof_thm1} is to combine \cref{prop:inv_cv,prop:relax} (``invasion implies convergence'' and ``no invasion implies relaxation'') to complete the proof of \cref{thm:main,prop:residual_asympt_en}.
Not much remains to be said. 

As everywhere else, let us consider a function $V$ in $\ccc^2(\rr^d,\rr)$ satisfying the coercivity hypothesis \cref{hyp_coerc}. Let us assume in addition that the generic hypotheses \cref{hyp_gen} hold for the potential $V$, and let us consider two points $m_-$ and $m_+$ in $\mmm$ and a bistable solution $(x,t)\mapsto u(x,t)$ of system \cref{init_syst} connecting $m_-$ to $m_+$. 

Let us introduce the following notation:
\[
\begin{aligned}
\cHomMinus &= -(\cnoesc + 1)
\quad&\text{and}\quad
\xHomMinus(t) &= \cHomMinus t \\
\text{and}\quad
\cHomPlus &= \cnoesc + 1
\quad&\text{and}\quad
\xHomPlus(t) &= \cHomPlus  t
\,.
\end{aligned}
\]
According to the results of \cref{subsec:def_fire_zero,subsec:up_bd_inv_vel} (namely \vref{lem:inv} and inequality \vref{dt_fire}), for every positive quantity $L$, 
\[
\begin{aligned}
\sup_{\xi\in[-L,L]} \abs{u \bigl( \xHomMinus(t) + \xi\bigr) - m_-} &\to 0
\quad\text{as}\quad
t\to +\infty \\
\text{and}\quad
\sup_{\xi\in[-L,L]} \abs{u \bigl( \xHomPlus(t) + \xi\bigr) - m_+} &\to 0
\quad\text{as}\quad
t\to +\infty
\,,
\end{aligned}
\]
in other words hypotheses \textup{(\hyperlink{hypHom}{\hypHomRef})} of \cref{prop:relax} and \textup{(\hyperlink{hypHomRight}{\hypHomRightRef})} of \cref{prop:inv_cv} (and the symmetric hypothesis on the left) hold for these definitions. Thus, the two points $\xEscPlus(t)$ and $\xEscMinus(t)$ (for all $t$ in $[0,+\infty)$) and the corresponding asymptotic mean-sup speeds $\cEscPlus$ and $\cEscMinus$ may be defined exactly as in \cref{subsec:def_hyp_dichot}.

At this stage, two cases must be distinguished:
\begin{enumerate}
\item ``invasion'': $\max \bigl(\abs{\cEscMinus} , \cEscPlus \bigr)$ is positive;
\item ``no invasion'': $\cEscMinus$ is nonnegative and $\cEscPlus$ is nonpositive. 
\end{enumerate}

If the first case ``invasion'' occurs, then \cref{prop:inv_cv} can be applied (either to the right, or to the left, or on both sides). According to the statements of this proposition, behind the front(s) (to the right, or to the left, or on both sides) approached by the solution, ``new'' points $\xHomMinusNext(t)$ and $\xHomPlusNext(t)$ and new speeds $\cHomMinusNext$ and $\cHomPlusNext$ can be defined, for which hypothesis 
\textup{(\hyperlink{hypHom}{\hypHomRef})} is again satisfied. Thus the same process (definition of new ``Escape points'' and definition of their asymptotic mean-sup speeds and discussion at above about the signs of these speeds) can be repeated. And it can be repeated again, as long as case 1 (invasion) occurs. 

At each ``invasion'', the new bistable front approached by the solution replaces a stable homogeneous equilibrium by another one where (according to \vref{V_of_u_plus_minus_V_of_u_minus}) the potential takes a lower value. Since according to hypothesis \textup{(\hyperlink{hypCriticalValues}{\hypCriticalValuesRef})} the set of critical values of $V$ is finite, case 2 (no invasion) must eventually occur. 

When case 2 (no invasion) occurs, \cref{prop:relax} applies: the time derivative $u_t(x,t)$ of the solution goes to $0$ uniformly in the ``centre'' area between the two propagating terraces of travelling fronts, as time goes to $+\infty$, and there exists a nonnegative quantity $\eeeResAsympt[u]$ (``residual asymptotic energy'') such that, for all quantities $c_-$ in $(\cHomMinus,0)$ and $c_+$ in $(0,\cHomPlus)$, the limit \cref{asympt_energy_prop_relax} holds. In this case, all the arguments of \cite[\GlobalRelaxationSecApproachSetBistableStatSolAndConvergenceStandTerr]{Risler_globalRelaxation_2016} apply, up to the following minor changes: in \cite[\GlobalRelaxationSecApproachSetBistableStatSol]{Risler_globalRelaxation_2016} the quantity $c$ should be chosen smaller than $\min\left(\abs{\cHomMinus},\cHomPlus\right)$, and in \cite[\GlobalRelaxationSecApproachSetBistableStatSolAndConvergenceStandTerr]{Risler_globalRelaxation_2016} the statements should be restricted to the behaviour of the solution, at time $t$, in the space interval $[-ct,ct]$. Those arguments show that the solution approaches, in this ``centre area'', a standing terrace $\tttCentre$ of bistable stationary solutions (this completes the proof of \cref{thm:main}), and that the residual asymptotic energy $\eeeResAsympt[u]$ of the solution is equal to the energy $\eee[\tttCentre]$ of this standing terrace (this proves \cref{prop:residual_asympt_en}). The proof of \cref{thm:main,prop:residual_asympt_en} is complete.
\section{Some properties of the profiles of stationary solutions and travelling waves}
\label{sec:prop_stand_trav}
As everywhere else, let us consider a function $V$ in $\ccc^2(\rr^d,\rr)$ satisfying the coercivity hypothesis \cref{hyp_coerc}. 
\subsection{Asymptotics at the two ends of space}
\label{subsec:asympt_behav_tw}
Let $c$ denote a \emph{nonnegative} quantity, and let us consider the differential system governing the profiles of waves travelling at the speed $c$ (or ``standing'' if $c$ equals $0$): 
\begin{equation}
\label{syst_trav_front_bis}
\phi''=-c\phi'+\nabla V(\phi) \,.
\end{equation}
If $\xi\mapsto\phi(\xi)$ is a global solution of system \cref{syst_trav_front_bis}, recall that the \emph{$\alpha$-limit set of $\phi$} is the set $\displaystyle{\lim_\alpha \phi}$ defined as
\[
\lim_\alpha \phi = \bigcap_{\zeta\in\rr}\widebar{\{\phi(\xi):\xi<\zeta\}}
\,,
\]
and that this $\alpha$-limit set is a closed and connected subset of $\rr^d$. The following lemma is called upon in this paper only for a \emph{positive} quantity $c$, but the proof given below only requires that $c$ be \emph{nonnegative} (this allows to call upon this lemma in the case where $c$ equals zero in \cite{Risler_globalRelaxation_2016}).
\begin{lemma}[spatial asymptotics of the profiles of travelling waves]
\label{lem:asympt_behav_tw_2}
Let $m$ be a point of $\mmm$, and let $\xi\mapsto \phi(\xi)$ be a global solution of the differential system \cref{syst_trav_front_bis} satisfying 
\begin{equation}
\label{hyp_lem_asympt_behav_tw_2}
\abs{\phi(\xi)-m}\le \dEsc(m)
\quad\text{for every }
\xi \text{ in }[0,+\infty)
\quad\text{and}\quad
\phi(\cdot) \not\equiv m
\,.
\end{equation}
Then the following assertions hold. 
\begin{enumerate}
\item Both quantities $\abs{\phi(\xi)-m}$ and $\abs{\phi'(\xi)}$ go to $0$ as $\xi$ goes to $+\infty$.
\label{item:cv_spatial_asymptotics_tw}
\item For all $\xi$ in $[0,+\infty)$, the scalar product $\bigl(\phi(\xi)-m\bigr) \cdot \phi'(\xi)$ is negative. 
\label{item:transv_spatial_asymptotics_tw}
\item For all $\xi$ in $(0,+\infty)$, the quantity $\abs{\phi(\xi)-m}$ is smaller than $\dEsc(m)$. 
\label{item:closer_spatial_asymptotics_tw}
\item The supremum $\sup_{\xi\in\rr}\abs{\phi(\xi)-m}$ is larger than $\dEsc(m)$. 
\label{item:escape_spatial_asymptotics_tw}
\item In addition to assertion \cref{item:cv_spatial_asymptotics_tw} above, the quantities
\[
e^{c\xi}\abs{\phi(\xi)-m}
\quad\text{and}\quad
e^{c\xi}\abs{\phi'(\xi)}
\]
go to $0$ at an exponential rate when $\xi$ goes to $+\infty$. 
\label{item:exp_cv_spatial_asymptotics_tw}
\item If in addition $\phi(\cdot)$ is bounded on $\rr$ and $c$ is positive, then there exists a real quantity $V_{-\infty}$ such that
\[
\phi'(\xi)\xrightarrow[\xi\to -\infty]{}0
\quad\text{and}\quad
\lim_\alpha \phi \subset \bigl\{u\in\rr^d:V(u)=V_{-\infty}\text{ and } \nabla V(u)=0\bigr\}
\,,
\]
and such that
\begin{equation}
\label{V_of_m_plus_minus_V_at_minus_infinity}
V(m) - V_{-\infty} = c \int_\rr \phi'(\xi)^2 \, d\xi
\,.
\end{equation}
\label{item:minus_infty_asymptotics_tw}
\end{enumerate}
\end{lemma}
\begin{proof}
Let $m$ be a point of $\mmm$, and let $\xi\mapsto \phi(\xi)$ be a global solution of the differential system \cref{syst_trav_front_bis} satisfying \cref{hyp_lem_asympt_behav_tw_2}. For all $\xi$ in $\rr$, let us introduce the quantities
\begin{equation}
\label{def_Qloc_and_H}
\Qloc(\xi)=\frac{1}{2} \bigl(\phi(\xi)-m\bigr)^2
\quad\text{and}\quad
H(\xi) = \frac{1}{2}\phi'(\xi)^2 - V\bigl(\phi(\xi)\bigr)
\,.
\end{equation}
Then, it follows from system \cref{syst_trav_front_bis} that, for all $\xi$ in $\rr$, 
\[
\Qloc'(\xi) = \bigl(\phi(\xi)-m\bigr)\cdot \phi'(\xi)
\quad\text{and}\quad
\Qloc''(\xi) + c \Qloc'(\xi) = \phi'(\xi)^2 + \bigl(\phi(\xi)-m\bigr)\cdot\nabla V\bigl(\phi(\xi)\bigr)
\,,
\]
and thus it follows from inequality \vref{v_nablaV_controls_square_around_loc_min} that, for all $\xi$ in $[0,+\infty)$, 
\begin{equation}
\label{lower_bound_Q_second_plus_c_Q_prime}
\Qloc''(\xi)+c\Qloc'(\xi)\ge \phi'(\xi)^2+\eigVmin(m) \Qloc(\xi)
\,.
\end{equation} 
On the other hand, it again follows from system \cref{syst_trav_front_bis} that, for all $\xi$ in $\rr$, 
\begin{equation}
\label{expression_H_prime}
H'(\xi) = - c \phi'(\xi)^2 
\,,
\end{equation}
thus the function $H(\cdot)$ is non-increasing and thus bounded from above on $[0,+\infty)$. Besides, it follows from the first assumption of \cref{hyp_lem_asympt_behav_tw_2} that $V\bigl(\phi(\cdot)\bigr)$ is also bounded from above on $[0,+\infty)$. Thus, it follows from the expression \cref{def_Qloc_and_H} of $H(\xi)$ that $\abs{\phi'(\cdot)}$ is bounded on $[0,+\infty)$, and as a consequence the same is true for $\Qloc'(\cdot)$ and thus for $\Qloc'(\cdot)+c\Qloc(\cdot)$. Since the quantity $\Qloc'(\xi)+c\Qloc(\xi)$ is, according to inequality \cref{lower_bound_Q_second_plus_c_Q_prime}, non decreasing with respect to $\xi$ on $[0,+\infty)$, it follows that this quantity must converge towards a finite limit as $\xi$ goes to $+\infty$; and thus it follows from inequality \cref{lower_bound_Q_second_plus_c_Q_prime} that both functions $\abs{\phi'(\cdot)}^2$ and $\Qloc(\cdot)$ are integrable on $[0,+\infty)$. Since the derivative of the function $\Qloc(\cdot)$ is bounded on this interval, this function $\Qloc(\cdot)$ must converge towards $0$ as $\xi$ goes to $+\infty$. Thus $\phi(\xi)$ goes to $m$ as $\xi$ goes to $+\infty$, and as a consequence $V\bigl(\phi(\xi)\big)$ goes to $V(m)$ as $\xi$ goes to $+\infty$. Thus, since the function $H(\cdot)$ must converge to a finite limit at $+\infty$, it follows that $\phi'(\xi)^2$ must also go to a finite limit when $\xi$ goes to $+\infty$. Since $\phi'(\cdot)^2$ is integrable on $[0,+\infty)$, its limit at $+\infty$ must be $0$. Assertion \cref{item:cv_spatial_asymptotics_tw} is proved. 

It follows from assertion \cref{item:cv_spatial_asymptotics_tw} that the function $\Qloc'(\cdot)+c\Qloc(\cdot)$ converges towards $0$ at $+\infty$, and according to inequality \cref{lower_bound_Q_second_plus_c_Q_prime} (and since according to assumption \cref{hyp_lem_asympt_behav_tw_2} $\phi(\cdot)$ is not identically equal to $m$) the derivative of this function is positive on $[0,+\infty)$. It follows that this function is negative on $[0,+\infty)$, thus so is $\Qloc'(\cdot)$, which proves assertion \cref{item:transv_spatial_asymptotics_tw}. As a consequence, $\Qloc(\cdot)$ is strictly decreasing on $[0,+\infty)$, which proves assertion \cref{item:closer_spatial_asymptotics_tw}.

To prove assertion \cref{item:escape_spatial_asymptotics_tw}, let us proceed by contradiction and assume that $\abs{\phi(\xi)-m}$ remains not larger that $\dEsc(m)$ for all $\xi$ in $\rr$. It follows that the function $\Qloc(\cdot)$ is bounded on $\rr$. It also follows that inequality \cref{lower_bound_Q_second_plus_c_Q_prime} holds for all $\xi$ in $\rr$, thus the function $\Qloc'(\cdot)+c\Qloc(\cdot)$ is non-decreasing (and even, since $\phi(\cdot)$ is not identically equal to $m$, strictly increasing) on $\rr$. This function $\Qloc'(\cdot)+c\Qloc(\cdot)$ cannot go to $-\infty$ as $\xi$ goes to $-\infty$, or else (since $c$ is nonnegative) the same would be true for $\Qloc'(\cdot)$, a contradiction with the fact that $\Qloc(\cdot)$ is bounded. Thus $\Qloc'(\xi)+c\Qloc(\xi)$ converges to a finite limit as $\xi$ goes to $-\infty$. Again, according to inequality \cref{lower_bound_Q_second_plus_c_Q_prime}, this shows that the functions $\Qloc(\cdot)$ and $\abs{\phi'(\cdot)}^2$ are integrable, this time on $\rr$. It follows from the expression \cref{expression_H_prime} of $H'(\xi)$ that the function $H(\cdot)$ must converge towards a finite limit at $-\infty$. Thus $H(\cdot)$ is bounded on $\rr$, so that the same is true for $\phi'(\cdot)$. It follows that $\Qloc'(\cdot)$ is bounded on $\rr$, so that $\Qloc(\xi)$ must go to $0$ as $\xi$ goes to $-\infty$. Thus $V\bigl(\phi(\xi)\big)$ goes to $V(m)$ as $\xi$ goes to $-\infty$. As a consequence, since the function $H(\cdot)$ must converge towards a finite limit at $-\infty$, it follows that $\phi'(\xi)^2$ must also go to a finite limit when $\xi$ goes to $-\infty$. Since $\phi'(\cdot)^2$ is integrable on $\rr$, it must converge towards $0$ at $-\infty$. Finally the function $\Qloc'(\cdot)+c\Qloc(\cdot)$ converges towards the same limit (that is zero) at $-\infty$ and at $+\infty$, a contradiction with the fact that this function is strictly increasing on $\rr$. 

To prove assertion \cref{item:exp_cv_spatial_asymptotics_tw}, let us denote by $D$ the Hessian matrix $D^2V(m)$. Then the linearization at $(m,0)$ of system~\cref{syst_trav_front_bis} reads
\begin{equation}
\label{syst_fronts_lin}
\phi''=-c\phi'+D\phi\iff
\begin{pmatrix}
\phi \\ \psi
\end{pmatrix}'
=
\begin{pmatrix}
0&1 \\ D&-c
\end{pmatrix}
\begin{pmatrix}
\phi \\ \psi
\end{pmatrix}
\,,
\end{equation}
and the set of eigenvalues of this linear system is
\[
\left\{ \frac{-c\pm\sqrt{c^2+4\mu}}{2} : \mu \text{ is an eigenvalue of } D\right\}
\,.
\]
Since every eigenvalue of $D$ is positive, it follows that every nonpositive eigenvalue of system \cref{syst_fronts_lin} is less than $-c$, and assertion \cref{item:exp_cv_spatial_asymptotics_tw} follows. 

It remains to prove assertion \cref{item:minus_infty_asymptotics_tw}. Let us assume that $\phi(\cdot)$ is bounded on $\rr$ and that $c$ is positive. Since $H(\cdot)$ is non-increasing and since $H(\xi)$ goes to $-V(m)$ as $\xi$ goes to $+\infty$, there is a quantity $H_{-\infty}$ in $[-V(m),+\infty)\cup\{+\infty\}$ such that $H(\xi)$ goes to $H_{-\infty}$ as $\xi$ goes to $-\infty$. For all $\xi$ in $\rr$, let us introduce the quantity
\[
\Qglob(\xi) = \frac{1}{2}\phi(\xi)^2
\,.
\]
It follows from system \cref{syst_trav_front_bis} that, for all $\xi$ in $\rr$, 
\[
\Qglob'(\xi) = \phi(\xi)\cdot \phi'(\xi)
\quad\text{and}\quad
\Qglob''(\xi) + c \Qglob'(\xi) = \phi'(\xi)^2 + \phi(\xi)\cdot\nabla V\bigl(\phi(\xi)\bigr)
\,.
\] 
It follows from hypothesis \cref{hyp_coerc} that there exist positive quantities $\epsCoerc$ and $\Kcoerc$ such that, for every $u$ in $\rr^d$, 
\[
u\cdot \nabla V(u) \ge \epsCoerc u^2 - \Kcoerc
\,, 
\]
so that, for all $\xi$ in $\rr$, 
\[
\Qglob''(\xi) + c \Qglob'(\xi) \ge \phi'(\xi)^2 + \epsCoerc \phi(\xi)^2 - \Kcoerc
\,.
\]
For every negative quantity $\xi$, integrating this inequality between $\xi$ and $0$ yields:
\begin{equation}
\label{integrated_inequality_Qglob}
\Qglob'(0)-\Qglob'(\xi) + c \bigl(\Qglob(0)-\Qglob(\xi)\bigr) \ge \int_\xi^0 \bigl(\phi'(\zeta)^2 + \epsCoerc \phi(\zeta)^2 - \Kcoerc\bigr)\, d\zeta
\,.
\end{equation}
If the quantity $H_{-\infty}$ was equal to $+\infty$, then it would follow from the expression \cref{def_Qloc_and_H} of $H(\cdot)$ that $\abs{\phi'(\xi)}$ goes to $+\infty$ as $\xi$ goes to $-\infty$, and it would follow from inequality \cref{integrated_inequality_Qglob} that $\Qglob'(\xi)$ goes to $-\infty$ as $\xi$ goes to $-\infty$, a contradiction with the boundedness of $\phi(\cdot)$ and thus of $\Qglob(\cdot)$ on $\rr$. Thus $H_{-\infty}$ is finite. Then it follows from the expression \cref{expression_H_prime} of $H'(\cdot)$ and from the assumption that $c$ is positive that $\phi'(\cdot)$ is square integrable, and it follows from the expression \cref{def_Qloc_and_H} of $H(\cdot)$ that $\phi'(\cdot)$ is bounded, and it follows from system \cref{syst_trav_front_bis} that $\phi''(\cdot)$ is bounded (those two assertions hold in a neighbourhood of $-\infty$, thus also on $\rr$). Thus $\phi'(\xi)$ goes to $0$ as $\xi$ goes to $-\infty$, and if the quantity $-H_{-\infty}$ is denoted by $V_{-\infty}$, it follows that $V\bigl(\phi(\xi)\bigr)$ goes to $V_{-\infty}$ as $\xi$ goes to $-\infty$. It follows that the set $\displaystyle{\lim_\alpha \phi}$ must belong to the set of critical points of $V$ in the level set $V^{-1}\bigl(\{V_{-\infty}\}\bigr)$, and equality \cref{V_of_m_plus_minus_V_at_minus_infinity} follows from multiplying system \cref{syst_trav_front_bis} by $\phi'(\xi)$ and integrating over $\rr$. 
\Cref{lem:asympt_behav_tw_2} is proved. 
\end{proof}
\subsection{Vanishing energy of travelling waves invading a stable homogeneous equilibrium}
\label{subsec:zero_en_tw}
The following observation was, to the knowledge of the author, first made by Muratov (\cite[Proposition 3.3]{Muratov_globVarStructPropagation_2004}). For sake of completeness, a proof is given below.
\begin{lemma}[the energy of a bounded travelling wave invading a stable equilibrium vanishes]
\label{lem:zero_en_tw}
For every point $m$ of $\mmm$, every positive quantity $c$, and every function $\phi$ in the set $\Phi_c(m)$ of bounded profiles of waves travelling at the speed $c$ and ``invading'' the stable equilibrium $m$, the weighted energy
\[
\int_{\rr} e^{c\xi}\Bigl(\frac{1}{2}\phi'(\xi)^2 + V\bigl(\phi(\xi)\bigr) - V(m)\Bigr)\, d\xi
\,,
\]
is a convergent integral and its value is $0$. 
\end{lemma}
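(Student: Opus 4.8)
The plan is to derive the identity from the two conserved-type quantities associated with the ODE $\phi''=-c\phi'+\nabla V(\phi)$ and the known exponential decay of $\phi$ at $+\infty$. First I would record the convergence of the integral: by \cref{lem:asympt_behav_tw_1} the limit $m_-=\lim_{x\to-\infty}\phi(x)$ exists, is a critical point with $V(m_-)<V(m)$, and by \cref{lem:asympt_behav_tw_2} (applied after translating so that $\phi$ enters the $\dEsc$-neighbourhood of $m$) the pair $\bigl(\phi(x),\phi'(x)\bigr)$ converges to $(m,0)$ exponentially, with $\abs{\phi(x)-m}\le Ce^{-cx}$ and $\abs{\phi'(x)}\le Ce^{-cx}$ on $[0,+\infty)$. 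Since $V(\phi(x))-V(m)=O\bigl(\abs{\phi(x)-m}^2\bigr)=O(e^{-2cx})$ near $+\infty$, and near $-\infty$ the integrand $e^{cx}\bigl(\phi'^2/2+V(\phi)-V(m)\bigr)$ is bounded (the bracket stays bounded by \cref{lem:asympt_behav_tw_1} and $e^{cx}\to 0$, in fact the bracket tends to $V(m_-)-V(m)<0$ so the integrand decays like $e^{cx}$), the integral converges absolutely.

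Next I would exploit the gradient structure in the travelling frame, exactly as in \cref{form_en_mf}. For a stationary profile $\phi$ the weighted energy $\eee_c$ is formally constant in ``time''; the relevant computation is the spatial one. Multiply the ODE by $e^{cx}\phi'$ and observe
\[
\frac{d}{dx}\Bigl[e^{cx}\Bigl(\frac{\phi'(x)^2}{2}-V\bigl(\phi(x)\bigr)+V(m)\Bigr)\Bigr]
= e^{cx}\Bigl(c\,\phi'^2 + c\phi'\cdot\phi' + \phi''\cdot\phi' - \nabla V(\phi)\cdot\phi'\Bigr)\cdot(\text{sign bookkeeping}),
\]
so that after using $\phi''=-c\phi'+\nabla V(\phi)$ one gets
\[
\frac{d}{dx}\Bigl[e^{cx}\Bigl(\frac{\phi'(x)^2}{2}-\bigl(V(\phi(x))-V(m)\bigr)\Bigr)\Bigr] = 2c\,e^{cx}\,\frac{\phi'(x)^2}{2}\cdot 2 \quad(\text{up to the precise constant}),
\]
i.e. the derivative of the ``Hamiltonian-type'' weighted quantity equals a nonnegative multiple of $e^{cx}\phi'^2$. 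I would then integrate this exact identity over $\rr$: the boundary term at $+\infty$ vanishes by the exponential decay above, and the boundary term at $-\infty$ vanishes because $e^{cx}\to 0$ while $\phi'\to 0$ and $V(\phi)\to V(m_-)$ stays bounded. This yields $\int_{\rr} e^{cx}\phi'(x)^2\,dx$ expressed via… actually it gives one linear relation. To close the argument I would pair it with a second identity obtained by multiplying the ODE by $e^{cx}\phi'$ in the complementary way (or equivalently by integrating $\frac{d}{dx}\bigl[e^{cx}\phi'\bigr]\cdot\phi$-type expressions): combining the two linear relations between $\int e^{cx}\phi'^2$ and $\int e^{cx}\bigl(V(\phi)-V(m)\bigr)$ forces the target integral $\int_{\rr}e^{cx}\bigl(\phi'^2/2+V(\phi)-V(m)\bigr)\,dx$ to be $0$.

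The cleanest route, which I would actually write out, is the one-line observation that $\frac{d}{dx}\bigl[e^{cx}\bigl(\frac12\phi'^2+V(\phi)-V(m)\bigr)\bigr] = e^{cx}\phi'\cdot\bigl(\phi''+ \tfrac{\text{?}}{}\bigr)$ — more precisely, directly differentiating and substituting the ODE gives
\[
\frac{d}{dx}\Bigl[e^{cx}\Bigl(\tfrac12\phi'(x)^2+V\bigl(\phi(x)\bigr)-V(m)\Bigr)\Bigr]
= e^{cx}\Bigl(c\,\tfrac12\phi'^2 + cV(\phi)-cV(m) + \phi'\cdot\phi'' + \nabla V(\phi)\cdot\phi'\Bigr)
= e^{cx}\Bigl(c\,\tfrac12\phi'^2 + c\bigl(V(\phi)-V(m)\bigr) + \phi'\cdot\bigl(-c\phi'+2\nabla V(\phi)\bigr)\Bigr),
\]
which is not a perfect derivative, so instead I use the genuinely conserved combination $e^{cx}\bigl(\tfrac12\phi'^2-(V(\phi)-V(m))\bigr)$ whose derivative is $2c\,e^{cx}\bigl(\tfrac12\phi'^2\bigr) - $ wait; the correct clean identity is: $\frac{d}{dx}\bigl[e^{cx}(\tfrac12\phi'^2-V(\phi)+V(m))\bigr]=e^{cx}\bigl(c\phi'^2\bigr)$ after substitution. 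Integrating over $\rr$ and using vanishing of boundary terms gives $\int_\rr e^{cx}\phi'^2\,dx = 0$?? — no, it gives $0 = \int_\rr c\,e^{cx}\phi'^2\,dx$ only if the boundary terms vanish at both ends, but at $-\infty$ the quantity $e^{cx}(\tfrac12\phi'^2-V(\phi)+V(m)) \to 0\cdot(0-V(m_-)+V(m)) = 0$ does vanish; hence this would force $\phi'\equiv0$, which is false. The resolution — and the real content of the proof — is that the boundary term does \emph{not} vanish at $-\infty$ when computed correctly because the decay rate to $m_-$ may be too slow; the correct statement multiplies by $e^{cx}$ on $[0,\infty)$ only after normalization. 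The main obstacle, therefore, is getting the boundary/integrability bookkeeping at $-\infty$ exactly right; I expect the paper's proof handles this by instead integrating the ODE against $e^{cx}$ (not $e^{cx}\phi'$): multiplying $\phi''+c\phi'=\nabla V(\phi)$ by $e^{cx}$ gives $\frac{d}{dx}(e^{cx}\phi') = e^{cx}\nabla V(\phi)$, hence $\int_{-\infty}^{+\infty}\frac{d}{dx}(e^{cx}\phi')\,dx$ — the left side telescopes to $0$ by decay at $+\infty$ and $e^{cx}\phi'\to0$ at $-\infty$ — yielding $\int_\rr e^{cx}\nabla V(\phi(x))\cdot(\cdot)$ relations; combined with the energy identity $\frac{d}{dx}\bigl[e^{cx}(\tfrac12\phi'^2+V(\phi)-V(m))\bigr] = e^{cx}\bigl(c\phi'^2 + (\text{terms that integrate to zero})\bigr)$ one extracts the claimed vanishing. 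In the write-up I would: (i) establish the exponential decay and integrability via \cref{lem:asympt_behav_tw_1,lem:asympt_behav_tw_2}; (ii) justify differentiating under the integral and that $\eee_c[\phi(\cdot+s)]$ is independent of the shift $s$ since $\phi$ is stationary; (iii) compute $\frac{d}{ds}\eee_c[\phi(\cdot+s)]\big|_{s=0}$ two ways — once as $0$ (stationarity/translation) and once via \cref{dt_form_en_mf}-type formula giving $-\int e^{cx}(\text{something})^2$, which being $\le 0$ and equal to $0$ forces that something to vanish, and then re-identify the energy value as $0$ from the boundary terms. The delicate point throughout is the $x\to-\infty$ endpoint, and that is where I would spend the most care.
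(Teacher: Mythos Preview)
Your proposal is not a proof but an exploratory ramble with a decisive arithmetic error at its centre. When you compute $\frac{d}{dx}\bigl[e^{cx}\bigl(\tfrac12\phi'^2-V(\phi)+V(m)\bigr)\bigr]$ and substitute $\phi''=-c\phi'+\nabla V(\phi)$, the $\nabla V(\phi)\cdot\phi'$ terms cancel and you are left with
\[
\frac{d}{dx}\Bigl[e^{cx}\Bigl(\tfrac12\phi'^2-V(\phi)+V(m)\Bigr)\Bigr]
= -c\,e^{cx}\Bigl(\tfrac12\phi'^2+V(\phi)-V(m)\Bigr),
\]
i.e.\ exactly $-c$ times the integrand you want to show has integral zero --- not $e^{cx}\,c\,\phi'^2$ as you wrote. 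Integrating this correct identity over $\rr$ and checking that the boundary term $e^{cx}\bigl(\tfrac12\phi'^2-V(\phi)+V(m)\bigr)$ vanishes at both ends (at $+\infty$ by the $O(e^{-2cx})$ decay you already recorded, at $-\infty$ because $e^{cx}\to0$ while the bracket stays bounded) gives the lemma in one line. There is no subtlety at $-\infty$: your worry there stemmed from the miscomputed identity, which made you think you were proving $\int e^{cx}\phi'^2=0$.

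The paper's proof is a repackaging of the same computation via translation: it sets $\eee_{c,\phi}(x)=\int_{\rr}e^{c(y-x)}\bigl(\tfrac12\phi'(y)^2+V(\phi(y))-V(m)\bigr)\,dy=e^{-cx}\eee_{c,\phi}(0)$, so obviously $\eee_{c,\phi}'(x)=-c\,e^{-cx}\eee_{c,\phi}(0)$; on the other hand, writing $\eee_{c,\phi}(x)=\int_{\rr}e^{cy}\bigl(\tfrac12\phi'(x+y)^2+V(\phi(x+y))-V(m)\bigr)\,dy$ and differentiating under the integral, the ODE turns the integrand into the exact derivative $\tfrac{d}{dy}\bigl(e^{cy}\phi'(x+y)^2\bigr)$, whose integral vanishes by the same boundary considerations. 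Equating the two expressions for $\eee_{c,\phi}'(x)$ gives $\eee_{c,\phi}(0)=0$. This is precisely the idea you gesture at in your item (iii), and it is equivalent to the direct antiderivative computation above; either route is a two-line proof once the algebra is done correctly.
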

\begin{proof}
Let us take a point $m$ in $\mmm$, a positive quantity $c$, and a function $\phi$ in the set $\Phi_c(m)$ of bounded profiles of waves travelling at the speed $c$ and ``invading'' the stable equilibrium $m$, and let us consider the function
\[
\bar{\xi}\mapsto \eee_{c,\phi}(\bar{\xi}) = \int_{\rr} e^{c(\xi-\bar{\xi})}\Bigl(\frac{1}{2}\phi'(\xi)^2 + V\bigl(\phi(\xi)\bigr)- V(m) \Bigr) \, d\xi = e^{-c\bar{\xi}} \eee_{c,\phi}(0)
\,.
\]
According to assertions \cref{item:exp_cv_spatial_asymptotics_tw,item:minus_infty_asymptotics_tw} of \cref{lem:asympt_behav_tw_2} above this integral converges thus this function is defined for all $\bar{\xi}$ in $\rr$. Let us consider the derivative $\eee_{c,\phi}'(\bar{\xi})$ of this function at some quantity $\bar{\xi}$. On the one hand,
\begin{equation}
\label{derivative_of_eee_c_phi_first_expression}
\eee_{c,\phi}'(\bar{\xi}) = -c \eee_{c,\phi}(\bar\xi)
\,,
\end{equation}
and on the other hand, since 
\[
\eee_{c,\phi}(\bar{\xi}) = \int_{\rr} e^{c\xi}\Bigl(\frac{1}{2}\phi'(\bar{\xi}+\xi)^2 + V\bigl(\phi(\bar{\xi}+\xi) \bigr) - V(m) \Bigr)\, d\xi
\,,
\]
it follows, differentiating under the integral, that
\[
\eee_{c,\phi}'(\bar{\xi}) = \int_{\rr} e^{c\xi}  \phi'(\bar{\xi}+\xi) \cdot \Bigl( \phi''(\bar{\xi}+\xi) + \nabla V\bigl( \phi(\bar{\xi}+\xi) \bigr) \Bigr) \, d\xi
\]
thus according to the differential system \cref{syst_trav_front_bis},
\[
\begin{aligned}
\eee_{c,\phi}'(\bar{\xi}) &= \int_{\rr} e^{c\xi}  \phi'(\bar{\xi}+\xi) \cdot \Bigl( 2 \phi''(\bar{\xi}+\xi) + c \phi'(\bar{\xi}+\xi) \Bigr) \, d\xi \\
&= e^{-c\bar{\xi}} \int_{\rr} e^{c\xi} \phi'(\xi) \cdot \Bigl( 2 \phi''(\xi) + c \phi'(\xi) \Bigr) \, d\xi \\
&= e^{-c\bar{\xi}} \int_{\rr} \frac{d}{d\xi} \bigl( e^{c\xi}\phi'(\xi)^2 \bigr) \, d\xi 
\,.
\end{aligned}
\]
Since $c$ is positive and according to assertion \cref{item:minus_infty_asymptotics_tw} of \cref{lem:asympt_behav_tw_2}, the quantity $e^{c\xi}\phi'(\xi)^2$ goes to $0$ as $\xi$ goes to $-\infty$, and according to assertion \cref{item:exp_cv_spatial_asymptotics_tw} of \cref{lem:asympt_behav_tw_2} the same quantity also goes to $0$ as $\xi$ goes to $+\infty$. It follows that $\eee_{c,\phi}'(\bar{\xi})$ vanishes. Since $c$ is nonzero, the conclusion follows from \cref{derivative_of_eee_c_phi_first_expression}. \Cref{lem:zero_en_tw} is proved. 

\end{proof}
\subsubsection*{Acknowledgements} 
I am indebted to Thierry Gallay and Romain Joly for their help and interest through numerous fruitful discussions. 
\emergencystretch=1em
\printbibliography 
\bigskip
\mySignature
\end{document}